\DeclareMathAlphabet{\mathcalligra}{T1}{calligra}{m}{n}
\DeclareMathOperator{\Res}{Res}
\DeclareMathOperator{\Hom}{Hom}
\DeclareMathOperator{\Sing}{Sing}
\DeclareMathOperator{\Aut}{Aut}
\DeclareMathOperator{\End}{End}
\begin{document}

\newtheorem{thm}{Theorem}[section]
\newtheorem{prop}[thm]{Proposition}
\newtheorem{coro}[thm]{Corollary}
\newtheorem{conj}[thm]{Conjecture}
\newtheorem{example}[thm]{Example}
\newtheorem{lem}[thm]{Lemma}
\newtheorem{rem}[thm]{Remark}
\newtheorem{hy}[thm]{Hypothesis}
\newtheorem*{acks}{Acknowledgements}
\theoremstyle{definition}
\newtheorem{de}[thm]{Definition}
\newtheorem{ex}[thm]{Example}

\newtheorem{convention}[thm]{Convention}

\newtheorem{bfproof}[thm]{{\bf Proof}}
\xymatrixcolsep{5pc}

\newcommand{\C}{{\mathbb{C}}}
\newcommand{\Z}{{\mathbb{Z}}}
\newcommand{\N}{{\mathbb{N}}}
\newcommand{\Q}{{\mathbb{Q}}}
\newcommand{\te}[1]{\mbox{#1}}
\newcommand{\set}[2]{{
    \left.\left\{
        {#1}
    \,\right|\,
        {#2}
    \right\}
}}
\newcommand{\sett}[2]{{
    \left\{
        {#1}
    \,\left|\,
        {#2}
    \right\}\right.
}}

\newcommand{\choice}[2]{{
\left[
\begin{array}{c}
{#1}\\{#2}
\end{array}
\right]
}}
\def \<{{\langle}}
\def \>{{\rangle}}

\def\({\left(}

\def\){\right)}

\def \:{\mathopen{\overset{\circ}{
    \mathsmaller{\mathsmaller{\circ}}}
    }}
\def \;{\mathclose{\overset{\circ}{\mathsmaller{\mathsmaller{\circ}}}}}

\newcommand{\overit}[2]{{
    \mathop{{#1}}\limits^{{#2}}
}}
\newcommand{\belowit}[2]{{
    \mathop{{#1}}\limits_{{#2}}
}}

\newcommand{\wt}[1]{\widetilde{#1}}

\newcommand{\wh}[1]{\widehat{#1}}

\newcommand{\no}[1]{{
    \mathopen{\overset{\circ}{
    \mathsmaller{\mathsmaller{\circ}}}
    }{#1}\mathclose{\overset{\circ}{\mathsmaller{\mathsmaller{\circ}}}}
}}

\newlength{\dhatheight}
\newcommand{\dwidehat}[1]{%
    \settoheight{\dhatheight}{\ensuremath{\widehat{#1}}}%
    \addtolength{\dhatheight}{-0.45ex}%
    \widehat{\vphantom{\rule{1pt}{\dhatheight}}%
    \smash{\widehat{#1}}}}
\newcommand{\dhat}[1]{%
    \settoheight{\dhatheight}{\ensuremath{\hat{#1}}}%
    \addtolength{\dhatheight}{-0.35ex}%
    \hat{\vphantom{\rule{1pt}{\dhatheight}}%
    \smash{\hat{#1}}}}

\newcommand{\dwh}[1]{\dwidehat{#1}}

\newcommand{\ck}[1]{\check{#1}}

\newcommand{\dis}{\displaystyle}

\newcommand{\pd}[1]{{\partial {#1}}}
\newcommand{\hpd}[1]{{\partial_{(#1)}}}
\newcommand{\pdiff}[2]{\frac{\partial^{#2}}{\partial #1^{#2}}}


\newcommand{\g}{{\frak g}}
\newcommand{\fg}{\g}
\newcommand{\ff}{{\frak f}}
\newcommand{\f}{\ff}
\newcommand{\gc}{{\bar{\g'}}}
\newcommand{\h}{{\frak h}}
\newcommand{\cent}{{\frak c}}
\newcommand{\notc}{{\not c}}
\newcommand{\Loop}{{\mathcal L}}
\newcommand{\G}{{\mathcal G}}
\newcommand{\D}{\mathcal D}
\newcommand{\T}{\mathcal T}
\newcommand{\Free}{\mathcal F}
\newcommand{\Cfk}{\mathcal C}
\newcommand{\nil}{\mathfrak n}
\newcommand{\al}{\alpha}
\newcommand{\alck}{\al^\vee}
\newcommand{\be}{\beta}
\newcommand{\beck}{\be^\vee}
\newcommand{\ssl}{{\mathfrak{sl}}}
\newcommand{\id}{\te{id}}
\newcommand{\rtu}{{\xi}}
\newcommand{\period}{{N}}
\newcommand{\half}{{\frac{1}{2}}}
\newcommand{\quar}{{\frac{1}{4}}}
\newcommand{\oct}{{\frac{1}{8}}}
\newcommand{\hex}{{\frac{1}{16}}}
\newcommand{\reciprocal}[1]{{\frac{1}{#1}}}
\newcommand{\inverse}{^{-1}}
\newcommand{\inv}{\inverse}
\newcommand{\SumInZm}[2]{\sum\limits_{{#1}\in\Z_{#2}}}
\newcommand{\uce}{{\mathfrak{uce}}}
\newcommand{\Rcat}{\mathcal R}
\newcommand{\cS}{{\mathcal{S}}}


\newcommand{\orb}[1]{|\mathcal{O}({#1})|}
\newcommand{\up}{_{(p)}}
\newcommand{\uq}{_{(q)}}
\newcommand{\upq}{_{(p+q)}}
\newcommand{\uz}{_{(0)}}
\newcommand{\uk}{_{(k)}}
\newcommand{\nsum}{\SumInZm{n}{\period}}
\newcommand{\ksum}{\SumInZm{k}{\period}}
\newcommand{\overN}{\reciprocal{\period}}
\newcommand{\df}{\delta\left( \frac{\xi^{k}w}{z} \right)}
\newcommand{\dfl}{\delta\left( \frac{\xi^{\ell}w}{z} \right)}
\newcommand{\ddf}{\left(D\delta\right)\left( \frac{\xi^{k}w}{z} \right)}

\newcommand{\ldfn}[1]{{\left( \frac{1+\xi^{#1}w/z}{1-{\xi^{#1}w}/{z}} \right)}}
\newcommand{\rdfn}[1]{{\left( \frac{{\xi^{#1}w}/{z}+1}{{\xi^{#1}w}/{z}-1} \right)}}
\newcommand{\ldf}{{\ldfn{k}}}
\newcommand{\rdf}{{\rdfn{k}}}
\newcommand{\ldfl}{{\ldfn{\ell}}}
\newcommand{\rdfl}{{\rdfn{\ell}}}

\newcommand{\kprod}{{\prod\limits_{k\in\Z_N}}}
\newcommand{\lprod}{{\prod\limits_{\ell\in\Z_N}}}
\newcommand{\E}{{\mathcal{E}}}
\newcommand{\F}{{\mathcal{F}}}

\newcommand{\Etopo}{{\mathcal{E}_{\te{topo}}}}

\newcommand{\Ye}{{\mathcal{Y}_\E}}

\newcommand{\rh}{{{\bf h}}}
\newcommand{\rp}{{{\bf p}}}
\newcommand{\rrho}{{{\pmb \varrho}}}
\newcommand{\ral}{{{\pmb \al}}}

\newcommand{\comp}{{\mathfrak{comp}}}
\newcommand{\ctimes}{{\widehat{\boxtimes}}}
\newcommand{\ptimes}{{\widehat{\otimes}}}
\newcommand{\ptimeslt}{{
{}_{\te{t}}\ptimes
}}
\newcommand{\ptimesrt}{{\ot_{\te{t}} }}
\newcommand{\ttp}[1]{{
    {}_{{#1}}\ptimes
}}
\newcommand{\bigptimes}{{\widehat{\bigotimes}}}
\newcommand{\bigptimeslt}{{
{}_{\te{t}}\bigptimes
}}
\newcommand{\bigptimesrt}{{\bigptimes_{\te{t}} }}
\newcommand{\bigttp}[1]{{
    {}_{{#1}}\bigptimes
}}

\newcommand{\ot}{\otimes}
\newcommand{\Ot}{\bigotimes}

\newcommand{\affva}[1]{V_{\wh\g}\(#1,0\)}
\newcommand{\saffva}[1]{L_{\wh\g}\(#1,0\)}
\newcommand{\saffmod}[1]{L_{\wh\g}\(#1\)}

\newcommand{\otcopies}[2]{\belowit{\underbrace{{#1}\ot \cdots \ot {#1}}}{{#2}\te{-times}}}

\newcommand{\twotcopies}[3]{\belowit{\underbrace{{#1}\wh\ot_{#2} \cdots \wh\ot_{#2} {#1}}}{{#3}\te{-times}}}


\newcommand{\tar}{{\mathcal{DY}}_0\(\mathfrak{gl}_{\ell+1}\)}
\newcommand{\U}{{\mathcal{U}}}
\newcommand{\htar}{\mathcal{DY}_\hbar\(A\)}
\newcommand{\hhtar}{\widetilde{\mathcal{DY}}_\hbar\(A\)}
\newcommand{\htarz}{\mathcal{DY}_0\(\mathfrak{gl}_{\ell+1}\)}
\newcommand{\hhtarz}{\widetilde{\mathcal{DY}}_0\(A\)}
\newcommand{\qhei}{\U_\hbar\left(\hat{\h}\right)}
\newcommand{\n}{{\mathfrak{n}}}
\newcommand{\vac}{{{\bf 1}}}
\newcommand{\vtar}{{{
    \mathcal{V}_{\hbar,\tau}\left(\ell,0\right)
}}}

\newcommand{\qtar}{
    \U_q\(\wh\g_\mu\)}
\newcommand{\rk}{{\bf k}}

\newcommand{\hctvs}[1]{Hausdorff complete linear topological vector space}
\newcommand{\hcta}[1]{Hausdorff complete linear topological algebra}
\newcommand{\ons}[1]{open neighborhood system}
\newcommand{\B}{\mathcal{B}}
\newcommand{\rx}{{\bf x}}
\newcommand{\re}{{\bf e}}
\newcommand{\rphi}{{\boldsymbol{ \phi}}}

\newcommand{\der}{\mathcal D}

\newcommand{\prodlim}{\mathop{\prod_{\longrightarrow}}\limits}

\newcommand{\lp}[1]{\mathcal L(f)}

\newcommand{\cha}{\check a}
\newcommand{\chh}{\check \h}


\makeatletter
\renewcommand{\BibLabel}{%
    \Hy@raisedlink{\hyper@anchorstart{cite.\CurrentBib}\hyper@anchorend}%
    [\thebib]%
}
\@addtoreset{equation}{section}
\def\theequation{\thesection.\arabic{equation}}
\makeatother \makeatletter

\newcommand{\qtarl}{\U^l_q(\wh\g_\mu)}

\title[Quantum lattice VA]{Twisted quantum affine algebras and equivariant $\phi$-coordinated modules for quantum vertex algebras}

\author{Naihuan Jing$^1$}
\address{Department of Mathematics, North Carolina State University, Raleigh, NC 27695,
USA}
\email{jing@math.ncsu.edu}
\thanks{$^1$Partially supported by NSF of China (No.12171303) and Simons Foundation (No.523868).}

\author{Fei Kong}
\address{Key Laboratory of Computing and Stochastic Mathematics (Ministry of Education), School of Mathematics and Statistics, Hunan Normal University, Changsha, China 410081} \email{kongmath@hunnu.edu.cn}

\author{Haisheng Li}
 \address{Department of Mathematical Sciences, Rutgers University, Camden, NJ 08102}
 \email{hli@camden.rutgers.edu}

 \author{Shaobin Tan$^2$}
 \address{Department of Mathematics, Xiamen University,
 Xiamen, China 361005} \email{tans@xmu.edu.cn}
 \thanks{$^2$Partially supported by NSF of China (Nos.12131018 and 12161141001).}

\begin{abstract}
This paper is about establishing a natural connection of quantum affine algebras with quantum vertex algebras.
Among the main results,  we establish $\hbar$-adic versions of the smash product construction of
 quantum vertex algebras and their $\phi$-coordinated quasi modules, which were obtained before in a sequel,
we construct a family of $\hbar$-adic quantum vertex algebras $V_L[[\hbar]]^{\eta}$
as deformations of the lattice vertex algebras $V_L$, and establish
a natural connection between twisted quantum affine algebras of type $A, D, E$
and equivariant $\phi$-coordinated quasi modules for the $\hbar$-adic quantum vertex algebras $V_L[[\hbar]]^{\eta}$
with certain specialized $\eta$.
\end{abstract}

\maketitle

\section{Introduction}
This paper is to study $\hbar$-adic quantum vertex algebras and their $\phi$-coordinated quasi modules,
where the main goal is to establish a natural association of $\hbar$-adic quantum vertex algebras to quantum affine algebras.

In the general field of vertex algebras, one conceptual problem is to develop suitable quantum vertex algebra theories
so that quantum vertex algebras can be naturally associated to certain quantum algebras such as quantum affine algebras.
As one of the fundamental works, Etingof and Kazhdan  (see \cite{EK-qva}) developed
a theory of quantum vertex operator algebras in the sense of formal deformation, where
the underlying space of a quantum vertex operator algebra is a topologically free $\C[[\hbar]]$-module.
As for the vertex algebra-like structure, while the (weak) associativity stays in place,
the usual locality, namely weak commutativity, is replaced with what was called $\mathcal{S}$-locality.

Partly motivated by the work of Etingof and Kazhdan, the third named author of this current paper
conducted a series of studies. During these studies,  various (new) notions were introduced, including nonlocal vertex algebra,
weak quantum vertex algebra, quasi module, $\phi$-coordinated quasi module, and $\hbar$-adic counterparts of these.
While vertex algebras are analogues of commutative and associative algebras,
nonlocal vertex algebras (see \cite{bk}, \cite{li-g1}, \cite{Li-nonlocal}) are analogues of noncommutative associative algebras.
Weak quantum vertex algebras are nonlocal vertex algebras which satisfy Etingof and Kazhdan's $\mathcal{S}$-locality.
In a general picture, the notion of weak quantum vertex algebra (see \cite{Li-nonlocal})
generalizes the notion of vertex super-algebra,
while the classical limits $V/\hbar V$ of $\hbar$-adic (weak) quantum vertex algebras $V$ (see \cite{Li-h-adic})
are (weak) quantum vertex algebras. In this framework, the notion of $\hbar$-adic quantum vertex algebra
slightly generalizes Etingof and Kazhdan's notion of quantum vertex operator algebra where
the classical limit is assumed to be a vertex algebra.

On the representation side, the notion of quasi module was introduced in \cite{Li-new-construction}
in order to associate vertex algebras to Lie algebras of a certain type.
Quasi modules for vertex algebras still enjoy (weak) associativity, whereas locality is replaced with ``quasi locality.''
While this notion generalizes that of module naturally, it has a close connection with twisted module
 (see \cite{Li-quasi-twisted}). Later, mainly in order to associate quantum vertex algebras
 to algebras like quantum affine algebras, a theory of what were called $\phi$-coordinated quasi modules
 for general nonlocal vertex algebras was developed in \cite{Li-phi-coor, Li-G-phi},
where $\phi$ is an associate of the $1$-dimensional additive formal group as we mention next.

It was known that the $1$-dimensional additive formal group (law) $F_{a}(x,y):=x+y\in \C[[x,y]]$
plays (implicitly) an important role  in vertex algebra theory.
(This can be seen from the associativity or the physics operator product expansion.)
 A notion of what was called associate of $F_{a}(x,y)$ was introduced in \cite{Li-phi-coor}, where
an associate is a formal series $\phi(x,z)\in \C((x))[[z]]$ such that
$$\phi(x,0)=x,\    \   \  \phi( \phi(x,y),z)=\phi(x,y+z).$$
It was proved therein that for any $p(x)\in \C((x))$, the formal series $\phi(x,z):=e^{zp(x)\frac{d}{dx}}x$
 is an associate, and every associate is of this form with $p(x)$ uniquely determined.
The essence of \cite{Li-phi-coor} is that for each associate $\phi$,
a theory  of what were called $\phi$-coordinated quasi modules for a general nonlocal vertex algebra was developed,
where a module is simply a $\phi$-coordinated module with $\phi(x,z)$ taken to be
the formal group itself---$F_{a}(x,z)$.

In another direction, in seeking for effective tools to construct desired quantum vertex algebras,
we came to certain classical constructions.
As one of such attempts, a vertex-algebra analogue of the smash product construction
in Hopf algebra theory was obtained in \cite{Li-smash}.
Specifically, a notion of nonlocal vertex bialgebra  and
a notion of (left) $H$-module nonlocal vertex algebra for a nonlocal vertex bialgebra $H$ were introduced, and then
the smash product $V\sharp H$ of an $H$-module nonlocal vertex algebra $V$ with $H$ was constructed.
This construction was used therein to give a variant construction of lattice vertex algebras and their modules.

The study on smash product construction was continued in \cite{JKLT-Defom-va},
where a notion of right $H$-comodule nonlocal vertex algebra
for a nonlocal vertex bialgebra $H$ was introduced.
Let $V$ be an $H$-module nonlocal vertex algebra.
Assume that $V$ is also a right $H$-comodule nonlocal vertex algebra such that
the right $H$-comodule structure map $\rho: V\rightarrow V\ot H$ is ``compatible'' with the $H$-module structure on $V$. Then
a new nonlocal vertex algebra ${\mathfrak{D}}_{\rho}(V)$ with $V$ as the underlying space was obtained.
It was also proved that under a certain condition, ${\mathfrak{D}}_{\rho}(V)$ is a quantum vertex algebra.
On the other hand, it was proved that for any $\phi$-coordinated $V$-module $W$ with a ``compatible'' $\phi$-coordinated $H$-module
structure on $W$, there exists a $\phi$-coordinated ${\mathfrak{D}}_{\rho}(V)$-module structure on $W$.
Furthermore, these general results were applied to lattice vertex algebras $V_L$,
to obtain a family of quantum vertex algebras $V_L^{\eta}$ and their $\phi$-coordinated quasi modules.

In the current paper, we continue the study of \cite{JKLT-Defom-va}, to establish $\hbar$-adic versions
of the basic constructions and results, while one of the main goals is to associate $\hbar$-adic quantum vertex algebras
to twisted (and untwisted) quantum affine algebras.
Among the main results, we construct a family of $\hbar$-adic quantum vertex algebras $V_L[[\hbar]]^{\eta}$
as deformations of the lattice vertex algebras $V_L$ and establish
a natural connection between twisted quantum affine algebras of type $A, D, E$
and equivariant $\phi$-coordinated quasi modules for $V_L[[\hbar]]^{\eta}$
with suitably defined $\eta$.

We mention that there are several closely related studies using different approaches.
In the pioneer work \cite{EK-qva}, Etingof and Kazhdan constructed quantum vertex operator algebras
 as formal deformations of type $A$ universal affine vertex algebras
by using the $R$-matrix type relations in \cite{RS-RTT}.
Later, Butorac, Jing and Ko\v{z}i\'{c}  (see \cite{BJK-qva-BCD}) extended Etingof-Kazhdan's construction to
types $B$, $C$ and $D$ rational $R$-matrices, obtaining $\hbar$-adic quantum vertex algebras.
It was proved therein that modules for these $\hbar$-adic quantum vertex algebras
are in one-to-one correspondence with restricted modules for the corresponding Yangian doubles.
Recently, based on the $R$-matrix presentation of quantum affine algebras of classical types
(see \cite{DF-qaff-RTT-Dr}, \cite{JLM-qaff-RTT-Dr-BD}, \cite{JLM-qaff-RTT-Dr-C}),
Ko\v{z}i\'{c} in \cite{Kozic-qva-tri-A, K-qva-phi-mod-BCD} constructed $\hbar$-adic quantum vertex algebras
by using trigonometric $R$-matrices and established a one-to-one correspondence
between $\phi$-coordinated modules for the $\hbar$-adic quantum vertex algebras
and restricted modules for the quantum affine algebras.

Now, we continue the introduction to provide some more detailed information.
First, we give more information about the basic concepts which are somewhat less well known.
By definition, a nonlocal vertex bialgebra is a nonlocal vertex algebra $V$
equipped with a classical coalgebra structure on $V$ such that
the comultiplication $\Delta$ and the counit $\epsilon$ are homomorphisms of nonlocal vertex algebras.
For a nonlocal vertex bialgebra $H$, a (left) $H$-module nonlocal vertex algebra is a nonlocal vertex algebra $V$
equipped with a module structure for $H$ viewed as a nonlocal vertex algebra, satisfying certain compatibility conditions.
On the other hand,  a right $H$-comodule nonlocal vertex algebra
is a nonlocal vertex algebra $V$ equipped with a comodue structure
$\rho: V\rightarrow V\otimes H$ for $H$ viewed as a coalgebra such that
$\rho$ is a homomorphism of nonlocal vertex algebras.

Next, we mention lattice vertex algebras which play a major role in this paper.
Let $L$ be a non-degenerate even lattice and let $\varepsilon(\cdot,\cdot): L\times L\rightarrow \C^{\times}$
be a $2$-cocycle satisfying a certain normalizing condition.
Denote by $\C_{\varepsilon}[L]$ the $\varepsilon$-twisted group algebra of $L$, which
has a designated basis $\{ e_{\alpha}\ |\ \alpha\in L\}$ with
$$e_{\alpha}\cdot e_{\beta}=\varepsilon(\alpha,\beta)e_{\alpha+\beta}\quad \text{ for }\alpha,\beta\in L.$$
Set $\h=\C\otimes_{\Z}L$ and then set $\widehat{\h}^{-}=\h\otimes t^{-1}\C[t^{-1}]$.
The underlying vector space of the lattice vertex algebra $V_L$ is $S(\widehat{\h}^{-})\otimes \C_{\varepsilon}[L]$.
A well known fact is that when $L$ is taken to be the root lattice of
a finite-dimensional simple Lie algebra $\mathfrak{g}$ of type $A$, $D$, or $E$,
$V_L$ affords a (level one) basic module structure for the untwisted affine Kac-Moody algebra $\widehat{\mathfrak{g}}$.

On the other hand, consider $B_L:=S(\widehat{\h}^{-})\otimes \C[L]$, which is a commutative and associative algebra,
where $\C[L]$ denotes the group algebra of $L$. Note that $B_L$ is naturally a Hopf algebra, in particular, a bialgebra.
The algebra $B_L$ admits a derivation $\partial$ uniquely determined by
$$\partial a(-n)=na(-n-1),\quad \partial (e^{\alpha})=\alpha(-1)e^{\alpha} \quad \text{ for }a\in \h,\ n\in \Z_{+},\ \alpha\in L,$$
where $a(-n)=a\otimes t^{-n}$.
Then $B_L$ becomes a commutative vertex algebra through Borcherds' construction.
This makes $B_L$ a commutative and cocommutative vertex bialgebra, which was exploited in \cite{Li-smash} and \cite{JKLT-Defom-va}.

Let $\pi: V_L\rightarrow B_L$ be the natural vector space isomorphism (with $\pi(e_{\alpha})=e^{\alpha}$ for $\alpha\in L$).
Set $\rho=(\pi^{-1}\otimes 1)\Delta\circ \pi$, a linear map from $V_L$ to $V_L\otimes  B_L$,
where $\Delta: B_L\rightarrow B_L\otimes B_L$ is the comultiplication of $B_L$.
It was proved in \cite{JKLT-Defom-va} that $V_L$ with  $\rho: V_L\rightarrow V_L\otimes  B_L$ is a right $B_L$-comodule vertex algebra.
On the other hand,
for any linear map $\eta(\cdot,x): \h\rightarrow \h\otimes x\C[[x]]$,  a $B_L$-module structure
$Y_M^{\eta}(\cdot,x)$ on $V_L$ was obtained, which makes $V_L$ a $B_L$-module vertex algebra.
Then we obtained a quantum vertex algebra ${\mathfrak{D}}_{\rho}^{\eta}(V_L)$.
Assuming that $\eta$ satisfies certain conditions, from a $\phi$-coordinated quasi $V_L$-module $W$,
 we obtained a $\phi$-coordinated quasi ${\mathfrak{D}}_{\rho}^{\eta}(V_L)$-module structure on $W$.

In this current paper,  we formulate $\hbar$-adic versions of the aforementioned results
and then make use of them to study twisted quantum affine algebras.
Note that $V_L[[\hbar]]$ is an $\hbar$-adic vertex algebra while $B_L[[\hbar]]$
is a commutative $\hbar$-adic vertex bialgebra.
Assume $\eta(\cdot,x):\h \rightarrow \h\otimes \C((x))[[\hbar]]$ is a linear map such that $\eta_0(a,x)\in \h\ot x\C[[x]]$ for $a\in \h$,
where $\eta(a,x)=\sum_{n\ge 0}\eta_n(a,x)\hbar^n$ for $a\in \h$.
Then we obtain an $\hbar$-adic quantum vertex algebra $V_L[[\hbar]]^{\eta}$ with $V_L[[\hbar]]$ as its underlying space.
For our consideration on twisted quantum affine algebras, we assume that $L$ is the root lattice of a simple Lie algebra
$\mathfrak{g}$ of type $A, D$, or $E$, and let $\mu$ be a symmetry of the Dynkin diagram of $\mathfrak{g}$.
The twisted quantum affine algebra $\qtar$ associated to $\mu$ was introduced by Drinfeld (see \cite{Dr-new}, cf. \cite{J-inv}).
By using $\mu$ and some structure data of $\qtar$, we then define a particular linear map
  $\eta(\cdot,x):\h \rightarrow \h\otimes \C((x))[[\hbar]]$.
Finally, we prove that every ``equivariant'' $\phi$-coordinated quasi $V_L[[\hbar]]^{\eta}$-module
is naturally a module for the twisted quantum affine algebra $\qtar$.

This paper is organized as follows: Section 2 is preliminary; We recall some necessary notions and basic results on
nonlocal vertex algebras, (weak) quantum vertex algebras, and $\phi$-coordinated quasi modules.
In Section 3, we study $\hbar$-adic (weak) quantum vertex algebras and their $\phi$-coordinated quasi modules.
In Section 4, we study smash product $\hbar$-adic nonlocal vertex algebras and $\phi$-coordinated quasi modules.
In Section 5, we first recall the lattice vertex algebra $V_L$ and vertex bialgebra $B_L$,
and then study $\hbar$-adic quantum vertex algebras $V[[\hbar]]^{\eta}$.
In Section 6, we recall the twisted quantum affine algebra $\qtar$ and give a variant presentation, and then state the main result
which states that every equivariant $\phi$-coordinated quasi module for $V[[\hbar]]^{\eta}$
with specially defined $\eta$ is naturally a  $\qtar$-module.
Section 7 is devoted to the proof of the main result.

In this paper, we use the formal variable notations and conventions as established in \cite{FLM2} and \cite{fhl}.
In addition, we use $\Z_+$ and $\N$ for the sets of positive integers and nonnegative integers, respectively.
For a commutative ring $R$,  $R(x), R(x_1,x_2)$ denote the rings of rational functions, and $R^{\times}$ denotes the set of invertible elements.
For a variable $z$, we denote by $\partial_{z}$ the partial differential operator $\frac{\partial}{\partial z}$.

\section{Nonlocal vertex algebras and their $\phi$-coordinated quasi modules}

This section is preliminary, where we recall the basic notions and results on nonlocal vertex algebras
and their $\phi$-coordinated quasi modules mostly from  \cite{Li-nonlocal, Li-phi-coor}.
For this paper, the scalar field is assumed to be the complex number field $\C$ unless it is stated otherwise.

\subsection{Nonlocal vertex algebras and quantum vertex algebras}
We start with the notion of nonlocal vertex algebra.

\begin{de}\label{def-nlva}
A {\em nonlocal vertex algebra} is a vector space $V$ equipped with a linear map
\begin{eqnarray*}
Y(\cdot,x): &&  V\rightarrow (\te{End} V)[[x,x^{-1}]]\nonumber\\
&&v\mapsto Y(v,x)=\sum_{n\in \Z}v_nx^{-n-1}\  \  (\mbox{where }v_n\in \te{End }V)
\end{eqnarray*}
and a distinguished vector ${\bf 1}\in V$, satisfying the conditions that
\begin{align}
Y(u,x)v\in V((x))\   \   \mbox{ for }u,v\in V,
\end{align}
\begin{align}
Y({\bf 1},x)v=v,\   \   \   \   Y(v,x){\bf 1}\in V[[x]]\   \mbox{ and }\  \lim_{x\rightarrow 0}Y(v,x){\bf 1}=v\   \   \mbox{ for }v\in V,
\end{align}
and that for any $u,v\in V$, there exists a nonnegative integer $k$ such that
\begin{align}\label{nlva-compatibilty}
(x_1-x_2)^{k}Y(u,x_1)Y(v,x_2)\in \Hom(V,V((x_1,x_2)))
\end{align}
and
\begin{align}\label{nlva-associativity}
\left((x_1-x_2)^{k}Y(u,x_1)Y(v,x_2)\right)|_{x_1=x_2+x_0}=
x_0^{k}Y(Y(u,x_0)v,x_2).
\end{align}
\end{de}

From  \cite{JKLT-G-phi-mod} (cf. \cite{DLMi}, \cite{LTW1}) we have:

\begin{lem}\label{two-definitions}
Let $V$ be a nonlocal vertex algebra. Then the following
 {\em weak associativity} holds: For any $u,v,w\in V$, there exists $l\in \N$ such that
 \begin{align}\label{weak-assoc}
 (x_0+x_2)^{l}Y(u,x_0+x_2)Y(v,x_2)w= (x_0+x_2)^{l}Y(Y(u,x_0)v,x_2)w.
\end{align}
\end{lem}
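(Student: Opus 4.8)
The plan is to derive weak associativity \eqref{weak-assoc} from the axioms \eqref{nlva-compatibilty} and \eqref{nlva-associativity} together with the truncation condition $Y(u,x)v\in V((x))$. Fix $u,v,w\in V$. First I would use \eqref{nlva-compatibilty} to choose $k\in\N$ such that $(x_1-x_2)^k Y(u,x_1)Y(v,x_2)w\in V((x_1,x_2))$, and large enough that \eqref{nlva-associativity} also holds for this $k$ when applied to $w$. The point of the substitution $x_1=x_2+x_0$ in \eqref{nlva-associativity} is that it is a legitimate operation precisely because $(x_1-x_2)^k Y(u,x_1)Y(v,x_2)w$ lies in $V((x_1,x_2))$ (only finitely many negative powers of each variable), so after multiplying by the extra factor needed to clear denominators, the expansion $(x_2+x_0)^{-n}=\sum_{j\ge0}\binom{-n}{j}x_2^{-n-j}x_0^j$ makes sense. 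Thus \eqref{nlva-associativity} gives
\begin{align*}
x_0^{k}Y(Y(u,x_0)v,x_2)w=\left.\left((x_1-x_2)^{k}Y(u,x_1)Y(v,x_2)w\right)\right|_{x_1=x_2+x_0}
=x_0^{k}\,Y(u,x_2+x_0)Y(v,x_2)w,
\end{align*}
where the last equality is just the substitution $x_1\mapsto x_2+x_0$ applied to $(x_1-x_2)^k=x_0^k$ and to the product of series.

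Next I would cancel the common factor $x_0^k$. This is the step that requires a small argument: one needs that both sides, viewed as elements of a suitable space of formal series in $x_0,x_2$, have no nonzero entries in negative-or-large-negative powers that would obstruct dividing by $x_0^k$. Since $Y(u,x_0)v\in V((x_0))$, the left side $x_0^k Y(Y(u,x_0)v,x_2)w$ is a well-defined element of $V((x_2))((x_0))$ (finitely many negative powers of $x_0$), and dividing by $x_0^k$ stays in $V((x_2))((x_0))$. Similarly the right side, after the substitution, lies in the same space. Hence from $x_0^k A = x_0^k B$ in $V((x_2))((x_0))$ we conclude $A=B$, i.e.
\begin{align*}
Y(Y(u,x_0)v,x_2)w = Y(u,x_2+x_0)Y(v,x_2)w,
\end{align*}
an identity in $V((x_2))((x_0))$.

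Finally I would multiply both sides by $(x_0+x_2)^l$ for $l\in\N$ chosen large enough to make the right-hand side $(x_0+x_2)^l Y(u,x_2+x_0)Y(v,x_2)w$ actually lie in $V((x_2))[[x_0]]$ — i.e. to kill the negative powers of $x_0$ coming from the expansion of $Y(u,x_1)$ in the region $x_1=x_2+x_0$. Concretely, if $Y(u,x_1)$ applied to $Y(v,x_2)w$ produces negative powers of $x_1$ only down to $x_1^{-N}$ after using \eqref{nlva-compatibilty}, then $l=k+N$ (or any sufficiently large $l$) works; here one uses that $(x_1-x_2)^kY(u,x_1)Y(v,x_2)w\in V((x_1,x_2))$ controls how negative the $x_1$-powers get. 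With that choice, $(x_0+x_2)^lY(u,x_2+x_0)Y(v,x_2)w$ equals $(x_0+x_2)^lY(u,x_0+x_2)Y(v,x_2)w$ as an honest element of $V((x_2))[[x_0]]$ (the two notations $x_2+x_0$ and $x_0+x_2$ agree once we are in $V((x_2))[[x_0]]$), giving \eqref{weak-assoc}.

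I expect the only genuine subtlety to be the bookkeeping in the cancellation/multiplication steps: keeping careful track of which space of formal series each expression lives in, so that "substitute $x_1=x_2+x_0$", "divide by $x_0^k$", and "multiply by $(x_0+x_2)^l$" are each legitimate and reversible operations. This is exactly the kind of delooping argument that appears in \cite{li-g1}; alternatively, one could simply invoke the cited references \cite{JKLT-G-phi-mod}, \cite{DLMi}, \cite{LTW1} where this equivalence (weak associativity $\Leftrightarrow$ the compatibility/associativity pair) is established in detail.
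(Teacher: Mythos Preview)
The paper does not give a proof of this lemma; it simply cites \cite{JKLT-G-phi-mod} (cf.\ \cite{DLMi}, \cite{LTW1}). Your sketch has the right ingredients, but there is a genuine gap in the formal-series bookkeeping, precisely at the step you flag as ``the only genuine subtlety.''

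The problem is the claimed equality
\[
\bigl((x_1-x_2)^{k}Y(u,x_1)Y(v,x_2)w\bigr)\big|_{x_1=x_2+x_0}
=x_0^{k}\,Y(u,x_2+x_0)Y(v,x_2)w.
\]
You cannot factor the substitution like this. The whole point of \eqref{nlva-compatibilty} is that the \emph{product} $(x_1-x_2)^kY(u,x_1)Y(v,x_2)w$ lies in $V((x_1,x_2))$ and therefore admits the substitution $x_1=x_2+x_0$; the factor $Y(u,x_1)Y(v,x_2)w$ by itself does not lie in $V((x_1,x_2))$, and the expression $Y(u,x_2+x_0)Y(v,x_2)w$ (expanding negative $x_1$-powers in nonnegative powers of $x_0$) is not a well-defined formal series in general. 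So the identity you then ``cancel $x_0^k$'' from is not established, and the subsequent step of passing from $Y(u,x_2+x_0)$ to $Y(u,x_0+x_2)$ has nothing to stand on.

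The clean route, which is what the cited references do, is to substitute in the \emph{other} direction first. One has $Y(u,x_1)Y(v,x_2)w\in V((x_1))((x_2))$, and the substitution $x_1=x_0+x_2$ (nonnegative powers of $x_2$) is a well-defined map on this space that respects multiplication by polynomials; applying it to $(x_1-x_2)^kY(u,x_1)Y(v,x_2)w=:F(x_1,x_2)$ gives
\[
x_0^{k}\,Y(u,x_0+x_2)Y(v,x_2)w=F(x_0+x_2,x_2).
\]
Now pick $l$ so that $x_1^{l}F(x_1,x_2)\in V[[x_1]]((x_2))$ (possible since $F\in V((x_1,x_2))$). With only nonnegative $x_1$-powers present, the two substitutions $x_1=x_0+x_2$ and $x_1=x_2+x_0$ agree, so
\[
(x_0+x_2)^{l}x_0^{k}Y(u,x_0+x_2)Y(v,x_2)w
=(x_2+x_0)^{l}F(x_2+x_0,x_2)
=(x_0+x_2)^{l}x_0^{k}Y(Y(u,x_0)v,x_2)w
\]
by \eqref{nlva-associativity}, and cancelling $x_0^{k}$ gives \eqref{weak-assoc}. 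Your outline has all the pieces; they just need to be applied in this order.
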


\begin{rem}\label{nlva-definitions}
{\em Note that a notion of nonlocal vertex algebra
was defined in  \cite{Li-nonlocal} in terms of weak associativity,
which is the same as the notion of axiomatic $G_1$-vertex algebra introduced in \cite{li-g1} and
also the same as the notion of field algebra introduced in \cite{bk}.
In view of Lemma \ref{two-definitions}, the notion of nonlocal vertex algebra in the sense of
Definition \ref{def-nlva} is theoretically stronger than the other notions. }
\end{rem}

Let $V$ be a nonlocal vertex algebra. Define a linear operator $\der$ by $\der (v)= v_{-2}\vac$ for $v\in V$.
Then
\begin{align}
[\der, Y(v,x)]=Y(\der (v),x)=\frac{d}{dx}Y(v,x)\   \   \te{for }v\in V.
\end{align}

The following notion singles out a family of nonlocal vertex algebras (see \cite{Li-nonlocal}):

\begin{de}\label{weak-qva}
A {\em weak quantum vertex algebra} is a nonlocal vertex algebra $V$
satisfying the following {\em ${\mathcal{S}}$-locality:}  For any $u,v\in V$, there exist
$$u^{(i)},v^{(i)}\in V,\ f_i(x)\in \C((x)) \  \   (i=1,\dots,r)$$
and a nonnegative integer $k$ such that
\begin{eqnarray}
(x-z)^{k}Y(u,x)Y(v,z)=(x-z)^{k}\sum_{i=1}^{r}f_{i}(z-x)Y(v^{(i)},z)Y(u^{(i)},x).
\end{eqnarray}
\end{de}

The following was proved in \cite{Li-nonlocal}:

\begin{prop}\label{S-Jacobi-def}
A weak quantum vertex algebra can be defined equivalently by replacing the conditions (\ref{nlva-compatibilty})
and (\ref{nlva-associativity}) in Definition \ref{def-nlva} (for  a nonlocal vertex algebra) with the condition
that for any $u,v\in V$, there exist
$$u^{(i)},v^{(i)}\in V,\ f_i(x)\in \C((x)) \  \   (i=1,\dots,r)$$
 such that the following {\em ${\mathcal{S}}$-Jacobi identity} holds:
 \begin{eqnarray} \label{eq:S-Jacobi-id}
 &&x_0^{-1}\delta\left(\frac{x_1-x_2}{x_0}\right)Y(u,x_1)Y(v,x_2)\\
 &&\  \   \   \   -x_0^{-1}\delta\left(\frac{x_2-x_1}{-x_0}\right)\sum_{i=1}^{r}f_i(x_2-x_1)Y(v^{(i)},x_2)Y(u^{(i)},x_1)\nonumber\\
 &&= x_2^{-1}\delta\left(\frac{x_1-x_0}{x_2}\right)Y(Y(u,x_0)v,x_2).\nonumber
 \end{eqnarray}
\end{prop}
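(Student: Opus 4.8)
The plan is to prove the claimed equivalence of axiom systems --- that, for a triple $(V,Y,\vac)$ satisfying the truncation condition (2.1) and the vacuum condition (2.2), the compatibility and associativity conditions (\ref{nlva-compatibilty})--(\ref{nlva-associativity}) together with $\mathcal{S}$-locality are equivalent to the single $\mathcal{S}$-Jacobi identity (\ref{eq:S-Jacobi-id}) --- by the standard formal $\delta$-function calculus of \cite{fhl}, adapted as in \cite{Li-nonlocal}. The only inputs needed are the fundamental identity $x_0^{-1}\delta\left(\frac{x_1-x_2}{x_0}\right)-x_0^{-1}\delta\left(\frac{x_2-x_1}{-x_0}\right)=x_2^{-1}\delta\left(\frac{x_1-x_0}{x_2}\right)$, the elementary rule $(x_1-x_2)^{k}\,x_0^{-1-k}\delta\left(\frac{x_1-x_2}{x_0}\right)=x_0^{-1}\delta\left(\frac{x_1-x_2}{x_0}\right)$ (and its counterpart with $x_2-x_1$ and $-x_0$), and the substitution property by which $x_2^{-1}\delta\left(\frac{x_1-x_0}{x_2}\right)$ permits the replacement $x_1\mapsto x_0+x_2$ inside any factor valued in $\Hom(V,V((x_1,x_2)))$.

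For the forward implication (nonlocal vertex algebra plus $\mathcal{S}$-locality $\Rightarrow$ $\mathcal{S}$-Jacobi), I would fix $u,v\in V$ and choose a single $k\in\N$ witnessing both the compatibility condition (\ref{nlva-compatibilty}) and the $\mathcal{S}$-locality of Definition \ref{weak-qva}, so that $G(x_1,x_2):=(x_1-x_2)^{k}Y(u,x_1)Y(v,x_2)$ lies in $\Hom(V,V((x_1,x_2)))$ and at the same time equals $(x_1-x_2)^{k}\sum_{i}f_i(x_2-x_1)Y(v^{(i)},x_2)Y(u^{(i)},x_1)$. Multiplying the fundamental identity by $x_0^{-k}G(x_1,x_2)$ and simplifying, the first term becomes $x_0^{-1}\delta\left(\frac{x_1-x_2}{x_0}\right)Y(u,x_1)Y(v,x_2)$; the second, once the $\mathcal{S}$-locality form of $G$ is inserted, becomes $x_0^{-1}\delta\left(\frac{x_2-x_1}{-x_0}\right)\sum_i f_i(x_2-x_1)Y(v^{(i)},x_2)Y(u^{(i)},x_1)$; and the third, after the substitution $x_1\mapsto x_0+x_2$ and the associativity axiom (\ref{nlva-associativity}) in the form $G(x_0+x_2,x_2)=x_0^{k}Y(Y(u,x_0)v,x_2)$, becomes $x_2^{-1}\delta\left(\frac{x_1-x_0}{x_2}\right)Y(Y(u,x_0)v,x_2)$. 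This is precisely the $\mathcal{S}$-Jacobi identity (\ref{eq:S-Jacobi-id}).

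For the converse, assuming (\ref{eq:S-Jacobi-id}), I would fix $u,v\in V$ and use (2.1) to pick $N\in\N$ with $x_0^{N}Y(u,x_0)v\in V[[x_0]]$. For $k\ge N$, multiplying (\ref{eq:S-Jacobi-id}) by $(x_1-x_2)^{k}$ turns its right-hand side into a series involving only nonnegative powers of $x_0$, so $\Res_{x_0}$ annihilates it; applying $\Res_{x_0}$ to the resulting identity then gives $(x_1-x_2)^{k}Y(u,x_1)Y(v,x_2)=(x_1-x_2)^{k}\sum_i f_i(x_2-x_1)Y(v^{(i)},x_2)Y(u^{(i)},x_1)$, which is the $\mathcal{S}$-locality of Definition \ref{weak-qva}. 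Since its left-hand side lies in $\Hom(V,V((x_1))((x_2)))$ and its right-hand side in $\Hom(V,V((x_2))((x_1)))$, the series $G(x_1,x_2):=(x_1-x_2)^{k}Y(u,x_1)Y(v,x_2)$ lies in the intersection $\Hom(V,V((x_1,x_2)))$, which is (\ref{nlva-compatibilty}). For (\ref{nlva-associativity}): both left-hand terms of (\ref{eq:S-Jacobi-id}) can be rewritten in terms of $G$ --- the first directly, the second by means of the $\mathcal{S}$-locality just obtained --- whereupon the fundamental identity collapses their difference to $x_0^{-k}x_2^{-1}\delta\left(\frac{x_1-x_0}{x_2}\right)G(x_1,x_2)$; equating this with the right-hand side of (\ref{eq:S-Jacobi-id}) and applying $\Res_{x_1}$, using the substitution property of $x_2^{-1}\delta\left(\frac{x_1-x_0}{x_2}\right)$ together with $G\in\Hom(V,V((x_1,x_2)))$, produces $((x_1-x_2)^{k}Y(u,x_1)Y(v,x_2))|_{x_1=x_2+x_0}=x_0^{k}Y(Y(u,x_0)v,x_2)$, i.e.\ (\ref{nlva-associativity}). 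Hence $V$ is a nonlocal vertex algebra satisfying $\mathcal{S}$-locality, that is, a weak quantum vertex algebra, and the two formulations coincide.

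Each individual step is a routine $\delta$-function manipulation; the part that will require care --- and, in that sense, the main obstacle --- is the bookkeeping of expansion conventions: keeping track of the direction in which each rational function $f_i(x_2-x_1)$, and each negative power of a binomial occurring inside a $\delta$-function, is to be expanded, verifying that these choices are mutually consistent across all three $\delta$-terms, and confirming that one nonnegative integer $k$ can be chosen making every manipulation above simultaneously legitimate --- which is exactly where the truncation condition (2.1) is used. Beyond that, the computation runs in complete parallel with the derivation of the Jacobi identity for vertex (super)algebras in \cite{fhl} and with the original argument for weak quantum vertex algebras in \cite{Li-nonlocal}.
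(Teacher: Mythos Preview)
Your proposal is correct and follows the standard $\delta$-function calculus argument from \cite{Li-nonlocal}; the paper itself does not give an independent proof but simply attributes the result to that reference, so your approach coincides with what the paper implicitly invokes.
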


A {\em rational quantum Yang-Baxter operator} on a vector space $U$ is a linear map
 $$\cS(x):  U\ot U\rightarrow U\ot U\ot \C((x))$$
 such that
 \begin{eqnarray}
   {\mathcal{S}}^{12}(x)\cS^{13}(x+z)\cS^{23}(z)=\cS^{23}(z)\cS^{13}(x+z)\cS^{12}(x)
 \end{eqnarray}
 (the {\em quantum Yang-Baxter equation}).
Furthermore, $\cS(x)$ is said to be {\em unitary} if
\begin{align}
  \cS^{21}(x)\cS^{12}(-x)=1,
\end{align}
where $\cS^{21}(x)=\sigma \cS(x)\sigma$ with $\sigma$ the flip operator on $U\ot U$.

For a nonlocal vertex algebra $V$, follow \cite{EK-qva} to define a linear map
\begin{align}\label{def-Y(x)}
Y(x):\   V\ot V\rightarrow V((x))
\end{align}
by $Y(x)(u\ot v)=Y(u,x)v$ for $u,v\in V$.

The following notion is essentially due to \cite{EK-qva} (cf.  \cite{Li-nonlocal}):

\begin{de}
 A {\em quantum vertex algebra} is a weak quantum vertex algebra $V$ equipped with
a unitary rational quantum Yang-Baxter operator $\cS(x)$ on $V$, satisfying the following conditions for $u,v\in V$:
\begin{align}
 & \left[\mathcal D\otimes 1,{\mathcal{S}}(x)\right]=-\frac{d}{dx}{\mathcal{S}}(x),\\
 & Y(u,x)v=e^{x\D}Y(-x)\cS(-x)(v\ot u)\quad  (\text{the {\em $\cS$-skew symmetry}}),\label{S-skew-symmetry}\\
&  \cS(x)\(Y(z)\otimes 1\)=\(Y(z)\otimes 1\)\cS^{23}(x)\cS^{13}(x+z)\quad (\text{the {\em hexagon identity}}).
\end{align}
\end{de}

It is known (see \cite{EK-qva}) that the $\cS$-skew symmetry is equivalent to the {\em $\cS$-locality:}
 For any $u,v\in V$, there exists $k\in \N$ such that
\begin{align*}
  (x-z)^kY(x)\(1\otimes Y(z)\)(u\otimes v\otimes w)
=(x-z)^kY(z)\(1\otimes Y(x)\)\(\cS(z-x)(v\otimes u)\otimes w\)
\end{align*}
for all $w\in V$.


A nonlocal vertex algebra $V$ is said to be {\em non-degenerate} (see \cite{EK-qva})
  if for every positive integer $n$, the linear map
 $$Z_n: \C((x_1))\cdots ((x_n))\otimes V^{\otimes n}\rightarrow V((x_1))\cdots ((x_n))),$$
 defined by
 $$Z_n(f\ot v^{(1)}\otimes \cdots \otimes v^{(n)})=f Y(v^{(1)},x_1)\cdots Y(v^{(n)},x_n){\bf 1}$$
for $f\in \C((x_1))\cdots ((x_n)),\  v^{(1)},\dots, v^{(n)}\in V$, is injective.

 We have (see \cite{EK-qva}, \cite{Li-nonlocal}):

 \begin{prop}\label{nondeg-wqva}
 Let $V$ be a weak quantum vertex algebra which is non-degenerate. Then
 there exists a linear map $\cS(x): V\ot V\rightarrow V\ot V\ot \C((x))$,
 which  is uniquely determined by the $\cS$-locality or equivalently by the $\cS$-symmetry.
 Furthermore, $V$ together with $\cS(x)$ is a quantum vertex algebra.
\end{prop}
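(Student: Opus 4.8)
The plan is to construct $\cS(x)$ directly out of the $\cS$-locality data, using non-degeneracy to see that the construction is well defined and independent of choices. First I would fix $u,v\in V$ and invoke the defining $\cS$-locality of a weak quantum vertex algebra: there exist $u^{(i)},v^{(i)}\in V$ and $f_i(x)\in \C((x))$ ($i=1,\dots,r$) and $k\in \N$ such that
\begin{align*}
(x-z)^{k}Y(u,x)Y(v,z)=(x-z)^{k}\sum_{i=1}^{r}f_{i}(z-x)Y(v^{(i)},z)Y(u^{(i)},x).
\end{align*}
Applying both sides to $\vac$ and taking $\lim_{z\to 0}$ (legitimate after clearing the pole, using $Y(v,z)\vac\in V[[z]]$ and $Y(u^{(i)},x)\vac\in V[[x]]$), one extracts an identity in $V((x))$; more usefully, applying both sides to $w\in V$ and using the creation property together with weak associativity (Lemma \ref{two-definitions}) lets one compare coefficients. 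The idea is to define $\cS(x)(v\ot u):=\sum_{i=1}^{r}f_i(x)\,(u^{(i)}\ot v^{(i)})\in V\ot V\ot \C((x))$, and the first real task is to show this element of $V\ot V\ot\C((x))$ does not depend on the particular $\cS$-locality relation chosen. This is exactly where non-degeneracy enters: if two collections of data $\{u^{(i)},v^{(i)},f_i\}$ and $\{\tilde u^{(j)},\tilde v^{(j)},\tilde f_j\}$ both witness $\cS$-locality for the pair $(u,v)$, then after multiplying by a common power $(x-z)^N$ the two expressions $\sum_i f_i(z-x)Y(v^{(i)},z)Y(u^{(i)},x)$ and $\sum_j \tilde f_j(z-x)Y(\tilde v^{(j)},z)Y(\tilde u^{(j)},x)$ agree as operators; hitting $\vac$ and applying $Z_2$-type injectivity (the $n=2$ case of non-degeneracy, possibly after a change of variables $z\mapsto z, x\mapsto x$ and renaming) forces $\sum_i f_i(x)(u^{(i)}\ot v^{(i)})=\sum_j \tilde f_j(x)(\tilde u^{(j)}\ot \tilde v^{(j)})$ in $V\ot V\ot \C((x))$. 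Bilinearity of $\cS(x)$ in $u$ and $v$ is then automatic from uniqueness.

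Next I would verify the three structural axioms. For the $\cS$-skew symmetry \eqref{S-skew-symmetry}, the point is that the $\cS$-locality relation, after moving all factors to one side and using the creation property plus weak associativity, is equivalent to the skew-symmetry formula $Y(u,x)v=e^{x\D}Y(-x)\cS(-x)(v\ot u)$; this is the standard argument (as in \cite{EK-qva}) that $\cS$-locality and $\cS$-skew symmetry are interchangeable, now run in reverse to produce the formula once $\cS$ is known to exist. The $\D$-bracket relation $[\D\ot 1,\cS(x)]=-\frac{d}{dx}\cS(x)$ follows by applying $\D$ to the $\cS$-locality relation and using $[\D,Y(v,x)]=\frac{d}{dx}Y(v,x)$ together with the uniqueness just established; differentiating in the spectral parameter matches the $f_i'$ terms. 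Unitarity $\cS^{21}(x)\cS^{12}(-x)=1$ comes from applying $\cS$-locality twice (swap $(u,v)$ and then swap back), each time replacing the operator product by its $\cS$-transform, and invoking uniqueness to conclude that the composite transformation is the identity. The hexagon identity is proved similarly: expand $Y(u,x_1)Y(v,x_2)Y(w,x_3)$, push $Y(u,x_1)$ past $Y(v,x_2)$ and then past $Y(w,x_3)$ using $\cS$-locality twice, compare with pushing through the fused field $Y(Y(v,x_2')w,x_3)$ via weak associativity, and read off the cocycle-type identity $\cS(x)(Y(z)\ot 1)=(Y(z)\ot 1)\cS^{23}(x)\cS^{13}(x+z)$; non-degeneracy (the $n=3$ case of $Z_3$) guarantees that matching the two sides after applying to $\vac$ is enough. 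Finally, the quantum Yang--Baxter equation for $\cS$ is a formal consequence of unitarity and the hexagon identity, or can be derived from associativity of the triple product in the same spirit.

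I expect the main obstacle to be the well-definedness step, i.e. using non-degeneracy to pin down $\cS(x)$ independently of the $\cS$-locality data: one must carefully arrange the variables so that the relevant map $Z_2$ (resp. $Z_n$) applies, handle the fact that $\cS$-locality only gives an equality after multiplication by $(x-z)^k$ for some unspecified $k$ (so one works in a localization and must check the extracted identity really lives in $V\ot V\ot\C((x))$ rather than a larger space), and track the substitution $z\mapsto x$, $x\mapsto$ generic carefully enough that $f_i(z-x)$ becomes $f_i(x)$ with the correct sign. Once $\cS(x)$ is unambiguously defined, the verification of the axioms is essentially a transcription of the corresponding arguments in \cite{EK-qva} and \cite{Li-nonlocal}, with non-degeneracy supplying the cancellation at each stage; I would cite those sources for the routine parts and give full detail only for the uniqueness argument.
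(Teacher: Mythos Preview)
The paper does not supply its own proof of this proposition: it is stated with the attribution ``We have (see \cite{EK-qva}, \cite{Li-nonlocal})'' and no argument is given. Your outline is essentially the standard proof found in those references---define $\cS(x)$ from the $\cS$-locality data, use non-degeneracy (injectivity of the maps $Z_n$) to show the definition is independent of choices, and then verify the shift condition, unitarity, hexagon identity, and QYBE by repeatedly appealing to that uniqueness---so there is nothing to compare beyond noting that you have correctly reconstructed the cited argument. One small caution: in your definition you wrote $\cS(x)(v\ot u)=\sum_i f_i(x)\,(u^{(i)}\ot v^{(i)})$, but with the conventions of the paper (see the map $Y(x)$ in \eqref{def-Y(x)} and the form of the $\cS$-skew symmetry \eqref{S-skew-symmetry}) the tensor factors should appear as $v^{(i)}\ot u^{(i)}$; this is only a bookkeeping slip and does not affect the structure of the argument.
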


The following notion plays an important role in this paper (see \cite{Li-new-construction}, \cite{JKLT-G-phi-mod}):

\begin{de}\label{def-chi-G-va}
Let $G$ be a group with a linear character $\chi: G\rightarrow \C^{\times}$.
A {\em $(G,\chi)$-module nonlocal vertex algebra} is a nonlocal vertex algebra $V$ equipped with
a group representation $R:G\rightarrow \te{GL} (V)$ of $G$ on $V$  such that
$R(g)\vac=\vac$ and
\begin{align}
  R(g)Y(v,x)R(g)\inverse =Y(R(g)v,\chi(g)x)\quad\te{for }g\in G,\  v\in V.
\end{align}
We may denote a $(G,\chi)$-module nonlocal vertex algebra by $(V,R)$.
\end{de}

\begin{rem}
{\em Note that a $(G,\chi)$-module nonlocal vertex algebra with $\chi=1$ (the trivial character) is simply
a nonlocal vertex algebra on which  $G$ acts as an automorphism group.
In this case, $V$ is a $\C[G]$-module nonlocal vertex algebra in the sense of \cite{Li-smash} (see Section  3).}
\end{rem}

Let $(U,R_U)$ and $(V,R_V)$ be $(G,\chi)$-module nonlocal vertex algebras.
A {\em $(G,\chi)$-module nonlocal vertex algebra morphism from $U$ to $V$} is a nonlocal vertex algebra morphism
which is also a $G$-module morphism.

 \begin{rem}\label{rnva-on-R}
 {\em Suppose $\mathcal{R}$ is a commutative and associative algebra over $\C$ with $1$.
The notions of  nonlocal vertex algebra and (weak) quantum vertex algebra over $\mathcal{R}$
 are defined in the obvious way.}
 \end{rem}

\subsection{$\phi$-coordinated quasi modules for nonlocal vertex algebras}

We here recall the basic results on (equivariant)  $\phi$-coordinated quasi modules
for nonlocal vertex algebras.

We begin with the {\em one-dimensional additive formal group (law)} $F_a(x,y)$ over $\C$.
 By definition, $F_a(x,y)=x+y\in \C[[x,y]]$, which satisfies
$$F_a(0,y)=y,\  \ F_a(x,0)=x,\   \    F_a(F_a(x,y),z)=F_a(x,F_a(y,z)).$$
An {\em associate}  of $F_a(x,y)$ (see \cite{Li-phi-coor}) is a formal series
$\phi(x,z)\in\C((x))[[z]]$ such that
\begin{align*}
  \phi(x,0)=x,\ \  \  \   \phi(\phi(x,y),z)=\phi(x,y+z).
\end{align*}
It was proved therein that for any $p(x)\in \C((x))$,
\begin{align*}
  \phi(x,z):=e^{zp(x)\partial_x}x
  =\sum_{n\ge0}\frac{z^n}{n!}\(p(x)\partial_x\)^nx
\end{align*}
is an associate of $F_a(x,y)$, and every associate is of this form with $p(x)$ uniquely determined.
 From \cite{FLM2}, \cite{fhl},  for $r\in \Z$ we have
\begin{eqnarray}\label{expression-phi-r}
\phi_r(x,z):=e^{zx^{r+1}\partial_x}x=\begin{cases}x\(1-rzx^{r}\)^{-\frac{1}{r}}&\  \  \mbox{ if }r\ne 0\\
xe^z& \  \  \mbox{ if }r= 0.
\end{cases}
\end{eqnarray}
In particular, we have
$$\phi_{-1}(x,z)=e^{z\partial_x}x=x+z=F_{a}(x,z)\ \text{ and }\  \phi_0(x,z)=e^{zx\partial_x}x=xe^{z}.$$

Assume $\phi(x,z)=e^{zp(x)\partial_x}x$ with $p(x)\ne 0$, or equivalently, $\phi(x,z)\ne x$.
For any $f(x_1,x_2)\in \C((x_1,x_2))$,  $f(\phi(x,z),x)$ exists in $\C((x))[[z]]$, and
$$f(\phi(x,z),x)\ne 0\quad \te{ whenever }f(x_1,x_2)\ne 0.$$
Let $\C_\ast((x_1,x_2))$ denote  the fraction field of $\C((x_1,x_2))$:
\begin{align}
\C_\ast((x_1,x_2))=\{ f/g\ |\  f,g\in \C((x_1,x_2))\  \te{ with }g\ne 0\}.
\end{align}
Note that $\C((x))((z))$ is (isomorphic to) the fraction field of $\C((x))[[z]]$.
Then  for any $h(x_1,x_2)\in \C_{*}((x_1,x_2))$,
$$h(\phi(x,z),z)\mbox{ exists in }\C((x))((z)).$$
On the other hand, for any $f(x_1,x_2)\in\C((x_1,x_2))$,  $f(\phi(x,y),\phi(x,z))$ exists in $\C((x))[[y,z]],$
 and  the correspondence $$f(x_1,x_2)\mapsto f(\phi(x,y),\phi(x,z))$$
is a ring embedding of $\C((x_1,x_2))$ into $\C((x))[[y,z]]$.
Furthermore, this ring embedding gives rise to a ring embedding of $\C_{*}((x_1,x_2))$ into $\C((x))_{*}((y,z))$,
the fraction field of $\C((x))((y,z))$.

\begin{de}
Let $\phi(x,z)=e^{zp(x)\partial_x}x$ with $p(x)\in \C((x))^{\times}$.
Define $\C_{\phi}((x_1,x_2))$ to consist of $F(x_1,x_2)\in \C_{*}((x_1,x_2))$ such that
\begin{align}\label{p(x)-differential}
p(x_1)\partial_{x_1}F(x_1,x_2)=-p(x_2)\partial_{x_2}F(x_1,x_2).
\end{align}
\end{de}

The following two lemmas were obtained in \cite{JKLT-Defom-va}:

\begin{lem}\label{F-characterization}
Let $\phi(x,z)=e^{zp(x)\partial_x}x$ with $p(x)\in \C((x))^{\times}$ and let $F(x_1,x_2)\in \C_{*}((x_1,x_2))$. Then
(\ref{p(x)-differential}) holds  if and only if
\begin{align}
F(\phi(x_1,z),x_2)=F(x_1,\phi(x_2,-z)).
\end{align}
Furthermore, assuming either one of the two equivalent conditions, we have
\begin{align}
&F(\phi(x,z),x)=f(z),\\
&F(\phi(x,z_1),\phi(x,z_2))=f(z_1-z_2)\label{eq:F-characterization-result2}
\end{align}
for some uniquely determined $f(z)\in \C((z))$.
\end{lem}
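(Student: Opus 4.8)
The plan is to treat the two equivalent conditions in tandem: first prove that the functional equation $F(\phi(x_1,z),x_2)=F(x_1,\phi(x_2,-z))$ is equivalent to the differential equation \eqref{p(x)-differential}, and then deduce the specialization formulas. For the equivalence, I would differentiate the functional equation with respect to $z$ and set $z=0$. Using $\partial_z|_{z=0}\phi(x_i,z)=p(x_i)$ (which follows from $\phi(x,z)=e^{zp(x)\partial_x}x$ and $\phi(x,0)=x$), the chain rule gives $p(x_1)\partial_{x_1}F(x_1,x_2)=-p(x_2)\partial_{x_2}F(x_1,x_2)$, i.e. \eqref{p(x)-differential}. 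Conversely, assuming \eqref{p(x)-differential}, I would show that $G(x_1,x_2,z):=F(\phi(x_1,z),x_2)-F(x_1,\phi(x_2,-z))$ vanishes: it vanishes at $z=0$ since $\phi(x_i,0)=x_i$, and I would check $\partial_z G=0$ by using the group law $\phi(\phi(x,z),w)=\phi(x,z+w)$, the chain rule, and \eqref{p(x)-differential} applied at the shifted arguments $(\phi(x_1,z),x_2)$ and $(x_1,\phi(x_2,-z))$ — these are legitimate because $F\in\C_\ast((x_1,x_2))$ and, as recalled in the excerpt, the substitution $f(x_1,x_2)\mapsto f(\phi(x,y),\phi(x,z))$ is a ring embedding, so the $\C_\ast$-elements survive the substitutions and their $z$-derivatives are computed formally. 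Since $G|_{z=0}=0$ and $\partial_z G=0$ in the relevant completed/fraction ring, $G=0$.

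For the two specialization formulas, I would substitute $x_1=\phi(x,z_1)$ and $x_2=\phi(x,z_2)$ into the functional equation (again this is a valid ring embedding into $\C((x))[[z_1,z_2]]$, extended to the fraction field $\C((x))_\ast((z_1,z_2))$ as noted in the excerpt). The left side becomes $F(\phi(\phi(x,z_1),z),\phi(x,z_2))=F(\phi(x,z_1+z),\phi(x,z_2))$ by the group law, and the right side becomes $F(\phi(x,z_1),\phi(\phi(x,z_2),-z))=F(\phi(x,z_1),\phi(x,z_2-z))$. Hence the function $H(z_1,z_2):=F(\phi(x,z_1),\phi(x,z_2))$ satisfies $H(z_1+z,z_2)=H(z_1,z_2-z)$ for all $z$, which forces $H(z_1,z_2)$ to depend only on $z_1-z_2$; call this common value $f(z_1-z_2)$, with $f(z)\in\C((z))$ well-defined by setting $z_2=0$, i.e. $f(z)=F(\phi(x,z),x)$ (this is independent of $x$ by the shift-invariance just established, or directly because $p(x_1)\partial_{x_1}-(-p(x_2)\partial_{x_2})$ annihilates it). This simultaneously gives both \eqref{eq:F-characterization-result2} and $F(\phi(x,z),x)=f(z)$, and uniqueness of $f$ is immediate from $f(z)=F(\phi(x,z),x)$.

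The main obstacle I anticipate is purely bookkeeping about \emph{where} the various identities live: $F$ is a ratio of elements of $\C((x_1,x_2))$, and after the substitutions one lands in fraction fields of iterated Laurent-series rings such as $\C((x))_\ast((y,z))$; I need to make sure that differentiating in $z$, specializing at $z=0$, and invoking "$\partial_z G=0\Rightarrow G$ constant" are all legitimate in those rings (they are, since these are fields of iterated Laurent/power series over $\C$ and the derivations are the standard formal ones with no constants killed other than the genuine ones). Once the functional framework is set up correctly using the ring embeddings recalled just before the lemma, the computations themselves are short applications of the chain rule and the formal group law, so I would not belabor them.
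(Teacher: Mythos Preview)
The paper does not prove this lemma; it is quoted verbatim from \cite{JKLT-Defom-va}, so there is no in-paper argument to compare against. Your plan is essentially correct and would give a self-contained proof, but one step needs adjustment.

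In the converse direction you assert that the two shifted instances of \eqref{p(x)-differential} yield $\partial_z G=0$. They do not. Writing $A(z)=F(\phi(x_1,z),x_2)$ and $B(z)=F(x_1,\phi(x_2,-z))$, the chain rule together with $\partial_z\phi(x,z)=p(\phi(x,z))$ and \eqref{p(x)-differential} at the shifted points gives
\[
\partial_z A=-p(x_2)\,\partial_{x_2}A,\qquad \partial_z B=p(x_1)\,\partial_{x_1}B,
\]
which are not the same operator acting on $G=A-B$. The clean fix is to note directly that $A(z)=e^{zp(x_1)\partial_{x_1}}F(x_1,x_2)$ and $B(z)=e^{-zp(x_2)\partial_{x_2}}F(x_1,x_2)$, and then iterate \eqref{p(x)-differential} to $(p(x_1)\partial_{x_1})^nF=(-p(x_2)\partial_{x_2})^nF$ for all $n\ge 0$, whence $A=B$. (Equivalently, combine the direct identity $\partial_z A=p(x_1)\partial_{x_1}A$ with the shifted-equation identity $\partial_z B=p(x_1)\partial_{x_1}B$ to get $\partial_z G=p(x_1)\partial_{x_1}G$, $G|_{z=0}=0$, hence $G=0$ coefficient by coefficient in $z$.)

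Your derivation of the two specialization formulas via $H(z_1+z,z_2)=H(z_1,z_2-z)$ is correct, and your remark that $f(z)=F(\phi(x,z),x)$ is $x$-independent follows as you say from $(p(x_1)\partial_{x_1}+p(x_2)\partial_{x_2})F=0$ together with $p(x)\partial_x\phi(x,z)=p(\phi(x,z))$.
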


\begin{lem}\label{diff-algebra-embeding}
The set $\C_{\phi}((x_1,x_2))$ is a subalgebra of $\C_{*}((x_1,x_2))$ with $p(x_1)\partial_{x_1}$ $(=-p(x_2)\partial_{x_2})$ as a derivation
and  the map $\pi_{\phi}: \C_{\phi}((x_1,x_2))\rightarrow \C((z))$
defined by
\begin{align}
\pi_{\phi}(F(x_1,x_2))=F(\phi(x,z),x)\quad \text{  for }F(x_1,x_2)\in \C_{\phi}((x_1,x_2))
\end{align}
 is an embedding of differential algebras, where $\C((z))$
is viewed as a differential algebra with derivation $\frac{d}{dz}$.
\end{lem}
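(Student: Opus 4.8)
The plan is to establish the three assertions in turn — that $\C_{\phi}((x_1,x_2))$ is a subalgebra, that $p(x_1)\partial_{x_1}$ restricts to a derivation of it, and that $\pi_{\phi}$ is an injective homomorphism of differential algebras — relying on the substitution facts recorded just before the definition of $\C_{\phi}((x_1,x_2))$ and on Lemma~\ref{F-characterization}. First I would introduce the operator $D:=p(x_1)\partial_{x_1}+p(x_2)\partial_{x_2}$. Since $p(x_i)\in\C((x_i))^{\times}\subseteq\C((x_1,x_2))$ and each $\partial_{x_i}$ is a derivation of $\C((x_1,x_2))$, the operator $D$ is a derivation of $\C((x_1,x_2))$, and it extends uniquely (via the quotient rule) to a derivation of the fraction field $\C_{*}((x_1,x_2))$. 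By definition $\C_{\phi}((x_1,x_2))=\Ker D$, and the kernel of a derivation is a subalgebra; this is the first assertion. For the second, I would observe that $p(x_1)\partial_{x_1}$ and $p(x_2)\partial_{x_2}$ involve disjoint variables and hence commute, so $[D,\,p(x_1)\partial_{x_1}]=0$; therefore $p(x_1)\partial_{x_1}$ maps $\Ker D=\C_{\phi}((x_1,x_2))$ into itself, and being the restriction of a derivation of $\C_{*}((x_1,x_2))$ it is a derivation of $\C_{\phi}((x_1,x_2))$, on which it coincides with $-p(x_2)\partial_{x_2}$ by (\ref{p(x)-differential}).

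Next I would analyze $\pi_{\phi}$. The assignment $f(x_1,x_2)\mapsto f(\phi(x,z),x)$ is an injective ring homomorphism $\C((x_1,x_2))\to\C((x))[[z]]$ (this is recorded in the discussion preceding the definition of $\C_{\phi}((x_1,x_2))$), so it extends to a field embedding $\rho\colon\C_{*}((x_1,x_2))\hookrightarrow\C((x))((z))$ given by $\rho(F)=F(\phi(x,z),x)$. By Lemma~\ref{F-characterization}, each $F\in\C_{\phi}((x_1,x_2))$ satisfies $F(\phi(x,z),x)=f(z)$ for a unique $f(z)\in\C((z))$, so $\rho$ carries $\C_{\phi}((x_1,x_2))$ into the subfield $\C((z))\subseteq\C((x))((z))$. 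Thus $\pi_{\phi}$ is exactly the restriction $\rho|_{\C_{\phi}((x_1,x_2))}$, and as the restriction of an injective ring homomorphism to a subalgebra it is an injective homomorphism of algebras.

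It remains to check that $\pi_{\phi}$ intertwines the derivations, i.e., that $\pi_{\phi}(p(x_1)\partial_{x_1}F)=\frac{d}{dz}\pi_{\phi}(F)$ for $F\in\C_{\phi}((x_1,x_2))$. The key computational input is the identity $\partial_z\phi(x,z)=p(\phi(x,z))$, which I would derive by differentiating $\phi(\phi(x,y),z)=\phi(x,y+z)$ in $z$ and then setting $z=0$, using $\partial_z\phi(x,z)|_{z=0}=p(x)$ (immediate from $\phi(x,z)=x+zp(x)+O(z^2)$). Granting this, the chain rule — applied term by term to a Laurent expansion of $F$ when $F\in\C((x_1,x_2))$, and extended to $\C_{*}((x_1,x_2))$ through the quotient rule — gives, for every $F\in\C_{*}((x_1,x_2))$, the identity $\frac{d}{dz}(F(\phi(x,z),x))=(\partial_{x_1}F)(\phi(x,z),x)\cdot\partial_z\phi(x,z)=(p(x_1)\partial_{x_1}F)(\phi(x,z),x)$. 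Restricting to $F\in\C_{\phi}((x_1,x_2))$, and recalling that $p(x_1)\partial_{x_1}F$ again lies in $\C_{\phi}((x_1,x_2))$, this reads $\frac{d}{dz}\pi_{\phi}(F)=\pi_{\phi}(p(x_1)\partial_{x_1}F)$, completing the argument. I expect this last step to be the only part requiring real care: one must justify that $z$-differentiation passes through the formal substitution $x_1\mapsto\phi(x,z)$ (term by term and then through quotients), which is precisely what the identity $\partial_z\phi(x,z)=p(\phi(x,z))$ encodes; everything else is routine bookkeeping with derivations and their kernels.
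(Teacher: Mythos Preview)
Your proof is correct. The paper does not actually prove this lemma; it cites \cite{JKLT-Defom-va} for both this lemma and the preceding one, so there is no in-paper argument to compare against. Your approach---identifying $\C_{\phi}((x_1,x_2))$ as the kernel of the derivation $D=p(x_1)\partial_{x_1}+p(x_2)\partial_{x_2}$, using $[D,p(x_1)\partial_{x_1}]=0$ to show the kernel is stable, and then deriving $\partial_z\phi(x,z)=p(\phi(x,z))$ from the associate identity to check compatibility with $\frac{d}{dz}$---is clean and is essentially the natural argument one would expect in the cited reference.
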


\begin{rem}\label{rem-p(x)=1}
{\em Consider the example $\phi(x,z)=xe^z$ with $p(x)=1$.
For any (rational function) $f(x)\in \C(x)$, we have $f(x_1/x_2), f(x_2/x_1)\in \C_{\phi}((x_1,x_2))$ with
\begin{align*}
\pi_{\phi} f(x_1/x_2)=f(e^z),\quad \pi_{\phi} f(x_2/x_1)=f(e^{-z}),
\end{align*}
where $f(e^{\pm z})$ are understood as elements of $\C((z))$. Set
\begin{align}
\C_{e}((z))=\{ f(e^z)\ |\ f(x)\in \C(x)\},
\end{align}
which is a differential subfield of $\C((z))$.}
\end{rem}

\begin{rem}
{\em Let $U$ be a vector space with a linear map
$$\wh S(x_1,x_2):\ U\ot U\rightarrow U\ot U\ot \C_{\phi}((x_1,x_2)).$$
Set  $S(x)=(1\ot 1\ot \pi_\phi) \wh S(x_1,x_2)$. Note that $S(z_1-z_2)= \wh S(\phi(x,z_1),\phi(x,z_2))$.
Then $S(x)$ is a rational quantum Yang-Baxter operator if and only if
\begin{align}
  \wh S^{12}(x_1,x_2)\wh S^{13}(x_1,x_3)\wh S^{23}(x_2,x_3)
  =\wh S^{23}(x_2,x_3)\wh S^{13}(x_1,x_3)\wh S^{12}(x_1,x_2).
\end{align}}
\end{rem}

\begin{de}
Let $V$ be a nonlocal vertex algebra.
A {\em $\phi$-coordinated quasi $V$-module} is a vector space $W$ equipped with a linear map
\begin{eqnarray*}
Y_{W}(\cdot,x): \  V\rightarrow (\te{End} W)[[x,x^{-1}]];\quad
v\mapsto Y_{W}(v,x),
\end{eqnarray*}
satisfying the conditions that
$$Y_{W}(v,x)w\in W((x))\   \   \    \te{for }v\in V,\ w\in W,$$
$$Y_{W}({\bf 1},x)=1_{W} \   \  (\te{the identity operator on }W),$$
and that for any $u,v\in V$, there exists nonzero $f(x_1,x_2)\in \C((x_1,x_2))$ such that
\begin{align}
&f(x_1,x_2)Y_{W}(u,x_1)Y_{W}(v,x_2)\in \Hom(W,W((x_1,x_2))),\\
&\(f(x_1,x_2)Y_{W}(u,x_1)Y_{W}(v,x_2)\)|_{x_1=\phi(x_2,z)}=f(\phi(x_2,z),x_2)Y_{W}(Y(u,z)v,x_2).
\end{align}
\end{de}

Let $W$ be a vector space (over $\C$). Set
$$\E(W)=\Hom(W,W((x))).$$
More generally, for any positive integer $r$, set
\begin{align}
\E^{(r)}(W)=\Hom(W,W((x_1,x_2,\dots,x_r))).
\end{align}
An (ordered) sequence $\(a_1(x),\dots,a_r(x)\)$ in
$\E(W)$ is said to be {\em quasi-compatible} (see  \cite{Li-nonlocal}) if
there exists a nonzero series $f(x,y)$ from $\C((x,y))$ such that
\begin{align*}
  \(\prod_{1\le i<j\le r}f(x_i,x_j)\)a_1(x_1)\cdots a_r(x_r)
  \in\E^{(r)}(W).
\end{align*}
Furthermore, a subset $U$ of $\E(W)$ is said to be {\em quasi-compatible} if every finite sequence in $U$ is quasi-compatible.

Let $(\alpha(x),\beta(x))$ be a quasi-compatible ordered pair in $\E(W)$. Define
$$\alpha(x)_{n}^{\phi}\beta(x)\in \E(W) \   \  \te{ for }n\in \Z$$
in terms of generating function
\begin{align}
Y_{\E}^{\phi}(\alpha(x),z)\beta(x)=\sum_{n\in \Z}\alpha(x)_{n}^{\phi}\beta(x) z^{-n-1}
\end{align}
by
\begin{align}
Y_{\E}^{\phi}(\alpha(x),z)\beta(x)=\iota_{x,z}(1/f(\phi(x,z),x))\(f(x_1,x)\alpha(x_1)\beta(x)\)|_{x_1=\phi(x,z)},
\end{align}
where $f(x_1,x_2)$ is any nonzero series from $\C((x_1,x_2))$ such that
\begin{align}
f(x_1,x_2)\alpha(x_1)\beta(x_2)\in \Hom(W,W((x_1,x_2)))\  \  (=\E^{(2)}(W)).
\end{align}
A quasi-compatible subspace $U$ of $\E(W)$ is said to be {\em $Y_\E^\phi$-closed} if
\begin{align}
a(x)_{n}^{\phi}b(x)\in U\quad \text{for all }a(x),b(x)\in U,\ n\in \Z.
\end{align}

The following is a result of \cite{Li-phi-coor}:

\begin{thm}\label{thm:abs-construct-non-h-adic}
Let $W$ be a vector space and  let $V$ be a $Y_\E^\phi$-closed quasi-compatible subspace of $\E(W)$
with $1_{W}\in V$.
Then $(V,Y_\E^\phi,1_{W})$ carries the structure of a nonlocal vertex algebra and
$W$ is a $\phi$-coordinated quasi $V$-module with $Y_W(\al(x),z)=\al(z)$ for $\al(x)\in V$.
On the other hand, for every quasi-compatible subset $U$ of $\E(W)$,
there exists a unique minimal $Y_\E^\phi$-closed quasi-compatible subspace $\<U\>_\phi$
which contains $1_W$ and $U$.
Furthermore, $(\<U\>_\phi,Y_\E^\phi,1_W)$ carries the structure of a nonlocal vertex
algebra and $W$ is a $\phi$-coordinated quasi $\<U\>_\phi$-module.
\end{thm}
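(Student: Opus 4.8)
\smallskip
\noindent\emph{Proof strategy.} The plan is to run the ``local systems'' argument of \cite{Li-phi-coor} (the $\phi$-coordinated refinement of \cite{Li-nonlocal}), using the facts about $\C_{\phi}((x_1,x_2))$ and $\pi_\phi$ recalled above. First I would nail down the elementary properties of $Y_\E^\phi$. Independence of the auxiliary series: if $f(x_1,x_2)$ and $g(x_1,x_2)$ both carry $\alpha(x_1)\beta(x_2)$ into $\E^{(2)}(W)$, so does $f(x_1,x_2)g(x_1,x_2)$, and since $h\mapsto h(\phi(x,z),x)$ is injective on $\C((x_1,x_2))\setminus\{0\}$, the two candidates for $Y_\E^\phi(\alpha(x),z)\beta(x)$ coincide after dividing the common factor through $\iota_{x,z}$. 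Truncation: substituting $x_1=\phi(x,z)$ into an element of $\Hom(W,W((x_1,x)))$ gives an element of $\Hom(W,W((x))((z)))$, and multiplying by $\iota_{x,z}\big(1/f(\phi(x,z),x)\big)\in\C((x))((z))$ stays there, so $Y_\E^\phi(\alpha(x),z)\beta(x)\in\E(W)((z))$. Taking $f=1$ gives the vacuum identities $Y_\E^\phi(1_W,z)\beta(x)=\beta(x)$ and $Y_\E^\phi(\alpha(x),z)1_W\in\E(W)[[z]]$ with $z\to0$ limit $\alpha(x)$; the operator $\D\alpha(x):=\alpha(x)_{-2}^{\phi}1_W$, which equals $p(x)\partial_x\alpha(x)$, then satisfies the usual derivative property.

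The technical core is a composition lemma: if $(\alpha_1(x),\dots,\alpha_r(x))$ is a quasi-compatible sequence in $\E(W)$, then so is any sequence obtained from it by inserting a product $\alpha_i(x)_n^{\phi}\alpha_j(x)$ $(i<j,\ n\in\Z)$. One fixes a nonzero $f(x,y)\in\C((x,y))$ with $\big(\prod_{1\le a<b\le r}f(x_a,x_b)\big)\alpha_1(x_1)\cdots\alpha_r(x_r)\in\E^{(r)}(W)$, then performs the substitution $x_i=\phi(x_j,z)$ and tracks the denominators; this forces one to work in the fraction fields $\C_{\ast}((x_a,x_b))$ and $\C((x))_{\ast}((y,z))$, and there the compatibility of the substitution $x\mapsto\phi(x,\cdot)$ with the field structure, together with Lemmas~\ref{F-characterization} and \ref{diff-algebra-embeding} (in particular the differential-algebra embedding $\pi_\phi:\C_{\phi}((x_1,x_2))\hookrightarrow\C((z))$), produces a new nonzero witness for the enlarged sequence. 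A consequence is that the linear span of all iterated $\phi$-products of a quasi-compatible set $S\cup\{1_W\}$ is again quasi-compatible and $Y_\E^\phi$-closed.

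Granting this, let $V$ be a $Y_\E^\phi$-closed quasi-compatible subspace with $1_W\in V$. For $\alpha(x),\beta(x)\in V$, the composition lemma (applied to triples $\alpha,\beta,\gamma$ with $\gamma\in V$) supplies $k\in\N$ with $(x_1-x_2)^{k}Y_\E^\phi(\alpha,x_1)Y_\E^\phi(\beta,x_2)\in\Hom(V,V((x_1,x_2)))$, which is the compatibility condition (\ref{nlva-compatibilty}); unwinding both sides of the associativity (\ref{nlva-associativity}) through the action on $W$ --- two nested applications of the defining formula for $Y_\E^\phi$ --- and invoking the group law $\phi(\phi(x,y),z)=\phi(x,y+z)$ along with Lemmas~\ref{F-characterization}--\ref{diff-algebra-embeding} turns that identity into a manipulation of rational functions that checks out. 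With the first paragraph this makes $(V,Y_\E^\phi,1_W)$ a nonlocal vertex algebra. Setting $Y_W(\alpha(x),z):=\alpha(z)$, the $\phi$-coordinated quasi $V$-module axioms over $W$ are immediate except the iterate identity, which for a pair $\alpha,\beta\in V$ with witnessing $f$ reads $\big(f(x_1,x_2)Y_W(\alpha,x_1)Y_W(\beta,x_2)\big)\big|_{x_1=\phi(x_2,z)}=f(\phi(x_2,z),x_2)\,Y_W\big(Y_\E^\phi(\alpha,z)\beta,x_2\big)$ and holds by the very definition of $Y_\E^\phi$, using $\iota_{x_2,z}\big(1/f(\phi(x_2,z),x_2)\big)\cdot f(\phi(x_2,z),x_2)=1$ (valid since $f\ne0$ forces $f(\phi(x_2,z),x_2)\ne0$). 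Finally, for an arbitrary quasi-compatible $U$, the span of all iterated $\phi$-products of $U\cup\{1_W\}$ is one $Y_\E^\phi$-closed quasi-compatible subspace containing $1_W$ and $U$; the intersection of all such subspaces is again quasi-compatible (a finite sequence in the intersection is a finite sequence in each member) and $Y_\E^\phi$-closed, hence is the desired minimal $\<U\>_\phi$, to which the previous paragraph applies.

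The main obstacle is the composition lemma and its entanglement with the associativity computation. Because $\phi(x,z)$ is not polynomial, one cannot clear denominators with powers of $x_1-x_2$; the substitution $x\mapsto\phi(x,\cdot)$ and the ``extraction'' of $\phi$-products must be carried out inside the fraction fields $\C_{\ast}((x_1,x_2))$ and $\C((x))_{\ast}((y,z))$, and the role of Lemmas~\ref{F-characterization}--\ref{diff-algebra-embeding} is precisely to keep that rational-function bookkeeping consistent under the change of coordinates and to certify that, after substitution, the resulting series genuinely lie in the relevant $\Hom$-spaces.
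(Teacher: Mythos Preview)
The paper does not prove this theorem; it is stated with the preamble ``The following is a result of \cite{Li-phi-coor}'' and cited without proof, so there is no in-paper argument to compare against. Your outline is essentially the proof as carried out in \cite{Li-phi-coor} (building on \cite{Li-nonlocal}): well-definedness of $Y_\E^\phi$, the creation/vacuum properties, the closure/composition lemma (stated in this paper as Lemma~\ref{lem:quasi-compatible} in its $\hbar$-adic form, citing \cite[Proposition~4.9]{Li-phi-coor}), the associativity via $\phi(\phi(x,y),z)=\phi(x,y+z)$, and the tautological $\phi$-coordinated quasi module structure on $W$.

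One small correction on the ``minimal'' part: in \cite{Li-phi-coor} (and in this paper's $\hbar$-adic version, Theorem~\ref{thm:abs-construct}) the subspace $\langle U\rangle_\phi$ is constructed from the inside as the ascending union $\bigcup_{n\ge1}U^{(n)}$ of iterated $\phi$-product spans, and minimality is then immediate because any $Y_\E^\phi$-closed quasi-compatible subspace containing $U\cup\{1_W\}$ must contain each $U^{(n)}$. Your alternative description via the intersection of all such subspaces is problematic: while the intersection is trivially $Y_\E^\phi$-closed and contains $U\cup\{1_W\}$, your argument that it is quasi-compatible (``a finite sequence in the intersection is a finite sequence in each member'') only shows that each finite sequence is quasi-compatible as a sequence in $\E(W)$, which is already known since each member is quasi-compatible --- so that step is fine, but it is cleaner and matches the literature to use the direct union construction, which also makes the ``unique minimal'' claim transparent.
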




For a subgroup $\Gamma$ of $\C^{\times}$, set
\begin{align}
\C_{\Gamma}[x]=\langle x-\alpha\ |\ \alpha\in \Gamma\rangle,
\end{align}
the multiplicative monoid generated in $\C[x]$ by $x-\alpha$ for $\alpha\in \Gamma$, and set
\begin{align}
\C_{\Gamma}[x_1,x_2]=\langle x_1-\alpha x_2\ |\ \alpha\in \Gamma\rangle.
\end{align}

The following notion was introduced in \cite{JKLT-G-phi-mod} (cf. \cite{Li-G-phi}):

\begin{de}\label{de:G-equiv-phi-mod-1}
Let $V$ be a $(G,\chi)$-module nonlocal vertex algebra and let $\chi_{\phi}$ be a linear character of $G$
such that
\begin{eqnarray}\label{p(x)-compatibility}
\phi(x,\chi(g)z)=\chi_{\phi}(g)\phi(\chi_{\phi}(g)^{-1}x,z)\   \   \mbox{ for }g\in G.
\end{eqnarray}
 A {\em $(G,\chi_{\phi})$-equivariant $\phi$-coordinated quasi $V$-module} is a $\phi$-coordinated quasi $V$-module
$(W,Y_W)$ satisfying the conditions that
\begin{eqnarray}\label{phi-module-equiv-1}
Y_{W}(R(g)v,x)=Y_{W}(v,\chi_{\phi}(g)^{-1}x)\   \   \   \   \mbox{ for }g\in G,\  v\in V
\end{eqnarray}
and that for $u,v\in V$, there exists $q(x_1,x_2)\in \C_{\chi_{\phi}(G)}[x_1,x_2]$ such that
\begin{eqnarray}
q(x_1,x_2)Y_W(u,x_1)Y_W(v,x_2)\in \Hom (W,W((x_1,x_2))).
\end{eqnarray}
\end{de}

\begin{rem}
{\em Note that for $\phi(x,z)=e^{zx^{r+1}\partial_x}x$ with $r\in \Z$,
(\ref{p(x)-compatibility}) is equivalent to $\chi=\chi_{\phi}^{-r}$. In particular, for $r=-1$ this amounts to $\chi=\chi_{\phi}$, while
for $r=0$, this amounts to $\chi=1$ (the trivial character).}
\end{rem}



Let $W$ be a vector space, $\Gamma$ a subgroup of $\C^\times$.
Define a group homomorphism
$$R:\Gamma\rightarrow \te{GL}(\E(W)),\  \  g\mapsto R_g$$
by
\begin{align}
R_g(a(x))=a(g^{-1} x)\quad  \te{ for }a(x)\in \E(W).
\end{align}
Let $\chi_{\phi}$ be the natural embedding of $\Gamma$ into $\C^{\times}$, i.e.,
\begin{eqnarray}
\chi_{\phi}(g)=g\   \   \    \mbox{ for }g\in \Gamma.
\end{eqnarray}

\begin{de}
A subset $U$ of $\E(W)$ is said to be {\em $\Gamma$-quasi compatible} if
 for any $a_1(x),a_2(x),\dots,a_k(x)\in U$, there exists
$ f(x_1,x_2)\in \C_{\Gamma}[x_1,x_2]$ such that
\begin{align*}
  \left(\prod_{1\le i<j\le k}f(x_i,x_j)\right)a_1(x_1)a_2(x_2)\cdots a_k(x_k)\in\Hom(W,W((x_1,\dots,x_k))).
\end{align*}
\end{de}

We have (see  \cite{JKLT-G-phi-mod}):

\begin{thm}\label{coro:G-va-abs-construct-non-h-adic}
Let $W$ be a vector space and let $\Gamma$ be a subgroup of $\C^\times$.
Suppose that $U$ is a $\Gamma$-quasi compatible $\Gamma$-submodule of $\E(W)$.
Then the nonlocal vertex algebra $\<U\>_\phi$ together with the group homomorphism $R$
 is a $(\Gamma,\chi)$-module nonlocal vertex algebra
and $W$ is a $(\Gamma,\chi_{\phi})$-equivariant $\phi$-coordinated quasi $\<U\>_\phi$-module with
$Y_W(a(x),z)=a(z)$ for $a(x)\in \<U\>_\phi$.
\end{thm}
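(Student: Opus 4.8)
The plan is to verify directly that the triple $(\langle U\rangle_\phi, Y_\E^\phi, 1_W)$, which is already known to be a nonlocal vertex algebra by Theorem~\ref{thm:abs-construct-non-h-adic}, becomes a $(\Gamma,\chi)$-module nonlocal vertex algebra under the homomorphism $R$, and that $W$ with $Y_W(a(x),z) = a(z)$ is a $(\Gamma,\chi_\phi)$-equivariant $\phi$-coordinated quasi module in the sense of Definition~\ref{de:G-equiv-phi-mod-1}. There are essentially four things to check: (i) $R$ is a well-defined group homomorphism into $\mathrm{GL}(\langle U\rangle_\phi)$, i.e. $\langle U\rangle_\phi$ is stable under each $R_g$; (ii) $R_g \mathbf 1 = \mathbf 1$, i.e. $R_g(1_W) = 1_W$, which is immediate since $1_W(g^{-1}x) = 1_W$; (iii) the covariance relation $R_g Y_\E^\phi(a(x),z) R_g^{-1} = Y_\E^\phi(R_g a(x), \chi(g)z)$; and (iv) the two module-level conditions \eqref{phi-module-equiv-1} and the $\Gamma$-quasi-compatibility of the products $Y_W(u,x_1)Y_W(v,x_2)$.

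First I would establish (i). Since $U$ is a $\Gamma$-submodule, each $R_g$ preserves $U$ and $1_W$; the point is that $R_g$ also commutes with the operations $a(x)_n^\phi b(x)$ up to the character twist, so it preserves the closure $\langle U\rangle_\phi$. Concretely, if $f(x_1,x_2)\,\alpha(x_1)\beta(x_2) \in \E^{(2)}(W)$, then replacing $x_i$ by $g^{-1}x_i$ shows $f(g^{-1}x_1, g^{-1}x_2)\,\alpha(g^{-1}x_1)\beta(g^{-1}x_2) \in \E^{(2)}(W)$, so $(R_g\alpha, R_g\beta)$ is again quasi-compatible; note that $f(g^{-1}x_1,g^{-1}x_2)$ lies in $\C((x_1,x_2))$ because $g \in \C^\times$. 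Then one computes, using the definition of $Y_\E^\phi$ and the substitution identity $\phi(g^{-1}x, \chi(g)z) = g^{-1}\phi(x,z)$ — which is exactly the relation \eqref{p(x)-compatibility} with $\chi_\phi(g) = g$ for the $R$ under consideration, valid here because for $\phi(x,z) = e^{zx^{r+1}\partial_x}x$ the homomorphism property of $R$ forces $\chi = \chi_\phi^{-r}$ — that
\begin{align*}
R_g\left(Y_\E^\phi(\alpha(x),z)\beta(x)\right) = Y_\E^\phi\left(R_g\alpha(x), \chi(g)z\right)R_g\beta(x).
\end{align*}
Expanding in powers of $z$ gives $R_g(\alpha(x)_n^\phi \beta(x)) = \chi(g)^{-n-1}(R_g\alpha(x))_n^\phi(R_g\beta(x))$ for all $n$, which simultaneously proves stability of any $Y_\E^\phi$-closed quasi-compatible subspace containing $1_W$ and $U$ under $R_g$ — hence proves (i) for $\langle U\rangle_\phi$ — and proves (iii).

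For (iv): the equivariance \eqref{phi-module-equiv-1} reads $Y_W(R_g a(x), z) = (R_g a)(z) = a(g^{-1}z) = Y_W(a(x), \chi_\phi(g)^{-1}z)$, which holds by the very definition $Y_W(a(x),z) = a(z)$ and $\chi_\phi(g) = g$. The $\Gamma$-quasi-compatibility condition for $Y_W(u,x_1)Y_W(v,x_2)$ with $q \in \C_{\chi_\phi(\Gamma)}[x_1,x_2] = \C_\Gamma[x_1,x_2]$ is precisely the $\Gamma$-quasi-compatibility hypothesis on $U$ transported through the construction: one shows by induction along the generation of $\langle U\rangle_\phi$ that every pair in $\langle U\rangle_\phi$ admits a compatibility polynomial in $\C_\Gamma[x_1,x_2]$, using that the operations $\cdot_n^\phi$ and the substitution $x_1 \mapsto \phi(x,z)$ transform such polynomials into admissible ones (this is where $\phi$ being an associate and the form of $\C_\Gamma[x_1,x_2]$ interact). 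I expect the main obstacle to be precisely this last point — tracking that the $\Gamma$-form of the compatibility data is preserved under $Y_\E^\phi$-closure, since the definition of $a(x)_n^\phi b(x)$ involves $\iota_{x,z}(1/f(\phi(x,z),x))$ and one must argue that starting from $f \in \C_\Gamma[x_1,x_2]$ the resulting products again admit $\C_\Gamma[x_1,x_2]$ denominators; this is a bookkeeping argument about how $\phi$-substitution acts on the monoid $\C_\Gamma[x_1,x_2]$, and is the technical heart of the statement. The remaining verifications, including checking that $R$ is a homomorphism (which reduces to $R_{gh} = R_g R_h$, immediate from $a((gh)^{-1}x) = (R_g(R_h a))(x)$), are routine.
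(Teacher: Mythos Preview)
The paper does not prove this theorem itself; it is quoted from the earlier paper \cite{JKLT-G-phi-mod}, so there is no ``paper's own proof'' to compare against directly. Your outline is the natural direct verification one expects to find there, and it is essentially correct. The key computation in (iii) --- that the substitution $x\mapsto g^{-1}x$ combined with the compatibility identity $\phi(g^{-1}x,z)=g^{-1}\phi(x,\chi(g)z)$ transforms the defining formula for $Y_\E^\phi(\alpha(x),z)\beta(x)$ into that for $Y_\E^\phi(R_g\alpha(x),\chi(g)z)R_g\beta(x)$ --- is right, and from it the component identity $R_g(\alpha(x)_n^\phi\beta(x))=\chi(g)^{-n-1}(R_g\alpha(x))_n^\phi(R_g\beta(x))$ gives the $\Gamma$-stability of $\langle U\rangle_\phi$ by induction along its construction.

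The one genuine technical step you flag --- showing that $\Gamma$-quasi-compatibility (with denominators in $\C_\Gamma[x_1,x_2]$) is preserved under $Y_\E^\phi$-closure --- is indeed the heart of the matter. This is the $\Gamma$-restricted analogue of \cite[Proposition~4.9]{Li-phi-coor} (the result underlying Lemma~\ref{lem:quasi-compatible} here), and the argument there carries over once one observes that for $f(x_1,x_2)\in\C_\Gamma[x_1,x_2]$ the substitution $x_1\mapsto\phi(x,z)$ gives an element whose zeros in $z$ are controlled, and that products of $\C_\Gamma[x_1,x_2]$ elements stay in the monoid. Your expectation that this is ``bookkeeping'' is accurate, though it does require care; the details are worked out in \cite{JKLT-G-phi-mod}. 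One small clarification: in the theorem as stated the character $\chi$ is not defined explicitly in this paper --- it is the unique character making the compatibility relation \eqref{p(x)-compatibility} hold for the given $\phi$ and $\chi_\phi$, so in the application (Section~6, where $\phi(x,z)=xe^z$) one has $\chi=1$.
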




\section{$\hbar$-adic (weak) quantum vertex algebras}

In this section, we recall and refine various results on $\hbar$-adic nonlocal vertex algebras,
$\hbar$-adic quantum vertex algebras, and their $\phi$-coordinated quasi modules.
Throughout this paper, $\hbar$ is a formal variable.

\subsection{$\hbar$-adic nonlocal vertex algebras and $\hbar$-adic quantum vertex algebras}

We first review some basic facts about $\C[[\hbar]]$-modules. A $\C[[\hbar]]$-module $V$ is said to be
{\em torsion-free} if $\hbar^{n}v=0$  implies $v=0$ for $n\in \N,\ v\in V$, and it is said to be
{\em separated} if $\cap_{n\ge 0}\hbar^{n}V=0$. A {\em Cauchy sequence}  in $V$ is a sequence $\{ v_n\}_{n\ge 0}$
satisfying the condition that  for every $n\in \N$, there exists $k\in \N$ such that $v_p-v_q\in \hbar^nV$ for all $p,q\ge k$.
A sequence $\{ v_n\}_{n\ge 0}$ is said to
have a limit $v$ in $V$, written as $\lim_{n\rightarrow \infty}v_n=v$,  if for every $n\in \N$, there exists $k\in \N$
such that $v_p-v\in \hbar^nV$ for all $p\ge k$.
Furthermore, $V$ is said to be ($\hbar$-adically) {\em complete} if every Cauchy sequence in $V$ has a limit in $V$.

A $\C[[\hbar]]$-module $V$ is said to be {\em topologically free}
 if there exists a $\C$-subspace $V_0$ of $V$ such that $V=V_0[[\hbar]]$.
 For two topologically free $\C[[\hbar]]$-modules $V=V_0[[\hbar]]$ and $W=W_0[[\hbar]]$,
let $V\ptimes W$ denote the completed tensor product, which is isomorphic to $\(V_0\otimes W_0\)[[\hbar]]$.

The following result can be found in \cite{Kassel-topologically-free}:

\begin{lem}\label{free-top}
A $\C[[\hbar]]$-module is topologically free  if and only if it is  torsion-free, separated, and $\hbar$-adically complete.
\end{lem}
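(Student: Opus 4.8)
Lemma~\ref{free-top}: a $\C[[\hbar]]$-module $V$ is topologically free if and only if it is torsion-free, separated, and $\hbar$-adically complete.

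**Plan of proof.** The forward direction is the easy one. Assuming $V = V_0[[\hbar]]$ for some $\C$-subspace $V_0$, I would verify the three properties by a direct inspection of formal power series. An element $v \in V$ is written uniquely as $v = \sum_{n \ge 0} v_n \hbar^n$ with $v_n \in V_0$; if $\hbar^m v = 0$ then all $v_n = 0$, giving torsion-freeness; the condition $v \in \bigcap_{m \ge 0} \hbar^m V$ forces $v_n = 0$ for all $n$, giving separatedness; and a Cauchy sequence in $V_0[[\hbar]]$ stabilizes coefficient-by-coefficient, so its coefficientwise limit lies in $V_0[[\hbar]]$, giving completeness.

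**The substantive direction.** Now suppose $V$ is torsion-free, separated, and $\hbar$-adically complete; I want to produce a $\C$-subspace $V_0$ with $V = V_0[[\hbar]]$. The natural candidate for the role of ``$V_0$'' up to the obvious identification is the classical limit $\overline{V} := V/\hbar V$, and the key step is to choose a $\C$-linear section $s: \overline{V} \to V$ of the quotient map $\pi: V \to V/\hbar V$; such a section exists because we are over a field, so $\overline{V}$ has a basis and $s$ can be defined on basis elements by arbitrary lifts. Set $V_0 := s(\overline{V}) \subseteq V$, a $\C$-subspace mapping isomorphically onto $\overline{V}$. I then claim the induced $\C[[\hbar]]$-module map $\Phi: V_0[[\hbar]] \to V$, $\sum_n a_n \hbar^n \mapsto \sum_n a_n \hbar^n$ (the latter sum converging in $V$ by completeness), is an isomorphism. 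For \emph{surjectivity}: given $v \in V$, inductively construct $a_0, a_1, \dots \in V_0$ with $v - \sum_{k < n} a_k \hbar^k \in \hbar^n V$; indeed, having done this up to $n$, write $v - \sum_{k<n} a_k \hbar^k = \hbar^n w$ for some $w \in V$ (using torsion-freeness to make $w$ well-defined, though well-definedness is not even needed here), let $a_n := s(\pi(w)) \in V_0$, and observe $w - a_n \in \hbar V$ so that $v - \sum_{k \le n} a_k \hbar^k \in \hbar^{n+1} V$. Completeness then gives $v = \Phi(\sum_n a_n \hbar^n)$. For \emph{injectivity}: if $\Phi(\sum_n a_n \hbar^n) = 0$ with not all $a_n$ zero, let $m$ be minimal with $a_m \ne 0$; then $\hbar^m(a_m + \hbar(\cdots)) = 0$, so by torsion-freeness $a_m + \hbar(\cdots) = 0$, whence $a_m \in \hbar V$; but $a_m \in V_0$ and $V_0 \cap \hbar V = 0$ since $\pi|_{V_0}$ is injective, so $a_m = 0$, a contradiction. (Separatedness is what guarantees the convergence bookkeeping is not vacuous and, more precisely, that $\Phi$ is well-defined and the limits are unique.)

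**Main obstacle.** There is no deep obstacle; this is a standard lemma and the argument is essentially a successive-approximation / Hensel-type lifting. The one point requiring a little care is organizing the interplay of the three hypotheses in the surjectivity argument: completeness supplies the limit, torsion-freeness lets one ``divide by $\hbar^n$'' cleanly at each stage, and separatedness ensures the resulting series representation is unique so that $\Phi$ really is a bijection rather than merely a surjection with an a priori ambiguous inverse. Since this is quoted from \cite{Kassel-topologically-free}, I would either reproduce the short argument above or simply cite it; I would lean toward including the proof sketch for self-containedness, as it is only a few lines.
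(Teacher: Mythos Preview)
Your proof is correct. The paper does not actually give a proof of this lemma; it simply quotes the result from Kassel's book \cite{Kassel-topologically-free}. Your argument is the standard one and is more detailed than anything the paper provides, so there is nothing to compare.
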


As every $\C[[\hbar]]$-submodule of a topologically free $\C[[\hbar]]$-module is
torsion-free and separated, we immediately have:

\begin{lem}\label{free-top-submodule}
Every $\hbar$-adically closed $\C[[\hbar]]$-submodule of a topologically free $\C[[\hbar]]$-module is topologically free.
\end{lem}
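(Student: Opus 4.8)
The plan is to verify directly the three conditions of Lemma \ref{free-top} for the submodule in question. Let $V$ be a topologically free $\C[[\hbar]]$-module and let $M$ be an $\hbar$-adically closed $\C[[\hbar]]$-submodule of $V$. By Lemma \ref{free-top}, $V$ is torsion-free, separated, and $\hbar$-adically complete; it suffices to show that $M$ enjoys these same three properties, and then Lemma \ref{free-top} applies again to conclude that $M$ is topologically free.

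Two of the three conditions hold for any submodule, with no use of the closedness hypothesis, as already noted before the statement: if $\hbar^{n}m=0$ for some $m\in M$ and $n\in\N$, then $m=0$ since $V$ is torsion-free; and $\bigcap_{n\ge0}\hbar^{n}M\subseteq\bigcap_{n\ge0}\hbar^{n}V=0$ since $V$ is separated. So $M$ is torsion-free and separated. The only point that uses the hypothesis that $M$ is $\hbar$-adically closed is the $\hbar$-adic completeness of $M$.

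For completeness, I would take a Cauchy sequence $\{m_{k}\}_{k\ge0}$ in $M$. Since $\hbar^{n}M\subseteq\hbar^{n}V$ for all $n$, this sequence is also Cauchy in $V$, hence has a limit $v\in V$ by completeness of $V$. Next I would use the elementary description of the $\hbar$-adic closure of $M$ inside $V$ as $\bigcap_{n\ge0}(M+\hbar^{n}V)$: for each $n$ there is $k$ with $v-m_{k}\in\hbar^{n}V$, so $v\in m_{k}+\hbar^{n}V\subseteq M+\hbar^{n}V$; as $n$ was arbitrary, $v$ lies in this intersection, i.e., in the closure of $M$. Since $M$ is closed, $v\in M$, so $\{m_{k}\}$ converges in $M$, proving that $M$ is $\hbar$-adically complete.

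There is essentially no obstacle in this argument; it is a routine application of Lemma \ref{free-top} together with the inheritance of torsion-freeness and separatedness by submodules. The one small thing to be careful about is the verification that the $\hbar$-adic closure of a $\C[[\hbar]]$-submodule $M$ of $V$ coincides with $\bigcap_{n\ge0}(M+\hbar^{n}V)$, since it is precisely this identification that turns the ``$\hbar$-adically closed'' hypothesis into the statement ``the limit of a Cauchy sequence from $M$ lies in $M$'' needed for completeness.
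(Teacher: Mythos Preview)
Your approach matches the paper's, which simply notes (just before the lemma) that torsion-freeness and separatedness are inherited by any submodule and then invokes Lemma~\ref{free-top}; only completeness uses the closedness hypothesis, and the paper does not spell that step out.

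There is, however, one point in your completeness argument that needs a bit more. You show that the limit $v$ of a Cauchy sequence $\{m_k\}$ in $M$ lies in $M$ and satisfies $v-m_k\in\hbar^{n}V$ for large $k$. But ``$\hbar$-adically complete'' in Lemma~\ref{free-top} refers to $M$'s \emph{own} $\hbar$-adic filtration: you must show $v-m_k\in\hbar^{n}M$ for large $k$, not merely $\hbar^{n}V$. The subspace topology on $M$ inherited from $V$ and the intrinsic $\hbar$-adic topology on $M$ need not coincide a priori, so ``$v\in M$ and $v-m_k\in\hbar^n V$'' does not by itself give convergence in $M$.

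This gap is easy to fill using torsion-freeness of $V$. Fix $n$ and choose $K$ with $m_p-m_K\in\hbar^{n}M$ for all $p\ge K$; write $m_p-m_K=\hbar^{n}a_p$ with $a_p\in M$. Since $v-m_K\in\hbar^{n}V$, write $v-m_K=\hbar^{n}b$ with $b\in V$. Then $\hbar^{n}(b-a_p)=v-m_p$, and whenever $v-m_p\in\hbar^{n+r}V$ torsion-freeness forces $b-a_p\in\hbar^{r}V$; hence $a_p\to b$ in $V$, so $b\in M$ by closedness. Thus $v-m_K=\hbar^{n}b\in\hbar^{n}M$, and $v-m_k=(v-m_K)-(m_k-m_K)\in\hbar^{n}M$ for all $k\ge K$.
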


The following is a very useful fact which can be found in  \cite{Kassel-topologically-free}: 

\begin{lem}\label{basic-facts}
Let $W_1,W_2$ be topologically free $\C[[\hbar]]$-modules and let
$\psi:W_1\rightarrow W_2$ be a continuous $\C[[\hbar]]$-module map.
 If the derived $\C$-linear map $\bar{\psi}: W_1/\hbar W_1\rightarrow W_2/\hbar W_2$ is one-to-one (resp. onto),
 then $\psi$ is one-to-one (resp. onto).
\end{lem}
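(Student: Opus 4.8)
The statement to be proved is Lemma~\ref{basic-facts}: if $\psi\colon W_1\to W_2$ is a continuous $\C[[\hbar]]$-module map between topologically free $\C[[\hbar]]$-modules whose reduction $\bar\psi\colon W_1/\hbar W_1\to W_2/\hbar W_2$ is injective (resp. surjective), then $\psi$ itself is injective (resp. surjective). The plan is to run the two cases separately by a standard $\hbar$-adic successive-approximation argument, using the characterization of topologically free modules in Lemma~\ref{free-top} (torsion-free, separated, complete) at the key points.

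\emph{Surjectivity.} Suppose $\bar\psi$ is onto. Given $w\in W_2$, I would build a preimage by successive approximation: choose $u_0\in W_1$ with $\psi(u_0)\equiv w \pmod{\hbar W_2}$, so $w-\psi(u_0)\in\hbar W_2$, say $w-\psi(u_0)=\hbar w_1$; then apply surjectivity of $\bar\psi$ again to $w_1$ to get $u_1$ with $w_1-\psi(u_1)\in\hbar W_2$, and iterate. Setting $v=\sum_{n\ge0}\hbar^n u_n$, which converges because $W_1$ is $\hbar$-adically complete, one gets $\psi(v)=w$ using continuity of $\psi$ (so that $\psi$ commutes with the convergent sum). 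This shows $\psi$ is onto.

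\emph{Injectivity.} Suppose $\bar\psi$ is one-to-one. Let $v\in W_1$ with $\psi(v)=0$. The claim is $v\in\hbar^nW_1$ for every $n$, whence $v=0$ by separatedness (Lemma~\ref{free-top}). Proceed by induction on $n$: the case $n=0$ is trivial. Assuming $v\in\hbar^nW_1$, write $v=\hbar^n v'$ with $v'\in W_1$; then $\hbar^n\psi(v')=\psi(v)=0$, and since $W_2$ is torsion-free we get $\psi(v')=0$, hence $\bar\psi(\bar{v'})=0$ in $W_2/\hbar W_2$, so by injectivity of $\bar\psi$ we have $v'\in\hbar W_1$, i.e. $v\in\hbar^{n+1}W_1$. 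This completes the induction and the proof.

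\emph{Main obstacle.} The argument is essentially routine once one has Lemma~\ref{free-top} in hand; the only points requiring care are (i) invoking \emph{continuity} of $\psi$ correctly in the surjectivity half, so that $\psi\bigl(\sum_n \hbar^n u_n\bigr)=\sum_n \hbar^n\psi(u_n)$ — this is where the hypothesis that $\psi$ is a continuous map (not merely $\C[[\hbar]]$-linear on a dense subset) is used — and (ii) using torsion-freeness of $W_2$ in the injectivity half to cancel the factor $\hbar^n$. Neither is a genuine difficulty; the lemma is a packaging of the classical fact that a morphism of complete separated filtered modules inducing an iso (resp. epi, mono) on associated graded pieces is itself an iso (resp. epi, mono), specialized to the $\hbar$-adic filtration.
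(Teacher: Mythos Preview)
Your proof is correct and is the standard $\hbar$-adic approximation argument. Note that the paper does not actually prove this lemma; it merely cites Kassel's textbook \cite{Kassel-topologically-free} for the result, so there is no ``paper's own proof'' to compare against. One minor remark: since any $\C[[\hbar]]$-module map $\psi$ satisfies $\psi(\hbar^n W_1)=\hbar^n\psi(W_1)\subset \hbar^n W_2$, continuity is automatic here, so the hypothesis you flagged in point (i) is not an additional assumption but a consequence of $\C[[\hbar]]$-linearity.
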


Let $U$ be a $\C[[\hbar]]$-submodule of a topologically free $\C[[\hbar]]$-module $V$.
Denote by $\overline{U}$ the closure of $U$ (with respect to the $\hbar$-adic topology of $V$).
Follow \cite{Li-h-adic} to set
\begin{align}
  [U]=\set{v\in V}{\hbar^n v\in U\te{ for some }n\in \N}.
\end{align}
It is straightforward to see that
\begin{align}
  \left[[U]\right]=[U],\   \   \   \   \overline{\overline{U}}=\overline{U}.
  \end{align}

By using a similar argument  in the proof of \cite[Proposition 3.7]{Li-h-adic}, we have:

\begin{lem}\label{lem:topo-op}
Let $V$ be a topologically free $\C[[\hbar]]$-module.
Suppose that $U$ is a $\C[[\hbar]]$-submodule  such that
$[U]=U$. Then $[\overline{U}]=\overline{U}$ and $\overline{U}$ is topologically free.
\end{lem}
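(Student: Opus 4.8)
The plan is to prove the two assertions $[\overline{U}]=\overline{U}$ and ``$\overline{U}$ is topologically free'' by adapting the argument of \cite[Proposition 3.7]{Li-h-adic}, exploiting the hypothesis $[U]=U$ together with the closure properties recorded just before the statement. First I would fix the ambient topologically free module $V=V_0[[\hbar]]$ and recall that a sequence converges in $V$ iff it is Cauchy with respect to the $\hbar$-adic filtration, and that $\overline{U}=\bigcap_{n\ge 0}(U+\hbar^n V)$, the standard description of the closure. Since $\overline{U}$ is an $\hbar$-adically closed $\C[[\hbar]]$-submodule of a topologically free module, Lemma~\ref{free-top-submodule} already gives that $\overline{U}$ is topologically free; so the substantive point is really $[\overline{U}]=\overline{U}$, and I would present the topological-freeness claim as an immediate consequence of Lemma~\ref{free-top-submodule} once $\overline{U}$ has been exhibited as a closed submodule (which it is by construction).

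For $[\overline{U}]=\overline{U}$, the inclusion $\overline{U}\subseteq[\overline{U}]$ is trivial, so I would concentrate on $[\overline{U}]\subseteq\overline{U}$. Take $v\in V$ with $\hbar^m v\in\overline{U}$ for some $m\in\N$; I must show $v\in\overline{U}$, i.e. $v\in U+\hbar^n V$ for every $n$. Fix $n$. Since $\hbar^m v\in\overline{U}=\bigcap_k(U+\hbar^k V)$, in particular $\hbar^m v\in U+\hbar^{m+n}V$, so write $\hbar^m v=u+\hbar^{m+n}w$ with $u\in U$, $w\in V$. Then $u=\hbar^m v-\hbar^{m+n}w=\hbar^m(v-\hbar^n w)$, so $u\in U$ is divisible by $\hbar^m$ in $V$; because $V$ is torsion-free, $v-\hbar^n w$ is the unique element with $\hbar^m(v-\hbar^n w)=u$, and the hypothesis $[U]=U$ applied to the element $v-\hbar^n w$ (which satisfies $\hbar^m(v-\hbar^n w)\in U$) yields $v-\hbar^n w\in U$. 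Hence $v=(v-\hbar^n w)+\hbar^n w\in U+\hbar^n V$. As $n$ was arbitrary, $v\in\overline{U}$, establishing $[\overline{U}]=[U]$-style closure.

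The main obstacle I anticipate is purely bookkeeping: making sure the division-by-$\hbar^m$ step is legitimate, which is exactly where torsion-freeness of $V$ is used, and making sure that the element one feeds to the hypothesis $[U]=U$ is genuinely in $V$ (not just in some localization) — this is automatic here since $v$ and $w$ already live in $V$. A secondary subtlety is to double-check that the description $\overline{U}=\bigcap_n(U+\hbar^n V)$ is valid in this setting; it is, because the $\hbar$-adic topology on $V$ is first-countable and Hausdorff (separatedness of $V$), so closure coincides with sequential closure and with the intersection of the $U+\hbar^n V$. Once these points are in place, the argument is a short diagram chase, and I would close by remarking that combining $[\overline{U}]=\overline{U}$ with Lemma~\ref{free-top-submodule} (applied to the closed submodule $\overline{U}$) gives both conclusions of the lemma.
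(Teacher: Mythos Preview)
Your proof is correct and follows essentially the approach the paper has in mind: the paper does not spell out a proof but refers to the argument of \cite[Proposition~3.7]{Li-h-adic}, and your computation---writing $\hbar^m v = u + \hbar^{m+n}w$, factoring $u = \hbar^m(v-\hbar^n w)$, and invoking $[U]=U$ to get $v-\hbar^n w\in U$---is precisely that argument. Your appeal to Lemma~\ref{free-top-submodule} for the topological freeness of $\overline{U}$ is also the intended route.
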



Recall that for a positive integer $r$ and for a vector space $U$ over $\C$,
\begin{align*}
  \E^{(r)}(U)=\Hom\(U,U((x_1,x_2,\dots,x_r))\).
\end{align*}
Set $\E(U)=\E^{(1)}(U)=\Hom(U,U((x)))$.

Now, let $W=W_0[[\hbar]]$ be a topologically free $\C[[\hbar]]$-module.
For each positive integer $n$, the canonical quotient map from $W$ to $W/\hbar^nW$ induces a linear map
\begin{align}
\pi_n:\  \(\te{End}_{\C[[\hbar]]}W\)[[x_1^{\pm 1},\dots,x_r^{\pm 1}]]
\longrightarrow
\(\te{End }(W/\hbar^nW)\)[[x_1^{\pm 1},\dots,x_r^{\pm 1}]].\  \
\end{align}
For any positive integer $r$, set
\begin{align*}
 \E_\hbar^{(r)}(W)=\set{f\in\(\te{End}_{\C[[\hbar]]}W\)
   [[x_1^{\pm 1},\dots,x_r^{\pm 1}]]}
 {\pi_n(f)\in\E^{(r)}(W/\hbar^nW)\te{ for }n\ge 1}.
\end{align*}
Denote $\E^{(1)}_{\hbar}(W)$ by  $\E_\hbar(W)$ alternatively.
The following can be found in \cite{Li-h-adic}:

\begin{lem}\label{lem:E-h}
Let $W=W_0[[\hbar]]$ be a topologically free $\C[[\hbar]]$-module and let $r$ be a positive integer.  Then

(1) $\E_\hbar^{(r)}(W)=\E^{(r)}(W_0)[[\hbar]]$.

(2) For every positive integer $n$, $\pi_n(\E_\hbar^{(r)}(W))=\E^{(r)}(W/\hbar^nW)$.

(3) $\left[\E_\hbar^{(r)}(W)\right]=\E_\hbar^{(r)}(W)$
and $\overline{\E_\hbar^{(r)}(W)}=\E_\hbar^{(r)}(W)$.

(4) The inverse system
\begin{align*}
  \xymatrix{
    0&W/\hbar W\ar[l]&W/\hbar^2 W\ar[l]&\cdots\ar[l]
  }
\end{align*}
induces an inverse system
\begin{align*}
  \xymatrix{
    0&\E^{(r)}(W/\hbar W)\ar[l]_{\theta_0}&\E^{(r)}(W/\hbar^2W)\ar[l]_{\theta_1}&\cdots\ar[l]_{\theta_2}.
  }
\end{align*}

(5) The map $\E^{(r)}_\hbar(W)\rightarrow \varprojlim\limits_{n\ge 1}\E^{(r)}(W/\hbar^nW)$ given by
$f\mapsto \(\pi_n(f)\)_{n\ge 1}$ is a $\C[[\hbar]]$-module isomorphism.
\end{lem}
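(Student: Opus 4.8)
The plan is to prove parts (1)--(5) in a logically efficient order, observing that most assertions reduce to the basic identifications provided by Lemma \ref{free-top} (the Kassel characterization of topologically free modules) together with the elementary fact that $\operatorname{End}_{\C[[\hbar]]}(W) \cong (\operatorname{End}_\C W_0)[[\hbar]]$ when $W = W_0[[\hbar]]$. First I would establish (1), which is the heart of the lemma. The inclusion $\E^{(r)}(W_0)[[\hbar]] \subseteq \E_\hbar^{(r)}(W)$ is immediate: if $f = \sum_{k\ge 0} f_k \hbar^k$ with each $f_k \in \E^{(r)}(W_0)$, then modulo $\hbar^n$ the truncation $\pi_n(f) = \sum_{k<n} f_k \hbar^k$ visibly lies in $\E^{(r)}(W/\hbar^n W)$ since each $f_k$ sends $W_0$ into $W_0((x_1,\dots,x_r))$ and hence $W/\hbar^n W$ into $(W/\hbar^n W)((x_1,\dots,x_r))$. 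For the reverse inclusion, take $f \in \E_\hbar^{(r)}(W)$, write $f = \sum_{k\ge 0} f_k \hbar^k$ with $f_k \in (\operatorname{End}_\C W_0)[[x_i^{\pm 1}]]$, and argue by induction on $k$ that $f_k \in \E^{(r)}(W_0)$: given $w_0 \in W_0$, the condition $\pi_n(f) \in \E^{(r)}(W/\hbar^n W)$ for all $n$ forces, after subtracting the already-controlled lower-order terms, that $f_k(w_0)$ has only finitely many negative powers of each $x_i$ modulo $\hbar$, hence (being $\hbar$-independent) genuinely lies in $W_0((x_1,\dots,x_r))$. The one subtlety to handle carefully here is the interaction of the iterated Laurent expansion $((x_1))\cdots((x_r))$ with the truncation, but this is the same bookkeeping as in \cite{Li-h-adic} and presents no real difficulty.

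With (1) in hand, (2) follows quickly: surjectivity of $\pi_n$ onto $\E^{(r)}(W/\hbar^n W)$ is seen by lifting a representative $\sum_{k<n} \bar f_k \hbar^k$ with $\bar f_k \in \E^{(r)}(W_0)$ (using that $W/\hbar^n W$ has $W_0$ as a set of coset representatives at each $\hbar$-order) to the element $\sum_{k<n} f_k \hbar^k \in \E_\hbar^{(r)}(W)$. Part (3) is then a formal consequence of (1): $\E_\hbar^{(r)}(W) = \E^{(r)}(W_0)[[\hbar]]$ is of the form $U_0[[\hbar]]$, and any submodule of that shape is both $\hbar$-adically closed (so $\overline{\E_\hbar^{(r)}(W)} = \E_\hbar^{(r)}(W)$) and saturated, i.e.\ $\hbar^n f \in \E_\hbar^{(r)}(W)$ implies $f \in \E_\hbar^{(r)}(W)$ by torsion-freeness of the ambient module $(\operatorname{End}_{\C[[\hbar]]} W)[[x_i^{\pm1}]]$; alternatively one invokes Lemma \ref{lem:topo-op}. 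For (4), I would simply observe that each connecting map $W/\hbar^{n+1}W \to W/\hbar^n W$ is a $\C$-linear (indeed $\C[[\hbar]]$-linear) surjection, and composition with such a map carries $\Hom(W/\hbar^{n+1}W, (W/\hbar^{n+1}W)((x_1,\dots,x_r)))$ into the corresponding space for $W/\hbar^n W$; this defines the $\theta_n$ and the inverse-system compatibility is automatic.

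Finally, for (5), the map $f \mapsto (\pi_n(f))_{n\ge 1}$ is a $\C[[\hbar]]$-module homomorphism by construction, and its target $\varprojlim_n \E^{(r)}(W/\hbar^n W)$ is a submodule of $\varprojlim_n (\operatorname{End}(W/\hbar^n W))[[x_i^{\pm1}]]$; since $\operatorname{End}_{\C[[\hbar]]} W \cong \varprojlim_n \operatorname{End}(W/\hbar^n W)$ and this identification is compatible with adjoining the formal variables $x_i^{\pm 1}$, injectivity is clear (an element killed by every $\pi_n$ lies in $\cap_n \hbar^n(\cdots) = 0$ by separatedness) and surjectivity follows from (2): a compatible sequence $(\bar f_n)$ with $\bar f_n \in \E^{(r)}(W/\hbar^n W)$ assembles, via the isomorphism of (1) and the description of each $W/\hbar^n W$ in terms of $W_0$, into an element $f = \sum_k f_k \hbar^k$ with $f_k \in \E^{(r)}(W_0)$ whose truncations recover the $\bar f_n$. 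I expect the main obstacle to be the careful verification in part (1) that membership in $\E_\hbar^{(r)}(W)$ genuinely forces each homogeneous coefficient $f_k$ into $\E^{(r)}(W_0)$ --- that is, that the ``finitely many negative powers'' condition, which a priori only holds modulo each $\hbar^n$, actually holds on the nose for the $\hbar$-independent coefficients; everything else is routine given Lemmas \ref{free-top}, \ref{lem:topo-op}, and the standard inverse-limit description of $\C[[\hbar]]$-endomorphisms of a topologically free module.
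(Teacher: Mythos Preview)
Your proof sketch is correct and follows the standard route. Note, however, that the paper does not prove this lemma at all: it simply records it with the preamble ``The following can be found in \cite{Li-h-adic},'' deferring entirely to that reference. Your argument---establishing (1) by writing $f=\sum_k f_k\hbar^k$ and showing inductively that each $f_k\in\E^{(r)}(W_0)$, then deducing (2)--(5) formally from (1) together with the inverse-limit description of $\End_{\C[[\hbar]]}W$---is exactly the expected one and is presumably what appears in \cite{Li-h-adic}.

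One small notational remark: in this paper $\E^{(r)}(W_0)=\Hom(W_0,W_0((x_1,\dots,x_r)))$ uses the joint Laurent series ring (finitely many negative powers in all variables simultaneously), not the iterated ring $W_0((x_1))\cdots((x_r))$ you mention in passing; this does not affect your argument, since the same coefficient-by-coefficient reasoning applies.
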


The following notion was introduced in \cite{Li-h-adic}:

\begin{de}\label{de:h-adic-nonva}
An {\em $\hbar$-adic nonlocal vertex algebra} is a topologically free
$\C[[\hbar]]$-module $V$, equipped with a $\C[[\hbar]]$-module
map
\begin{align*}
  Y(\cdot,x):\ V&\longrightarrow \E_\hbar(V),\\
  v&\mapsto Y(v,x)=\sum_{n\in\Z}v_nx^{-n-1},
\end{align*}
and equipped with a distinguished vector $\vac\in V$,  satisfying the conditions that
\begin{align*}
  &Y(\vac,x)=1,\\
  &Y(v,x)\vac\in V[[x]]\,\,\te{and}\,\,
  \lim\limits_{x\rightarrow 0}Y(v,x)\vac=v\  \
  \te{for }v\in V,
\end{align*}
and that for $u,v,w\in V$ and for every $n\in\Z_{+}$, there exists $\ell\in\N$
such that
\begin{align}
  (z+y)^\ell Y(u,z+y)Y(v,y)w\equiv (z+y)^\ell Y(Y(u,z)v,y)w
\end{align}
modulo $\hbar^n V[[z^{\pm 1},y^{\pm 1}]]$ (the {\em $\hbar$-adic weak associativity}).
\end{de}

For $n\in \Z_{+}$, set
\begin{align}
\mathcal{R}_n=\C[[\hbar]]/\hbar^n\C[[\hbar]]\simeq \C[\hbar]/\hbar^n\C[\hbar].
\end{align}
We have (loc. cit):

\begin{prop}\label{h-adic-quotient}
Let $V$ be a topologically free $\C[[\hbar]]$-module with a $\C[[\hbar]]$-module map
$Y(\cdot,x): V\rightarrow ({\rm End} V)[[x,x^{-1}]]$ and a vector ${\bf 1}\in V$.
Then $V$ is an $\hbar$-adic nonlocal vertex algebra if and only if for every positive integer $n$,
$V/\hbar^nV$ is a nonlocal vertex algebra over $\mathcal{R}_n$.
\end{prop}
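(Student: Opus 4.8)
The plan is to exploit the structure provided by Lemma \ref{lem:E-h}, which already tells us that $Y(\cdot,x)$ lands in $\E_\hbar(V)$ precisely when every reduction $\pi_n Y(\cdot,x)$ lands in $\E(V/\hbar^n V)$, together with Proposition \ref{h-adic-quotient}'s target characterization of nonlocal vertex algebras over $\mathcal{R}_n$ via weak associativity (the $\mathcal{R}_n$-analogue of Lemma \ref{two-definitions}). The argument is essentially a ``pass to the quotient and back'' device. First I would set up the notation: for $n\in\Z_+$ write $\bar V_n = V/\hbar^n V$, a free $\mathcal{R}_n$-module since $V$ is topologically free, with quotient map $q_n:V\to\bar V_n$, and let $\bar Y^{(n)}(\cdot,x)$ denote the induced $\mathcal{R}_n$-module map on $\bar V_n$. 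Note $q_n(\vac)$ is the candidate vacuum in $\bar V_n$.

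For the forward direction, assume $V$ is an $\hbar$-adic nonlocal vertex algebra. The vacuum axioms $Y(\vac,x)=1$ and $Y(v,x)\vac\in V[[x]]$ with $\lim_{x\to 0}Y(v,x)\vac=v$ descend immediately to $\bar V_n$ under $q_n$ since $q_n$ is $\C[[\hbar]]$-linear and surjective. That $\bar Y^{(n)}(u,x)v\in\bar V_n((x))$ for $u,v\in\bar V_n$ follows from $Y(u,x)v\in V((x))$ (lift $u,v$ arbitrarily) — actually this is already packaged in $Y(\cdot,x)\in\E_\hbar(V)$ via Lemma \ref{lem:E-h}(2), which gives $\pi_n(Y(u,x))\in\E(\bar V_n)$. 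The key point is weak associativity: the defining relation of Definition \ref{de:h-adic-nonva} says that for all $u,v,w$ and every $n$ there is $\ell\in\N$ with
\begin{align*}
(z+y)^\ell Y(u,z+y)Y(v,y)w\equiv(z+y)^\ell Y(Y(u,z)v,y)w
\end{align*}
modulo $\hbar^n V[[z^{\pm1},y^{\pm1}]]$; applying $q_n$ yields exactly the weak associativity of $\bar Y^{(n)}$ over $\mathcal{R}_n$. Hence $\bar V_n$ is a nonlocal vertex algebra over $\mathcal{R}_n$ in the weak-associativity formulation, which by the $\mathcal{R}_n$-version of Lemma \ref{two-definitions} (equivalently Remark \ref{nlva-definitions}) is the desired structure — here one should remark that over $\mathcal{R}_n$ the notions coincide as in the $\C$ case, or simply take the weak-associativity definition as the working one.

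For the converse, assume each $\bar V_n$ is a nonlocal vertex algebra over $\mathcal{R}_n$. First, from $\pi_n(Y(v,x))\in\E(\bar V_n)$ for all $n$ and Lemma \ref{lem:E-h}(1),(5) we conclude $Y(\cdot,x)$ takes values in $\E_\hbar(V)$, so it is a legitimate $\C[[\hbar]]$-module map $V\to\E_\hbar(V)$. The vacuum axioms lift: $Y(\vac,x)-1$ reduces to $0$ in every $\E(\bar V_n)$, hence is $0$ by separatedness; similarly $Y(v,x)\vac\in V[[x]]$ and $\lim_{x\to0}Y(v,x)\vac=v$ follow from the corresponding statements mod $\hbar^n$ for all $n$. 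For weak associativity, fix $u,v,w\in V$ and $n\in\Z_+$; the nonlocal vertex algebra axioms for $\bar V_n$ supply some $\ell\in\N$ with $(z+y)^\ell\bar Y^{(n)}(u,z+y)\bar Y^{(n)}(v,y)w=(z+y)^\ell\bar Y^{(n)}(\bar Y^{(n)}(u,z)v,y)w$ in $\bar V_n[[z^{\pm1},y^{\pm1}]]$, and lifting via $q_n$ gives precisely the congruence of Definition \ref{de:h-adic-nonva} modulo $\hbar^n V[[z^{\pm1},y^{\pm1}]]$. This completes the equivalence.

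The main obstacle — really the only subtlety — is to make sure the bookkeeping between the three descriptions of a nonlocal vertex algebra is handled cleanly: Definition \ref{def-nlva} uses compatibility plus associativity, while Definition \ref{de:h-adic-nonva} is stated via $\hbar$-adic weak associativity, and Lemma \ref{two-definitions}/Remark \ref{nlva-definitions} record that these coincide over $\C$. One must either invoke that these notions agree over each $\mathcal{R}_n$ (the same proofs go through verbatim, $\mathcal{R}_n$ being a commutative ring containing $\C$, cf. Remark \ref{rnva-on-R}), or, more economically, adopt the weak-associativity formulation throughout so that the forward and backward passages are literally $q_n$ applied to a single identity. A secondary point to state carefully is that in the converse direction one needs the $\ell$ appearing at level $n$ to be usable — but Definition \ref{de:h-adic-nonva} already allows $\ell$ to depend on $n$, so no uniformity in $n$ is required and nothing further is needed. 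Everything else is routine application of separatedness and surjectivity of the $q_n$.
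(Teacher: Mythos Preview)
The paper does not supply its own proof of this proposition; it is quoted from \cite{Li-h-adic} (``We have (loc.\ cit):''). Your argument is the natural one and is essentially what one finds in that reference: the $\hbar$-adic weak associativity in Definition~\ref{de:h-adic-nonva} is, by design, exactly the statement that weak associativity holds in every quotient $V/\hbar^n V$, and the vacuum/truncation axioms pass back and forth via surjectivity of $q_n$ and separatedness of $V$. Your handling of the one genuine subtlety --- that Definition~\ref{def-nlva} in this paper is phrased via the compatibility/associativity pair (\ref{nlva-compatibilty})--(\ref{nlva-associativity}) rather than weak associativity, while Definition~\ref{de:h-adic-nonva} uses only weak associativity --- is honest and correct: one either appeals to the $\mathcal{R}_n$-analogue of the equivalence recorded in Lemma~\ref{two-definitions}/Remark~\ref{nlva-definitions} (cf.\ Remark~\ref{rnva-on-R}), or simply adopts the weak-associativity definition over $\mathcal{R}_n$, which is in fact how nonlocal vertex algebras are defined in \cite{Li-h-adic} itself. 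Either way your proof goes through.
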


Let $V$ be an $\hbar$-adic nonlocal vertex algebra. Just as for a nonlocal vertex algebra,
define a $\C[[\hbar]]$-linear operator $\mathcal D$ on $V$ by
$\mathcal D(v)=v_{-2}\vac$ for $v\in V$, and we have
\begin{align}
[\mathcal{D},Y(v,x)]=Y(\mathcal D v,x)=\frac{d}{dx}Y(v,x)\quad \text{ for }v\in V.
\end{align}
On the other hand, following \cite{EK-qva}, denote by $Y(x)$
the $\C[[\hbar]]$-linear map from $V\ptimes V$ to $V[[x,x^{-1}]]$, associated to
the vertex operator map $Y(\cdot,x)$:
\begin{align}
  Y(x):\  V\ptimes V\longrightarrow V[[x,x^{-1}]]; \  \
  (u,v)\mapsto Y(u,x)v.
\end{align}

\begin{de}\label{de:h-adic-wqva}
An {\em $\hbar$-adic weak quantum vertex algebra} is an $\hbar$-adic nonlocal vertex algebra
$V$ which satisfies the {\em $\hbar$-adic $\mathcal{S}$-locality}:
For $u,v\in V$, there exists
\begin{align*}
  F(u,v,x)\in V\ptimes V\ptimes \C((x))[[\hbar]],
\end{align*}
satisfying the condition that for any $n\in\Z_{+}$, there exists $k\in\N$ such that
\begin{align}
  (x-y)^k Y(u,x)Y(v,y)w\equiv
  (x-y)^k Y(y)(1\otimes Y(x))\(F(u,v,y-x)\otimes w\)
\end{align}
modulo $\hbar^n V[[x^{\pm 1},y^{\pm 1}]]$ for all $w\in V$.
 If the $\hbar$-adic $\mathcal{S}$-locality holds with $F(u,v,x)=v\otimes u$ for all $u,v\in V$,
 we call $V$ an {\em $\hbar$-adic vertex algebra}.
\end{de}

Using a routine argument in vertex algebra theory, we have (see \cite{Li-h-adic}):

\begin{prop}\label{rem:h-S-Jacobi}
An $\hbar$-adic weak quantum vertex algebra can be defined equivalently by replacing
 the $\hbar$-adic weak associativity in Definition \ref {de:h-adic-nonva} with
the property that for $u,v\in V$, there exists $F(u,v,x)\in V\ptimes V\ptimes \C((x))[[\hbar]]$ such that
\begin{align}\label{S-Jacobi}
&z\inverse\delta\(\frac{x-y}{z}\)
Y(u,x)Y(v,y)w\\
&\quad \   \   -z\inverse\delta\(\frac{y-x}{-z}\)
Y(y)\(1\otimes Y(x)\)\(F(u,v,-z)\otimes w\)\nonumber\\
=\ &x\inverse\delta\(\frac{y+z}{x}\)
Y\(Y(u,z)v,y\)w\nonumber
\end{align}
(the {\em $\mathcal{S}$-Jacobi identity}) for all $w\in V$.
\end{prop}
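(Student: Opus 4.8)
The plan is to prove the proposition by the standard ``routine argument in vertex algebra theory,'' i.e.\ by reducing the $\hbar$-adic $\mathcal{S}$-Jacobi identity to its constituent $\delta$-function identities level by level modulo $\hbar^n$, and conversely reconstructing $\hbar$-adic weak associativity from the $\mathcal{S}$-Jacobi identity. First I would invoke Proposition~\ref{h-adic-quotient}: an $\hbar$-adic nonlocal vertex algebra is precisely a topologically free $\C[[\hbar]]$-module $V$ such that each $V/\hbar^n V$ is a nonlocal vertex algebra over $\mathcal{R}_n$; so it suffices to work over each $\mathcal{R}_n$ and then pass to the inverse limit using Lemma~\ref{lem:E-h}(5). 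The element $F(u,v,x)\in V\ptimes V\ptimes \C((x))[[\hbar]]$ reduces, modulo $\hbar^n$, to an element of $(V/\hbar^nV)\otimes_{\mathcal{R}_n}(V/\hbar^nV)\otimes_{\mathcal{R}_n}\mathcal{R}_n((x))$, which is a finite sum $\sum_i v^{(i)}\otimes u^{(i)}\otimes f_i(x)$ since $V/\hbar^nV$ need not be finite-dimensional but $F$ is a single element; this is exactly the shape of the data appearing in the (classical) $\mathcal{S}$-Jacobi identity of Proposition~\ref{S-Jacobi-def}.

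The forward direction is then immediate: assuming $\hbar$-adic weak associativity plus $\hbar$-adic $\mathcal{S}$-locality, one works modulo $\hbar^n$ to get, over $\mathcal{R}_n$, a nonlocal vertex algebra satisfying $\mathcal{S}$-locality, hence a weak quantum vertex algebra over $\mathcal{R}_n$; by the classical Proposition~\ref{S-Jacobi-def} this yields the $\mathcal{S}$-Jacobi identity over $\mathcal{R}_n$, and letting $n\to\infty$ gives \eqref{S-Jacobi}. For the converse, one extracts from \eqref{S-Jacobi} modulo $\hbar^n$ the usual consequences: applying $\Res_z$ gives the iterate/commutator formula, and taking the coefficient of $z^{-1}$ after multiplying by an appropriate power of $(x-y)$ (to kill the negative powers coming from $F(u,v,-z)$, using that $f_i(x)\in\mathcal{R}_n((x))$ has a pole of bounded order) yields weak associativity modulo $\hbar^n$, i.e.\ $(z+y)^\ell Y(u,z+y)Y(v,y)w\equiv(z+y)^\ell Y(Y(u,z)v,y)w$. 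Since this holds for every $n$, $V$ is an $\hbar$-adic nonlocal vertex algebra, and the $\mathcal{S}$-locality half of \eqref{S-Jacobi} (obtained by multiplying by $z x^{-1}\delta\!\left(\tfrac{y+z}{x}\right)$ is wrong---rather, by taking $\Res_x$ after multiplying by a power of $(x-y)$) gives the $\hbar$-adic $\mathcal{S}$-locality.

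The main obstacle I expect is bookkeeping the pole orders uniformly: in the classical setting each pair $u,v$ has a single $k$, but here $k$ is allowed to depend on $n$, and the series $f_i(x)$ in the level-$n$ reduction of $F$ may have poles whose order grows with $n$. So when I multiply \eqref{S-Jacobi} by $(x-y)^k$ and try to apply the cancellation-of-$\delta$ argument, I must check that the chosen $k$ (coming from $\hbar$-adic $\mathcal{S}$-locality at level $n$) is large enough to render $(x-y)^k Y(y)(1\otimes Y(x))(F(u,v,-z)\otimes w)$ a genuine element of $V((x))((y))[[z]]$ modulo $\hbar^n$ with the correct analytic continuation; this is where one uses Lemma~\ref{lem:E-h} to control everything inside $\E_\hbar^{(r)}(V)$ rather than in a naive formal-series module. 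Once the pole orders are handled, the passage between the three standard forms (weak associativity, $\mathcal{S}$-locality, $\mathcal{S}$-Jacobi) is the verbatim formal-calculus manipulation with $\delta$-functions from \cite{fhl}, carried out over $\mathcal{R}_n$ and then assembled over the inverse system, and I would simply cite \cite{Li-h-adic} for the details rather than reproduce them.
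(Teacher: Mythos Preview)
Your proposal is correct and takes essentially the same approach as the paper, which simply attributes the result to ``a routine argument in vertex algebra theory'' and refers to \cite{Li-h-adic} without giving any details; your reduction modulo $\hbar^n$ to Proposition~\ref{S-Jacobi-def} over $\mathcal{R}_n$ followed by passage to the inverse limit is exactly that routine argument spelled out.
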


The following is a slight generalization of Etingof-Kazhdan's notion of quantum vertex operator algebra
(see \cite{EK-qva}):

\begin{de}
An {\em $\hbar$-adic quantum vertex algebra} is an $\hbar$-adic nonlocal vertex algebra $V$
equipped with a unitary rational quantum Yang-Baxter operator
\begin{align}
  {\mathcal{S}}(x):V\ptimes V\longrightarrow V\ptimes V\ptimes \C((x))[[\hbar]],
\end{align}
which satisfies the {\em shift condition:}
\begin{align}
  \left[\mathcal D\otimes 1,\mathcal{S}(x)\right]=-\frac{d}{dx}\mathcal{S}(x),
\end{align}
the {\em $\hbar$-adic $\mathcal{S}$-locality:} For any $u,v\in V$ and $n\in \Z_{+}$,
there exists $k\in \N$ such that
\begin{align*}
 & (x-z)^kY(x)\(1\otimes Y(z)\)(u\otimes v\otimes w)\\
\equiv \  &  (x-z)^kY(z)\(1\otimes Y(x)\)\(\mathcal{S}(z-x)(v\otimes u)\otimes w\)
\end{align*}
 modulo $\hbar^nV[[x^{\pm 1},z^{\pm 1}]]$ for all $w\in V$, and the hexagon identity holds:
\begin{align}
  \mathcal{S}(x)\(Y(z)\otimes 1\)=\(Y(z)\otimes 1\)\mathcal{S}^{23}(x)\mathcal{S}^{13}(x+z).
\end{align}
\end{de}

\begin{rem}
{\em Note that a quantum vertex operator algebra in the sense of Etingof-Kazhdan
is the same as an $\hbar$-adic quantum vertex algebra $V$ such that $V/\hbar V$ is a vertex algebra.
In contrast, for an $\hbar$-adic quantum vertex algebra $V$, $V/\hbar V$ is a quantum vertex algebra over $\C$. }
\end{rem}

By definition, any $\hbar$-adic quantum vertex algebra is an $\hbar$-adic weak quantum vertex algebra.
On the other hand, if $V$ is an $\hbar$-adic quantum vertex algebra, then
for any $u,v\in V$, the $\mathcal{S}$-Jacobi identity (\ref{S-Jacobi}) holds with $\mathcal{S}(-z)(v\otimes u)$
in place of $F(u,v,-z)$.
The following is an immediate consequence:

\begin{coro}\label{lem:q-Jacobi}
Let $V$ be an $\hbar$-adic quantum vertex algebra with rational quantum Yang-Baxter operator $\mathcal{S}(x)$.
Then for any $u,v,w\in V$,
\begin{align*}
  Y(u,x)&Y(v,z)w-Y(z)\(1\ot Y(x)\)(\mathcal{S}(z-x)(v\ot u)\ot w)\\
  &=\sum_{j\ge 0}Y(u_jv,z)w\frac{1}{j!}\left(\frac{\partial}{\partial z}\right)^jx\inv\delta\(\frac{z}{x}\).
\end{align*}
\end{coro}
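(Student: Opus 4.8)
The plan is to derive this identity as a direct consequence of the $\mathcal{S}$-Jacobi identity (\ref{S-Jacobi}) for an $\hbar$-adic quantum vertex algebra, using the standard formal-calculus manipulation of extracting a ``truncated'' relation from a delta-function identity. First I would invoke the remark preceding the corollary: since $V$ is an $\hbar$-adic quantum vertex algebra, for any $u,v\in V$ the $\mathcal{S}$-Jacobi identity holds with $F(u,v,-z)$ replaced by $\mathcal{S}(-z)(v\otimes u)$, so we have, for all $w\in V$,
\begin{align*}
&z^{-1}\delta\!\left(\frac{x-y}{z}\right)Y(u,x)Y(v,y)w
-z^{-1}\delta\!\left(\frac{y-x}{-z}\right)Y(y)\bigl(1\otimes Y(x)\bigr)\bigl(\mathcal{S}(-z)(v\otimes u)\otimes w\bigr)\\
&\quad=x^{-1}\delta\!\left(\frac{y+z}{x}\right)Y\bigl(Y(u,z)v,y\bigr)w.
\end{align*}
Everything here is understood modulo $\hbar^nV[[\cdots]]$ for arbitrary $n$, i.e.\ in the $\hbar$-adically complete setting, and the unitarity of $\mathcal{S}(x)$ guarantees that $\mathcal{S}(-z)(v\otimes u)$ lies in the appropriate completed tensor product so the middle term makes sense.

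Next I would apply $\Res_{x}$ after no multiplication, or more precisely multiply through by nothing and instead take $\Res_z$ against a suitable test expression — but the cleanest route is the classical one: rename variables so that the target has $x$ and $z$ as the ``operator'' variables and $y$ plays the auxiliary role, then take $\Res_{y}$ of the whole identity. Concretely, set $y=z$ in the corollary's notation; in the Jacobi identity above relabel $(x,y,z)\mapsto(x,z,y_0)$ where $y_0$ is the new auxiliary variable, multiply by nothing, and apply $\Res_{y_0}$. Using the standard delta-function facts
$\Res_{y_0}\, y_0^{-1}\delta\!\left(\frac{x-z}{y_0}\right)=1$ and the expansion identity relating $z^{-1}\delta((y+y_0)/x)$-type terms to $\sum_{j\ge0}\frac{1}{j!}(\partial/\partial z)^j\, x^{-1}\delta(z/x)$ after taking the residue, the first term produces $Y(u,x)Y(v,z)w$, the second produces $Y(z)(1\otimes Y(x))(\mathcal{S}(z-x)(v\otimes u)\otimes w)$ (here the sign flip $-y_0\mapsto z-x$ and the argument of $\mathcal{S}$ come out as $\mathcal{S}(z-x)$ after the relabelling), and the right-hand side, upon using $Y(Y(u,y_0)v,z)=\sum_{m\in\Z}Y(u_m v,z)y_0^{-m-1}$ and $x^{-1}\delta((z+y_0)/x)=\sum_{j\ge0}\frac{1}{j!}\bigl(\frac{\partial}{\partial z}\bigr)^j\bigl(x^{-1}\delta(z/x)\bigr)y_0^{j}$ followed by $\Res_{y_0}$, collapses to $\sum_{j\ge0}Y(u_j v,z)w\,\frac{1}{j!}\bigl(\frac{\partial}{\partial z}\bigr)^j x^{-1}\delta(z/x)$. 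Rearranging gives exactly the asserted formula.

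The only genuine subtlety — and the step I would be most careful about — is the bookkeeping of the $\hbar$-adic topology and the well-definedness of all three residues: one must check that $Y(y)(1\otimes Y(x))(\mathcal{S}(-z)(v\otimes u)\otimes w)$, expanded out, lies in $V[[x^{\pm1}]]((y))[[z]]$ type spaces so that $\Res_{y_0}$ (in the relabelled variables) is legitimate modulo each $\hbar^n V[[\cdots]]$, and that taking the limit over $n$ recovers a genuine identity in the topologically free module. This follows from Lemma \ref{lem:E-h} together with Proposition \ref{h-adic-quotient}: it suffices to verify the identity in each $V/\hbar^n V$, which is a nonlocal vertex algebra over $\mathcal{R}_n$, where the computation is the purely algebraic delta-function manipulation of \cite{EK-qva}; the completeness of $V$ then lifts it. The remaining algebra — the delta-function expansion identities and the residue extraction — is entirely routine formal calculus and I would not spell it out in detail.
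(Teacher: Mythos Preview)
Your approach is correct and matches the paper's: the corollary is stated there as ``an immediate consequence'' of the $\mathcal{S}$-Jacobi identity (\ref{S-Jacobi}) with $F(u,v,-z)$ replaced by $\mathcal{S}(-z)(v\otimes u)$, and the derivation is exactly the residue-in-the-auxiliary-variable extraction you describe. One small point of presentation: the step where $\mathcal{S}(-y_0)$ becomes $\mathcal{S}(z-x)$ uses the delta-function substitution property $y_0^{-1}\delta\!\bigl(\frac{z-x}{-y_0}\bigr)G(-y_0)=y_0^{-1}\delta\!\bigl(\frac{z-x}{-y_0}\bigr)G(z-x)$ (with $z-x$ expanded in nonnegative powers of $x$), which holds for $G\in\C((t))$ and hence termwise in each $\hbar^n$-truncation --- you allude to this but it would be cleaner to state it explicitly rather than folding it into ``the relabelling.''
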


\begin{de}
An $\hbar$-adic nonlocal vertex algebra $V$ is said to be {\em non-degenerate} if
$V/\hbar V$ is non-degenerate.
\end{de}

This notion above is due to \cite{EK-qva}, and more importantly, the results therein imply:

\begin{prop}
Let $V$ be an $\hbar$-adic weak quantum vertex algebra.
If $V$ is non-degenerate, then there exists a  $\C[[\hbar]]$-module map
$\mathcal{S}(x): V\ptimes V\rightarrow V\ptimes V\ptimes \C((x))[[\hbar]]$,
 which is uniquely determined by the $\hbar$-adic $\mathcal{S}$-locality,
and $V$ with $\mathcal{S}(x)$ is an $\hbar$-adic quantum vertex algebra.
 \end{prop}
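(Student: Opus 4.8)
The plan is to reduce the statement to Etingof--Kazhdan's result over $\C$ via the quotients $V/\hbar^n V$, using the structural lemmas already in hand. First I would observe that since $V$ is non-degenerate by definition, $V/\hbar V$ is a non-degenerate weak quantum vertex algebra over $\C$, so by Proposition~\ref{nondeg-wqva} (the non-$\hbar$-adic result of \cite{EK-qva}) there is a unique rational quantum Yang--Baxter operator $\bar{\mathcal{S}}(x)$ on $V/\hbar V$ determined by the $\mathcal{S}$-locality, and $(V/\hbar V,\bar{\mathcal{S}}(x))$ is a quantum vertex algebra. The heart of the argument is to lift this to a $\C[[\hbar]]$-module map $\mathcal{S}(x)\colon V\ptimes V\to V\ptimes V\ptimes\C((x))[[\hbar]]$ and to verify it has all the required properties.

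For the existence and uniqueness of $\mathcal{S}(x)$, I would argue inductively along the tower $\{V/\hbar^n V\}$. Each $V/\hbar^n V$ is a weak quantum vertex algebra over $\mathcal{R}_n$ by Proposition~\ref{h-adic-quotient}; the key point is that non-degeneracy of $V/\hbar V$ propagates to a suitable ``flatness/non-degeneracy over $\mathcal{R}_n$'' statement, so that the $\mathcal{S}$-operator on $V/\hbar^{n}V$ exists, is unique, and reduces mod $\hbar^{n-1}$ to the one on $V/\hbar^{n-1}V$. Concretely, one writes the $\mathcal{S}$-locality at level $n$ as an identity of elements of $\E_\hbar^{(r)}$-type spaces, uses part~(5) of Lemma~\ref{lem:E-h} to pass to the inverse limit, and invokes the injectivity of $Z_n$ (the non-degeneracy condition) to solve uniquely for $\mathcal{S}$ at each stage. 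Taking the inverse limit over $n$ and using Lemma~\ref{free-top} (topological freeness $=$ torsion-free $+$ separated $+$ complete) together with Lemma~\ref{basic-facts} yields a well-defined continuous $\C[[\hbar]]$-module map $\mathcal{S}(x)$ on $V\ptimes V$ with the stated target, uniquely determined by the $\hbar$-adic $\mathcal{S}$-locality.

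It then remains to check that this $\mathcal{S}(x)$ satisfies unitarity, the shift condition $[\mathcal{D}\otimes 1,\mathcal{S}(x)]=-\frac{d}{dx}\mathcal{S}(x)$, and the hexagon identity, making $V$ an $\hbar$-adic quantum vertex algebra. Each of these is an identity between two continuous $\C[[\hbar]]$-module maps into a topologically free module, so by Lemma~\ref{free-top-submodule} and Lemma~\ref{basic-facts} it suffices to verify it modulo each $\hbar^n$, i.e.\ over $\mathcal{R}_n$; for $n=1$ these are exactly the identities supplied by Proposition~\ref{nondeg-wqva} for $V/\hbar V$, and the inductive step is the same computation done at level $n$ using the compatibility of the $\mathcal{S}$-operators along the tower, plus the $\mathcal{S}$-Jacobi identity of Proposition~\ref{rem:h-S-Jacobi} to handle the $\mathcal{D}$-bracket and the hexagon relation. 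The quantum Yang--Baxter equation for $\mathcal{S}(x)$ follows from the hexagon identity and $\mathcal{S}$-locality in the standard way (as in \cite{EK-qva}).

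The main obstacle I expect is the inductive/limit bookkeeping over $\mathcal{R}_n$: one must be careful that ``non-degenerate'' in the $\hbar$-adic sense (defined only via $V/\hbar V$) genuinely gives the injectivity of the relevant maps at each finite level $\mathcal{R}_n$ needed to solve for $\mathcal{S}$ uniquely, and that the solutions are mutually compatible so that their inverse limit lands in $V\ptimes V\ptimes\C((x))[[\hbar]]$ rather than a larger completion. Once that coherence is secured, the verification of unitarity, the shift condition, and the hexagon identity is routine, being a level-by-level reduction to the classical statements of \cite{EK-qva}.
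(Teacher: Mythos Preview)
The paper does not give an independent proof of this proposition; it simply states that the notion of non-degeneracy is due to \cite{EK-qva} and that ``the results therein imply'' the proposition. So there is no detailed argument in the paper to compare against, only a citation.

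That said, your proposed route through the tower $\{V/\hbar^n V\}$ is more roundabout than what Etingof--Kazhdan actually do, and the obstacle you flag is a real one: $V/\hbar^n V$ is a module over the non-field $\mathcal{R}_n$, and you never say what ``non-degenerate over $\mathcal{R}_n$'' should mean or why it follows from non-degeneracy of $V/\hbar V$. The injectivity of the $Z_n$-type maps over $\mathcal{R}_n$ is not automatic, and solving for $\mathcal{S}$ level by level requires exactly that.

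The cleaner approach, and the one implicit in the citation, is to work directly at the $\hbar$-adic level. Since $V$ is topologically free and $V/\hbar V$ is non-degenerate, Lemma~\ref{basic-facts} immediately gives that the relevant ``$Z_n$''-type maps on $V$ itself (valued in topologically free $\C[[\hbar]]$-modules) are injective. The $\hbar$-adic $\mathcal{S}$-locality then determines $F(u,v,x)\in V\ptimes V\ptimes\C((x))[[\hbar]]$ uniquely for each pair $u,v$, and one sets $\mathcal{S}(x)(v\otimes u)=F(u,v,x)$; uniqueness forces $\C[[\hbar]]$-linearity and continuity. The shift condition, unitarity, quantum Yang--Baxter, and hexagon identity are then deduced exactly as in \cite{EK-qva} (or \cite{Li-nonlocal}) from uniqueness of $\mathcal{S}$, with no need to descend to finite quotients. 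This sidesteps your inverse-limit bookkeeping entirely.
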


Let $V$ be an $\hbar$-adic nonlocal vertex algebra.
Suppose that $U$ is a $\C[[\hbar]]$-submodule of $V$ such that
\begin{align}\label{condition-hadic-subalgebra}
[U]=U, \quad  \overline{U}=U,\
\   {\bf 1}\in U,  \  \   u_mv\in U\  \mbox{for }u,v\in U,\ m\in \Z.
\end{align}
As $[U]=U$, we have $U\cap \hbar^nV=\hbar^nU$ for $n\in \N$.
Thus the $\hbar$-adic topology of $U$ coincides with the induced topology.
Furthermore, as $\overline{U}=U$, $U$ is topologically free. Then $U$ itself is an $\hbar$-adic nonlocal vertex algebra.
Motivated by this, we formulate the following notion:

\begin{de}\label{def-hadic-subalgebra}
An {\em $\hbar$-adic nonlocal vertex subalgebra} of an $\hbar$-adic nonlocal vertex algebra $V$
is defined to be a $\C[[\hbar]]$-submodule $U$ satisfying the condition (\ref{condition-hadic-subalgebra}).
\end{de}

Let $V$ be an $\hbar$-adic nonlocal vertex algebra. For any $\C[[\hbar]]$-submodule $T$, define
$$T^{(2)}={\rm span}_{\C[[\hbar]]}\{ a_mb\ |\ a,b\in T,\ m\in \Z\}.$$
Let $S$ be a subset of $V$.
Set $S^{(1)}=\C[[\hbar]]{\bf 1}+\C[[\hbar]]S$, a $\C[[\hbar]]$-submodule of $V$.
Then inductively define $S^{(n+1)}=(S^{(n)})^{(2)}$ for $n\ge 1$.
Notice that as ${\bf 1}\in S^{(1)}$ we have $S^{(1)}\subset S^{(2)}$.
In this way we obtain an ascending sequence of $\C[[\hbar]]$-submodules
$$S^{(1)}\subset S^{(2)}\subset S^{(3)}\subset \cdots$$
with $\{ {\bf 1}\}\cup S\subset S^{(1)}$.
Then we set
\begin{align}
\<S\>=\overline{ \left[ \cup_{n\ge 1}S^{(n)}\right]}.
\end{align}
Using Lemma \ref{lem:topo-op}, we immediately get:

\begin{prop}
Let $V$ be an $\hbar$-adic nonlocal vertex algebra and let $S$ be a subset. Then
$\<S\>$ is an $\hbar$-adic nonlocal vertex subalgebra containing $S$. Furthermore, any
$\hbar$-adic nonlocal vertex subalgebra that contains $S$ also contains $\<S\>$.
 \end{prop}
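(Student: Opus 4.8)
The plan is to verify that $\<S\>=\overline{\left[\cup_{n\ge 1}S^{(n)}\right]}$ satisfies the four defining conditions of an $\hbar$-adic nonlocal vertex subalgebra in Definition~\ref{def-hadic-subalgebra}, and then to check minimality. First I would deal with the subalgebra conditions. Set $U_0=\cup_{n\ge 1}S^{(n)}$. By construction $\vac\in S^{(1)}\subseteq U_0$. For closure under the products $u_mv$: given $a,b\in U_0$, we have $a\in S^{(p)}$ and $b\in S^{(q)}$ for some $p,q$; taking $N=\max(p,q)$ and using $S^{(1)}\subseteq S^{(2)}\subseteq\cdots$, both lie in $S^{(N)}$, hence $a_mb\in (S^{(N)})^{(2)}=S^{(N+1)}\subseteq U_0$. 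So $U_0$ is closed under all products $a_mb$ and contains $\vac$, although it need not be a $\C[[\hbar]]$-submodule with the topological properties. The point of applying $[\;\cdot\;]$ and then the closure $\overline{\;\cdot\;}$ is precisely to repair this: I would invoke Lemma~\ref{lem:topo-op} with $U=[U_0]$. That lemma requires $[U]=U$, which holds by the identity $[[U_0]]=[U_0]$ noted in the excerpt, and it yields $[\overline{[U_0]}]=\overline{[U_0]}$ together with topological freeness of $\overline{[U_0]}=\<S\>$. This gives conditions $[U]=U$, $\overline{U}=U$ (the latter by $\overline{\overline{U}}=\overline{U}$), and topological freeness for free.

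Next I would propagate the algebraic closure properties through the two operations $[\;\cdot\;]$ and $\overline{\;\cdot\;}$. Since $\{\vac\}\cup S\subseteq U_0\subseteq[U_0]\subseteq\overline{[U_0]}=\<S\>$, the vector $\vac$ and the set $S$ are in $\<S\>$. For closure under products: one must show that if $a,b\in\<S\>$ then $a_mb\in\<S\>$ for all $m\in\Z$. I would first check this for $[U_0]$: if $\hbar^i a\in U_0$ and $\hbar^j b\in U_0$, then $\hbar^{i+j}(a_mb)=(\hbar^i a)_m(\hbar^j b)\in U_0$ by $\C[[\hbar]]$-bilinearity of the vertex operator products, so $a_mb\in[U_0]$. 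Then I would pass to the closure: $Y(\cdot,x)$ lands in $\E_\hbar(V)$ and the product maps $(a,b)\mapsto a_mb$ are continuous in the $\hbar$-adic topology (each $a_m$ is a continuous $\C[[\hbar]]$-endomorphism, and $Y(a,x)$ depends continuously on $a$ since $Y$ is a $\C[[\hbar]]$-module map into the complete module $\E_\hbar(V)$), so from $a_mb\in[U_0]$ for $a,b\in[U_0]$ one gets $a_mb\in\overline{[U_0]}=\<S\>$ for $a,b\in\<S\>$ by taking limits of Cauchy sequences in $[U_0]$ approximating $a$ and $b$. Here one uses that $\<S\>$ is itself closed. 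This establishes all four conditions of Definition~\ref{def-hadic-subalgebra}, so $\<S\>$ is an $\hbar$-adic nonlocal vertex subalgebra containing $S$.

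Finally, for minimality: let $W$ be any $\hbar$-adic nonlocal vertex subalgebra with $S\subseteq W$. Since $\vac\in W$ and $W$ is a $\C[[\hbar]]$-submodule, $S^{(1)}=\C[[\hbar]]\vac+\C[[\hbar]]S\subseteq W$. Since $W$ is closed under all products $a_mb$, an easy induction on $n$ gives $S^{(n)}\subseteq W$ for all $n$, hence $U_0\subseteq W$. Because $[W]=W$ we get $[U_0]\subseteq W$, and because $\overline{W}=W$ we get $\<S\>=\overline{[U_0]}\subseteq W$. This completes the proof.

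The step I expect to be the main obstacle is the propagation of closure under products through the closure operation $\overline{\;\cdot\;}$, i.e.\ showing $a_mb\in\<S\>$ for general $a,b\in\<S\>$ rather than merely for $a,b\in[U_0]$. This requires a genuine continuity argument: one must argue that the bilinear products $(a,b)\mapsto a_mb$ are jointly continuous for the $\hbar$-adic topology, which hinges on $Y(\cdot,x)$ being a continuous $\C[[\hbar]]$-module map into $\E_\hbar(V)=\E(V_0)[[\hbar]]$ (Lemma~\ref{lem:E-h}) and on the completeness of $\<S\>$ guaranteed by Lemma~\ref{lem:topo-op}. Everything else is either formal bookkeeping or a direct appeal to the already-established identities $[[U]]=[U]$, $\overline{\overline{U}}=\overline{U}$, and Lemma~\ref{lem:topo-op}.
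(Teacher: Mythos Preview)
Your proof is correct and follows the same approach as the paper, which records only that the result follows immediately from Lemma~\ref{lem:topo-op}. Your detailed argument---showing $U_0=\cup_{n\ge 1}S^{(n)}$ is closed under all products $a_mb$, then propagating this closure through $[\,\cdot\,]$ by $\C[[\hbar]]$-bilinearity and through $\overline{\,\cdot\,}$ by $\hbar$-adic continuity, and finally invoking Lemma~\ref{lem:topo-op} with $U=[U_0]$ to obtain $[\<S\>]=\<S\>$---is precisely the content that the paper leaves implicit, and your minimality argument is exactly the expected one.
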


We also have the following result:

 \begin{lem}\label{hadic-generating-subset}
Let $V$ be an $\hbar$-adic nonlocal vertex algebra and let $S$ be a subset such that
 for every positive integer $n$, $\{a+\hbar^nV\ |\ a\in S\}$ generates $V/\hbar^nV$ as a nonlocal vertex algebra.
 Then $\<S\>=V$.
\end{lem}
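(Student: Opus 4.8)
The plan is to prove $\langle S\rangle = V$ by combining two facts: first, that $\langle S\rangle$ is an $\hbar$-adic nonlocal vertex subalgebra (hence in particular a closed, topologically free $\C[[\hbar]]$-submodule with $[\langle S\rangle] = \langle S\rangle$), and second, that its image in each quotient $V/\hbar^n V$ is everything. For the second point I would unwind the construction of $\langle S\rangle$ modulo $\hbar^n$: the ascending chain $S^{(1)}\subset S^{(2)}\subset\cdots$ is built purely from the $\C[[\hbar]]$-span of $\{{\bf 1}\}\cup S$ by iterating the operation $T\mapsto T^{(2)} = \mathrm{span}_{\C[[\hbar]]}\{a_m b\mid a,b\in T,\ m\in\Z\}$. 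Passing to $V/\hbar^n V$, the image of $\cup_{k\ge 1} S^{(k)}$ is exactly the nonlocal-vertex-subalgebra of $V/\hbar^n V$ generated (over $\mathcal R_n$) by $\{a+\hbar^n V\mid a\in S\}$, since the vertex operations on $V/\hbar^n V$ are induced from those on $V$ and the group element ${\bf 1}$ descends to the vacuum of $V/\hbar^n V$. By hypothesis this generated subalgebra is all of $V/\hbar^n V$.

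Next I would compare $\langle S\rangle$ with $V$ using the completeness machinery. Set $U = \cup_{k\ge 1} S^{(k)}$, so $\langle S\rangle = \overline{[U]}$. From the previous paragraph, the image of $U$ in $V/\hbar^n V$ equals $V/\hbar^n V$ for every $n$; equivalently $V = U + \hbar^n V$ for all $n$, hence $V = [U] + \hbar^n V$ for all $n$ (enlarging $U$ to $[U]$ only helps). This says $[U]$ is dense in $V$ in the $\hbar$-adic topology. Taking closures, $\overline{[U]} = V$ — here I use that $V$ itself is $\hbar$-adically complete (topologically free, via Lemma \ref{free-top}) and that $\overline{[U]}$, being the closure of a dense submodule, must be the whole space. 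This gives $\langle S\rangle = V$ directly.

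Alternatively — and this is probably the cleaner write-up — I would invoke Lemma \ref{basic-facts}: the inclusion map $\iota\colon \langle S\rangle \hookrightarrow V$ is a continuous $\C[[\hbar]]$-module map between topologically free $\C[[\hbar]]$-modules ($\langle S\rangle$ is topologically free because it is an $\hbar$-adic nonlocal vertex subalgebra, by the discussion preceding Definition \ref{def-hadic-subalgebra}, or by Lemma \ref{lem:topo-op} since $[U]=[U]$ forces $[\overline{[U]}]=\overline{[U]}$). The derived map $\bar\iota\colon \langle S\rangle/\hbar\langle S\rangle \to V/\hbar V$ is onto: its image is the sub-(nonlocal vertex algebra) of $V/\hbar V$ generated by $\{a + \hbar V\mid a\in S\}$, which by the $n=1$ case of the hypothesis is all of $V/\hbar V$. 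Hence $\iota$ is onto by Lemma \ref{basic-facts}, i.e. $\langle S\rangle = V$.

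The only genuine point requiring care — the main obstacle — is the bookkeeping identifying the image of $\cup_{k} S^{(k)}$ in $V/\hbar^n V$ (or $V/\hbar V$) with the nonlocal vertex subalgebra generated there. One must check that the image of $[U]$ (not just of $U$) in $V/\hbar V$ is still this generated subalgebra; since $V/\hbar V$ is a $\C$-vector space with no $\hbar$-torsion issues, $[U] + \hbar V = U + \hbar V$, so the subtlety evaporates. With that observed, the rest is the standard translation between the iterative ``generated subalgebra'' construction over $\C[[\hbar]]$ and its reduction mod $\hbar^n$, which is routine given Proposition \ref{h-adic-quotient}.
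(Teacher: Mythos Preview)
Your proposal is correct and follows essentially the same route as the paper: establish $V = U + \hbar^n V$ for all $n$ (density), then conclude via closedness. The paper's write-up is marginally cleaner in one respect: rather than identifying the image of $\cup_k S^{(k)}$ in $V/\hbar^n V$ with the generated subalgebra, it works with an arbitrary $\hbar$-adic nonlocal vertex subalgebra $U\supset S$, notes that $[U]=U$ forces $U\cap\hbar^n V = \hbar^n U$ so that $U/\hbar^n U \hookrightarrow V/\hbar^n V$ is a subalgebra containing $\bar S$, and immediately gets $U/\hbar^n U = V/\hbar^n V$ from the hypothesis --- no explicit bookkeeping on the $S^{(k)}$ is needed. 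Your alternative via Lemma~\ref{basic-facts} (using only the $n=1$ case) is a valid shortcut the paper does not take.
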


 \begin{proof} Let $U$ be any $\hbar$-adic nonlocal vertex subalgebra containing $S$.
 Let $n\in \Z_{+}$. As $[U]=U$, we have
 $U\cap \hbar^nV=\hbar^nU$. Then $U/\hbar^nU\subset V/\hbar^nV$.
  Since $S\subset U$ and $\{a+\hbar^nV\ |\ a\in S\}$ generates $V/\hbar^nV$ as a nonlocal vertex algebra, we have
 $U/\hbar^nU=V/\hbar^nV$, which implies $V=U+\hbar^nV$. Now, let $v\in V$. Then there exist $u(n)\in U$ for $n\in \Z_+$ such that
 $v-u(n)\in \hbar^nV$. Thus $\lim_{n\rightarrow \infty}u(n)=v$ in $V$. As $\overline{U}=U$, we conclude $v\in U$. This proves  $U=V$.
 In particular, we have $\<S\>=V$.
 \end{proof}

For convenience, we formulate the following notion:

 \begin{de}\label{def-hadic-generating-subset}
A subset $S$ of an $\hbar$-adic nonlocal vertex algebra $V$ is called a {\em generating subset}
if for every positive integer $n$,
 $\{a+\hbar^nV\ |\ a\in S\}$ generates $V/\hbar^nV$ as a nonlocal vertex algebra.
\end{de}

\begin{rem}
{\em  Note that a subset $S$ of $V$ with $\<S\>=V$ is {\em not} necessarily a generating subset.
 For example, take $S=\hbar V$. Then we have
 $\<S\>=V$ as $[S]=V$, but $\{ a+\hbar V\ |\ a\in S\}=\{ 0+\hbar V\}$, which does not generate $V/\hbar V$
if $\dim_{\C} V/\hbar V>1$.}
\end{rem}

\subsection{$\phi$-coordinated quasi modules}

Here, we study $\phi$-coordinated quasi modules for $\hbar$-adic nonlocal vertex algebras.
Throughout this section, we fix an associate $\phi(x,z)=e^{zp(x)\partial_x}(x)$ of $F_a(x,y)$  with $p(x)\ne 0$.

\begin{de}\label{de:h-adic-phi-mod}
Let $V$ be an $\hbar$-adic nonlocal vertex algebra.
A {\em $\phi$-coordinated quasi $V$-module}  is a topologically free $\C[[\hbar]]$-module $W$
equipped with a $\C[[\hbar]]$-module map
\begin{align*}
  Y_W^\phi(\cdot,x):\  V\longrightarrow \E_\hbar(W);\ \ \
  v\mapsto Y_W^{\phi}(v,x),
\end{align*}
satisfying the conditions that $Y_W^\phi(\vac,x)=1_W$
and that for $u,v\in V$ and for every positive integer $n$,
there exists a nonzero series $q(x_1,x_2)\in\C((x_1,x_2))$ such that
\begin{align*}
  &q(x_1,x_2)\pi_n\( Y_W^\phi(u,x_1)Y_W^\phi(v,x_2)\)\in \E^{(2)}\(W/\hbar^nW\)
\end{align*}
and
\begin{align*}
  q(\phi(x,z),x)Y_W^\phi(Y(u,z)v,x)w\equiv
  \(q(x_1,x)Y_W^\phi(u,x_1)Y_W^\phi(v,x)w\)|_{x_1=\phi(x,z)}
\end{align*}
modulo $\hbar^nW[[x^{\pm 1},z^{\pm 1}]]$ for all $w\in W$.
Furthermore, $W$ is called a {\em $\phi$-coordinated $V$-module} (without the prefix ``quasi'')
if $q(x_1,x_2)$ is assumed to be a polynomial of the form $(x_1-x_2)^k$ with $k\in \N$.
\end{de}

The following is a straightforward generalization of Proposition \ref{h-adic-quotient}:

\begin{prop}\label{prop:phi-coor-ind}
Let $V$ be an $\hbar$-adic nonlocal vertex algebra and let
$Y_W^\phi(\cdot,x): V\rightarrow ({\rm End}W)[[x,x\inverse]]$ be a $\C[[\hbar]]$-module map,
where $W$ is a topologically free $\C[[\hbar]]$-module.
Then $(W,Y_W^\phi)$ is a $\phi$-coordinated (quasi) $V$-module
if and only if for every positive integer $n$,
$W/\hbar^nW$ is a $\phi$-coordinated (quasi) $V/\hbar^nV$-module.
\end{prop}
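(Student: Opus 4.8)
The plan is to reduce the statement to the non-$\hbar$-adic case, Proposition~\ref{h-adic-quotient}, by carefully matching up the defining data of a $\phi$-coordinated (quasi) module on both sides of the equivalence. First I would note that by Lemma~\ref{lem:E-h}(1) the hypothesis $Y_W^\phi(\cdot,x)\colon V\to(\mathrm{End}\,W)[[x,x^{-1}]]$ lands in $\E_\hbar(W)=\E(W_0)[[\hbar]]$ precisely when each $\pi_n\circ Y_W^\phi$ takes values in $\E^{(1)}(W/\hbar^n W)$, which is exactly the condition $Y_W^\phi(v,x)w\in W((x))$ read modulo $\hbar^n$; so the ``image'' axiom for $(W,Y_W^\phi)$ over $\C[[\hbar]]$ is equivalent to the corresponding axiom for all the quotients $(W/\hbar^n W, \pi_n Y_W^\phi)$ over $\Rcat_n$. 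The vacuum axiom $Y_W^\phi(\vac,x)=1_W$ is manifestly preserved and detected under passing to quotients.

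The substance is the quasi-associativity axiom. In the forward direction, fix $u,v\in V$ and $n\ge 1$; the $\hbar$-adic definition hands us a nonzero $q(x_1,x_2)\in\C((x_1,x_2))$ with $q(x_1,x_2)\pi_n(Y_W^\phi(u,x_1)Y_W^\phi(v,x_2))\in\E^{(2)}(W/\hbar^n W)$ and the substitution identity holding modulo $\hbar^n W[[x^{\pm1},z^{\pm1}]]$. Reading both of these statements in $W/\hbar^n W$ literally gives that $W/\hbar^n W$ is a $\phi$-coordinated quasi $V/\hbar^n V$-module over $\Rcat_n$ (using that $\pi_n$ is a ring/algebra homomorphism compatible with the vertex operator maps, and that $q(\phi(x,z),x)$ makes sense in $\C((x))((z))$ by the discussion preceding Lemma~\ref{F-characterization}). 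For the converse, suppose each $W/\hbar^n W$ is a $\phi$-coordinated quasi $V/\hbar^n V$-module. Given $u,v\in V$, for each $n$ we get some nonzero $q_n(x_1,x_2)$ witnessing the axioms modulo $\hbar^n$. The key point — and here is where I expect the real work — is to produce a \emph{single} nonzero $q(x_1,x_2)\in\C((x_1,x_2))$ that works modulo $\hbar^n$ for all $n$ simultaneously. One cannot simply multiply the $q_n$ together (infinitely many of them), so the argument must instead show that the witnesses can be taken compatibly: if $q_n$ works mod $\hbar^n$ then $q_n$ also works mod $\hbar^m$ for $m\le n$, and conversely a witness mod $\hbar^{n+1}$ refines one mod $\hbar^n$; then one invokes an argument (of the flavor of \cite{Li-h-adic}, e.g.\ the reasoning behind Lemma~\ref{lem:E-h}(5) and the inverse-limit description) that a compatible family of ``denominators'' stabilizes, or more precisely that the set of admissible $q$ for the pair $(u,v)$ modulo $\hbar^n$ forms a multiplicatively closed cofiltered system whose inverse limit is nonempty. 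Concretely I would argue: the collection of $q$ that serve as a witness for $(u,v)$ modulo $\hbar^n$ is nonempty and closed under multiplication by arbitrary nonzero elements of $\C((x_1,x_2))$; such a $q$ automatically serves modulo $\hbar^{n-1}$ as well; hence it suffices to find $q$ serving modulo $\hbar^n$ for each $n$ with the property that $q$ serving mod $\hbar^n$ can be upgraded — but in fact the slicker route is to observe that the $\C[[\hbar]]$-submodule of $\E_\hbar^{(2)}(W)$ spanned by $Y_W^\phi(u,x_1)Y_W^\phi(v,x_2)w$ over $w\in W$, after clearing a denominator that works mod $\hbar$, already lies in $\E_\hbar^{(2)}(W)$ because $\E_\hbar^{(2)}(W)=[\E_\hbar^{(2)}(W)]$ is $\hbar$-saturated (Lemma~\ref{lem:E-h}(3)); combined with completeness this forces a uniform denominator.

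So the workflow is: (i) equivalence of the image and vacuum axioms under $\pi_n$, routine via Lemma~\ref{lem:E-h}; (ii) forward direction of the associativity axiom, a direct read-off; (iii) converse direction, the only nontrivial step, where I would first fix the denominator $q=q_1$ coming from the mod-$\hbar$ module, show $q\cdot Y_W^\phi(u,x_1)Y_W^\phi(v,x_2)\in\E_\hbar^{(2)}(W)$ using the $\hbar$-saturatedness and completeness from Lemma~\ref{lem:E-h}, and then show the substitution identity holds modulo every $\hbar^n$ because it does so modulo $\hbar$ after multiplying by $q$ and because the two sides differ by something $\hbar$-divisible yet lying in a topologically free module, so an induction on $n$ (or a limit argument) closes it. The non-``quasi'' case is identical, carrying along the extra constraint that $q$ be of the form $(x_1-x_2)^k$; note that $(x_1-x_2)^{k_1}$ working mod $\hbar$ can be replaced by $(x_1-x_2)^{\max(k_1,\dots)}$, but since we only need a mod-$\hbar$ witness to bootstrap, this causes no difficulty. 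I expect step (iii), specifically the uniform-denominator argument, to be the main obstacle; everything else is bookkeeping with the functors $\pi_n$ and the identifications in Lemma~\ref{lem:E-h}.
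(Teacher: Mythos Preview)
Your proposal contains a genuine misreading of Definition~\ref{de:h-adic-phi-mod}. Look again at the quantifier structure there: ``for $u,v\in V$ and for every positive integer $n$, there exists a nonzero series $q(x_1,x_2)\in\C((x_1,x_2))$ such that \dots''. The witness $q$ is explicitly allowed to depend on $n$. Consequently the converse direction is just as immediate as the forward one: if each $W/\hbar^n W$ is a $\phi$-coordinated (quasi) $V/\hbar^n V$-module, then for fixed $u,v\in V$ and fixed $n$ you take the witness $q_n$ coming from the quotient module structure, and that $q_n$ is exactly what Definition~\ref{de:h-adic-phi-mod} asks for at level $n$. There is no need to produce a uniform $q$ working for all $n$ simultaneously, so the entire ``step~(iii)'' apparatus you propose --- the $\hbar$-saturation argument via Lemma~\ref{lem:E-h}(3), the inverse-limit reasoning, the induction on $n$ --- is addressing a nonexistent obstacle.

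This is why the paper states the proposition without proof as ``a straightforward generalization of Proposition~\ref{h-adic-quotient}'': the $\hbar$-adic definition has been set up precisely so that it amounts, axiom by axiom, to the family of quotient conditions. Your treatment of the image axiom, the vacuum axiom, and the forward direction of associativity is fine; once you drop the spurious uniformity requirement, the converse is equally routine and the proof is complete.
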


Let $G$ be a group with a linear character $\chi:G\rightarrow \C^\times$.
A {\em  $(G,\chi)$-module $\hbar$-adic nonlocal vertex algebra} is an
$\hbar$-adic nonlocal vertex algebra $V$ equipped with a group representation
$R: G\rightarrow {\rm GL}(V)$ such that $R(g){\bf 1}={\bf 1}$ and
$$R(g)Y(v,x)R(g)^{-1}=Y(R(g)v,\chi(g)x)\quad \te{ for } g\in G,\ v\in V.$$

\begin{rem}
{\em Let $V$ be an $\hbar$-adic nonlocal vertex algebra with a group representation
$R: G\rightarrow {\rm GL}(V)$.
It is straightforward to show that $V$ is a $(G,\chi)$-module $\hbar$-adic nonlocal vertex algebra
if and only if for every positive integer $n$, $V/\hbar^nV$ is a $(G,\chi)$-module nonlocal vertex algebra
over $\mathcal{R}_n$.}
\end{rem}

\begin{de}\label{def-h-G-equivariant-coordinated-module}
Let  $V$ be a $(G,\chi)$-module $\hbar$-adic nonlocal vertex algebra and
 let $\chi_{\phi}$ be a linear character of $G$, satisfying the compatibility condition (\ref{p(x)-compatibility}).
 A {\em $(G,\chi_{\phi})$-equivariant $\phi$-coordinated quasi $V$-module} is a
 $\phi$-coordinated quasi $V$-module $(W,Y_W)$ such that
for every positive integer $n$, $W/\hbar^n W$ is a
$(G,\chi_{\phi})$-equivariant $\phi$-coordinated quasi $V/\hbar^nV$-module.
\end{de}

Note that for a $(G,\chi_{\phi})$-equivariant $\phi$-coordinated quasi $V$-module $(W,Y_W^{\phi})$,
as $\cap_{n\ge 1}\hbar^nW=0$, we have
\begin{align}
Y_W(R(g)v,x)=Y_W(v,\chi_{\phi}(g)^{-1}x)\quad \te{for }g\in G,\ v\in V.
\end{align}

Next, we discuss a conceptual construction of $\hbar$-adic nonlocal vertex algebras and
their $\phi$-coordinated quasi-modules.
Let $W=W_0[[\hbar]]$ be a topologically free $\C[[\hbar]]$-module.
Recall that $\E_\hbar(W)$ consists of every $a(x)\in \te{End}(W)$ such that $\pi_n(a(x))\in \E(W/\hbar^nW)$
for $n\in \Z_{+}$, where $\pi_n: \te{End}(W)\rightarrow \te{End}(W/\hbar^nW)$ is the natural $\C[\hbar]$-module map.

\begin{de}
A finite sequence $\(a_1(x),\dots,a_r(x)\)$ in $\E_\hbar(W)$
is said to be {\em $\hbar$-adically (quasi-)compatible} if for every positive integer $n$,
$$\(\pi_n(a_1(x)),\dots,\pi_n(a_r(x))\)$$ is a
(quasi-)compatible sequence in $\E(W/\hbar^nW)$.
Furthermore, a subset $U$ of $\E_\hbar(W)$ is said to be {\em $\hbar$-adically (quasi-)compatible}
if every finite sequence in $U$ is $\hbar$-adically (quasi-)compatible.
\end{de}



Let $(\al(x),\be(x))$ be an $\hbar$-adically quasi-compatible ordered pair in $\E_\hbar(W)$.
By definition,  for every positive integer $n$,
$(\pi_n\(\al(x)\),\pi_n\(\be(x)\))$ is a quasi-compatible ordered pair in $\E(W/\hbar^nW)$.
Then  we have (see \cite{Li-phi-coor})
 $$\pi_n(\al(x))_m^\phi\pi_n(\be(x))\in \E(W/\hbar^nW)\    \    \te{ for }m\in \Z,$$
which are defined in terms of generating function
 \begin{align*}
  Y_\E^\phi\(\pi_n(\al(x)),z\)\pi_n(\be(x))
  =\sum_{m\in\Z}\pi_n(\al(x))_m^\phi\pi_n(\be(x))z^{-m-1}
\end{align*}
by
\begin{align*}
&Y_\E^\phi\(\pi_n(\al(x)),z\)\pi_n(\be(x))\\
=\ & \iota_{x,z}q(\phi(x,z),x)\inverse \(q(x_1,x)\pi_n(\al(x_1))
    \pi_n(\be(x))\)|_{x_1=\phi(x,z)},
\end{align*}
where $q(x_1,x_2)$ is any nonzero element of $\C((x_1,x_2))$ such that
$$q(x_1,x_2)\pi_n(\al(x_1))\pi_n(\be(x_2)) \in\E^{(2)}\(W/\hbar^nW\).$$
Notice that for $q(x_1,x_2)\in\C((x_1,x_2))$, if
\begin{align*}
  q(x_1,x_2)\pi_{n+1}(\al(x_1))\pi_{n+1}(\be(x_2))
    \in\E^{(2)}\(W/\hbar^{n+1}W\)
\end{align*}
then
\begin{align*}
  q(x_1,x_2)\pi_n(\al(x_1))\pi_n(\be(x_2))
    \in\E^{(2)}\(W/\hbar^nW\).
\end{align*}
From \cite{Li-phi-coor}, we have
\begin{align}
  \theta_n&  \(Y_\E^\phi\(\pi_{n+1}(\al(x)),z\)\pi_{n+1}(\be(x)) \)
  =Y_\E^\phi\(\pi_n(\al(x)),z\)\pi_n(\be(x)),
\end{align}
recalling that $\theta_n$ is the natural map from $\E(W/\hbar^{n+1}W)$ to $\E(W/\hbar^{n}W)$.

\begin{de}\label{de:Y-E-phi}
Let $\(\al(x),\be(x)\)$ be an $\hbar$-adically quasi-compatible ordered pair  in $\E_\hbar(W)$.
For $m\in\Z$, we define
\begin{align}
  \al(x)_m^\phi\be(x)=\varprojlim\limits_n \pi_n(\al(x))_m^\phi\pi_n(\be(x))\in \E_\hbar(W).
\end{align}
Furthermore, form a generating function
\begin{align}\label{vertex-operator-Y-E-phi}
  Y_\E^\phi\(\al(x),z\)\be(x)=\sum_{m\in\Z}\al(x)_m^\phi\be(x)z^{-m-1}.
\end{align}
\end{de}

From definition and Lemma \ref{lem:E-h}, $\al(x)_m^{\phi}\be(x)$ is well defined for
each $m\in\Z$, and for every positive integer $n$ we have
\begin{align}
  \pi_n\(\al(x)_m^\phi\be(x)\)=\pi_n(\al(x))_m^\phi\pi_n(\be(x)) \   \   \,  \te{for }m\in\Z.
\end{align}
Namely,
\begin{align}
  \pi_n\(Y_\E^\phi\(\al(x),z\)\be(x)\)=Y_\E^\phi\(\pi_n(\al(x)),z\)\pi_n(\be(x)).
\end{align}
Using \cite[Proposition 4.9]{Li-phi-coor} we immediately have  the following $\hbar$-adic analogue:

\begin{lem}\label{lem:quasi-compatible}
Let $\varphi_1(x),\dots,\varphi_r(x), \al(x),\be(x), \phi_1(x),\dots,\phi_s(x)\in\E_\hbar(W)$.
Assume that the ordered sequences $(\al(x),\be(x))$ and
\begin{align*}
  (\varphi_1(x),\dots,\varphi_r(x),\al(x),\be(x),\phi_1(x),\dots,\phi_s(x))
\end{align*}
are $\hbar$-adically quasi-compatible. Then for every $m\in\Z$  the ordered sequence
\begin{align*}
  (\varphi_1(x),\dots,\varphi_r(x),\al(x)_m^\phi\be(x),\phi_1(x),\dots,\phi_s(x))
\end{align*}
is $\hbar$-adically quasi-compatible. The same assertion holds without the prefix ``quasi.''
\end{lem}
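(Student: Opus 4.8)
The plan is to reduce the $\hbar$-adic statement to the non-$\hbar$-adic one via the projection maps $\pi_n$, exactly as Lemma \ref{lem:E-h} and Definition \ref{de:Y-E-phi} have set things up. First I would recall that, by definition, $\al(x)_m^\phi\be(x)=\varprojlim_n \pi_n(\al(x))_m^\phi\pi_n(\be(x))$ and that $\pi_n\(\al(x)_m^\phi\be(x)\)=\pi_n(\al(x))_m^\phi\pi_n(\be(x))$ for every positive integer $n$. So, to prove that
\begin{align*}
  \(\varphi_1(x),\dots,\varphi_r(x),\al(x)_m^\phi\be(x),\phi_1(x),\dots,\phi_s(x)\)
\end{align*}
is $\hbar$-adically quasi-compatible, it suffices by the very definition of $\hbar$-adic quasi-compatibility to show that for each fixed positive integer $n$ the sequence obtained by applying $\pi_n$ to each entry is quasi-compatible in $\E(W/\hbar^nW)$. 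Applying $\pi_n$ termwise and using the displayed identity $\pi_n\(\al(x)_m^\phi\be(x)\)=\pi_n(\al(x))_m^\phi\pi_n(\be(x))$, this is precisely the sequence
\begin{align*}
  \(\pi_n(\varphi_1(x)),\dots,\pi_n(\varphi_r(x)),\pi_n(\al(x))_m^\phi\pi_n(\be(x)),\pi_n(\phi_1(x)),\dots,\pi_n(\phi_s(x))\).
\end{align*}

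Second, I would invoke the hypothesis: the two given sequences are $\hbar$-adically quasi-compatible, so by definition their $\pi_n$-images are quasi-compatible in $\E(W/\hbar^nW)$ for every $n$. In particular, for our fixed $n$, the pair $\(\pi_n(\al(x)),\pi_n(\be(x))\)$ and the longer sequence $\(\pi_n(\varphi_1),\dots,\pi_n(\varphi_r),\pi_n(\al),\pi_n(\be),\pi_n(\phi_1),\dots,\pi_n(\phi_s)\)$ are quasi-compatible in $\E(W/\hbar^nW)$. Now I would apply \cite[Proposition 4.9]{Li-phi-coor} (the non-$\hbar$-adic result the lemma is announced as the analogue of) over the base ring $\mathcal{R}_n$ to conclude that replacing the consecutive pair $\pi_n(\al),\pi_n(\be)$ by $\pi_n(\al)_m^\phi\pi_n(\be)$ yields a quasi-compatible sequence in $\E(W/\hbar^nW)$. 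Since this holds for all $n$, the original $\hbar$-adic sequence is $\hbar$-adically quasi-compatible, which is what we wanted. The case ``without the prefix quasi'' is verbatim the same argument, replacing ``quasi-compatible'' by ``compatible'' throughout and using the corresponding non-quasi part of \cite[Proposition 4.9]{Li-phi-coor}.

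One point that deserves care: \cite{Li-phi-coor} is stated over a field, whereas here we are working over the ring $\mathcal{R}_n=\C[\hbar]/\hbar^n\C[\hbar]$. Two ways to handle this. The cleaner one is to observe that $W/\hbar^nW$ is a free $\mathcal{R}_n$-module and to note that all the arguments in \cite{Li-phi-coor} concerning quasi-compatibility and the products $a(x)_m^\phi b(x)$ go through verbatim over any commutative base ring, the only ``division'' involved being by the polynomial $q$ in formal-variable denominators (the operator $\iota_{x,z}q(\phi(x,z),x)^{-1}$), which is legitimate since $q\in\C((x_1,x_2))$ has coefficients in $\C\subset\mathcal{R}_n$ and its ``inverse'' is taken in the formal-variable sense, not in $\mathcal{R}_n$. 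Alternatively, one can pass further to $W/\hbar W$-type pieces, but that is unnecessary. So I expect the main (and essentially only) obstacle to be the bookkeeping of confirming that \cite[Proposition 4.9]{Li-phi-coor} applies over $\mathcal{R}_n$ rather than over a field; once that is granted, the $\hbar$-adic statement is an immediate $\varprojlim$ of the $\mathcal{R}_n$-level statements, with no further computation. The compatibility of $\pi_n$ with the $\phi$-products and the associativity/locality properties needed is already recorded in Lemma \ref{lem:E-h} and in the discussion preceding Definition \ref{de:Y-E-phi}.
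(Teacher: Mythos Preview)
Your proposal is correct and follows essentially the same route as the paper: the paper simply states that the lemma follows immediately from \cite[Proposition 4.9]{Li-phi-coor}, and your reduction via $\pi_n$ to the non-$\hbar$-adic statement over $\mathcal{R}_n$ is exactly how one makes that ``immediately'' precise. Your extra care about the base ring $\mathcal{R}_n$ versus a field is a legitimate point that the paper leaves implicit, and your observation that the formal-variable inversions in \cite{Li-phi-coor} only use coefficients in $\C$ is the right way to dispose of it.
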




\begin{de}\label{closed}
An $\hbar$-adically quasi-compatible $\C[[\hbar]]$-submodule $U$ of $\E_\hbar(W)$ is said to be
{\em  $Y_\E^\phi$-closed}  if
\begin{align*}
  \al(x)_m^\phi\be(x)\in U\  \   \,\te{for all }\al(x),\be(x)\in U, \  m\in\Z.
\end{align*}
\end{de}

We have (cf. \cite[Proposition 4.13]{Li-h-adic}):

\begin{thm}\label{prop:abs-construct}
Let $W$ be a topologically free $\C[[\hbar]]$-module, $V$ an $\hbar$-adically quasi-compatible and $Y_\E^\phi$-closed
$\C[[\hbar]]$-submodule of $\E_\hbar(W)$ such that $1_W\in V$, $[V]=V$, and $\overline{V}=V$.
Then $(V,Y_\E^\phi,1_W)$ carries the structure of an $\hbar$-adic nonlocal vertex algebra and
$W$ is a faithful $\phi$-coordinated quasi module
with $Y_W^\phi(\al(x),z)=\al(z)$ for $\al(x)\in V$.
Furthermore, if $V$ is $\hbar$-adically compatible, $W$ is a $\phi$-coordinated module.
\end{thm}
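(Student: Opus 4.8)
The plan is to reduce everything to the non-$\hbar$-adic statement (Theorem \ref{thm:abs-construct-non-h-adic}) via the quotient functors $\pi_n$, exactly in the spirit of Proposition \ref{h-adic-quotient} and Proposition \ref{prop:phi-coor-ind}. First I would observe that since $V$ is an $\hbar$-adically closed $\C[[\hbar]]$-submodule of the topologically free module $\E_\hbar(W)$ with $[V]=V$ and $\overline V=V$, Lemmas \ref{free-top-submodule} and \ref{lem:topo-op} guarantee that $V$ is itself topologically free; moreover $[V]=V$ gives $V\cap\hbar^n\E_\hbar(W)=\hbar^n V$, so the $\hbar$-adic topology on $V$ is the subspace topology and $\pi_n(V)=V/\hbar^nV$ sits inside $\E(W/\hbar^nW)=\pi_n(\E_\hbar(W))$ (using Lemma \ref{lem:E-h}(1),(2)). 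Thus for each $n$, $\pi_n(V)$ is a $\C$-subspace of $\E^{(1)}(W/\hbar^nW)$ containing $1_{W/\hbar^nW}$.

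Next I would check the two closure hypotheses of Theorem \ref{thm:abs-construct-non-h-adic} at each level $n$. Quasi-compatibility of $\pi_n(V)$ in $\E(W/\hbar^nW)$ is immediate from the definition of $\hbar$-adic quasi-compatibility of $V$. For $Y_\E^\phi$-closedness: given $\al(x),\be(x)\in V$, the compatibility of the construction with the quotient maps recorded just before Definition \ref{de:Y-E-phi} — namely $\pi_n(\al(x)_m^\phi\be(x))=\pi_n(\al(x))_m^\phi\pi_n(\be(x))$ — shows that $\pi_n(\al(x))_m^\phi\pi_n(\be(x))=\pi_n(\al(x)_m^\phi\be(x))\in\pi_n(V)$ since $\al(x)_m^\phi\be(x)\in V$ by hypothesis. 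Hence $\pi_n(V)$ is a $Y_\E^\phi$-closed quasi-compatible subspace of $\E(W/\hbar^nW)$ containing $1_{W/\hbar^nW}$, so Theorem \ref{thm:abs-construct-non-h-adic} makes $(\pi_n(V),Y_\E^\phi,1_{W/\hbar^nW})$ a nonlocal vertex algebra over $\mathcal R_n$ and $W/\hbar^nW$ a $\phi$-coordinated quasi $\pi_n(V)$-module, with $Y^\phi_{W/\hbar^nW}(a(x),z)=a(z)$.

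Now I would assemble these levelwise structures. The vertex operation on $V$ is the one from Definition \ref{de:Y-E-phi}, which by construction satisfies $\pi_n\circ Y_\E^\phi=Y_\E^\phi\circ(\pi_n\otimes\pi_n)$; together with $1_W\in V$ and $\pi_n(V)=V/\hbar^nV$ this is precisely the hypothesis of Proposition \ref{h-adic-quotient}, so $(V,Y_\E^\phi,1_W)$ is an $\hbar$-adic nonlocal vertex algebra. Likewise, defining $Y_W^\phi(\al(x),z)=\al(z)$ (which lands in $\E_\hbar(W)$ because $\al(x)\in V\subseteq\E_\hbar(W)$ and $\E_\hbar(W)$ is stable under $x\mapsto z$), the maps $\pi_n$ identify this with the levelwise action, so by Proposition \ref{prop:phi-coor-ind} $W$ is a $\phi$-coordinated quasi $V$-module; faithfulness follows since if $Y_W^\phi(\al(x),z)=0$ then $\al(z)=0$, i.e. $\al(x)=0$. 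Finally, for the last sentence: if $V$ is $\hbar$-adically compatible (not merely quasi), then each $\pi_n(V)$ is a compatible subspace, so the non-quasi version of Theorem \ref{thm:abs-construct-non-h-adic} applies at each level and Proposition \ref{prop:phi-coor-ind} (without the prefix "quasi") upgrades $W$ to a $\phi$-coordinated $V$-module.

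The only genuinely delicate point is the bookkeeping that the $\hbar$-adic $n$-operations $\al(x)_m^\phi\be(x)$ of Definition \ref{de:Y-E-phi} really do commute with $\pi_n$ and assemble into a well-defined $\C[[\hbar]]$-module map $V\to\E_\hbar(V)$ — i.e. that for fixed $u\in V$ only finitely many $u_m v$ are nonzero modulo each $\hbar^n$, so that $Y_\E^\phi(u,z)v\in V[[z,z^{-1}]]$ has the truncation property defining $\E_\hbar(V)$ — but this is exactly what is recorded in the displayed compatibilities preceding Definition \ref{de:Y-E-phi} and in Lemma \ref{lem:E-h}, so it is a verification rather than an obstacle; the substance of the theorem is carried entirely by Theorem \ref{thm:abs-construct-non-h-adic} at the finite levels.
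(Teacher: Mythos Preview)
Your proposal is correct and follows essentially the same route as the paper's proof: reduce to the non-$\hbar$-adic Theorem \ref{thm:abs-construct-non-h-adic} on each quotient $W/\hbar^nW$, then invoke Propositions \ref{h-adic-quotient} and \ref{prop:phi-coor-ind} to reassemble. The only difference is presentational: where you deduce $V\cap\ker\pi_n=\hbar^nV$ from $[V]=V$ together with the identification $\E_\hbar(W)=\E(W_0)[[\hbar]]$ of Lemma \ref{lem:E-h}(1), the paper argues this directly (if $\pi_n(a(x))=0$ then $a(x)=\hbar^n b(x)$ with $b(x)\in\E_\hbar(W)$ by \cite[Lemma 4.6]{Li-h-adic}, whence $b(x)\in[V]=V$), but the content is identical.
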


\begin{proof}
Since $\E_\hbar(W)$ is topologically free, it follows from Lemma \ref{lem:topo-op} that $V$ is topologically free.
For any $n\in \Z_{+}$, as $\pi_n(a(x))_m\pi_n(b(x))=\pi_n(a(x)_mb(x))$ for $a(x),b(x)\in V,m\in\Z$,
we see that $\pi_n(V)$ is $Y_\E^\phi$-closed quasi-compatible $\C[\hbar]$-submodule
of $\E(W/\hbar^n W)$ with $1_W\in \pi_n(V)$.
It follows from \cite[Theorem 4.8]{Li-phi-coor} that $\pi_n(V)$ is a nonlocal vertex algebra over $\C$ with $W/\hbar^nW$
as a $\phi$-coordinated quasi module.

Now we prove that the map $\pi_n$ from $V$ to $\pi_n(V)$ reduces to a $\C[[\hbar]]$-isomorphism
from $V/\hbar^n V$ onto $\pi_n(V)$, so that $V/\hbar^n V$
is a nonlocal vertex algebra over $\C$.
Let $a(x)\in V$ such that $\pi_n(a(x))=0$ in $\E(W/\hbar^nW)$.
Then $a(x)W\subset \hbar^nW[[x,x\inverse]]$.
That is, $a(x)=\hbar^nb(x)$ for some $b(x)\in (\te{End }W)[[x,x\inverse]]$.
By \cite[Lemma 4.6]{Li-h-adic}, $b(x)\in \E_\hbar(W)$.
Then we have $b(x)\in [V]=V$.
Thus $a(x)=\hbar^nb(x)\in \hbar^nV$.
This proves that $V\cap\ker\pi_n=\hbar^nV$, which implies $V/\hbar^nV\cong\pi_n(V)\subset\E(W/\hbar^nW)$.
Consequently, $V/\hbar^nV$ is a nonlocal vertex algebra over $\C$.
In view of Proposition \ref{h-adic-quotient}, $V$ is an $\hbar$-adic nonlocal vertex algebra.
By Proposition \ref{prop:phi-coor-ind}, $W$ is a $\phi$-coordinated quasi $V$-module.
The last part follows from \cite[Theorem 4.8]{Li-phi-coor}.
\end{proof}

With Theorem \ref{prop:abs-construct},  we formulate the following notion:

\begin{de}
Let $W$ be a topologically free $\C[[\hbar]]$-module.
Define a {\em $\phi$-coordinated $\hbar$-adic nonlocal vertex subalgebra}
of $\E_\hbar(W)$ to be an $\hbar$-adically quasi-compatible and $Y_\E^\phi$-closed $\C[[\hbar]]$-submodule $V$
of $\E_\hbar(W)$ such that $1_W\in V$, $[V]=V$, and $\overline{V}=V$.
\end{de}

We have the following technical result:

\begin{lem}\label{lem:bar-U-quasi-comp-Y-closed}
Let $W$ be a topologically free $\C[[\hbar]]$-module and let $U$ be an $\hbar$-adically
quasi-compatible, $Y_\E^\phi$-closed $\C[[\hbar]]$-submodule of $\E_\hbar(W)$.
Then both $[U]$ and $\overline{U}$ are $\hbar$-adically quasi-compatible and $Y_\E^\phi$-closed.
\end{lem}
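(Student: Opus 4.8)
The plan is to prove the two assertions about $[U]$ and $\overline{U}$ essentially in tandem, reducing everything to facts already established about the non-$\hbar$-adic constructions via the reduction maps $\pi_n$. First I would record the two basic reduction principles that will be used repeatedly: for a $\C[[\hbar]]$-submodule $U$ of $\E_\hbar(W)$, one has $\pi_n([U]) = \pi_n(U)$ for every $n$ (since multiplying by $\hbar^m$ kills nothing after passing mod $\hbar^n$, up to shifting $n$, and more precisely $[U]\cap\hbar^nV$-type bookkeeping shows $\pi_n([U])$ and $\pi_n(U)$ have the same image — here I use that $\E_\hbar(W)$ is torsion-free so $\hbar^m a(x)=\hbar^m b(x)$ forces $a(x)=b(x)$), and $\pi_n(\overline{U}) = \pi_n(U)$ for every $n$ (because $\overline{U} = \bigcap_n (U + \hbar^n\E_\hbar(W))$, so $\overline{U}$ and $U$ agree modulo every $\hbar^n$). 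Both identities are the $\hbar$-adic analogues of statements implicit in \cite{Li-h-adic} and follow from Lemma~\ref{lem:E-h} together with the torsion-freeness of $\E_\hbar(W)$.

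Granting these, the argument runs as follows. For $\hbar$-adic quasi-compatibility: a finite sequence $\bigl(a_1(x),\dots,a_r(x)\bigr)$ in $[U]$ (resp.\ $\overline{U}$) is $\hbar$-adically quasi-compatible iff for every $n$ the sequence $\bigl(\pi_n(a_1(x)),\dots,\pi_n(a_r(x))\bigr)$ is quasi-compatible in $\E(W/\hbar^nW)$. But by the reduction principles, $\pi_n(a_i(x))\in\pi_n([U])=\pi_n(U)$ (resp.\ $\pi_n(\overline{U})=\pi_n(U)$), so each $\pi_n(a_i(x))$ lies in the quasi-compatible subset $\pi_n(U)$ of $\E(W/\hbar^nW)$; since $\pi_n(U)$ is quasi-compatible by hypothesis on $U$, the sequence is quasi-compatible. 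Hence $[U]$ and $\overline{U}$ are $\hbar$-adically quasi-compatible.

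For $Y_\E^\phi$-closedness: let $\alpha(x),\beta(x)\in[U]$ and $m\in\Z$; I must show $\alpha(x)_m^\phi\beta(x)\in[U]$. Using $\pi_n\bigl(\alpha(x)_m^\phi\beta(x)\bigr) = \pi_n(\alpha(x))_m^\phi\pi_n(\beta(x))$ (this is the compatibility of Definition~\ref{de:Y-E-phi} with the $\pi_n$), and the fact that $\pi_n(\alpha(x)),\pi_n(\beta(x))\in\pi_n(U)$ with $\pi_n(U)$ being $Y_\E^\phi$-closed, we get $\pi_n\bigl(\alpha(x)_m^\phi\beta(x)\bigr)\in\pi_n(U)$ for every $n$. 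It remains to deduce membership in $[U]$ (resp.\ $\overline{U}$) from the statement that all $\pi_n$-images land in $\pi_n(U)$: for $\overline{U}$ this is immediate since $\pi_n(c(x))\in\pi_n(U)$ for all $n$ means $c(x)\in U+\hbar^n\E_\hbar(W)$ for all $n$, i.e.\ $c(x)\in\overline{U}$; for $[U]$ one uses that $U$ is already $\hbar$-adically quasi-compatible and $Y_\E^\phi$-closed, so $\alpha(x)=\hbar^j\alpha'(x)$, $\beta(x)=\hbar^k\beta'(x)$ with $\alpha'(x),\beta'(x)\in U$ (after possibly enlarging the exponents), and $\C[[\hbar]]$-bilinearity of the $n$-th products in the $\hbar$-adic setting gives $\alpha(x)_m^\phi\beta(x) = \hbar^{j+k}\bigl(\alpha'(x)_m^\phi\beta'(x)\bigr)\in\hbar^{j+k}U$, whence $\alpha(x)_m^\phi\beta(x)\in[U]$.

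The main obstacle I anticipate is the $[U]$ case of $Y_\E^\phi$-closedness — specifically, justifying the $\C[[\hbar]]$-bilinearity (in particular $\hbar$-homogeneity) of the operation $\alpha(x)_m^\phi\beta(x)$, since $Y_\E^\phi$ is defined through an inverse limit of the non-$\hbar$-adic operations and one must check that pulling out powers of $\hbar$ commutes with that limit; this should follow from Definition~\ref{de:Y-E-phi} and the compatibility formula $\pi_n(\alpha(x)_m^\phi\beta(x)) = \pi_n(\alpha(x))_m^\phi\pi_n(\beta(x))$ together with the $\C$-bilinearity of the finite-level products from \cite{Li-phi-coor}, but it is the one place where a genuine (if routine) verification is needed rather than a formal reduction. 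Everything else is bookkeeping with $\pi_n$, Lemma~\ref{lem:E-h}, and torsion-freeness.
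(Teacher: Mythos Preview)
Your treatment of $\overline{U}$ is correct and essentially the paper's argument: the paper uses explicit approximating sequences $a_{jn}(x)\in U$ with $a_j(x)-a_{jn}(x)\in\hbar^n\E_\hbar(W)$, which is just another way of saying $\pi_n(\overline{U})=\pi_n(U)$.

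However, there is a genuine gap in your treatment of $[U]$. The claimed reduction principle $\pi_n([U])=\pi_n(U)$ is \emph{false}. Take $U=\hbar\,\C[[\hbar]]\cdot v$ for a nonzero $v\in\E_\hbar(W)$; then $v\in[U]$ (since $\hbar v\in U$), so $\pi_1(v)\in\pi_1([U])$, but $\pi_1(U)=0$. Thus your quasi-compatibility argument for $[U]$, which relies on $\pi_n(a_i(x))\in\pi_n(U)$, breaks down.

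Your $Y_\E^\phi$-closedness argument for $[U]$ has the right idea but the direction is inverted: $\alpha(x)\in[U]$ means $\hbar^j\alpha(x)\in U$ for some $j$, not $\alpha(x)=\hbar^j\alpha'(x)$ with $\alpha'(x)\in U$. Corrected, set $\alpha'(x):=\hbar^j\alpha(x)\in U$ and $\beta'(x):=\hbar^k\beta(x)\in U$; then by $\C[[\hbar]]$-bilinearity $\hbar^{j+k}\bigl(\alpha(x)_m^\phi\beta(x)\bigr)=\alpha'(x)_m^\phi\beta'(x)\in U$, so $\alpha(x)_m^\phi\beta(x)\in[U]$. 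The same device repairs the quasi-compatibility gap: given $a_1(x),\dots,a_r(x)\in[U]$ with $\hbar^{m_i}a_i(x)\in U$, apply quasi-compatibility of $U$ at level $n+\sum m_i$ to the sequence $(\hbar^{m_1}a_1(x),\dots,\hbar^{m_r}a_r(x))$, then use torsion-freeness of $\E_\hbar(W)=\E(W_0)[[\hbar]]$ (Lemma~\ref{lem:E-h}) to cancel $\hbar^{\sum m_i}$ and descend to level $n$. This is what the paper means when it says the $[U]$ case is ``straightforward'' from Lemma~\ref{lem:E-h}.
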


\begin{proof} By using Lemma \ref{lem:E-h}, it is straightforward to show that $[U]$ is $\hbar$-adically quasi-compatible
and $Y_\E^\phi$-closed.
Let $a_1(x),\dots,a_r(x)\in \overline U$. For each $1\le j\le r$, there exist $a_{jn}(x)\in U$ for $n\in \N$ such that
$a_j(x)-a_{jn}(x)\in \hbar^n \E_\hbar(W)$.  Then
\begin{align}\label{r-product}
a_1(x_1)\cdots a_r(x_r)\in a_{1n}(x_1)\cdots a_{rn}(x_r)+\hbar^n \E_\hbar(W)
\  \   \   \mbox{ for }n\in \N.
\end{align}
As $U$ is $\hbar$-adically quasi-compatible, it follows that the sequence $(a_1(x),\dots,a_r(x))$ is
$\hbar$-adically quasi-compatible.
Therefore, $\overline{U}$ is $\hbar$-adically quasi-compatible.

Now, let $a(x),b(x)\in \overline{U}$. There exist $a_{n}(x),b_n(x)\in U$ for $n\in \N$ such that
$a(x)-a_n(x),\ b(x)-b_n(x)\in \hbar^n \E_\hbar(W)$.
For any $n\in \N,\ m\in \Z$,  we have
\begin{align*}
 \pi_n(a(x)_m^\phi b(x))=  \pi_n(a(x))_m^\phi\pi_n(b(x))=\pi_n(a_n(x))_m^\phi\pi_n(b_n(x))
  =\pi_n(a_n(x)_m^\phi b_n(x)).
 \end{align*}
As $U$ is $Y_\E^\phi$-closed, we get $ \pi_n(a(x)_m^\phi b(x))=\pi_n(a(x)_m^\phi b(x))\in \pi_n(U)$.
Then
\begin{align*}
  a(x)_m^\phi b(x)=\varprojlim_n \pi_n(a(x))_m^\phi \pi_n(b(x))\in \overline U.
\end{align*}
This proves that $\overline U$ is $Y_\E^\phi$-closed.
\end{proof}

Let $W$ be a topologically free $\C[[\hbar]]$-module
and let $U$ be an $\hbar$-adically (quasi) compatible subset of $\E_\hbar(W)$.
Set $U^{(1)}=\C[[\hbar]]U+\C[[\hbar]]1_W$, which is an $\hbar$-adically (quasi) compatible $\C[[\hbar]]$-submodule containing $1_W$.
Define $U^{(2)}$ to be the $\C[[\hbar]]$-span of the vectors $\al(x)_m^\phi\be(x)$ for $\al(x),\be(x)\in U^{(1)}$, $m\in\Z$.
Note that $U^{(1)}\subset U^{(2)}$. From Lemma \ref{lem:quasi-compatible}, $U^{(2)}$ is again an $\hbar$-adically (quasi) compatible
$\C[[\hbar]]$-submodule containing $1_W$.
Then  we inductively define $U^{(n+1)}=\(U^{(n)}\)^{(2)}$ for $n\ge 1$.
In this way, we obtain an ascending sequence of $\hbar$-adically quasi-compatible $\C[[\hbar]]$-submodules:
\begin{align*}
  U^{(1)}\subset U^{(2)}\subset U^{(3)}\subset\cdots.
\end{align*}
Then set
\begin{align}
  \<U\>_\phi=\overline{\left[\cup_{n\ge 1}U^{(n)}\right]}.
\end{align}
We have:

\begin{thm}\label{thm:abs-construct}
Let $W$ be a topologically free $\C[[\hbar]]$-module
and let $U$ be an $\hbar$-adically (quasi) compatible subset of $\E_\hbar(W)$.
Then $\<U\>_\phi$ is the smallest  $\hbar$-adically (quasi) compatible and $Y_\E^\phi$-closed $\C[[\hbar]]$-submodule
 such that
 $$\{1_W\}\cup U\subset \<U\>_\phi,\   \   [\<U\>_\phi]=\<U\>_\phi\  \te{ and }\ \overline{\<U\>_\phi}=\<U\>_\phi.$$
Furthermore, $\(\<U\>_\phi,Y_\E^\phi,1_W\)$ carries the structure of an $\hbar$-adic nonlocal vertex algebra
and $W$ is a faithful $\phi$-coordinated
(quasi) $\<U\>_\phi$-module with $Y_W^\phi(\al(x),z)=\al(z)$ for $\al(x)\in \<U\>_\phi$.
\end{thm}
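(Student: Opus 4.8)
The plan is to bootstrap from the non-$\hbar$-adic result, Theorem~\ref{thm:abs-construct-non-h-adic}, by passing to the quotients $W/\hbar^nW$ and then assembling via the inverse limit machinery of Lemma~\ref{lem:E-h}, in close parallel to the proof of Theorem~\ref{prop:abs-construct}. First I would record the purely order-theoretic claim: $\<U\>_\phi$ is the smallest subset containing $\{1_W\}\cup U$ that is $Y_\E^\phi$-closed and satisfies $[\,\cdot\,]=\,\cdot\,$ and $\overline{\,\cdot\,}=\,\cdot\,$. For this, one observes that the $\C[[\hbar]]$-submodule $T:=\cup_{n\ge 1}U^{(n)}$ is already $Y_\E^\phi$-closed (by construction, using Lemma~\ref{lem:quasi-compatible} to keep $\hbar$-adic quasi-compatibility at every stage), hence so are $[T]$ and $\overline{[T]}=\<U\>_\phi$ by Lemma~\ref{lem:bar-U-quasi-comp-Y-closed}; that $\<U\>_\phi$ is $\hbar$-adically closed and satisfies $[\<U\>_\phi]=\<U\>_\phi$ follows from Lemma~\ref{lem:topo-op} together with the identities $\overline{\overline{U}}=\overline{U}$ and $[[U]]=[U]$. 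Minimality is then immediate: any $Y_\E^\phi$-closed submodule $V$ with $V=[V]=\overline V$ and $\{1_W\}\cup U\subset V$ must contain each $U^{(n)}$ by induction, hence $T$, hence $[T]$, hence $\overline{[T]}=\<U\>_\phi$.

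Next I would invoke Theorem~\ref{prop:abs-construct} directly: since $\<U\>_\phi$ is $\hbar$-adically quasi-compatible, $Y_\E^\phi$-closed, contains $1_W$, and satisfies $[\<U\>_\phi]=\<U\>_\phi$ and $\overline{\<U\>_\phi}=\<U\>_\phi$, it is by that theorem the underlying space of an $\hbar$-adic nonlocal vertex algebra, and $W$ is a faithful $\phi$-coordinated quasi $\<U\>_\phi$-module with $Y_W^\phi(\al(x),z)=\al(z)$. The ``non-quasi'' version of the statement (when $U$ is $\hbar$-adically compatible rather than merely quasi-compatible) follows from the corresponding final clause of Theorem~\ref{prop:abs-construct}, once one checks that compatibility (not just quasi-compatibility) is preserved at each step $U^{(n)}\rightsquigarrow U^{(n+1)}$ and under $[\,\cdot\,]$ and closure --- this is the content of the ``same assertion holds without the prefix quasi'' part of Lemma~\ref{lem:quasi-compatible} plus the obvious analogue of Lemma~\ref{lem:bar-U-quasi-comp-Y-closed}.

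The one genuinely substantive point, and the step I expect to be the main obstacle, is showing that $T=\cup_{n\ge1}U^{(n)}$ is $\hbar$-adically quasi-compatible \emph{as a set}, i.e.\ that every finite sequence drawn from $T$ (whose members may lie in different stages $U^{(n_i)}$) is quasi-compatible. This is where Lemma~\ref{lem:quasi-compatible} does the real work: given $\al(x)_m^\phi\be(x)$ appearing in some sequence, one uses the lemma to ``absorb'' it, reducing to a sequence one step lower in the filtration with the same quasi-compatibility status; iterating down to $U^{(1)}=\C[[\hbar]]U+\C[[\hbar]]1_W$, where quasi-compatibility holds by hypothesis on $U$. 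One must be careful that the finitely many sequences produced during this descent are each controlled by a single nonzero $q(x_1,x_2)\in\C((x_1,x_2))$ at each level $n$ (which is exactly the form of Lemma~\ref{lem:quasi-compatible}), and that passing to $[T]$ and $\overline{T}$ does not destroy this --- for $[T]$ one clears denominators of $\hbar$, which does not affect the image in any $\E^{(r)}(W/\hbar^nW)$, and for $\overline T$ one uses the approximation argument of Lemma~\ref{lem:bar-U-quasi-comp-Y-closed}, equation~\eqref{r-product}. Once this is in hand, everything else is a formal consequence of the two cited theorems.
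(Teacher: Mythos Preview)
Your proposal is correct and follows essentially the same route as the paper's proof: establish that $T=\cup_{n\ge1}U^{(n)}$ is $\hbar$-adically quasi-compatible and $Y_\E^\phi$-closed via Lemma~\ref{lem:quasi-compatible}, pass to $\<U\>_\phi=\overline{[T]}$ using Lemma~\ref{lem:bar-U-quasi-comp-Y-closed} and Lemma~\ref{lem:topo-op}, verify minimality by induction on the filtration, and then invoke Theorem~\ref{prop:abs-construct} for the algebra and module structures. Your discussion of the ``substantive point'' (quasi-compatibility of arbitrary finite sequences in $T$) makes explicit what the paper absorbs into the phrase ``from the construction of the ascending sequence,'' but the argument is the same.
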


\begin{proof} From the construction of the ascending sequence $\{U^{(n)}\}$,
$\cup_{n\ge 1}U^{(n)}$ is $\hbar$-adically quasi-compatible and $Y_\E^\phi$-closed.
Furthermore, from Lemma \ref{lem:bar-U-quasi-comp-Y-closed},
$\<U\>_\phi$ is $\hbar$-adically quasi-compatible and $Y_\E^\phi$-closed.
On the other hand,  from Lemma \ref{lem:topo-op} we have
$[\<U\>_\phi]=\<U\>_\phi$ and $\overline{\<U\>_\phi}=\<U\>_\phi$. It is clear that $\{1_W\}\cup U\subset \<U\>_\phi$.
Now, suppose that $K$ is an $\hbar$-adically (quasi) compatible and $Y_\E^\phi$-closed $\C[[\hbar]]$-submodule
 such that $\{1_W\}\cup U\subset K,\ [K]=K$ and $\overline{K}=K$. As $\{1_W\}\cup U\subset K$, we have
 $U^{(n)}\subset K$ for all $n\ge 1$. Then
 $$\left[\cup_{n\ge 1}U^{(n)}\right]\subset [K]=K,$$
 and hence
 $$\overline{\left[\cup_{n\ge 1}U^{(n)}\right]}\subset \overline{K}=K.$$
Thus  $\<U\>_\phi\subset K$. This proves the first assertion.
The furthermore assertion immediately follows from Theorem \ref{prop:abs-construct}.
\end{proof}

\begin{rem}\label{prop:Y-E-hom-h-adic}
{\em Let $(W,Y_W^\phi)$ be a $\phi$-coordinated quasi module for an $\hbar$-adic nonlocal vertex algebra $V$.
By definition, for $u,v\in V$, $(Y_W^{\phi}(u,x),Y_W^{\phi}(v,x))$ is $\hbar$-adically quasi compatible.
As a straightforward $\hbar$-analogue of \cite[Proposition 4.11]{Li-phi-coor}, we have
\begin{align}
  Y_W^\phi(Y(u,z)v,x)=Y_\E^\phi(Y_W^\phi(u,x),z)Y_W^\phi(v,x)\quad \text{ for }u,v\in V.
\end{align}}
\end{rem}

The following is an immediate $\hbar$-analogue of a result of \cite{JKLT-G-phi-mod}:

\begin{prop}\label{prop:tech-calculations}
Let $W$ be a topologically free $\C[[\hbar]]$-module
and let $V$ be a $\phi$-coordinated $\hbar$-adic nonlocal vertex subalgebra of $\E_\hbar(W)$.
Assume
\begin{align*}
 \al(x),\be(x),\al_i(x),\be_i(x), \mu_j(x),\nu_j(x)\in V,
 \  g_i(x_1,x_2),h_j(x_1,x_2)\in\C_\ast((x_1,x_2))[[\hbar]]
\end{align*}
for $1\le i \le r,\  1\le j\le s$. Let $\Gamma$ be a subgroup of $\C^\times$.
Suppose that
\begin{align*}
 & \sum_{i=1}^r\iota_{x_1,x_2,\hbar}\(g_i(x_1,x_2)\)\al_i(x_1)\be_i(x_2)
  -\sum_{j=1}^s\iota_{x_2,x_1,\hbar}\(h_j(x_1,x_2)\)
    \mu_j(x_2)\nu_j(x_1)\\
=&\sum_{\alpha\in \Gamma}\sum_{k=0}^\infty
  \gamma_{\alpha,k}(x_2)\frac{1}{k!}\(p(x_2)\partial_{x_2}\)^kp(x_1)x_1\inverse \delta\(\frac{\alpha^{-1}x_2}{x_1}\),
\end{align*}
where $\gamma_{\alpha,k}(x)\in V$, and for every positive integer $n$,
$\gamma_{\alpha,k}(x)\in \hbar^n V$ for all but finitely many $(\alpha,k)\in \Gamma\times \N$.
Then
\begin{align*}
 &\sum_{i=1}^r\iota_{x,x_1,x_2,\hbar}\(g_i(\phi(x,x_1),\phi(x,x_2))\)
  Y_\E^\phi(\al_i(x),x_1)Y_\E^\phi(\be_i(x),x_2)\\
 &\   \   \  \   -\sum_{j=1}^s\iota_{x,x_2,x_1,\hbar}
    \(h_j(\phi(x,x_1),\phi(x,x_2))\)
    Y_\E^\phi(\mu_j(x),x_2)Y_\E^\phi(\nu_j(x),x_1)\\
  =\  &\sum_{k=0}^\infty
 Y_\E^\phi(\gamma_{1,k}(x),x_2)\frac{1}{k!}\left(\frac{\partial}{\partial x_2}\right)^kx_1\inverse \delta\(\frac{x_2}{x_1}\).
\end{align*}
\end{prop}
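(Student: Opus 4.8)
The plan is to reduce the $\hbar$-adic statement to its classical ($\C$-coefficient) counterpart via the inverse-limit machinery already developed in this section, and then invoke the corresponding result of \cite{JKLT-G-phi-mod} level by level modulo $\hbar^n$. First I would fix a positive integer $n$ and apply $\pi_n$ to the hypothesis. The point is that $\pi_n$ is a continuous $\C[\hbar]$-algebra map on $\E_\hbar(W)$, so it commutes with the formal operations involved: the substitutions $x_1\mapsto\phi(x,x_1)$, $x_2\mapsto\phi(x,x_2)$, the expansions $\iota$, the derivations $p(x_2)\partial_{x_2}$, and the formation of $\delta$-series. Since for every positive integer $n$ only finitely many $\gamma_{\alpha,k}(x)$ fail to lie in $\hbar^nV$, the image under $\pi_n$ of the right-hand sum over $\Gamma\times\N$ is a genuinely finite sum in $\E^{(r)}(W/\hbar^nW)$; likewise the left-hand side maps to a finite $\C$-linear combination of products $\pi_n(\al_i(x_1))\pi_n(\be_i(x_2))$ with coefficients in $\C_\ast((x_1,x_2))$. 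Thus $\pi_n$ of the hypothesis is exactly an instance of the hypothesis of the classical proposition ([JKLT-G-phi-mod], the non-$\hbar$-adic analogue of the present statement) applied to the nonlocal vertex algebra $\pi_n(V)\subset\E(W/\hbar^nW)$ furnished by Theorem \ref{prop:abs-construct}.

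Next I would quote that classical result to obtain the desired identity with every symbol replaced by its $\pi_n$-image: the two sums of products $Y_\E^\phi(\pi_n\al_i(x),x_1)Y_\E^\phi(\pi_n\be_i(x),x_2)$ etc.\ equal $\sum_{k\ge0}Y_\E^\phi(\pi_n\gamma_{1,k}(x),x_2)\tfrac1{k!}(\partial_{x_2})^k x_1^{-1}\delta(x_2/x_1)$ in $\E^{(3)}(W/\hbar^nW)$. Here one uses $\pi_n\(\al(x)_m^\phi\be(x)\)=\pi_n(\al(x))_m^\phi\pi_n(\be(x))$, established just before Definition \ref{de:Y-E-phi}, to see that $\pi_n$ applied to each side of the $\hbar$-adic conclusion agrees with the correspondingly-indexed side of the classical conclusion. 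One must also check that $\pi_n$ of $\iota_{x,x_1,x_2,\hbar}\(g_i(\phi(x,x_1),\phi(x,x_2))\)$ equals $g_i^{(n)}(\phi(x,x_1),\phi(x,x_2))$ expanded in the classical sense, where $g_i^{(n)}$ is $g_i$ reduced mod $\hbar^n$; this is routine because reduction mod $\hbar^n$ and the ring embedding $f(x_1,x_2)\mapsto f(\phi(x,y),\phi(x,z))$ of $\C_\ast((x_1,x_2))$ into $\C((x))_\ast((y,z))$ commute. Having the identity modulo $\hbar^n$ for every $n$, I would then conclude by the inverse-limit characterization: by Lemma \ref{lem:E-h}(5), two elements of $\E_\hbar^{(3)}(W)$ that have equal $\pi_n$-images for all $n$ are equal, so the two sides of the asserted identity coincide in $\E_\hbar^{(3)}(W)$. (One checks separately, using the finiteness hypothesis on the $\gamma_{\alpha,k}$, that both sides genuinely lie in $\E_\hbar^{(3)}(W)$ so that passing to the limit is legitimate.)

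The main obstacle I anticipate is bookkeeping rather than conceptual: one has to be careful that all the infinite sums appearing---the sum over $\Gamma$, the sum over $k$, and the implicit $\hbar$-expansions inside the $g_i,h_j$---converge in the $\hbar$-adic topology of $\E_\hbar^{(3)}(W)$, and that the operations (substitution into $\phi$, the derivation $p(x_2)\partial_{x_2}$, multiplication of products of fields) are continuous so that $\pi_n$ really does distribute over them. The continuity of $\al(x),\be(x)\mapsto Y_\E^\phi(\al(x),z)\be(x)$ in each argument, together with $\C[[\hbar]]$-bilinearity, handles the products; the explicit compatibility $\pi_n\circ\theta_n$-relations recalled before Definition \ref{de:Y-E-phi} handle the $Y_\E^\phi$ operations; and the fact that $p(x)$, $\phi$ are defined over $\C$ (no $\hbar$) makes the geometric substitutions harmless. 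Once these continuity points are pinned down, the argument is a clean ``check it mod $\hbar^n$, then take the limit'' reduction to \cite{JKLT-G-phi-mod}.
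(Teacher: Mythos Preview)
Your proposal is correct and follows exactly the route the paper intends: the paper offers no proof at all, declaring the proposition ``an immediate $\hbar$-analogue of a result of \cite{JKLT-G-phi-mod},'' and your reduction---apply $\pi_n$, use the finiteness hypothesis on the $\gamma_{\alpha,k}$ to land in the classical setting, invoke the cited result for $\pi_n(V)\subset\E(W/\hbar^nW)$, then pass to the inverse limit via Lemma \ref{lem:E-h}(5) and the compatibility $\pi_n(\al(x)_m^\phi\be(x))=\pi_n(\al(x))_m^\phi\pi_n(\be(x))$---is precisely how that immediacy is to be unpacked.
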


\begin{de}
Let $W$ be a topologically free $\C[[\hbar]]$-module.
For $F(x_1,x_2), G(x_1,x_2)\in (\End W)[[x_1^{\pm 1},x_2^{\pm 1}]]$, we write $F(x_1,x_2)\sim G(x_1,x_2)$ if
for every positive integer $n$, there exists $k\in \N$ such that
\begin{align}
(x_1-x_2)^kF(x_1,x_2)\equiv (x_1-x_2)^kG(x_1,x_2)  \  \mod \hbar^n(\End W)[[x_1^{\pm 1},x_2^{\pm 1}]].
\end{align}
\end{de}

The following is an immediate $\hbar$-analogue of \cite[Proposition 4.27]{JKLT-Defom-va}:

\begin{prop}\label{prop:tech-reverse-calculations-h}
Let $V$ be a $(G,\chi)$-module $\hbar$-adic nonlocal vertex algebra and
let $(W,Y_W^\phi)$ be a $(G,\chi_{\phi})$-equivariant $\phi$-coordinated quasi $V$-module.
Assume $u,v\in V$,
\begin{align*}
  u^{(i)},v^{(i)}\in V,\   f_{i}(x_1,x_2)\in\C_{\phi}((x_1,x_2))[[\hbar]]\  \  (1\le i\le r)
\end{align*}
such that
\begin{align*}
  &Y(u,x_1)Y(v,x_2)\sim \sum_{i=1}^r
  \pi_{\phi}(f_{i})(x_2-x_1)Y(v^{(i)},x_2)Y(u^{(i)},x_1).
\end{align*}
Suppose that for each $n\in \Z_{+}$,
$(R(g)u)_jv\in \hbar^nV$ for all but finite many $(g,j)\in G\times \N$.
Then
\begin{align}
  &Y_W^\phi(u,x_1)Y_W^\phi(v,x_2)-
  \sum_{i=1}^r\iota_{x_2,x_1} (f_{i}(x_2,x_1))
  Y_W^\phi(v^{(i)},x_2)Y_W^\phi(u^{(i)},x_1)\\
  =\ &\sum_{\sigma\in G^0}\sum_{j\ge0}
  Y_W^\phi\((R(\sigma)u)_jv,x_2\)\frac{1}{j!}\(p(x_2)\frac{\partial}{\partial x_2}\)^j
  p(x_1)x_1\inverse\delta\(\chi_{\phi}(\sigma)^{-1}\frac{x_2}{x_1}\),\nonumber
\end{align}
where $G^0$ is any complete set of coset representatives of $\ker \chi_{\phi}$ in $G$.
\end{prop}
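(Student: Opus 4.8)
The strategy is to reduce the assertion to the known, non-$\hbar$-adic counterpart in \cite{JKLT-Defom-va} (\cite[Proposition 4.27]{JKLT-Defom-va}) by working modulo $\hbar^n$ for each positive integer $n$, and then to assemble the congruences into a genuine identity by passing to the inverse limit, using that $W$ is separated ($\cap_{n\ge1}\hbar^n W=0$). Concretely, fix $n\in\Z_+$. By Definition \ref{def-h-G-equivariant-coordinated-module}, $W/\hbar^n W$ is a $(G,\chi_\phi)$-equivariant $\phi$-coordinated quasi module for the nonlocal vertex algebra $V/\hbar^n V$ over $\mathcal{R}_n$. One must first extract, from the $\hbar$-adic $\mathcal{S}$-type relation
\[
Y(u,x_1)Y(v,x_2)\sim\sum_{i=1}^r\pi_\phi(f_i)(x_2-x_1)Y(v^{(i)},x_2)Y(u^{(i)},x_1),
\]
an honest relation of the same shape in $V/\hbar^n V$: by the definition of $\sim$, there is $k\in\N$ with $(x_1-x_2)^k$ times the difference lying in $\hbar^n(\End V)[[x_1^{\pm1},x_2^{\pm1}]]$; after reducing coefficients mod $\hbar^n$ (and noting $f_i(x_1,x_2)\in\C_\phi((x_1,x_2))[[\hbar]]$ reduces to an element of $(\C_\phi((x_1,x_2)))\otimes\mathcal R_n$ whose $\pi_\phi$-image is the relevant rational expansion), this becomes exactly the locality hypothesis needed to invoke \cite[Proposition 4.27]{JKLT-Defom-va} over the base ring $\mathcal R_n$.

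**Main steps.** First I would verify that the coefficient-finiteness hypothesis transfers: for a fixed $n$, the assumption that $(R(g)u)_j v\in\hbar^n V$ for all but finitely many $(g,j)\in G\times\N$ says precisely that in $V/\hbar^n V$ only finitely many $(R(g)u)_j v$ are nonzero, so the sum $\sum_{\sigma\in G^0}\sum_{j\ge0}$ appearing on the right-hand side is a finite sum modulo $\hbar^n$ and the $\delta$-function expression makes sense. Second, apply \cite[Proposition 4.27]{JKLT-Defom-va} to the $(G,\chi_\phi)$-equivariant $\phi$-coordinated quasi $V/\hbar^n V$-module $W/\hbar^n W$ to obtain the desired identity with all operators replaced by their images $\pi_n(Y_W^\phi(\cdot,x))$ and coefficients reduced mod $\hbar^n$; here $p(x_1)x_1^{-1}\delta(\chi_\phi(\sigma)^{-1}x_2/x_1)$ and the iterated $p(x_2)\partial_{x_2}$ operators are exactly the ingredients produced by the $\phi$-coordinate calculus in that proposition. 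Third, observe that both sides of the claimed $\hbar$-adic identity are elements of $(\End_{\C[[\hbar]]}W)[[x_1^{\pm1},x_2^{\pm1}]]$ lying in the appropriate completed space (the left side because of $\hbar$-adic quasi-compatibility of $(Y_W^\phi(u,x),Y_W^\phi(v,x))$ together with the expansion conventions $\iota_{x_2,x_1}$; the right side because the $\sigma$-sum is $\hbar$-adically convergent by the finiteness hypothesis), and that their images under every $\pi_n$ agree; since $\End_{\C[[\hbar]]}W\cong(\End_{\C}W_0)[[\hbar]]$ is separated, the two sides coincide.

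**Expected obstacle.** The routine part is the reduction mod $\hbar^n$; the delicate point is bookkeeping the expansion directions and the passage from the symmetric relation $\sim$ (which only controls $(x_1-x_2)^k\cdot(\text{difference})$) to a relation that, after multiplication by a suitable power of $(x_1-x_2)$, can be fed into \cite[Proposition 4.27]{JKLT-Defom-va} and then "divided back'' correctly—i.e.\ checking that the $\phi$-coordinated delta-function terms produced on the right are insensitive to the ambiguity in $k$, and that the expansions $\iota_{x_2,x_1}(f_i(x_2,x_1))$ match on the nose the expansions coming out of the finite-level statement. One should also confirm that $G^0$ being an arbitrary complete set of coset representatives of $\ker\chi_\phi$ is legitimate: replacing $\sigma$ by $\sigma\tau$ with $\tau\in\ker\chi_\phi$ changes $(R(\sigma)u)_j v$ to $(R(\sigma\tau)u)_j v$ but leaves $\chi_\phi(\sigma)$ and hence the $\delta$-term unchanged, and by \eqref{phi-module-equiv-1} the contributions reorganize consistently—this is inherited verbatim from the non-$\hbar$-adic proof and needs only the remark that $\cap_{n\ge1}\hbar^n W=0$ to upgrade each mod-$\hbar^n$ equality to the stated one.
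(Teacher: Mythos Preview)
Your proposal is correct and matches the paper's own treatment: the paper declares this proposition to be ``an immediate $\hbar$-adic analogue of \cite[Proposition 4.27]{JKLT-Defom-va}'' and gives no further argument, precisely because the reduction mod $\hbar^n$ (via Proposition \ref{prop:phi-coor-ind} and Definition \ref{def-h-G-equivariant-coordinated-module}) together with separatedness of $W$ that you outline is the intended route.
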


Combining Corollary \ref{lem:q-Jacobi} and Proposition \ref{prop:tech-reverse-calculations-h} we immediately have:

\begin{prop}\label{prop:reverse-cal}
Let $V$ be a $(G,\chi)$-module $\hbar$-adic quantum vertex algebra with rational quantum Yang-Baxter operator
$\mathcal{S}(x)$ and let $(W,Y_W)$ be a $(G,\chi_{\phi})$-equivariant $\phi$-coordinated quasi $V$-module.
Assume
$$\wh{ \mathcal{S}}(x_1,x_2):\ V\wh\ot V\rightarrow V\wh\ot V\wh\ot \C_{\phi}((x_1,x_2))[[\hbar]]$$
is a continuous $\C[[\hbar]]$-module map such that $\mathcal{S}(z)=(1\ot 1\ot \pi_{\phi})\wh {\mathcal{S}}(x_1,x_2)$.
Then
\begin{align}\label{phi-module-commuator}
  &Y_W(u,x_1)Y_W(v,x_2)w-
  Y_W(x_2)(1\ot Y_W(x_1))\(\iota_{x_2,x_1}\wh{\mathcal{S}}(x_2,x_1)(v\ot u)\ot w\)\\
  =&\sum_{\sigma\in G^{0}}\sum_{j\ge0}
  Y_W\((R(\sigma)u)_jv,x_2\)w\frac{1}{j!}  \(p(x_2)\frac{\partial}{\partial x_2}\)^j
  p(x_1)x_1\inverse\delta\(\chi_{\phi}(\sigma)^{-1}\frac{x_2}{x_1}\)\nonumber
\end{align}
for $u,v\in V,\ w\in W$, where  $G^0$ is given as in Proposition \ref{prop:tech-reverse-calculations-h}.
\end{prop}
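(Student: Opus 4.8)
The plan is to obtain the statement as a direct combination of Corollary~\ref{lem:q-Jacobi} and Proposition~\ref{prop:tech-reverse-calculations-h}: first I would extract from the quantum‑vertex‑algebra commutator formula of Corollary~\ref{lem:q-Jacobi} a ``$\sim$''-relation inside $V$ of precisely the shape demanded as input by Proposition~\ref{prop:tech-reverse-calculations-h}, and then I would transport that relation to the module $W$ by invoking Proposition~\ref{prop:tech-reverse-calculations-h} and reassembling the $\wh{\mathcal{S}}$-terms.

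For the first step, I would apply Corollary~\ref{lem:q-Jacobi} to $V$: for all $u,v,w\in V$,
\[
  Y(u,x_1)Y(v,x_2)w-Y(x_2)\(1\ot Y(x_1)\)\(\mathcal{S}(x_2-x_1)(v\ot u)\ot w\)
  =\sum_{j\ge0}Y(u_jv,x_2)w\,\frac{1}{j!}\(\frac{\partial}{\partial x_2}\)^j x_1\inv\delta\(\frac{x_2}{x_1}\).
\]
Since $Y(u,x_2)v\in V((x_2))[[\hbar]]$, for each $n\in\Z_+$ one has $u_jv\in\hbar^nV$ for all large $j$, so only finitely many summands on the right survive modulo $\hbar^n$; as $(x_1-x_2)^{j+1}$ annihilates the $j$-th $\delta$-term, for a suitable $k\in\N$ the whole right-hand side lies in $\hbar^n(\End V)[[x_1^{\pm1},x_2^{\pm1}]]$ after multiplication by $(x_1-x_2)^k$. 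Writing $\wh{\mathcal{S}}(x_1,x_2)(v\ot u)=\sum_i v^{(i)}\ot u^{(i)}\ot f_i(x_1,x_2)$ with $u^{(i)},v^{(i)}\in V$ and $f_i\in\C_{\phi}((x_1,x_2))[[\hbar]]$, and using $\mathcal{S}(z)=(1\ot1\ot\pi_\phi)\wh{\mathcal{S}}(x_1,x_2)$ together with the identity $\wh{\mathcal{S}}(\phi(x,z_1),\phi(x,z_2))=\mathcal{S}(z_1-z_2)$ (Lemma~\ref{F-characterization}), the term $\mathcal{S}(x_2-x_1)(v\ot u)$ becomes $\sum_i v^{(i)}\ot u^{(i)}\ot\pi_\phi(f_i)(x_2-x_1)$. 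Hence
\[
  Y(u,x_1)Y(v,x_2)\ \sim\ \sum_i\pi_\phi(f_i)(x_2-x_1)\,Y(v^{(i)},x_2)Y(u^{(i)},x_1),
\]
which is exactly the hypothesis of Proposition~\ref{prop:tech-reverse-calculations-h}. The one subtlety in this step is that the $i$-sum may be infinite; continuity of $\wh{\mathcal{S}}$ into the completed tensor product guarantees it is $\hbar$-adically convergent and finite modulo each $\hbar^n$, so it legitimately plays the role of the finite sum $\sum_{i=1}^r$ there.

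For the second step, I would feed $V$, $(W,Y_W)$, the vectors $u,v$, and the data $(u^{(i)},v^{(i)},f_i)$ into Proposition~\ref{prop:tech-reverse-calculations-h}. Its conclusion is the identity whose left-hand side is $Y_W(u,x_1)Y_W(v,x_2)-\sum_i\iota_{x_2,x_1}\(f_i(x_2,x_1)\)Y_W(v^{(i)},x_2)Y_W(u^{(i)},x_1)$ and whose right-hand side is the iterated $\sigma$-sum appearing on the right of~(\ref{phi-module-commuator}); since $\sum_i v^{(i)}\ot u^{(i)}\ot f_i(x_2,x_1)=\wh{\mathcal{S}}(x_2,x_1)(v\ot u)$, that left-hand side is precisely $Y_W(u,x_1)Y_W(v,x_2)-Y_W(x_2)(1\ot Y_W(x_1))\(\iota_{x_2,x_1}\wh{\mathcal{S}}(x_2,x_1)(v\ot u)\ot-\)$, and evaluating at an arbitrary $w\in W$ yields~(\ref{phi-module-commuator}). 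The part that needs genuine attention — rather than being purely formal bookkeeping with the $\iota$-expansions and the identification $\mathcal{S}(x_2-x_1)\leftrightarrow\pi_\phi\circ\wh{\mathcal{S}}(x_2,x_1)$ — is verifying the finiteness hypothesis of Proposition~\ref{prop:tech-reverse-calculations-h}, namely that modulo each $\hbar^n$ only finitely many $(g,j)\in G\times\N$ have $(R(g)u)_jv\notin\hbar^nV$; this is exactly the condition making the right-hand side of~(\ref{phi-module-commuator}) a well-defined element, and in the $(G,\chi_\phi)$-equivariant $\phi$-coordinated quasi-module setting it is available as part of the standing data.
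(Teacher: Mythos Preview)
Your proposal is correct and follows exactly the approach the paper takes: the paper simply states that Proposition~\ref{prop:reverse-cal} is obtained by ``combining Corollary~\ref{lem:q-Jacobi} and Proposition~\ref{prop:tech-reverse-calculations-h},'' and your two-step outline (extract from Corollary~\ref{lem:q-Jacobi} a $\sim$-relation of the required shape, then feed it into Proposition~\ref{prop:tech-reverse-calculations-h}) is precisely that combination, spelled out in more detail than the paper itself provides. The only loose end you flag --- the finiteness hypothesis on $(R(g)u)_jv$ modulo $\hbar^n$ --- is also left implicit in the paper; in the applications $G$ is a finite cyclic group, so it is automatic.
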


We end up this section with the following technical result we shall use:

\begin{lem}\label{tech-extra}
Let $V$ be an $\hbar$-adic nonlocal vertex algebra and let
$$u,v, c^{(r)}\in V,\  \beta(x),\ f(x),g(x)\in \C((x))[[\hbar]]\  \  (\text{where }r\in \N)$$
such that $\lim_{r\rightarrow \infty}c^{(r)}=0$ (with respect to the $\hbar$-adic topology) and
\begin{align}\label{S-bracket-simple}
&Y(u,x_1)Y(v,x_2)-\beta(x_2-x_1)Y(v,x_2)Y(u,x_1)\\
=\ &f(x_1-x_2)+g(x_2-x_1)+\sum_{r\ge 0}Y(c^{(r)},x_2)\frac{1}{j!}
\left(\frac{\partial}{\partial x_2}\right)^rx_1^{-1}\delta\left(\frac{x_2}{x_1}\right).\nonumber
\end{align}
Then
\begin{align}
Y(u,x_1)Y(v,x_2)\sim \beta(x_2-x_1)Y(v,x_2)Y(u,x_1)  +f(-x_2+x_1)+g(x_2-x_1),
\end{align}
\begin{align}
Y(u,x)^{-}v=f(x)^{-}{\bf 1}+\sum_{r\ge 0}c^{(r)}x^{-r-1},
\end{align}
where $Y(u,x)^{-}$ and $f(x)^{-}$ denote the singular parts of $Y(u,x)$ and $f(x)$, respectively.
\end{lem}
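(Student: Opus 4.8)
The plan is to extract the two stated conclusions from the single identity \eqref{S-bracket-simple} by projecting onto appropriate components of the formal delta distributions and by applying the multiplication-by-$(x_1-x_2)^k$ trick that defines the relation $\sim$. Throughout, the key structural fact I would use is the standard delta-function calculus from \cite{FLM2, fhl}: for any $k\in\N$,
\begin{align*}
(x_1-x_2)^{k}\,\frac{1}{r!}\left(\frac{\partial}{\partial x_2}\right)^{r}x_1^{-1}\delta\!\left(\frac{x_2}{x_1}\right)=0\quad\text{whenever }k>r,
\end{align*}
together with the fact that $f(x_1-x_2)+g(x_2-x_1)$, after expansion in the appropriate directions, has only finitely many ``mixed'' singular terms obstructing it from lying in the relevant $\Hom$ space up to $\hbar^{n}$.

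\textbf{First conclusion.} Fix $n\in\Z_{+}$. Since $\lim_{r\to\infty}c^{(r)}=0$ $\hbar$-adically, there is an $N$ with $c^{(r)}\in\hbar^{n}V$ for all $r\ge N$; hence modulo $\hbar^{n}$ the sum $\sum_{r\ge0}Y(c^{(r)},x_2)\cdots$ in \eqref{S-bracket-simple} is a finite sum over $r<N$. Multiplying \eqref{S-bracket-simple} through by $(x_1-x_2)^{k}$ with $k\ge N$ annihilates every delta-derivative term modulo $\hbar^{n}$ by the identity above. It remains to handle $f(x_1-x_2)+g(x_2-x_1)$: note that on the left side of the asserted $\sim$-relation we have instead $f(-x_2+x_1)+g(x_2-x_1)$, and $f(-x_2+x_1)$ and $f(x_1-x_2)$ differ only in which variable the geometric series is expanded — but $(x_1-x_2)^{k}$ times the difference of the two expansions of the same rational (Laurent) series $f$ is zero for $k$ large enough (each is a finite Laurent tail once we clear the pole). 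Choosing $k$ larger than both $N$ and the order of the pole of $f$, we get $(x_1-x_2)^{k}$ times the LHS minus RHS of the claimed relation $\equiv 0 \bmod \hbar^{n}$, which is exactly the definition of $\sim$.

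\textbf{Second conclusion.} Here I would apply both sides of \eqref{S-bracket-simple} to the vacuum $\vac$ and take the appropriate limit, exploiting the vacuum axioms $Y(v,x_2)\vac\in V[[x_2]]$ and creation property. Acting on $\vac$, the term $Y(v,x_2)Y(u,x_1)\vac$ is regular at $x_1=0$ since $Y(u,x_1)\vac\in V[[x_1]]$; likewise $Y(v,x_2)Y(u,x_1)\vac$ has no negative powers of $x_1$, so $\beta(x_2-x_1)Y(v,x_2)Y(u,x_1)\vac$ contributes nothing to the singular (negative-power-of-$x_1$) part after we also use that $g(x_2-x_1)$, being expanded in nonnegative powers of $x_1$, is likewise regular in $x_1$. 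Taking $\Res_{x_2}$ against test terms, or more directly extracting the coefficient of $x_1^{-r-1}$ for $r\ge0$ and then setting $x_2\to 0$ using $x_1^{-1}\delta(x_2/x_1)=\sum_{m\in\Z}x_2^{m}x_1^{-m-1}$ and its derivatives, picks out $c^{(r)}$ from the delta-sum and picks out the singular part of $f(x_1)$ (the $x_2$-independent, $\vac$-multiple contribution $f(x_1-x_2)\vac$ specialized at $x_2=0$) from the $f$-term; the $Y(u,x_1)Y(v,x_2)\vac$ side specializes to $Y(u,x_1)v$ by the creation property $\lim_{x_2\to0}Y(v,x_2)\vac=v$. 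Collecting the negative-power-in-$x_1$ terms yields precisely $Y(u,x)^{-}v=f(x)^{-}\vac+\sum_{r\ge0}c^{(r)}x^{-r-1}$.

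\textbf{Main obstacle.} The delicate point is not any single computation but the bookkeeping of \emph{expansion directions}: the symbols $f(x_1-x_2)$ versus $f(-x_2+x_1)$ denote genuinely different elements of $V((x_1))[[x_2]]$ versus $V((x_2))[[x_1]]$, and \eqref{S-bracket-simple} as literally written mixes conventions ($f$ is expanded one way, $g$ the other, to make the two-variable identity consistent). I expect the real work to be checking that, modulo $\hbar^{n}$ and after multiplication by a high enough power of $(x_1-x_2)$, the ``wrong-direction'' expansion of $f$ can be traded for the right-direction one — equivalently, that the discrepancy is supported on finitely many monomials killed by $(x_1-x_2)^{k}$ — and, for the second conclusion, that setting $x_2=0$ is legitimate termwise modulo $\hbar^{n}$, which is where the hypothesis $\lim_{r\to\infty}c^{(r)}=0$ is essential to keep everything a finite sum at each truncation level. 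Once those finiteness-at-level-$n$ observations are in place, both statements follow by the routine residue/delta manipulations sketched above.
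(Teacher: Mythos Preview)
Your argument is correct and reaches both conclusions, but it is organized differently from the paper's proof, which is shorter and more unified. The paper's single move is to write $f(x)^{-}=\sum_{r\ge 0}\lambda_r x^{-r-1}$ and use the standard identity
\[
f(x_1-x_2)-f(-x_2+x_1)=\sum_{r\ge 0}\lambda_r\,\frac{1}{r!}\Big(\frac{\partial}{\partial x_2}\Big)^{r}x_1^{-1}\delta\!\Big(\frac{x_2}{x_1}\Big),
\]
then substitute this back into \eqref{S-bracket-simple}. The rewritten equation has $f(-x_2+x_1)$ on the left (giving the $\sim$-relation immediately, since the right side is now a pure delta expansion with coefficients $c^{(r)}+\lambda_r\vac$ that vanish $\hbar$-adically) and simultaneously exhibits the singular-part formula, since those delta coefficients are exactly what must equal $u_r v$. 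Your first-conclusion argument is the same observation phrased as ``$(x_1-x_2)^k$ kills the difference of expansions,'' and your second-conclusion argument (apply to $\vac$, take the negative-$x_1$ part, set $x_2=0$) is a valid way to extract $u_r v$ from the delta expansion---likely what the paper's terse ``from which we obtain the desired relations'' has in mind. The advantage of the paper's organization is that the delta identity does all the work at once; the advantage of yours is that the vacuum argument makes the extraction of $u_r v$ fully explicit without appealing to any Jacobi-type statement.
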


\begin{proof} Writing $f(x)^{-}=\sum_{r\ge 0} \lambda_r x^{-r-1}$ with $\lambda_r\in \C[[\hbar]]$, we have
\begin{align*}
f(x_1-x_2) =\ & \sum_{r\ge 0}\lambda_r ((x_1-x_2)^{-r-1}-(-x_2+x_1)^{-r-1})+f(-x_2+x_1)\\
=\ & f(-x_2+x_1)+\sum_{r\ge 0}\lambda_{r}\frac{1}{r!}
\left(\frac{\partial}{\partial x_2}\right)^rx_1^{-1}\delta\left(\frac{x_2}{x_1}\right).
\end{align*}
With this, (\ref{S-bracket-simple}) can be rewritten as
\begin{align*}
&Y(u,x_1)Y(v,x_2)-\(\beta(x_2-x_1)Y(v,x_2)Y(u,x_1)+f(-x_2+x_1)+g(x_2-x_1)\)\\
=\ &\sum_{r\ge 0}Y(c^{(r)}+\lambda_r{\bf 1},x_2)\frac{1}{r!}
\left(\frac{\partial}{\partial x_2}\right)^rx_1^{-1}\delta\left(\frac{x_2}{x_1}\right),\nonumber
\end{align*}
from which we obtain the desired relations.
\end{proof}

\section{Deforming $\hbar$-adic nonlocal vertex algebras}
In this section, we present $\hbar$-adic versions of the basic results
about smash product nonlocal vertex algebras (see \cite{Li-smash}, \cite{JKLT-Defom-va}),
especially the deformation of a nonlocal vertex algebra by a right $H$-comodule nonlocal vertex algebra structure
 with a compatible $H$-module nonlocal vertex algebra structure.

\subsection{Smash product nonlocal vertex algebras}
We here recall the basic concepts and results about smash product nonlocal vertex algebras.

\begin{de}
A {\em nonlocal vertex bialgebra} is a nonlocal vertex algebra $V$ equipped with
a classical coalgebra structure such that the co-multiplication $\Delta:V\rightarrow V\ot V$
and the co-unit $\varepsilon:V\rightarrow \C$ are homomorphisms of nonlocal vertex algebras.
\end{de}

\begin{de}\label{de:mod-va-for-vertex-bialg}
Let $H$ be a nonlocal vertex bialgebra. An {\em $H$-module nonlocal vertex algebra}
is a nonlocal vertex algebra $V$ equipped with a module structure $Y_{V}^H(\cdot,x)$
on $V$ for $H$ viewed as a nonlocal vertex algebra such that
\begin{align}
  &Y_{V}^H(h,x)v\in V\ot \C((x)),\label{eq:mod-va-for-vertex-bialg1}\\
  &Y_{V}^H(h,x)\vac_V=\varepsilon(h)\vac_V,\label{eq:mod-va-for-vertex-bialg2}\\
  &Y_{V}^H(h,x_1)Y(u,x_2)v=\sum Y(Y_{V}^H(h_{(1)},x_1-x_2)u,x_2)Y_{V}^H(h_{(2)},x_1)v
  \label{eq:mod-va-for-vertex-bialg3}
\end{align}
for $h\in H$, $u,v\in V$, where $\vac_V$ denotes the vacuum vector of $V$
and $\Delta(h)=\sum h_{(1)}\ot h_{(2)}$ is the coproduct in the Sweedler notation.
\end{de}

The following is a result of \cite{Li-smash}:

\begin{thm}\label{smash-product}
Let $H$ be a nonlocal vertex bialgebra, $V$ an $H$-module nonlocal vertex algebra.
Set $V\sharp H=V\ot H$ as a vector space.
For $u,v\in V$, $h,h'\in H$, define
\begin{align}
  Y^\sharp (u\ot h,x)(v\ot h')=\sum Y(u,x)Y(h_{(1)},x)v\ot Y(h_{(2)},x)h'.
\end{align}
Then $(V\sharp H, Y^\sharp,{\bf 1}\otimes {\bf 1})$ carries the structure of a nonlocal vertex algebra.
\end{thm}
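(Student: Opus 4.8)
The plan is to verify the nonlocal vertex algebra axioms of Definition \ref{def-nlva} for $(V\sharp H, Y^\sharp, \vac\ot\vac)$ directly, using the module and bialgebra axioms. First I would check the easy structural conditions. The truncation condition $Y^\sharp(u\ot h,x)(v\ot h')\in (V\sharp H)((x))$ follows because $Y(u,x)Y(h_{(1)},x)v\in V((x))$ (noting $Y_V^H(h_{(1)},x)v\in V\ot\C((x))$ by \eqref{eq:mod-va-for-vertex-bialg1} and then applying $Y(u,x)$) and $Y(h_{(2)},x)h'\in H((x))$, together with the fact that $\Delta(h)$ is a finite sum. The vacuum properties are immediate: $Y^\sharp(\vac\ot\vac,x)(v\ot h')=Y(\vac,x)Y(\vac,x)v\ot Y(\vac,x)h'=v\ot h'$ using $\Delta(\vac)=\vac\ot\vac$ and $\varepsilon(\vac)=1$; and $Y^\sharp(u\ot h,x)(\vac\ot\vac)=\sum Y(u,x)Y(h_{(1)},x)\vac\ot Y(h_{(2)},x)\vac = \sum\varepsilon(h_{(1)})Y(u,x)\vac\ot Y(h_{(2)},x)\vac$ by \eqref{eq:mod-va-for-vertex-bialg2}, which by the counit axiom collapses to $Y(u,x)\vac\ot Y(h,x)\vac\in (V\sharp H)[[x]]$ with limit $u\ot h$ as $x\to 0$.

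The substance is the compatibility/associativity pair \eqref{nlva-compatibilty}--\eqref{nlva-associativity}. I would first establish a ``weak associativity for $Y^\sharp$'' statement and then, if needed, upgrade it; alternatively, in the spirit of Lemma \ref{two-definitions} it may be cleaner to verify \eqref{nlva-associativity} in the form $(x_1-x_2)^kY^\sharp(a,x_1)Y^\sharp(b,x_2)w$ lies in $\Hom$ of the appropriate space and restricts correctly. Expanding $Y^\sharp(u\ot h,x_1)Y^\sharp(v\ot h',x_2)(w\ot h'')$ produces, after applying the definition twice and the coassociativity of $\Delta$, a sum of terms of the shape
\begin{align*}
Y(u,x_1)Y(h_{(1)},x_1)Y(v,x_2)Y(h'_{(1)},x_2)w\ \ot\ Y(h_{(2)},x_1)Y(h'_{(2)},x_2)h''.
\end{align*}
The key move is to apply the module-compatibility relation \eqref{eq:mod-va-for-vertex-bialg3} to the middle pair $Y(h_{(1)},x_1)Y(v,x_2)$, turning $Y_V^H(h_{(1)},x_1)Y(v,x_2)w$ into $\sum Y(Y_V^H(h_{(1)(1)},x_1-x_2)v,x_2)Y_V^H(h_{(1)(2)},x_1)w$. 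After this rewriting, the $V$-component becomes a product of vertex operators all evaluated at $x_2$ (with arguments depending rationally on $x_1-x_2$), so one can invoke the associativity of $Y$ on $V$ directly; the $H$-component is handled by associativity of $Y$ on $H$. Choosing $k$ large enough to clear the finitely many poles coming from the $V$-associativity, the $H$-associativity, and the expressions $Y_V^H(\cdot,x_1-x_2)$ (each of which lies in $V\ot\C((x_1-x_2))$), one gets \eqref{nlva-compatibilty}, and then the substitution $x_1=x_2+x_0$ reorganizes everything into $x_0^k Y^\sharp(Y^\sharp(u\ot h,x_0)(v\ot h'),x_2)(w\ot h'')$ after one checks that $Y^\sharp(u\ot h,x_0)(v\ot h')=\sum Y(u,x_0)Y(h_{(1)},x_0)v\ot Y(h_{(2)},x_0)h'$ is exactly what the iterated application of \eqref{eq:mod-va-for-vertex-bialg3} plus the two $Y$-associativities deliver.

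The main obstacle I anticipate is bookkeeping: correctly matching the Sweedler legs after using coassociativity (so that the $h_{(1)(1)}$ that gets transported by $Y_V^H(\cdot,x_1-x_2)$ ends up multiplying $v$ at $x_2$ while $h_{(1)(2)}$ acts on $w$ at $x_1$ and the $h_{(2)}$ leg stays in the $H$-slot), and verifying that the rational functions in $x_1-x_2$ produced along the way are genuinely expansions of a single element of $V\ot\C((x_0))$ so that the formal substitution $x_1=x_2+x_0$ is legitimate. Because $H$ is only a nonlocal vertex bialgebra (not cocommutative), I must be careful not to permute tensor legs; the argument should use only coassociativity and the homomorphism property of $\Delta$. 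Once the $V$-slot and $H$-slot computations are separated as above, each is governed by the ordinary associativity axiom in Definition \ref{def-nlva}, so no new analytic input is needed beyond choosing the single integer $k$ uniformly. I would also remark that this argument is essentially the vertex-algebra shadow of the proof that a smash product $A\sharp H$ of an $H$-module algebra is associative, with \eqref{eq:mod-va-for-vertex-bialg3} playing the role of the module-algebra axiom.
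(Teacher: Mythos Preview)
The paper does not actually prove this theorem; it is simply quoted as ``a result of \cite{Li-smash}'' with no argument given. Your outline is the standard smash-product verification and is essentially the proof in \cite{Li-smash}: check the creation/vacuum axioms directly, then expand $Y^\sharp(u\ot h,x_1)Y^\sharp(v\ot h',x_2)(w\ot h'')$, use the compatibility relation \eqref{eq:mod-va-for-vertex-bialg3} together with coassociativity to move the $H$-action past $Y(v,x_2)$, and reduce to the separate associativities in $V$ and $H$. So there is nothing to compare against in this paper, and your plan is correct.

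One small caution on the execution: after applying \eqref{eq:mod-va-for-vertex-bialg3} you will have a factor $Y_V^H(h_{(1)(2)},x_1)$ sitting to the left of $Y(h'_{(1)},x_2)$ in the $V$-slot, and you must apply \eqref{eq:mod-va-for-vertex-bialg3} (or the $H$-module associativity) once more to merge these before you can invoke associativity of $Y$ on $V$. This is the only place where the Sweedler bookkeeping is genuinely delicate, and it is where coassociativity of $\Delta$ and the fact that $\Delta$ is a nonlocal vertex algebra homomorphism are both needed. Your proposal acknowledges this but slightly understates it; make sure in the write-up that every $Y_V^H$ factor has been absorbed before the final associativity step.
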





The following notions were introduced in  \cite{JKLT-Defom-va}:

\begin{de}
Let $H$ be a nonlocal vertex bialgebra.  A {\em right $H$-comodule nonlocal vertex algebra} is
a nonlocal vertex algebra $V$ equipped with a nonlocal vertex algebra homomorphism
 $\rho: V\rightarrow V\ot H$  such that
\begin{align}\label{eq:comod-cond}
(\rho\ot 1)\rho=(1\ot\Delta)\rho,\quad
  (1\ot \epsilon)\rho=\te{id}_V,
\end{align}
i.e., $\rho$ is also a right comodule structure on $V$ for $H$ viewed as a coalgebra.
\end{de}

\begin{de}\label{compatible-ma-cma}
Let $H$ be a nonlocal vertex bialgebra and let $V$ be a right $H$-comodule nonlocal vertex algebra
with comodule structure map $\rho:V\rightarrow V\ot H$.
Denote by $\mathfrak L^\rho_H(V)$ the set consisting of each linear map
\begin{align}
Y_V^H(\cdot,x):\  H\rightarrow \Hom(V,V\ot\C((x)))
\end{align}
such that $V$ with $Y_V^H(\cdot,x)$ is an $H$-module nonlocal vertex algebra and
$\rho$ is an $H$-module homomorphism with $H$ acting on the first factor of $V\otimes H$ only, i.e.,
\begin{align}\label{compatible-relation}
  \rho(Y_V^H(h,x)v)=(Y_V^H(h,x)\ot 1)\rho(v)\quad\te{for }h\in H,\  v\in V.
\end{align}
\end{de}

\begin{thm}\label{thm-deform-va}
Let $H$ be a cocommutative nonlocal vertex bialgebra,
let $(V,\rho)$ be a right $H$-comodule nonlocal vertex algebra,
and let $Y_V^H\in \mathfrak L^\rho_H(V)$.
For $a\in V$, set
\begin{align}
\mathfrak D_{Y_V^H}^\rho (Y)(a,x)=\sum Y(a_{(1)},x)Y_V^H(a_{(2)},x)\  \text{ on }V,
\end{align}
where $\rho(a)=\sum a_{(1)}\ot a_{(2)}\in V\ot H$.
Then $(V,\mathfrak D_{Y_V^H}^\rho (Y),\vac)$ carries the structure of a nonlocal vertex algebra.
Denote this nonlocal vertex algebra by $\mathfrak D_{Y_V^H}^\rho (V)$.
Furthermore, $\mathfrak D_{Y_V^H}^{\rho}(V)$ with the map $\rho$ is also a right $H$-comodule nonlocal vertex algebra.
\end{thm}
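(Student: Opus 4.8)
The plan is to reduce the statement to the already-established smash product construction (Theorem \ref{smash-product}) via a ``deformation isomorphism'' rather than checking the nonlocal vertex algebra axioms from scratch. First I would form the smash product nonlocal vertex algebra $V\sharp H$ from the $H$-module nonlocal vertex algebra structure $Y_V^H\in\mathfrak L_H^\rho(V)$; by Theorem \ref{smash-product} this is a nonlocal vertex algebra on $V\ot H$. The key observation is that the comodule map $\rho:V\rightarrow V\ot H$ is an algebra map into $V\sharp H$ in the deformed product: precisely, I would verify that
\begin{align*}
\rho\(\mathfrak D_{Y_V^H}^\rho(Y)(a,x)b\)=Y^\sharp(\rho(a),x)\rho(b)\quad\text{for }a,b\in V,
\end{align*}
so that $\rho$ intertwines the candidate vertex operation on $V$ with $Y^\sharp$ on $V\sharp H$. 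Together with injectivity of $\rho$ (which follows from $(1\ot\epsilon)\rho=\mathrm{id}_V$), this immediately transports the nonlocal vertex algebra axioms --- the vacuum properties, the $V((x))$-truncation, weak associativity --- from $V\sharp H$ back to $(V,\mathfrak D_{Y_V^H}^\rho(Y),\vac)$. Cocommutativity of $H$ enters here exactly as in \cite{JKLT-Defom-va}: it is what makes the formula $\mathfrak D_{Y_V^H}^\rho(Y)(a,x)=\sum Y(a_{(1)},x)Y_V^H(a_{(2)},x)$ well behaved under the coassociativity relation $(\rho\ot1)\rho=(1\ot\Delta)\rho$, i.e. it ensures the two ways of expanding iterated products agree.

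The computation of $\rho\(\mathfrak D_{Y_V^H}^\rho(Y)(a,x)b\)$ is where the compatibility hypotheses do their work. Expanding the left side using that $\rho$ is a nonlocal vertex algebra homomorphism for the original $Y$, one gets $\sum Y(\rho(a_{(1)}),x)\rho(Y_V^H(a_{(2)},x)b)$; now apply \eqref{compatible-relation} to rewrite $\rho(Y_V^H(a_{(2)},x)b)=(Y_V^H(a_{(2)},x)\ot1)\rho(b)$, then use the coassociativity \eqref{eq:comod-cond} to identify $\sum a_{(1)}\ot a_{(2)}\ot(\cdot)$ with $\sum a_{(1)(1)}\ot a_{(1)(2)}\ot a_{(2)}$, and finally compare with the definition of $Y^\sharp$ in Theorem \ref{smash-product}. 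The cocommutativity of $H$ is used to reorder the Sweedler legs so that the $H$-factors land in the order demanded by $Y^\sharp(u\ot h,x)(v\ot h')=\sum Y(u,x)Y(h_{(1)},x)v\ot Y(h_{(2)},x)h'$. This is essentially the content of the corresponding result in \cite{JKLT-Defom-va}, so I would follow that argument closely.

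For the ``furthermore'' assertion that $\mathfrak D_{Y_V^H}^\rho(V)$ is again a right $H$-comodule nonlocal vertex algebra with the \emph{same} map $\rho$, the coalgebra-compatibility conditions \eqref{eq:comod-cond} are unchanged since $\rho$ as a linear map has not changed; what must be re-checked is that $\rho$ is a homomorphism of nonlocal vertex algebras for the \emph{new} operation $\mathfrak D_{Y_V^H}^\rho(Y)$ on the source and a correspondingly deformed operation on the target $V\ot H$. The natural target structure is the deformation of $V\ot H$ (with the tensor-product nonlocal vertex algebra structure, $H$ acting on itself by left multiplication as an $H$-module nonlocal vertex algebra) by the comodule map $(1\ot\Delta)\rho=(\rho\ot1)\rho$; equivalently, one checks directly that the identity $\rho\(\mathfrak D_{Y_V^H}^\rho(Y)(a,x)b\)=Y^\sharp(\rho(a),x)\rho(b)$ already displayed, combined with the structure of $V\sharp H$, exhibits $\rho$ as a comodule-algebra map. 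Here one uses cocommutativity once more to see that the $H$-module structure used to deform $V\ot H$ is compatible in the sense of Definition \ref{compatible-ma-cma}.

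The main obstacle I anticipate is purely bookkeeping: keeping the iterated coproducts straight and verifying that each reordering of Sweedler legs is licensed precisely by cocommutativity (and not something stronger), and checking that the compatibility relation \eqref{compatible-relation} is genuinely needed rather than merely convenient. There is also a minor subtlety in that $Y_V^H(h,x)v$ lies in $V\ot\C((x))$ rather than $V((x))$, so the intertwining identity must be verified as an identity of formal series with the correct truncation properties; but since all the relevant maps ($Y$, $Y_V^H$, $\rho$) are already known to preserve the appropriate spaces, this causes no real trouble. No genuinely new ideas beyond the $\C$-coefficient case of \cite{JKLT-Defom-va} should be required.
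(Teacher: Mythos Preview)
The paper does not give its own proof of this theorem; it is stated in Section~4.1 as a result recalled from \cite{JKLT-Defom-va}. Your proposal---embedding $V$ into the smash product $V\sharp H$ via the injective comodule map $\rho$, verifying $\rho\bigl(\mathfrak D_{Y_V^H}^\rho(Y)(a,x)b\bigr)=Y^\sharp(\rho(a),x)\rho(b)$, and pulling back the nonlocal vertex algebra axioms---is correct and is exactly the argument used in that reference, so there is nothing to add.
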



\begin{rem}
{\em Let $(V,\rho)$ be a right $H$-comodule nonlocal vertex algebra. Define
a linear map $Y_{M}^\varepsilon(\cdot,x): H\rightarrow \te{End}_{\C} (V)\subset (\te{End}V)[[x,x^{-1}]] $ by
\begin{align}
Y_{M}^{\varepsilon}(h,x)v=\varepsilon(h)v\    \   \  \mbox{ for }h\in H,\  v\in V.
\end{align}
Then $Y_{M}^\varepsilon \in\mathfrak L_H^\rho(V)$ and
$\mathfrak D_{Y_{M}^\varepsilon}^\rho (V)=V$.}
\end{rem}

From definition,  $\mathfrak L^\rho_H(V)$ is a subset of $\Hom (H,\Hom(V,V\ot\C((x))))$.
For $Y_M,Y_M'\in \Hom (H,\Hom(V,V\ot\C((x))))$,
we say that $Y_M$ and $Y_M'$ {\em commute} if
\begin{align}
Y_{M}(h,x)Y_{M}'(k,z)=Y_{M}'(k,z)Y_{M}(h,x)\   \   \mbox{ for all }h,k\in H.
\end{align}
 We have:

\begin{lem}\label{lem:L-H-rho-V-compostition}
Let $H$ be a  nonlocal vertex bialgebra and
let $(V,\rho)$ be a right $H$-comodule nonlocal vertex algebra.
For
$$Y_M(\cdot,x),\ Y_M'(\cdot,x)\in \Hom (H,\Hom(V,V\ot\C((x)))),$$
define a linear map
 $ (Y_M\ast Y_M')(\cdot,x):\  H\rightarrow  \Hom(V,V\ot\C((x)))$
by
\begin{align}
  (Y_M\ast Y_M')(h,x)= \sum Y_M(h_{(1)},x)Y'_M(h_{(2)},x)
  \end{align}
  for $h\in H$, where $\Delta(h)=\sum h_{(1)}\otimes h_{(2)}$.
Then $\Hom\left(H,\Hom(V,V\ot\C((x)))\right)$ with multiplication $*$ is an associative algebra
 with $Y_{M}^\varepsilon$  as identity.
Let $Y_M,Y_M'\in \mathfrak L_H^\rho(V)$. If $H$ is cocommutative
 and if $Y_M$ and $Y_M'$ commute, then $Y_M\ast Y_M'\in \mathfrak L_H^\rho(V)$.
\end{lem}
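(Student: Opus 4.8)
The plan for the first assertion is to recognize $\ast$ as a convolution product. One first checks that $\Hom(V,V\ot\C((x)))$ is an associative algebra under the composition sending $(f,g)$ to the map obtained by applying $g$, then applying $f$ to the first tensor factor and multiplying the two resulting series in $\C((x))$; its identity is $v\mapsto v\ot 1$. Since $H$ is a coalgebra, the formula defining $Y_M\ast Y_M'$ is exactly the convolution product on $\Hom\big(H,\Hom(V,V\ot\C((x)))\big)$, so associativity of $\ast$ is immediate from the coassociativity of $\Delta$, and the counit axioms $(\varepsilon\ot 1)\Delta=1=(1\ot\varepsilon)\Delta$ show that $Y_M^\varepsilon$, which is $h\mapsto\varepsilon(h)$ times the identity of $\Hom(V,V\ot\C((x)))$, is a two-sided unit.

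For the second assertion fix $Y_M,Y_M'\in\mathfrak L_H^\rho(V)$ with $H$ cocommutative and $Y_M,Y_M'$ commuting, and write $Y^\sharp=Y_M\ast Y_M'$. Several conditions are routine and I would dispose of them first: $Y^\sharp(\vac_H,x)=1_V$ follows from $\Delta(\vac_H)=\vac_H\ot\vac_H$; condition $(\ref{eq:mod-va-for-vertex-bialg1})$ holds because $Y_M'(h_{(2)},x)v\in V\ot\C((x))$ and then applying $Y_M(h_{(1)},x)$ to each of its finitely many $V$-components keeps one inside $V\ot\C((x))$; condition $(\ref{eq:mod-va-for-vertex-bialg2})$ reduces to $(\varepsilon\ot\varepsilon)\Delta=\varepsilon$; and the $\rho$-compatibility $(\ref{compatible-relation})$ for $Y^\sharp$ follows by applying $(\ref{compatible-relation})$ first for $Y_M'$ and then for $Y_M$, i.e.\ by chaining the fact that $\rho$ intertwines both actions with $H$ acting on the first factor.

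The substantive point is that $Y^\sharp$ is a module structure on $V$ for $H$ regarded as a nonlocal vertex algebra. Here the plan is to expand $Y^\sharp(h,x_0+x_2)Y^\sharp(k,x_2)$ as the fourfold sum $\sum Y_M(h_{(1)},x_0+x_2)Y_M'(h_{(2)},x_0+x_2)Y_M(k_{(1)},x_2)Y_M'(k_{(2)},x_2)$, use the commuting hypothesis to move $Y_M(k_{(1)},x_2)$ to the left past $Y_M'(h_{(2)},x_0+x_2)$, apply (after multiplying by a suitable power of $x_0+x_2$) the module weak associativity of $Y_M$ on the pair $(h_{(1)},k_{(1)})$ and of $Y_M'$ on $(h_{(2)},k_{(2)})$, and finally observe that, because $\Delta$ is a homomorphism of nonlocal vertex algebras, $\Delta(Y_H(h,x_0)k)=\sum Y_H(h_{(1)},x_0)k_{(1)}\ot Y_H(h_{(2)},x_0)k_{(2)}$, so the resulting sum is exactly $Y^\sharp(Y_H(h,x_0)k,x_2)$. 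Conceptually, the commuting hypothesis makes $V$ a module for the tensor-product nonlocal vertex algebra $H\ot H$ via $h\ot k\mapsto Y_M(h,x)Y_M'(k,x)$, and $Y^\sharp$ is the pullback of that module along $\Delta$.

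It remains to verify condition $(\ref{eq:mod-va-for-vertex-bialg3})$ for $Y^\sharp$, and this is where cocommutativity is used. The plan is to iterate $(\ref{eq:mod-va-for-vertex-bialg3})$ itself: apply it first to rewrite $Y_M'(h_{(2)},x_1)Y(u,x_2)v$, then to rewrite the remaining $Y_M(h_{(1)},x_1)Y(\,\cdot\,,x_2)(\,\cdot\,)$. Writing the iterated coproduct of $h$ as $\sum h_1\ot h_2\ot h_3\ot h_4$ by coassociativity, this produces $\sum Y\big(Y_M(h_1,x_1-x_2)Y_M'(h_3,x_1-x_2)u,x_2\big)Y_M(h_2,x_1)Y_M'(h_4,x_1)v$, whereas expanding the right-hand side of $(\ref{eq:mod-va-for-vertex-bialg3})$ for $Y^\sharp$ directly gives the same expression with $h_2$ and $h_3$ interchanged; the two agree because cocommutativity of $\Delta$ makes the fourfold coproduct invariant under transposing its middle two tensor legs. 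I expect this Sweedler bookkeeping, together with the routine formal-variable justifications in the previous step (the choice of the power of $x_0+x_2$ and the substitution $x_1=x_0+x_2$), to be the only delicate points, and they run exactly parallel to the non-$\hbar$-adic arguments in \cite{Li-smash} and \cite{JKLT-Defom-va}.
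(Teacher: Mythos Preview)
Your proposal is correct and follows the natural line of argument. The paper does not supply its own proof of this lemma; it is recalled from \cite{JKLT-Defom-va} (see Lemma~3.5 and Proposition~3.4 there), where the proof proceeds exactly along the lines you describe: the first assertion is the standard convolution-algebra observation, and for the second one verifies the $H$-module nonlocal vertex algebra axioms and $\rho$-compatibility by iterating the corresponding axioms for $Y_M$ and $Y_M'$, using the commuting hypothesis to interleave the two actions and cocommutativity to reorder the Sweedler legs in the verification of \eqref{eq:mod-va-for-vertex-bialg3}. Your conceptual remark that the commuting hypothesis makes $V$ an $H\ot H$-module and $Y^\sharp$ its pullback along $\Delta$ is a clean way to package the weak-associativity step.
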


The following was also proved in \cite{JKLT-Defom-va}:

\begin{thm}\label{thm:S-op}
Let $H$ be a cocommutative vertex bialgebra and let $(V,\rho)$ be a right $H$-comodule vertex algebra.
Assume $Y_M(\cdot,x)$ is an invertible element of $\mathfrak L_H^\rho(V)$  with inverse $Y_M^{-1}(\cdot,x)$.
Define a linear map $\cS(x): V\otimes V\rightarrow V\otimes V\ot \C((x))$ by
\begin{align}
  \cS(x)(v\ot u)=\sum Y_M(u_{(2)},-x)v_{(1)}\ot Y_M\inverse (v_{(2)},x)u_{(1)}
\end{align}
for $u,v\in V$.
Then $\cS(x)$ is a unitary rational quantum Yang-Baxter operator and
the nonlocal vertex algebra $\mathfrak D_{Y_M}^\rho (V)$ with $\cS(x)$ is a quantum vertex algebra.
\end{thm}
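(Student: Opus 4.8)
The plan is to build on Theorem~\ref{thm-deform-va}, which already tells us that $\mathfrak D_{Y_M}^\rho(V)$---write $\wt Y=\mathfrak D_{Y_M}^\rho(Y)$ for its vertex operator map---is a nonlocal vertex algebra, in fact a right $H$-comodule nonlocal vertex algebra with the same structure map $\rho$. First observe that $\wt Y(v,x)\vac=\sum Y(v_{(1)},x)Y_M(v_{(2)},x)\vac=\sum \varepsilon(v_{(2)})Y(v_{(1)},x)\vac=Y(v,x)\vac$ by \eqref{eq:mod-va-for-vertex-bialg2} and the counit axiom, so the operator $\mathcal D$ is the same for $\wt Y$ as for $Y$. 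To promote $(\mathfrak D_{Y_M}^\rho(V),\wt Y)$ to a quantum vertex algebra it then suffices to verify that the displayed $\cS(x)$ is: (i) well defined with values in $V\ot V\ot\C((x))$ and unitary; (ii) a rational quantum Yang--Baxter operator; (iii) compatible with $\mathcal D$ via the shift condition $[\mathcal D\ot 1,\cS(x)]=-\frac{d}{dx}\cS(x)$; (iv) tied to $\wt Y$ through the $\cS$-skew-symmetry \eqref{S-skew-symmetry}, which in particular yields $\cS$-locality and hence makes $\mathfrak D_{Y_M}^\rho(V)$ a weak quantum vertex algebra; and (v) a solution of the hexagon identity. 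Every such verification is a computation in Sweedler notation that repeatedly invokes coassociativity, cocommutativity of $H$, the $H$-module nonlocal vertex algebra axioms \eqref{eq:mod-va-for-vertex-bialg1}--\eqref{eq:mod-va-for-vertex-bialg3}, the compatibility \eqref{compatible-relation}, the comodule identities \eqref{eq:comod-cond}, and the fact that $\rho$ is a homomorphism of nonlocal vertex algebras.

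I would dispatch the routine items first. Well-definedness of $\cS(x)$ is immediate from \eqref{eq:mod-va-for-vertex-bialg1}. For unitarity one expands $\cS^{21}(x)\cS^{12}(-x)$ into a fourfold Sweedler sum, collapses the inner coproducts by coassociativity, and then uses that $Y_M$ and its $\ast$-inverse $Y_M\inv$ satisfy $Y_M\ast Y_M\inv=Y_M\inv\ast Y_M=Y_M^\varepsilon$ in the convolution algebra of Lemma~\ref{lem:L-H-rho-V-compostition}, so that adjacent $Y_M$--$Y_M\inv$ pairs telescope to a counit; cocommutativity is what lines up the Sweedler legs for this telescoping, and \eqref{eq:mod-va-for-vertex-bialg2} handles the vacuum normalization. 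The shift condition follows by differentiating the defining formula for $\cS(x)$ and using the $\mathcal D$-bracket $[\mathcal D,Y_M(h,x)]=\frac{d}{dx}Y_M(h,x)$ (and likewise for $Y_M\inv$) together with $\rho\mathcal D=(\mathcal D\ot 1+1\ot\mathcal D)\rho$, which holds because $\rho$ is a homomorphism of nonlocal vertex algebras.

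For the Yang--Baxter equation I would expand both $\cS^{12}(x)\cS^{13}(x+z)\cS^{23}(z)$ and $\cS^{23}(z)\cS^{13}(x+z)\cS^{12}(x)$ on a generic $w\ot v\ot u$; fully splitting the coproducts by coassociativity and symmetrizing by cocommutativity, each side reduces to the same sixfold Sweedler sum once the associativity of the composition of the operators $Y_M(h,\cdot)$ on $V$ is used to reorder factors and the shifts $x$, $z$, $x+z$ are matched---this is the familiar twist/cocycle mechanism from Hopf algebra theory, which goes through here precisely because $Y_M$ is a genuine associative action compatible with the comodule map via \eqref{compatible-relation}. For the hexagon identity I would unwind $\wt Y(z)(a\ot b)=\sum Y(a_{(1)},z)Y_M(a_{(2)},z)b$ and apply \eqref{eq:mod-va-for-vertex-bialg3}, which governs exactly the braiding of $Y_M(h,x_1)$ past $Y(u,x_2)$; combined with \eqref{compatible-relation}, the $H$-legs produced by pushing $\rho$ through the deformed product match those generated by the two $\cS$-factors on the right-hand side.

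For the $\cS$-skew-symmetry I would start from $\wt Y(u,x)v=\sum Y(u_{(1)},x)Y_M(u_{(2)},x)v$, use \eqref{eq:mod-va-for-vertex-bialg1} to expand $Y_M(u_{(2)},x)v$, apply the ordinary skew-symmetry $Y(a,x)b=e^{x\mathcal D}Y(b,-x)a$ valid in the vertex algebra $V$, and then reassemble: using \eqref{compatible-relation}, cocommutativity, associativity of $\ast$, and the counit axiom, one recognizes the result as $e^{x\mathcal D}\wt Y(-x)\cS(-x)(v\ot u)$. The main obstacle, I expect, lies precisely here and in the Yang--Baxter step: one must apply coassociativity to reduce iterated coproducts $\Delta^{(n)}$ and then use cocommutativity to permute the many Sweedler legs into exactly the configuration appearing on the opposite side, while keeping the formal-variable shifts ($-x$, $x+z$) and the interlacing of $Y_M$ with $Y_M\inv$ aligned---all of the genuine content of the statement is concentrated in this bookkeeping. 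Once (i)--(v) are established, $\mathfrak D_{Y_M}^\rho(V)$ equipped with $\cS(x)$ satisfies by definition the axioms of a quantum vertex algebra. One may alternatively note that $\rho$ embeds $\mathfrak D_{Y_M}^\rho(V)$ into the smash product $V\sharp H$ of Theorem~\ref{smash-product} and carry part of the computation out there, but the operator $\cS(x)$ and its axioms are most transparently checked on $V$ itself.
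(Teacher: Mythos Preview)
The paper does not prove this theorem: it is quoted verbatim from \cite{JKLT-Defom-va} (introduced by ``The following was also proved in \cite{JKLT-Defom-va}''), so there is no proof here to compare against. Your direct-verification plan---checking unitarity, the quantum Yang--Baxter equation, the shift condition, $\cS$-skew-symmetry, and the hexagon identity by Sweedler-notation computations that exploit cocommutativity of $H$, the convolution product of Lemma~\ref{lem:L-H-rho-V-compostition}, the compatibility \eqref{compatible-relation}, and the module axioms \eqref{eq:mod-va-for-vertex-bialg1}--\eqref{eq:mod-va-for-vertex-bialg3}---is the natural approach and is essentially what the cited paper carries out; the bookkeeping you flag (aligning Sweedler legs via cocommutativity while tracking the variable shifts) is indeed the only real work.
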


For the rest of this section, we assume that $G$ is a group with linear characters $\chi$
and $\chi_\phi$, satisfying the compatibility condition (\ref{p(x)-compatibility}).

\begin{de}
A {\em $(G,\chi)$-module nonlocal vertex bialgebra} is a nonlocal vertex bialgebra $H$
equipped with a representation $R$ of $G$ on $H$ such that  $G$ acts as a coalgebra automorphism group
and $(H,R)$ is a $(G,\chi)$-module nonlocal vertex algebra.
\end{de}

\begin{prop}\label{smash-G-algebra}
Let $H$ be a $(G,\chi)$-module nonlocal vertex bialgebra.
Suppose that $V$ is a $(G,\chi)$-module nonlocal vertex algebra and an $H$-module nonlocal vertex algebra
with $H$-module structure map $Y_V^H(\cdot,x)$ such that
\begin{eqnarray}\label{RV-RH-compatibility}
R_{V}(g)Y_V^H(h,x)v=Y_V^H(R_{H}(g) h,\chi(g)x)R_{V}(g) v
\end{eqnarray}
for $g\in G,\ h\in H,\ v\in V$, where $R_{H}$ and $R_{V}$ denote the corresponding representations of $G$. Then
$V\sharp H$ is a $(G,\chi)$-module nonlocal vertex algebra with $R=R_V\ot R_H$.
\end{prop}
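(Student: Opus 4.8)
The plan is to reduce the statement entirely to the equivariance axiom, since Theorem~\ref{smash-product} already gives us that $(V\sharp H,Y^\sharp,\vac\ot\vac)$ is a nonlocal vertex algebra. First I would note that $R:=R_V\ot R_H$ is tautologically a group representation of $G$ on $V\sharp H=V\ot H$, and that $R(g)(\vac\ot\vac)=\vac\ot\vac$ because $R_V(g)\vac=\vac$ and $R_H(g)\vac=\vac$. Thus the whole content of the proposition is the operator identity
\begin{align*}
R(g)Y^\sharp(a,x)R(g)\inverse=Y^\sharp(R(g)a,\chi(g)x)\qquad\te{for }g\in G,\ a\in V\sharp H,
\end{align*}
and by $\C$-bilinearity of $Y^\sharp$ it suffices to verify it with $a=u\ot h$, applied to a vector $v\ot h'$, where $u,v\in V$ and $h,h'\in H$.

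Next I would carry out the direct computation. Writing $R(g)\inverse(v\ot h')=R_V(g)\inverse v\ot R_H(g)\inverse h'$ and expanding $Y^\sharp$ by its definition, the left-hand side applied to $v\ot h'$ becomes
\[
\sum R_V(g)\,Y(u,x)\,Y_V^H(h_{(1)},x)\,R_V(g)\inverse v\ \ot\ R_H(g)\,Y(h_{(2)},x)\,R_H(g)\inverse h'.
\]
On the $V$-factor I would insert $R_V(g)\inverse R_V(g)=1$ between $Y(u,x)$ and $Y_V^H(h_{(1)},x)$, then apply the $(G,\chi)$-equivariance of the vertex operator map of $V$ to the first piece and hypothesis~(\ref{RV-RH-compatibility})---read as $R_V(g)Y_V^H(h,x)R_V(g)\inverse=Y_V^H(R_H(g)h,\chi(g)x)$---to the second; this turns the $V$-factor into $Y(R_V(g)u,\chi(g)x)\,Y_V^H(R_H(g)h_{(1)},\chi(g)x)\,v$. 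On the $H$-factor I would use the $(G,\chi)$-equivariance of the vertex operator map of $H$, namely $R_H(g)Y(h_{(2)},x)R_H(g)\inverse=Y(R_H(g)h_{(2)},\chi(g)x)$.

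Finally I would invoke the hypothesis that $G$ acts on $H$ by coalgebra automorphisms, i.e.\ $\Delta(R_H(g)h)=\sum R_H(g)h_{(1)}\ot R_H(g)h_{(2)}$, to recognize the remaining sum over $R_H(g)h_{(1)}\ot R_H(g)h_{(2)}$ as the Sweedler expansion of $R_H(g)h$; reassembling, the expression is exactly $Y^\sharp(R_V(g)u\ot R_H(g)h,\chi(g)x)(v\ot h')=Y^\sharp(R(g)(u\ot h),\chi(g)x)(v\ot h')$, which is what was wanted. I do not anticipate any real obstacle: all nonlocal vertex algebra axioms for $V\sharp H$ come for free from Theorem~\ref{smash-product}, so the work is pure bookkeeping---tracking the two conjugations $R_V(g)(-)R_V(g)\inverse$ and $R_H(g)(-)R_H(g)\inverse$, inserting $R_V(g)\inverse R_V(g)$ in the right place, and moving $R_H(g)$ across the coproduct. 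The one point to be careful about is that hypothesis~(\ref{RV-RH-compatibility}), although stated as acting on a vector $v$, must be used as an operator identity after the substitution $v\mapsto R_V(g)\inverse v$, so that it supplies precisely the conversion of $R_V(g)Y_V^H(h_{(1)},x)R_V(g)\inverse$ into $Y_V^H(R_H(g)h_{(1)},\chi(g)x)$.
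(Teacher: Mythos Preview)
Your argument is correct and is exactly the natural direct verification: the nonlocal vertex algebra structure on $V\sharp H$ is inherited from Theorem~\ref{smash-product}, and the $(G,\chi)$-equivariance of $Y^\sharp$ follows by conjugating each piece of the smash-product formula with $R_V(g)$ and $R_H(g)$, using~(\ref{RV-RH-compatibility}) together with the coalgebra-automorphism property of $R_H(g)$. The paper states this proposition without proof (it is recalled from \cite{JKLT-Defom-va}), and your computation is precisely the intended one.
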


Furthermore, we have:

\begin{prop}\label{dual-vertex-G-algebra}
Let $H,V$ be given as in Proposition \ref{smash-G-algebra}.
In addition, assume that $(V,\rho)$ is a right $H$-comodule nonlocal vertex algebra such that
$Y_V^H\in \mathfrak L_H^\rho(V)$ and $\rho: V\rightarrow V\ot H$ is a $G$-module morphism.
Then $\mathfrak D_{Y_V^H}^\rho (V)$ is a $(G,\chi)$-module nonlocal vertex algebra
with the same map $R$.
\end{prop}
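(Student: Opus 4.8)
The plan is to verify directly, for the nonlocal vertex algebra $\mathfrak D_{Y_V^H}^\rho(V)$ already produced by Theorem \ref{thm-deform-va}, the two defining conditions of a $(G,\chi)$-module nonlocal vertex algebra with the representation being the original action $R_V$ of $G$ on the underlying space $V$. Since $(V,R_V)$ is a $(G,\chi)$-module nonlocal vertex algebra and $\mathfrak D_{Y_V^H}^\rho(V)$ has the same vacuum $\vac$, the condition $R_V(g)\vac=\vac$ is immediate, so the whole content lies in the intertwining identity
\[
  R_V(g)\,\mathfrak D_{Y_V^H}^\rho(Y)(a,x)\,R_V(g)^{-1} = \mathfrak D_{Y_V^H}^\rho(Y)\bigl(R_V(g)a,\chi(g)x\bigr)
\]
for $g\in G$ and $a\in V$. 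All manipulations below live in $(\End V)[[x,x^{-1}]]$ and are legitimate because $Y_V^H(h,x)v\in V\otimes\C((x))$ by \eqref{eq:mod-va-for-vertex-bialg1}, so every product of a $Y$-operator with a $Y_V^H$-operator is well defined.

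First I would expand the left-hand side using $\mathfrak D_{Y_V^H}^\rho(Y)(a,x)=\sum Y(a_{(1)},x)Y_V^H(a_{(2)},x)$, where $\rho(a)=\sum a_{(1)}\otimes a_{(2)}$, insert $R_V(g)^{-1}R_V(g)$ between the two factors, and conjugate each factor separately. The $(G,\chi)$-module nonlocal vertex algebra structure of $(V,R_V)$ gives $R_V(g)Y(a_{(1)},x)R_V(g)^{-1}=Y(R_V(g)a_{(1)},\chi(g)x)$, and the compatibility \eqref{RV-RH-compatibility}, read as $R_V(g)Y_V^H(h,x)R_V(g)^{-1}=Y_V^H(R_H(g)h,\chi(g)x)$, gives $R_V(g)Y_V^H(a_{(2)},x)R_V(g)^{-1}=Y_V^H(R_H(g)a_{(2)},\chi(g)x)$. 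This yields
\[
  R_V(g)\,\mathfrak D_{Y_V^H}^\rho(Y)(a,x)\,R_V(g)^{-1}
  = \sum Y\bigl(R_V(g)a_{(1)},\chi(g)x\bigr)\,Y_V^H\bigl(R_H(g)a_{(2)},\chi(g)x\bigr).
\]

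Next I would rewrite the right-hand side: by definition $\mathfrak D_{Y_V^H}^\rho(Y)(R_V(g)a,\chi(g)x)=\sum Y(b_{(1)},\chi(g)x)Y_V^H(b_{(2)},\chi(g)x)$, where $\rho(R_V(g)a)=\sum b_{(1)}\otimes b_{(2)}$. The hypothesis that $\rho\colon V\to V\otimes H$ is a $G$-module morphism, with $G$ acting on $V\otimes H$ through the diagonal action $R_V\otimes R_H$, says precisely that $\rho(R_V(g)a)=(R_V(g)\otimes R_H(g))\rho(a)=\sum R_V(g)a_{(1)}\otimes R_H(g)a_{(2)}$. Substituting this identifies the two Sweedler expansions term by term, proving the intertwining identity; the right $H$-comodule structure of $\mathfrak D_{Y_V^H}^\rho(V)$ is already supplied by Theorem \ref{thm-deform-va}, so nothing further is required.

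I do not expect a genuine obstacle here: once the hypotheses are lined up, the argument is a short bookkeeping computation. The only delicate point is to be sure that the $G$-action on $V\otimes H$ meant by the phrase ``$\rho$ is a $G$-module morphism'' is the diagonal action $R_V\otimes R_H$ rather than an action on a single tensor factor, since it is exactly this diagonal form that makes the two Sweedler expansions of $\rho(R_V(g)a)$ match; this is consistent with $H$ carrying its own $(G,\chi)$-module structure $R_H$ as a nonlocal vertex bialgebra.
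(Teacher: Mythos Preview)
Your proof is correct. The paper does not include its own proof of this proposition; it is stated as one of the results recalled from the earlier work \cite{JKLT-Defom-va}. Your direct verification of the intertwining identity is exactly the natural argument, and your reading of ``$\rho$ is a $G$-module morphism'' as equivariance with respect to the diagonal action $R_V\otimes R_H$ is the intended one, consistent with the $G$-action $R=R_V\otimes R_H$ on $V\sharp H$ set up in Proposition~\ref{smash-G-algebra}.
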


\begin{de}\label{NEW-G-phi-compatible}
Let $H$ be a nonlocal vertex bialgebra and let $(V,Y_V^H)$ be an $H$-module nonlocal vertex algebra.
The module structure $Y_V^H$ is said to be {\em $(\phi,\chi_{\phi})$-compatible}
if for $h\in H,\ v\in V$,
\begin{align}\label{G-phi-compatible0-new}
 Y_V^H(h,z)v\in V\ot \C_{\phi}((z))
\end{align}
and there exists $q(x_1,x_2)\in \C_{\chi_{\phi}(G)}[x_1,x_2]$ such that
\begin{align}
q(x_1,x_2)\wh Y_V^H(h,x_1,x_2)v\in &V\ot \C((x_1,x_2)),
\label{G-phi-compatible2-new}
\end{align}
where
\begin{align}\label{G-phi-compatible1-new}
\widehat{Y}_V^H(h,x_1,x_2)v= (1\ot \pi_{\phi}^{-1})(Y_V^H(h,z)v).
\end{align}
\end{de}

\begin{de}\label{de:G-phi-compatible-mod}
Let $H$ be a $(G,\chi)$-module nonlocal vertex bialgebra, $V$ a $(G,\chi)$-module nonlocal vertex algebra.
Assume that $V$ is also an $H$-module nonlocal vertex algebra where the module structure $Y_V^H$ is $(\phi,\chi_{\phi})$-compatible.
A {\em $(G,\chi_{\phi})$-equivariant $\phi$-coordinated quasi $(H,V)$-module}
is a $(G,\chi_{\phi})$-equivariant $\phi$-coordinated quasi module $W$ for both $H$ and $V$ such that
\begin{align}
&Y_W^H(h,x)w\in W\ot\C((x)),
    \label{eq:G-phi-compatible7}\\
&Y_W^H(h,x_1)Y_W^V(v,x_2)=\sum Y_W^V\( \iota_{x_1,x_2}\wh{Y}_V^H(h_{(1)},
    x_1,x_2)v,x_2\)Y_W^H(h_{(2)},x_1)
    \label{eq:G-phi-compatible8}
\end{align}
for  $h\in H$, $v\in V,$ $w\in W$, where $Y_W^H(\cdot,x)$ and $Y_W^V(\cdot,x)$
denote the module vertex operator maps for $H$ and $V$ on $W$.
\end{de}

We have (see \cite{JKLT-Defom-va}):

\begin{thm}\label{prop:deform-phi-quasi-mod}
Assume that $H$ is a $(G,\chi)$-module nonlocal vertex bialgebra, $V$ is a $(G,\chi)$-module nonlocal vertex algebra, and
 $V$ is also an $H$-module nonlocal vertex algebra with $(\phi,\chi_{\phi})$-compatible module map $Y_V^H$
such that for $g\in G,\ h\in H,\ v\in V$,
\begin{eqnarray*}
R_{V}(g)Y_V^H(h,x)v=Y_V^H(R_{H}(g) h,\chi(g)x)R_{V}(g) v.
\end{eqnarray*}
In addition, assume that $V$ is a right $H$-comodule nonlocal vertex algebra
 such that $Y_V^H\in \mathfrak L_H^\rho(V)$.
Let $(W,Y_W^H,Y_W^V)$ be a $(G,\chi_{\phi})$-equivariant $\phi$-coordinated quasi $(H,V)$-module.
Then $(W,Y_W^{\sharp\rho})$ is a $(G,\chi_{\phi})$-equivariant $\phi$-coordinated quasi
$\mathfrak D_{Y_V^H}^\rho (V)$-module, where for $v\in V$,
\begin{align}
Y_{W}^{\sharp \rho}(v,x)=\sum Y_{W}^V(v_{(1)},x)Y_{W}^H(v_{(2)},x) \in \E(W).
\end{align}
\end{thm}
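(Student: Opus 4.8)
The plan is to realize $\mathfrak D_{Y_V^H}^\rho(V)$ as a nonlocal vertex subalgebra of the smash product $V\sharp H$ via the comodule map $\rho$, and to obtain the asserted module structure by pulling back a $\phi$-coordinated quasi $V\sharp H$-module structure on $W$. The first and main step is the module-side analogue of Theorem~\ref{smash-product}: under the present hypotheses, the assignment
\[
Y_W^\sharp(u\ot h,x)=Y_W^V(u,x)\,Y_W^H(h,x)\qquad(u\in V,\ h\in H)
\]
makes $W$ into a $(G,\chi_\phi)$-equivariant $\phi$-coordinated quasi $V\sharp H$-module. To establish this I would verify the module axioms directly. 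Condition \eqref{eq:G-phi-compatible7} (that $Y_W^H(h,x)w\in W\ot\C((x))$) guarantees that the composition $Y_W^V(u,x)Y_W^H(h,x)$ is a well-defined element of $\E(W)$, exactly as in \cite{Li-smash}; the identity $Y_W^\sharp(\vac\ot\vac,x)=1_W$ is immediate; the $G$-equivariance of $Y_W^\sharp$ follows from that of $Y_W^V$ and $Y_W^H$ together with \eqref{RV-RH-compatibility}; and the $\Gamma$-quasi-compatibility and $\phi$-coordinated associativity of $Y_W^\sharp$ reduce, by means of the mixed commutation relation \eqref{eq:G-phi-compatible8} (used to carry each $Y_W^H(\cdot,x_1)$-factor to the right past each $Y_W^V(\cdot,x_2)$-factor, at the cost of replacing arguments by $\widehat Y_V^H(\cdot,x_1,x_2)$-images) and the bound \eqref{G-phi-compatible2-new}, to the corresponding properties of $Y_W^V$ and $Y_W^H$ separately; every denominator that arises lies in the multiplicative monoid $\C_{\chi_\phi(G)}[x_1,x_2]$, as the equivariant notion requires.

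Next I would check that $\rho$ is a homomorphism $\mathfrak D_{Y_V^H}^\rho(V)\to V\sharp H$. Since $\rho:V\to V\ot H$ is a homomorphism of nonlocal vertex algebras with $(1\ot\epsilon)\rho=\te{id}_V$ and satisfies \eqref{compatible-relation}, a direct computation using coassociativity of $\rho$, cocommutativity of $H$, and \eqref{compatible-relation} yields
\[
\rho\bigl(\mathfrak D_{Y_V^H}^\rho(Y)(a,x)b\bigr)=Y^\sharp(\rho(a),x)\rho(b)\quad(a,b\in V),\qquad \rho(\vac)=\vac\ot\vac;
\]
the cocommutativity of $H$ enters here in the same place where it does in Theorem~\ref{thm-deform-va}. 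Hence $\rho:\mathfrak D_{Y_V^H}^\rho(V)\to V\sharp H$ is an injective homomorphism of nonlocal vertex algebras; moreover it intertwines the $G$-action $R_V$ on $\mathfrak D_{Y_V^H}^\rho(V)$ (which, since $\rho$ is a $G$-module morphism, is a $(G,\chi)$-module nonlocal vertex algebra by Proposition~\ref{dual-vertex-G-algebra}) with $R=R_V\ot R_H$ on $V\sharp H$ (Proposition~\ref{smash-G-algebra}). Pulling back the $(G,\chi_\phi)$-equivariant $\phi$-coordinated quasi $V\sharp H$-module structure of the first step along this homomorphism then makes $W$ a $(G,\chi_\phi)$-equivariant $\phi$-coordinated quasi $\mathfrak D_{Y_V^H}^\rho(V)$-module whose vertex operator is $Y_W^\sharp(\rho(v),x)=\sum Y_W^V(v_{(1)},x)Y_W^H(v_{(2)},x)=Y_W^{\sharp\rho}(v,x)$, which is exactly the assertion. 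Equivalently, one can bypass $V\sharp H$ and verify the $\phi$-coordinated quasi module axioms for $Y_W^{\sharp\rho}$ directly; in that version the cocommutativity of $H$ is invoked precisely when the Sweedler indices of $a$ must be permuted to recognize the output of the $\phi$-coordinated associativity computation as $Y_W^{\sharp\rho}(\mathfrak D_{Y_V^H}^\rho(Y)(a,z)b,x_2)$.

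The main obstacle is the first step: the verification of the $\phi$-coordinated quasi $V\sharp H$-module axioms for $Y_W^\sharp$. Two technical points require care. First, one must confirm that the compositions of $Y_W^V$- and $Y_W^H$-fields that occur are well defined in $\E(W)$ and, after multiplication by an appropriate element of $\C_{\chi_\phi(G)}[x_1,x_2]$, land in $\Hom(W,W((x_1,x_2)))$; this is where \eqref{eq:G-phi-compatible7} and \eqref{G-phi-compatible2-new} are essential, and it parallels the well-definedness verifications in the ordinary smash-product construction of \cite{Li-smash}. Second, once \eqref{eq:G-phi-compatible8} has been used to bring the product $Y_W^\sharp(a,x_1)Y_W^\sharp(b,x_2)$ to iterated form, the substitution $x_1=\phi(x_2,z)$ must be carried out through fields located at both points $x_1$ and $x_2$ while \eqref{eq:G-phi-compatible8} has introduced $x_1$-dependence into the arguments of the $Y_W^V$-fields; this is handled using the identity $\widehat Y_V^H(h,\phi(x,z),x)=Y_V^H(h,z)$ (immediate from \eqref{G-phi-compatible1-new} and $\pi_\phi\circ\pi_\phi^{-1}=\te{id}$) together with the $\phi$-coordinated associativity relations for $Y_W^V$ and $Y_W^H$. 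The remaining bookkeeping, though lengthy, is routine and follows the pattern of \cite{Li-smash} and \cite{JKLT-Defom-va}.
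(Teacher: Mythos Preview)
Your approach is correct and is precisely the strategy used in \cite{JKLT-Defom-va}, to which the present paper defers for the proof (the theorem is stated here with ``We have (see \cite{JKLT-Defom-va}):'' and no argument is given). The two-step plan---first exhibit $(W,Y_W^\sharp)$ as a $(G,\chi_\phi)$-equivariant $\phi$-coordinated quasi $V\sharp H$-module, then pull back along the embedding $\rho:\mathfrak D_{Y_V^H}^\rho(V)\hookrightarrow V\sharp H$---is exactly how the result is established there, and your identification of \eqref{eq:G-phi-compatible7}, \eqref{eq:G-phi-compatible8}, and \eqref{G-phi-compatible2-new} as the key inputs for the smash-product module step is on target.
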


\subsection{Smash product $\hbar$-adic nonlocal vertex algebras}
We here formulate the corresponding $\hbar$-adic versions.
First, an {\em $\hbar$-adic nonlocal vertex bialgebra} is an $\hbar$-adic nonlocal vertex algebra $H$ with an ($\hbar$-adic)
coalgebra structure $(\Delta,\epsilon)$ on $H$ over $\C[[\hbar]]$ such that both $\Delta: H\rightarrow H\wh\otimes H$ and
$\epsilon: H\rightarrow \C[[\hbar]]$ are homomorphisms of $\hbar$-adic nonlocal vertex algebras.
Note that for $a\in H$, $\Delta(a)=\sum a_{(1)}\ot a_{(2)}$ is an infinite but convergent sum
with respect to the $\hbar$-adic topology.

\begin{lem}\label{h-bialgebra-limit}
Let $H$ be an $\hbar$-adic nonlocal vertex algebra, and let $\Delta: H\rightarrow H\wh\otimes H$ and
$\epsilon: H\rightarrow \C[[\hbar]]$ be $\C[[\hbar]]$-module maps.
Then $H$ with $\Delta$ and $\epsilon$ is an $\hbar$-adic nonlocal vertex bialgebra if and only if for every positive integer $n$,
$H/\hbar^{n}H$ is a nonlocal vertex bialgebra over $\mathcal{R}_n\ (=\C[\hbar]/\hbar^n\C[\hbar])$.
\end{lem}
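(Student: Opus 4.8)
The plan is to reduce the statement to the analogous characterizations already available at the level of $\hbar$-adic nonlocal vertex algebras and classical coalgebras over $\C[[\hbar]]$. The content of being an $\hbar$-adic nonlocal vertex bialgebra is: (i) $H$ is an $\hbar$-adic nonlocal vertex algebra; (ii) $(H,\Delta,\epsilon)$ is an ($\hbar$-adic) coalgebra over $\C[[\hbar]]$, i.e.\ coassociativity $(\Delta\wh\otimes 1)\Delta=(1\wh\otimes\Delta)\Delta$ and counit $(\epsilon\wh\otimes 1)\Delta=\id=(1\wh\otimes\epsilon)\Delta$ hold; and (iii) $\Delta$ and $\epsilon$ are homomorphisms of $\hbar$-adic nonlocal vertex algebras, meaning they are $\C[[\hbar]]$-module maps preserving vacuum and intertwining the vertex operator maps, with $\C[[\hbar]]$ and $H\wh\otimes H$ carrying their natural $\hbar$-adic nonlocal vertex algebra structures. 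The key observation is that each of these conditions is a family of identities among continuous $\C[[\hbar]]$-module maps between topologically free modules, and such an identity holds if and only if it holds after reduction modulo $\hbar^{n}$ for every $n$, because $H$ (being topologically free) satisfies $\cap_{n\ge1}\hbar^nH=0$.

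First I would fix notation: for each $n$ let $\pi_n$ denote reduction modulo $\hbar^n$; since $H$ is topologically free, $H\wh\otimes H\cong (H_0\otimes H_0)[[\hbar]]$ is topologically free and $(H\wh\otimes H)/\hbar^n(H\wh\otimes H)\cong (H/\hbar^nH)\otimes_{\mathcal R_n}(H/\hbar^nH)$, and likewise $\C[[\hbar]]/\hbar^n\C[[\hbar]]=\mathcal R_n$. The maps $\Delta,\epsilon$, being continuous $\C[[\hbar]]$-module maps, descend to $\mathcal R_n$-module maps $\Delta_n,\epsilon_n$ on the quotients, and conversely the inverse-limit description (Lemma \ref{lem:E-h}(5) and the analogous statement for $H$ and $H\wh\otimes H$) shows $\Delta=\varprojlim\Delta_n$, $\epsilon=\varprojlim\epsilon_n$. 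For the forward direction, assuming $H$ is an $\hbar$-adic nonlocal vertex bialgebra, Proposition \ref{h-adic-quotient} gives that $H/\hbar^nH$ is a nonlocal vertex algebra over $\mathcal R_n$; applying $\pi_n$ to the coassociativity and counit identities (which are genuine equalities of maps, not just mod-$\hbar^n$ congruences) yields that $(\Delta_n,\epsilon_n)$ is a coalgebra structure on $H/\hbar^nH$; and applying $\pi_n$ to the homomorphism conditions $\Delta Y(a,x)b=\cdots$, $\epsilon Y(a,x)b=\cdots$ shows $\Delta_n,\epsilon_n$ are nonlocal vertex algebra homomorphisms over $\mathcal R_n$. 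Hence $H/\hbar^nH$ is a nonlocal vertex bialgebra over $\mathcal R_n$. For the converse, assume $H/\hbar^nH$ is a nonlocal vertex bialgebra over $\mathcal R_n$ for all $n$. By Proposition \ref{h-adic-quotient} again, $H$ is an $\hbar$-adic nonlocal vertex algebra. Each required identity for $(H,\Delta,\epsilon)$ — coassociativity, counit, $\Delta$ and $\epsilon$ being $\C[[\hbar]]$-module homomorphisms of nonlocal vertex algebras — is an equation $F=G$ between continuous $\C[[\hbar]]$-module maps with values in a topologically free module (namely $H\wh\otimes H$, $H\wh\otimes H\wh\otimes H$, or $\C[[\hbar]]$); by hypothesis $\pi_n(F)=\pi_n(G)$ for every $n$, so $F-G$ has image in $\cap_n\hbar^n(\text{target})=0$, giving $F=G$. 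This establishes $H$ is an $\hbar$-adic nonlocal vertex bialgebra.

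The one point requiring a little care — and the closest thing to an obstacle — is the bookkeeping around the completed tensor products: one must check that reduction mod $\hbar^n$ is compatible with $\wh\otimes$ in the sense that $\pi_n$ applied to a composite such as $(\Delta\wh\otimes 1)\Delta$ equals $(\Delta_n\otimes 1)\Delta_n$, and that the vertex operator map on $H\wh\otimes H$ (the tensor product $\hbar$-adic nonlocal vertex algebra structure) reduces mod $\hbar^n$ to the tensor product nonlocal vertex algebra structure on $(H/\hbar^nH)^{\otimes 2}$ over $\mathcal R_n$. All of this follows from the isomorphism $(V\wh\otimes W)/\hbar^n \cong (V/\hbar^n)\otimes_{\mathcal R_n}(W/\hbar^n)$ for topologically free $V,W$ together with the functoriality of $\wh\otimes$ under continuous maps; I would invoke Lemma \ref{free-top}, Lemma \ref{basic-facts}, and the $\hbar$-adic analogue of Lemma \ref{lem:E-h}(5) to make this precise, but would not belabor it, since it is the standard mechanism already used in the proof of Proposition \ref{h-adic-quotient}. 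Everything else is a routine transcription of the classical bialgebra axioms through the exact functor $-\otimes_{\C[[\hbar]]}\mathcal R_n$ and its inverse limit.
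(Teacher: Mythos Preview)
Your proposal is correct and is exactly the natural argument; the paper in fact states this lemma without proof, treating it as a routine consequence of Proposition \ref{h-adic-quotient} together with the separatedness $\cap_{n\ge1}\hbar^nH=0$ and the standard compatibility of $\wh\otimes$ with reduction mod $\hbar^n$. Your write-up simply makes explicit the mechanism the paper leaves implicit, so there is nothing to compare.
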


\begin{de}
Let $H$ be an $\hbar$-adic nonlocal vertex bialgebra.
An {\em $H$-module $\hbar$-adic nonlocal vertex algebra} is an $\hbar$-adic nonlocal vertex algebra $V$
with a module structure $Y_{V}^{H}(\cdot,x)$ on $V$ for $H$ viewed as an $\hbar$-adic nonlocal vertex algebra such that
\begin{align*}
Y_V^H(a,x)v\in V\wh\otimes \C((x))[[\hbar]]\quad \text{ for }a\in V,\ v\in V
\end{align*}
and such that the other two conditions (\ref{eq:mod-va-for-vertex-bialg2}) and (\ref{eq:mod-va-for-vertex-bialg3})
in Definition \ref{de:mod-va-for-vertex-bialg} hold.
\end{de}

\begin{lem}
Let $H$ be an $\hbar$-adic nonlocal vertex bialgebra, $V$ an $\hbar$-adic nonlocal vertex algebra,
and $Y_{V}^{H}(\cdot,x): H\rightarrow ({\rm End}_{\C[[\hbar]]} V)[[x,x^{-1}]]$ a $\C[[\hbar]]$-module map. Then
$V$ with $Y_{V}^{H}(\cdot,x)$ is an $H$-module $\hbar$-adic nonlocal vertex algebra if and only if
$V/\hbar^{n}V$ is an $H/\hbar^{n}H$-module nonlocal vertex algebra over $\mathcal{R}_n$  for every positive integer $n$.
\end{lem}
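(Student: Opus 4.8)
The plan is to prove this by reduction modulo $\hbar^n$, exactly as in the proofs of Proposition~\ref{h-adic-quotient} and Lemma~\ref{h-bialgebra-limit}. Since $V$ is topologically free, it is separated and $\hbar$-adically complete, so an identity, or a containment, in $V$, in $\E_\hbar(V)$, or in $V\wh\otimes\C((x))[[\hbar]]$ holds if and only if its reduction modulo $\hbar^n$ holds in the corresponding $\mathcal{R}_n$-object for every $n$; by Lemma~\ref{lem:E-h} these $\mathcal{R}_n$-objects form the inverse system whose limit recovers the $\hbar$-adic ones. Moreover, $Y_V^H(\cdot,x)$ being $\C[[\hbar]]$-linear into $\E_\hbar(V)$, the composite $\pi_n\circ Y_V^H$ vanishes on $\hbar^n H$, so it descends to a $\C[[\hbar]]$-module map $\overline{Y_V^H}\colon H/\hbar^n H\to\E(V/\hbar^n V)$ for each $n$.

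For the ``only if'' direction, fix $n$. By Lemma~\ref{h-bialgebra-limit} and Proposition~\ref{h-adic-quotient}, $H/\hbar^n H$ is a nonlocal vertex bialgebra and $V/\hbar^n V$ a nonlocal vertex algebra over $\mathcal{R}_n$. I would then check that $(V/\hbar^n V,\overline{Y_V^H})$ satisfies Definition~\ref{de:mod-va-for-vertex-bialg} over $\mathcal{R}_n$: the truncation condition \eqref{eq:mod-va-for-vertex-bialg1} and the vacuum condition \eqref{eq:mod-va-for-vertex-bialg2} reduce directly; the weak-associativity axiom expressing that $V$ is a module for $H$ viewed as an $\hbar$-adic nonlocal vertex algebra reduces to the corresponding axiom over $\mathcal{R}_n$, as in Proposition~\ref{h-adic-quotient}; and in \eqref{eq:mod-va-for-vertex-bialg3} one uses that $\Delta(h)=\sum h_{(1)}\otimes h_{(2)}$ is $\hbar$-adically convergent, so only finitely many Sweedler terms are nonzero modulo $\hbar^n$ and the identity passes to the quotient.

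For the ``if'' direction, assume each $V/\hbar^n V$ is an $H/\hbar^n H$-module nonlocal vertex algebra over $\mathcal{R}_n$ and verify the axioms of an $H$-module $\hbar$-adic nonlocal vertex algebra. That $Y_V^H(a,x)\in\E_\hbar(V)$, and more sharply that $Y_V^H(a,x)v\in V\wh\otimes\C((x))[[\hbar]]$, follows from $\pi_n\bigl(Y_V^H(a,x)v\bigr)\in(V/\hbar^n V)\otimes_{\mathcal{R}_n}\mathcal{R}_n((x))$ for all $n$ together with the inverse-limit descriptions in Lemma~\ref{lem:E-h}. The vacuum condition \eqref{eq:mod-va-for-vertex-bialg2}, the $\hbar$-adic weak associativity making $V$ a module for $H$, and condition \eqref{eq:mod-va-for-vertex-bialg3} then hold because they hold modulo every $\hbar^n$ and $V$ is separated; for \eqref{eq:mod-va-for-vertex-bialg3} one again notes that the Sweedler sum converges $\hbar$-adically and is controlled modulo $\hbar^n$ by the $\mathcal{R}_n$-statement. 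I expect the only delicate point to be routine bookkeeping: checking that forming the operator products $Y_V^H(h,x_1)Y(u,x_2)v$, expanding the $(x_1-x_2)$-type factors, and taking the infinite Sweedler sum all commute with the projections $\pi_n$, so that the inverse system is genuinely compatible. This is precisely what is already carried out for Proposition~\ref{h-adic-quotient} and Lemma~\ref{h-bialgebra-limit}, and I would follow those arguments, the present case differing only in that the bialgebra $H$ and the algebra $V$ are reduced simultaneously.
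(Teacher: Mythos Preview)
Your proposal is correct and follows exactly the approach the paper intends: the lemma is stated without proof in the paper, being an immediate analogue of Proposition~\ref{h-adic-quotient} and Lemma~\ref{h-bialgebra-limit}, and your reduction-mod-$\hbar^n$ argument together with the inverse-limit description from Lemma~\ref{lem:E-h} is precisely what those results encode. The only caveat is cosmetic: the paper treats this as routine and does not spell out the Sweedler-sum bookkeeping you mention, so your write-up is more detailed than what the authors themselves provide.
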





\begin{rem}\label{h-adic-smash-product}
{\em We have the $\hbar$-adic version of Theorem \ref{smash-product} with the obvious changes: $H$ and $V$ are
assumed to be an $\hbar$-adic nonlocal vertex bialgebra and an $H$-module $\hbar$-adic nonlocal vertex algebra,
$V\sharp H=V\wh{\ot} H$, a topologically free $\C[[\hbar]]$-module, and the conclusion is that
$V\sharp H$ is an $\hbar$-adic nonlocal vertex algebra.}
\end{rem}



\begin{de}
Let $H$ be an $\hbar$-adic nonlocal vertex bialgebra.
A {\em right $H$-comodule $\hbar$-adic nonlocal vertex algebra} is an $\hbar$-adic nonlocal vertex algebra
$V$ equipped with a right ($\hbar$-adic) $H$-comodule structure $\rho: V\rightarrow V\wh\ot H$, which is also a homomorphism of
$\hbar$-adic nonlocal vertex algebras.
\end{de}

Let $H$ be an $\hbar$-adic nonlocal vertex bialgebra and let $(V,\rho)$ be a
right $H$-comodule $\hbar$-adic nonlocal vertex algebra. Then define $\mathfrak L_H^\rho(V)$
in the obvious way.
We here formulate the detailed $\hbar$-adic version of Theorem \ref{thm-deform-va}.

\begin{thm}\label{prop:deform-va-h}
Let $H$ be a cocommutative $\hbar$-adic nonlocal vertex bialgebra,
let $(V,\rho)$ be a right $H$-comodule $\hbar$-adic nonlocal vertex algebra,
and let $Y_V^H\in \mathfrak L^\rho_H(V)$.
For $a\in V$, set
\begin{align}
\mathfrak D_{Y_V^H}^\rho (Y)(a,x)=\sum Y(a_{(1)},x)Y_V^H(a_{(2)},x)\  \text{ on }V,
\end{align}
where $\rho(a)=\sum a_{(1)}\ot a_{(2)}\in V\wh\ot H$.
Then $(V,\mathfrak D_{Y_V^H}^\rho (Y),\vac)$ carries the structure of an $\hbar$-adic nonlocal vertex algebra.
Denote this $\hbar$-adic nonlocal vertex algebra by $\mathfrak D_{Y_V^H}^\rho (V)$.
Furthermore, $\mathfrak D_{Y_V^H}^{\rho}(V)$ with the same map $\rho$
is also a right $H$-comodule $\hbar$-adic nonlocal vertex algebra.
\end{thm}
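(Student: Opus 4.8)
The plan is to deduce the theorem from its $\C$-coefficient counterpart, Theorem~\ref{thm-deform-va}, via the standard reduction modulo $\hbar^{n}$. First I would check that the prescription
\[
\mathfrak D_{Y_V^H}^\rho(Y)(a,x)=\sum Y(a_{(1)},x)Y_V^H(a_{(2)},x)
\]
defines a $\C[[\hbar]]$-module map $V\rightarrow\E_\hbar(V)$: since $\rho(a)=\sum a_{(1)}\ot a_{(2)}$ is an $\hbar$-adically convergent sum in $V\wh\ot H$, for each $n$ only finitely many summands survive modulo $\hbar^{n}$, so the displayed sum converges in the $\hbar$-adically complete space $\E_\hbar(V)$ (Lemma~\ref{lem:E-h}) and its reduction modulo $\hbar^{n}$ depends only on $\rho(a)\bmod\hbar^{n}$.

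Next I would pass to the quotients $\mathcal{R}_{n}=\C[[\hbar]]/\hbar^{n}\C[[\hbar]]$. By Proposition~\ref{h-adic-quotient}, Lemma~\ref{h-bialgebra-limit}, and the analogous reduction statements for module and comodule structures, for every positive integer $n$ the space $H/\hbar^{n}H$ is a cocommutative nonlocal vertex bialgebra over $\mathcal{R}_{n}$, $V/\hbar^{n}V$ is a right $H/\hbar^{n}H$-comodule nonlocal vertex algebra over $\mathcal{R}_{n}$ with the induced comodule map $\bar\rho$, and the induced map $\overline{Y_V^H}$ lies in $\mathfrak L^{\bar\rho}_{H/\hbar^{n}H}(V/\hbar^{n}V)$. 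The proof of Theorem~\ref{thm-deform-va} in \cite{JKLT-Defom-va} is purely algebraic---it manipulates coproducts, the comodule axioms (\ref{eq:comod-cond}), the compatibility relation (\ref{compatible-relation}), and weak associativity, using no division---so it applies verbatim over any commutative base ring, in particular over $\mathcal{R}_{n}$ (cf.\ Remark~\ref{rnva-on-R}). Hence each $(V/\hbar^{n}V,\mathfrak D_{\overline{Y_V^H}}^{\bar\rho}(Y),\vac)$ is a nonlocal vertex algebra over $\mathcal{R}_{n}$ for which $\bar\rho$ is a right $H/\hbar^{n}H$-comodule nonlocal vertex algebra structure.

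I would then observe that these quotient constructions are compatible with the inverse system: by the first paragraph, the reduction modulo $\hbar^{n}$ of $\mathfrak D_{Y_V^H}^\rho(Y)(a,x)$ is exactly $\mathfrak D_{\overline{Y_V^H}}^{\bar\rho}(Y)(\bar a,x)$, since passing to $V/\hbar^{n}V$ commutes with the Sweedler sum, only finitely many terms of which survive modulo $\hbar^{n}$. Applying Proposition~\ref{h-adic-quotient} in the reverse direction then shows that $(V,\mathfrak D_{Y_V^H}^\rho(Y),\vac)$ is an $\hbar$-adic nonlocal vertex algebra (the vacuum and creation axioms being part of the ``nonlocal vertex algebra over $\mathcal{R}_{n}$'' conclusion, and holding because $\rho(\vac)=\vac\ot\vac$ and $Y_V^H(\vac,x)=1$ as over $\C$). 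For the last assertion, $\rho$ is a continuous $\C[[\hbar]]$-module map whose reduction modulo each $\hbar^{n}$ is, by the previous paragraph, a homomorphism of nonlocal vertex algebras over $\mathcal{R}_{n}$ for the deformed structure and still satisfies the comodule identities (\ref{eq:comod-cond}); since $V\wh\ot H$ is separated, $\rho$ itself inherits these properties, i.e.\ $\mathfrak D_{Y_V^H}^\rho(V)$ with $\rho$ is a right $H$-comodule $\hbar$-adic nonlocal vertex algebra.

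I expect the main difficulty to be bookkeeping rather than anything conceptual: one must verify that every ingredient of Theorem~\ref{thm-deform-va}---the set $\mathfrak L^\rho_H(V)$, the compatibility relation (\ref{compatible-relation}), cocommutativity of $\Delta$, and the $H$-module and $H$-comodule axioms---has a genuine $\hbar$-adic avatar that descends to the $\mathcal{R}_{n}$-coefficient setting compatibly with the inverse limit, and that all infinite Sweedler sums are handled only through their finite truncations modulo $\hbar^{n}$. No analytic input beyond $\hbar$-adic completeness and separatedness of $V$, $H$, and $\E_\hbar(V)$ is required.
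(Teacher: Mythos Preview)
Your proposal is correct and follows precisely the approach the paper intends: the paper does not give an explicit proof of this theorem, presenting it simply as ``the detailed $\hbar$-adic version of Theorem~\ref{thm-deform-va},'' and the reduction-modulo-$\hbar^{n}$ argument you outline (via Proposition~\ref{h-adic-quotient}, Lemma~\ref{h-bialgebra-limit}, and Remark~\ref{rnva-on-R}) is exactly the standard mechanism the paper uses throughout Section~3 to transfer results from the $\C$-coefficient setting to the $\hbar$-adic one. Your attention to the convergence of the Sweedler sums in $\E_\hbar(V)$ and the compatibility with the inverse system fills in the only nontrivial bookkeeping the paper leaves implicit.
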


The following is an $\hbar$-adic version of a result of \cite[Lemma 3.8]{JKLT-Defom-va}:

\begin{lem}\label{prop:gen-set}
Let $Y_M$ be an invertible element  of $\mathfrak L_H^\rho(V)$.
Assume that $S$ and $T$ are generating $\C[[\hbar]]$-submodules of $V$ and $H$
as $\hbar$-adic nonlocal vertex algebras, respectively, such that
\begin{align}
\rho(S)\subset S\ot T,\quad\Delta(T)\subset T\ot T,\quad
  Y_M\inverse(T,x)S\subset S\wh\ot \C((x))[[\hbar]].
\end{align}
Then $S$ is also a generating $\C[[\hbar]]$-submodule of $\mathfrak D_{Y_M}^\rho (V)$.
\end{lem}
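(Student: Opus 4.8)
The plan is to reduce the statement modulo $\hbar^n$ and to invoke the classical counterpart \cite[Lemma 3.8]{JKLT-Defom-va}, whose proof goes through verbatim over an arbitrary commutative base ring with identity, in particular over $\mathcal{R}_n=\C[\hbar]/\hbar^n\C[\hbar]$ (cf. Remark \ref{rnva-on-R}). Fix a positive integer $n$ and write $\overline{X}$ for the reduction of an object $X$ modulo $\hbar^n$. First I would record the reductions of all the data: since $H$ and $V$ are topologically free, $\overline{H}$ is a nonlocal vertex bialgebra over $\mathcal{R}_n$ (Lemma \ref{h-bialgebra-limit}), $\overline{V}$ is a nonlocal vertex algebra over $\mathcal{R}_n$ (Proposition \ref{h-adic-quotient}), and, by the analogous reduction criteria established earlier in this section, $(\overline{V},\overline{\rho})$ is a right $\overline{H}$-comodule nonlocal vertex algebra over $\mathcal{R}_n$ with $\overline{H}$ still cocommutative and $\overline{Y_M}\in\mathfrak{L}_{\overline{H}}^{\overline{\rho}}(\overline{V})$. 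Moreover $\overline{Y_M}$ is invertible in $\mathfrak{L}_{\overline{H}}^{\overline{\rho}}(\overline{V})$, with inverse $\overline{Y_M^{-1}}$, because the convolution product $\ast$ of Lemma \ref{lem:L-H-rho-V-compostition} is compatible with reduction modulo $\hbar^n$ and $Y_M\ast Y_M^{-1}=Y_M^{-1}\ast Y_M=Y_M^{\varepsilon}$.

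Next I would identify the deformed structure on the quotient. Abbreviating $\mathfrak{D}:=\mathfrak{D}_{Y_M}^{\rho}(V)$ (Theorem \ref{prop:deform-va-h}), the deformed vertex operator $\mathfrak{D}_{Y_M}^{\rho}(Y)(a,x)=\sum Y(a_{(1)},x)Y_M(a_{(2)},x)$ is a $\C[[\hbar]]$-linear expression built from $Y$, $\rho$ and $Y_M$, so its reduction modulo $\hbar^n$ coincides with $\mathfrak{D}_{\overline{Y_M}}^{\overline{\rho}}(\overline{Y})$; since the underlying $\mathcal{R}_n$-module of $\mathfrak{D}/\hbar^n\mathfrak{D}$ is $\overline{V}$, this gives $\mathfrak{D}/\hbar^n\mathfrak{D}\cong\mathfrak{D}_{\overline{Y_M}}^{\overline{\rho}}(\overline{V})$ as nonlocal vertex algebras over $\mathcal{R}_n$. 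I would then transport the hypotheses: by Definition \ref{def-hadic-generating-subset}, $S$ and $T$ being generating $\C[[\hbar]]$-submodules means precisely that $\overline{S}$ and $\overline{T}$ generate $\overline{V}$ and $\overline{H}$ as nonlocal vertex algebras over $\mathcal{R}_n$; and the inclusions $\rho(S)\subset S\ptimes T$, $\Delta(T)\subset T\ptimes T$, $Y_M^{-1}(T,x)S\subset S\ptimes\C((x))[[\hbar]]$ reduce to $\overline{\rho}(\overline{S})\subset\overline{S}\ot\overline{T}$, $\overline{\Delta}(\overline{T})\subset\overline{T}\ot\overline{T}$, $\overline{Y_M^{-1}}(\overline{T},x)\overline{S}\subset\overline{S}\ot\C((x))$. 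Applying \cite[Lemma 3.8]{JKLT-Defom-va} over $\mathcal{R}_n$ then shows that $\overline{S}$ generates $\mathfrak{D}_{\overline{Y_M}}^{\overline{\rho}}(\overline{V})\cong\mathfrak{D}/\hbar^n\mathfrak{D}$ as a nonlocal vertex algebra over $\mathcal{R}_n$; since $n$ was arbitrary, $S$ is a generating $\C[[\hbar]]$-submodule of $\mathfrak{D}_{Y_M}^{\rho}(V)$.

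The step I expect to require the most care is the treatment of the Sweedler sums under reduction. In the $\hbar$-adic framework, $\Delta(h)=\sum h_{(1)}\ot h_{(2)}$ and $\rho(a)=\sum a_{(1)}\ot a_{(2)}$ are infinite sums, convergent in the $\hbar$-adic topology and a priori living in the completed tensor products $H\ptimes H$ and $V\ptimes H$, whereas \cite[Lemma 3.8]{JKLT-Defom-va} and the manipulations in its proof are purely finitary. One must therefore check that modulo $\hbar^n$ these sums become honest finite sums in the ordinary tensor products $\overline{H}\ot\overline{H}$ and $\overline{V}\ot\overline{H}$ over $\mathcal{R}_n$ — which is immediate, since $\ptimes$ collapses to $\ot$ once $\hbar^n=0$ — so that the computations underpinning the classical lemma, and in particular the inversion identity $Y(a,x)b=\sum\mathfrak{D}_{Y_M}^{\rho}(Y)(a_{(1)},x)Y_M^{-1}(a_{(2)},x)b$ (which follows from coassociativity of $\rho$, the convolution identity $Y_M\ast Y_M^{-1}=Y_M^{\varepsilon}$ and the counit axiom, and is the device that recovers the original vertex operators from the deformed ones — the true engine of the lemma), carry over verbatim over $\mathcal{R}_n$. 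Granting this, every remaining step is a routine reduction modulo $\hbar^n$, and no new idea beyond the classical argument is required.
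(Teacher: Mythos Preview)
Your proposal is correct and follows precisely the route the paper intends: the paper does not give an explicit proof but simply declares the statement to be the $\hbar$-adic version of \cite[Lemma 3.8]{JKLT-Defom-va}, and the way to make that precise is exactly your reduction modulo $\hbar^n$ combined with Definition \ref{def-hadic-generating-subset} and the identification $\mathfrak D_{Y_M}^\rho(V)/\hbar^n\mathfrak D_{Y_M}^\rho(V)\cong \mathfrak D_{\overline{Y_M}}^{\overline\rho}(\overline V)$. One minor remark: the hypotheses actually read $\rho(S)\subset S\ot T$ and $\Delta(T)\subset T\ot T$ with \emph{ordinary} tensor products, so the Sweedler sums for elements of $S$ and $T$ are already finite before reduction, which slightly simplifies your last paragraph.
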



Let $H$ be an $\hbar$-adic nonlocal vertex bialgebra and let $\phi(x,z)=e^{zp(x)\partial_x}x$ with $p(x)\in \C((x))^{\times}$.
An  $H$-module $\hbar$-adic nonlocal vertex algebra $V$ is said to be {\em $(\phi,\chi_{\phi})$-compatible}
if  for any $a\in H,\ v\in V$,
\begin{align}
&Y_V^H(a,z)v\in V\wh\ot \C_{\phi}((z))[[\hbar]],\label{phi-compatible-HV-1}\\
&\wh{Y}_{V}^H(a,x_1,x_2)v\in V\wh\ot \C_{\phi}^{\chi_{\phi}(G)}((x_1,x_2))[[\hbar]],\label{phi-compatible-HV-2}
\end{align}
where
\begin{align}
\wh{Y}_{V}^H(a,x_1,x_2)v=(1\ot \pi_{\phi}^{-1})(Y_V^H(a,z)v).
\end{align}
From definition, an $H$-module $\hbar$-adic nonlocal vertex algebra $V$ is $(\phi,\chi_{\phi})$-compatible
if and only if for every positive integer $n$, $V/\hbar^nV$ is a $(\phi,\chi_{\phi})$-compatible $H/\hbar^nH$-module
nonlocal vertex algebra.



\begin{de}
Let $H$ be a $(G,\chi)$-module $\hbar$-adic nonlocal vertex bialgebra and let $V$ be a $(G,\chi)$-module  and
a $(\phi,\chi_{\phi})$-compatible $H$-module $\hbar$-adic nonlocal vertex algebra.
A {\em $(G,\chi_{\phi})$-equivariant $\phi$-coordinated quasi $(H,V)$-module}
is a $(G,\chi_{\phi})$-equivariant $\phi$-coordinated quasi module for both $H$ and $V$, such that
\begin{align}
&Y_W^H(a,x)w\in W\wh\ot\C((x))[[\hbar]],
    \label{eq:G-phi-compatible7-hadic}\\
&Y_W^H(a,x_1)Y_W^V(v,x_2)w=\sum Y_W^V\( \iota_{x_1,x_2}\wh{Y}_V^H(a_{(1)},
    x_1,x_2)v,x_2\)Y_W^H(a_{(2)},x_1)w
    \label{eq:G-phi-compatible8-hadic}
\end{align}
 for  $a\in H$, $v\in V,$ $w\in W$, where $\Delta(a)=\sum a_{(1)}\ot a_{(2)}\in H\wh\ot H$, and
where $Y_W^H(\cdot,x)$ and $Y_W^V(\cdot,x)$ denote the module vertex operator maps for $H$ and $V$ on $W$.
 \end{de}

Note that the conditions \eqref{eq:G-phi-compatible7-hadic} and \eqref{eq:G-phi-compatible8-hadic}
amount to that for every positive integer $n$, $W/\hbar^n W$ is
a $(G,\chi_{\phi})$-equivariant $\phi$-coordinated quasi $(H/\hbar^nH,V/\hbar^nV)$-module.
Then we immediately have the following $\hbar$-adic version of Theorem \ref{prop:deform-phi-quasi-mod}:

\begin{thm}\label{deform-phi-quasi-mod-h}
Let $H$ be a $(G,\chi)$-module $\hbar$-adic nonlocal vertex bialgebra and
let $(V,\rho)$ be a right $H$-comodule $\hbar$-adic nonlocal vertex algebra.
Assume that $V$ is also a $(G,\chi)$-module and an $H$-module $\hbar$-adic nonlocal vertex algebra
with $H$-module structure map $Y_V^H(\cdot,x)$ which is $(\phi,\chi_{\phi})$-compatible and lies in $\mathfrak D_{Y_V^H}^\rho (V)$,
such that
\begin{eqnarray*}
R_{V}(g)Y_V^H(h,x)v=Y_V^H(R_{H}(g) h,\chi(g)x)R_{V}(g) v\   \   \   \mbox{ for }g\in G,\ h\in H,\ v\in V.
\end{eqnarray*}
Let $(W,Y_W^H,Y_W^V)$ be a $(G,\chi_{\phi})$-equivariant $\phi$-coordinated quasi $(H,V)$-module.
For $v\in V$, set
\begin{align}
Y_{W}^{\sharp \rho}(v,x)=\sum Y_{W}^V(v_{(1)},x)Y_{W}^H(v_{(2)},x)\in \E_{\hbar}(W).
\end{align}
Then $(W,Y_W^{\sharp\rho})$ is a $(G,\chi_{\phi})$-equivariant $\phi$-coordinated quasi
$\mathfrak D_{Y_V^H}^\rho (V)$-module.
\end{thm}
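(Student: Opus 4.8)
The plan is to reduce everything modulo $\hbar^n$ and invoke the already-established non-$\hbar$-adic Theorem~\ref{prop:deform-phi-quasi-mod}, exploiting the fact---recorded in Proposition~\ref{prop:phi-coor-ind}, Lemma~\ref{h-bialgebra-limit}, and the remarks surrounding them---that each of the $\hbar$-adic structures appearing in the hypotheses is precisely characterized by having all of its reductions modulo $\hbar^n$ be the corresponding classical structure over $\mathcal{R}_n=\C[\hbar]/\hbar^n\C[\hbar]$. Fix $n\in\Z_{+}$ and write $H_n=H/\hbar^n H$, $V_n=V/\hbar^n V$, $W_n=W/\hbar^n W$. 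Then $H_n$ is a $(G,\chi)$-module nonlocal vertex bialgebra over $\mathcal{R}_n$; $V_n$ is a $(G,\chi)$-module nonlocal vertex algebra and an $H_n$-module nonlocal vertex algebra over $\mathcal{R}_n$ whose module map $\bar{Y}_V^H$ is $(\phi,\chi_{\phi})$-compatible (this is exactly the defining condition of $(\phi,\chi_{\phi})$-compatibility in the $\hbar$-adic setting); the reduced map $\bar\rho\colon V_n\to V_n\ot H_n$ makes $V_n$ a right $H_n$-comodule nonlocal vertex algebra with $\bar{Y}_V^H\in\mathfrak L_{H_n}^{\bar\rho}(V_n)$; the relation $R_V(g)Y_V^H(h,x)v=Y_V^H(R_H(g)h,\chi(g)x)R_V(g)v$ descends; and $(W_n,Y_{W_n}^H,Y_{W_n}^V)$ is a $(G,\chi_{\phi})$-equivariant $\phi$-coordinated quasi $(H_n,V_n)$-module by the observation stated just before the theorem. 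Thus every hypothesis of Theorem~\ref{prop:deform-phi-quasi-mod} is met over the ground ring $\mathcal{R}_n$; since the construction and proof of that theorem are purely formal and carry over verbatim to an arbitrary commutative $\C$-algebra (cf.\ Remark~\ref{rnva-on-R}), I conclude that $W_n$ is a $(G,\chi_{\phi})$-equivariant $\phi$-coordinated quasi $\mathfrak D_{\bar{Y}_V^H}^{\bar\rho}(V_n)$-module with module map $v\mapsto\sum Y_{W_n}^V(\bar v_{(1)},x)Y_{W_n}^H(\bar v_{(2)},x)$.

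Next I would match these reductions with the $\hbar$-adic objects themselves. By Theorem~\ref{prop:deform-va-h} the $\hbar$-adic nonlocal vertex algebra $\mathfrak D_{Y_V^H}^\rho(V)$ is already defined; because $\rho(v)=\sum v_{(1)}\ot v_{(2)}$ is an $\hbar$-adically convergent sum in $V\wh\ot H$ and each $\pi_n$ is continuous, the formula for $\mathfrak D_{Y_V^H}^\rho(Y)$ commutes with reduction modulo $\hbar^n$, giving an identification $\mathfrak D_{Y_V^H}^\rho(V)/\hbar^n\mathfrak D_{Y_V^H}^\rho(V)\cong\mathfrak D_{\bar{Y}_V^H}^{\bar\rho}(V_n)$ of nonlocal vertex algebras over $\mathcal{R}_n$. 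The same convergence, together with Lemma~\ref{lem:E-h}, shows that $Y_W^{\sharp\rho}(v,x)=\sum Y_W^V(v_{(1)},x)Y_W^H(v_{(2)},x)$ defines an element of $\E_\hbar(W)$ (the summands tend to $0$ in $\E_\hbar(W)$ since $v_{(2)}\to 0$ $\hbar$-adically and $Y_W^H$ is a $\C[[\hbar]]$-module map) and that $\pi_n\circ Y_W^{\sharp\rho}$ coincides with the module map produced above. Hence, for every $n$, $W/\hbar^n W$ is a $(G,\chi_{\phi})$-equivariant $\phi$-coordinated quasi module for $\mathfrak D_{Y_V^H}^\rho(V)/\hbar^n\mathfrak D_{Y_V^H}^\rho(V)$, and Proposition~\ref{prop:phi-coor-ind} (for the $\phi$-coordinated quasi module structure) together with Definition~\ref{def-h-G-equivariant-coordinated-module} (for the $(G,\chi_{\phi})$-equivariance) yields that $(W,Y_W^{\sharp\rho})$ is a $(G,\chi_{\phi})$-equivariant $\phi$-coordinated quasi $\mathfrak D_{Y_V^H}^\rho(V)$-module, as desired.

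I expect the main obstacle to be the bookkeeping around the Sweedler-type infinite sums and their interaction with the $\hbar$-adic topology: one must check carefully that $\rho$, $\Delta$, $Y_W^H$, and $Y_W^V$ all respect the $\hbar$-adic filtration tightly enough that ``apply $\pi_n$, then sum'' agrees with ``sum, then apply $\pi_n$'', and in particular that $Y_W^{\sharp\rho}$ genuinely lands in $\E_\hbar(W)=\E(W_0)[[\hbar]]$ rather than merely in $(\End W)[[x,x^{-1}]]$. Lemma~\ref{lem:E-h}, Lemma~\ref{free-top}, and the continuity of all the structure maps are what make this work; beyond that, the argument is a mechanical transcription of the proof of Theorem~\ref{prop:deform-phi-quasi-mod} to the ground ring $\mathcal{R}_n$, followed by passage to the inverse limit.
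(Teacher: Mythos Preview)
Your proposal is correct and is precisely the approach the paper takes: the paper presents this theorem with no proof beyond the remark that the $(H,V)$-module conditions amount to each $W/\hbar^n W$ being a $(G,\chi_\phi)$-equivariant $\phi$-coordinated quasi $(H/\hbar^nH,V/\hbar^nV)$-module, so that the result ``immediately'' follows from Theorem~\ref{prop:deform-phi-quasi-mod}. You have simply written out the details of that reduction (including the bookkeeping about Sweedler sums landing in $\E_\hbar(W)$), which the paper leaves implicit.
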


%
%

%
%
%

\section{$\hbar$-adic deformations of lattice vertex algebra $V_L$}

In this section, we use a particular vertex bialgebra which was exploited in \cite{Li-smash} and \cite{JKLT-Defom-va},
to obtain a family of formal deformations of lattice vertex algebras.

\subsection{Vertex algebra $V_{L}$ and vertex bialgebra $B_L$}
We first recall the lattice vertex algebra $V_L$ (see  \cite{bor}, \cite{FLM2}).
Let $L$ be a finite rank  even lattice, i.e., a free abelian group of finite rank equipped with a
symmetric $\Z$-valued bilinear form $\<\cdot,\cdot\>$ such that $\<\alpha,\alpha\>\in 2\Z$ for $\alpha\in L$.
Assume that $L$ is non-degenerate.
Set
\begin{align}
\h=\C\ot_{\Z}L
\end{align}
 and extend $\<\cdot,\cdot\>$ to a symmetric $\C$-valued bilinear form on $\h$.
View $\h$ as an abelian Lie algebra with $\<\cdot,\cdot\>$ as a non-degenerate symmetric invariant bilinear form.
Then we have an affine Lie algebra $\wh \h$, where
$$\wh \h=\h\otimes \C[t,t^{-1}]+\C {\bf k}$$
as a vector space, and where ${\bf k}$ is central and
\begin{eqnarray}
[\alpha(m),\beta(n)]=m\delta_{m+n,0}\<\alpha,\beta\>{\bf k}
\end{eqnarray}
for $\alpha,\beta\in \h,\ m,n\in \Z$ with $\alpha(m)$ denoting $\alpha\otimes t^{m}$.  Set
\begin{eqnarray}
\wh \h^{\pm}=\h\otimes t^{\pm 1} \C[t^{\pm 1}],
\end{eqnarray}
which are abelian Lie subalgebras. Identify $\h$ with $\h\otimes t^{0}\subset \wh \h$.
Furthermore, set
\begin{eqnarray}
\wh \h'=\wh \h^{+}+\wh \h^{-}+\C {\bf k},
\end{eqnarray}
which is a Heisenberg algebra. Then $\wh \h=\wh \h'\oplus \h$, a direct sum of Lie algebras.

Let $\varepsilon:L\times L\rightarrow \C^\times$ be a 2-cocycle such that
\begin{eqnarray*}
\varepsilon(\alpha,\beta)\varepsilon(\beta,\alpha)^{-1}=(-1)^{\<\al,\be\>},\   \   \
\varepsilon(\alpha,0)=1=\varepsilon(0,\alpha)\quad \te{ for } \alpha,\beta \in L.
\end{eqnarray*}
Denote by $\C_{\varepsilon}[L]$ the $\varepsilon$-twisted group algebra of $L$,
which by definition has a designated basis $\{ e_{\alpha}\ |\ \alpha\in L\}$ with
\begin{align}
e_\al \cdot e_\be=\varepsilon(\al,\be)e_{\al+\be}\quad \te{ for }\al,\be\in L.
\end{align}
Make $\C_{\varepsilon}[L]$ an $\wh \h$-module by letting $\wh \h'$
act trivially and letting $\h$ $(=\h(0))$ act by
\begin{eqnarray}
  h(0)e_\be=\<h,\be\>e_\be \   \  \mbox{ for }h\in \h,\  \be\in L.
  \end{eqnarray}

Note that $S(\wh \h^{-})$ is naturally an $\widehat \h$-module of level $1$.
Set
\begin{eqnarray}
V_{L}=S(\wh \h^{-})\otimes \C_{\varepsilon}[L],
\end{eqnarray}
the tensor product of $\widehat \h$-modules, which is an $\widehat \h$-module of level $1$.
Set
$$\vac=1\ot e_0\in  V_{L}.$$
Identify $\h$ and $\C_{\varepsilon}[L]$ as subspaces of $V_{L}$ via the correspondence
$$a\mapsto a(-1)\otimes e_0\   (a\in\h) \te{ and } e_\al\mapsto 1\otimes e_\al\   (\al\in L).$$

For $h\in \h$, set
\begin{align}
h(z)=\sum_{n\in\Z}h(n)z^{-n-1}.
\end{align}
On the other hand, for $\al\in L$ set
\begin{align}
E^{\pm}(\alpha,z)=\exp \left(\sum_{n\in \Z_{+}}\frac{\al(\pm n)}{\pm n}z^{\mp n}\right)
\end{align}
on $V_{L}$.  For $\al\in L$,  define a linear operator
$z^{\al}:\   \C_{\varepsilon}[L]\rightarrow \C_{\varepsilon}[L][z,z^{-1}]$ by
\begin{eqnarray}
z^\al\cdot e_\be=z^{\<\al,\be\>}e_\be\   \   \   \mbox{ for }\be\in L.
\end{eqnarray}
Then  there exists a vertex algebra structure on $V_L$, which is uniquely determined by
the conditions that $\vac$ is the vacuum vector and that
\begin{align}
&Y(h,z)=h(z)\quad \te{ for }h\in \h,\\
&Y(e_\al,z)=E^{-}(-\al,z)E^{+}(-\al,z)e_\al z^\al\quad \te{ for }\al\in L.
\end{align}

As we shall need, we recall from \cite[Section 6.5]{LL} the associative algebra $A(L)$.
By definition, $A(L)$ is the associative algebra with identity $1$ over $\C$, generated by
$$\set{h[n],\  e_\al[n]}{h\in\h,\ \al\in L,\ n\in \Z},$$
where $h[n]$ is linear in $h$, subject to a set of relations written in terms of generating functions
\begin{align*}
  h[z]=\sum_{n\in\Z}h[n]z^{-n-1},\   \   \  \
  e_\al[z]=\sum_{n\in\Z}e_\al[n]z^{-n-1}.
\end{align*}
The relations are
\begin{align*}
  &\te{(AL1) }\  \   \   \   e_0[z]=1,\\
  &\te{(AL2) }\   \   \   \    \left[h[z_1],
    h'[z_2]\right]=\<h,h'\>\partial_{z_2}z_1\inverse\delta\(\frac{z_2}{z_1}\),\\
  &\te{(AL3) }\    \   \  \    \left[h[z_1],e_\al[z_2]\right]=\<\al,h\>
    e_\al[z_2]z_1\inverse\delta\(\frac{z_2}{z_1}\),\\
  &\te{(AL4) }\    \   \   \   \left[e_\al[z_1],e_\be[z_2]\right]=0\   \te{ if }\<\al,\be\>\ge0,\\
  &\te{(AL5) }\   \   \   \  (z_1-z_2)^{-\<\al,\be\>}\left[e_\al[z_1],e_\be[z_2]\right]=0\   \te{ if }\<\al,\be\><0,
\end{align*}
for $h, h'\in \h,\  \al,\be\in L$.
An $A(L)$-module $W$ is said to be {\em restricted} if for every $w\in W$, we have
$ h[z]w, \  e_\al[z]w\in W((z))$ for all $h\in\h,\  \al\in L$.

The following result was obtained in  \cite{LL}:

\begin{prop}\label{prop:VQ-mod-AQ-mod}
 For any $V_L$-module $(W,Y_W)$, $W$ is a restricted $A(L)$-module with
\begin{align*}
  h[z]=Y_W(h,z),\   \  \   \,\,e_\al[z]=Y_W(e_\al,z)
\end{align*}
for  $h\in\h,\   \al\in L$, such that the following relations hold on $W$ for $\al,\be\in L$:
\begin{align*}
&\te{(AL6) }\   \   \   \   \frac{d}{dz} e_\al[z]=\al[z]^{+} e_\al[z]+e_\al[z]\al[z]^{-},\\
  &\te{(AL7) }\   \   \    \   \Res_{x}\big(
    (x-z)^{-\<\al,\be\>-1} e_\al[x] e_\be[z]-(-z+x)^{-\<\al,\be\>-1} e_\be[z] e_\al[x]\big)\\
  &\hspace{1.75cm}
  =\varepsilon(\al,\be) e_{\al+\be}[z],
\end{align*}
where $\al[z]^{+}=\sum_{n<0}\al[n]z^{-n-1}$ and $\al[z]^{-}=\sum_{n\ge 0}\al[n]z^{-n-1}$.
On the other hand, if $W$ is a restricted $A(L)$-module satisfying (AL6) and (AL7),
then $W$ admits a  $V_L$-module structure $Y_{W}(\cdot,z)$ which is uniquely determined by
$$Y_W(h,z)= h[z],\   \   \    Y_W(e_\al,z)= e_\al[z]\   \  \   \mbox{ for }h\in\h,\   \al\in L.$$
\end{prop}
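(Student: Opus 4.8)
The plan is to prove the two implications separately, using in each direction only the module axioms for a vertex algebra together with the explicit formulas for $Y(h,z)$ and $Y(e_{\al},z)$ on $V_L$ recalled above.

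\textbf{From $V_L$-modules to $A(L)$-modules.} Given a $V_L$-module $(W,Y_W)$, I would set $h[z]=Y_W(h,z)$ and $e_{\al}[z]=Y_W(e_{\al},z)$. The truncation condition for module vertex operators gives $h[z]w,\ e_{\al}[z]w\in W((z))$, so $W$ will be a restricted $A(L)$-module once (AL1)--(AL5) are checked. Since $e_{0}=1\ot e_{0}=\vac$, relation (AL1) is exactly $Y_W(\vac,z)=1_W$. Relations (AL2) and (AL3) follow from the commutator formula for module vertex operators applied to the $V_L$-products $h_{1}h'=\<h,h'\>\vac$, $h_{n}h'=0$ for $n\ge 0,\ n\ne 1$, and $h_{0}e_{\al}=\<h,\al\>e_{\al}$, $h_{n}e_{\al}=0$ for $n\ge1$, which all come from $Y(h,z)=h(z)$ and the level-one Heisenberg relations. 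Relations (AL4) and (AL5) merely transport to $W$ the mutual locality of $Y(e_{\al},z_1)$ and $Y(e_{\be},z_2)$ in $V_L$, namely $(z_1-z_2)^{N}[Y(e_{\al},z_1),Y(e_{\be},z_2)]=0$ with $N=\max(0,-\<\al,\be\>)$ and genuine commutation when $\<\al,\be\>\ge0$, a standard computation with the operators $E^{\pm}$. For (AL6) I would use $Y_W(\mathcal{D}v,z)=\tfrac{d}{dz}Y_W(v,z)$ with $v=e_{\al}$: from $Y(e_{\al},z)\vac=E^{-}(-\al,z)e_{\al}$ one reads off $\mathcal{D}e_{\al}=(e_{\al})_{-2}\vac=\al(-1)e_{\al}$, and the normal-ordered product identity $Y(\al(-1)e_{\al},z)=\al(z)^{+}Y(e_{\al},z)+Y(e_{\al},z)\al(z)^{-}$ in $V_L$ then yields (AL6) on $W$. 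Finally (AL7) is the iterate form of the Jacobi identity, which rewrites its left-hand side as $Y_W\big((e_{\al})_{-\<\al,\be\>-1}e_{\be},z\big)$, combined with the lattice identity $(e_{\al})_{-\<\al,\be\>-1}e_{\be}=\varepsilon(\al,\be)e_{\al+\be}$ (and $(e_{\al})_{n}e_{\be}=0$ for $n\ge-\<\al,\be\>$).

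\textbf{From $A(L)$-modules to $V_L$-modules.} For the converse I would let $W$ be a restricted $A(L)$-module satisfying (AL6) and (AL7). Restrictedness puts the set
\[
U=\{\,h[z]\mid h\in\h\,\}\cup\{\,e_{\al}[z]\mid\al\in L\,\}
\]
inside $\E(W)=\Hom(W,W((z)))$, and (AL2)--(AL5) show $U$ is a pairwise local subset. Applying the local-system construction of \cite{Li-phi-coor} (cf.\ Theorem~\ref{thm:abs-construct-non-h-adic}, specialized to $\phi=F_a$ and to ordinary locality, where a compatible pair is one regularized by some $(x_1-x_2)^{k}$) produces a vertex algebra $\<U\>$ acting on $W$, with $1_W$ as vacuum and $Y_{\E}$ as vertex operator map. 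It then suffices to construct a vertex algebra homomorphism $\theta\colon V_L\to\<U\>$ with $\theta(h)=h[z]$ and $\theta(e_{\al})=e_{\al}[z]$; the required $V_L$-module structure on $W$ is then $Y_W(v,z):=Y_{\E}(\theta(v),z)$, and it is the unique one with the prescribed values on $\h$ and on the $e_{\al}$ because $\h\cup\{e_{\al}\mid\al\in L\}$ generates $V_L$ as a vertex algebra.

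\textbf{The main obstacle} is the construction of $\theta$, i.e.\ verifying that the defining relations of $V_L$ hold among the generating fields inside $\<U\>$. The sub-vertex-algebra of $\<U\>$ generated by the $h[z]$ is, by (AL2), a homomorphic image of the level-one Heisenberg vertex algebra $S(\wh\h^{-})\cong M(1)$. To extend $\theta$ over the $e_{\al}$ I would check, via the products computed by $Y_{\E}$: that $(h[z])_{0}^{\E}e_{\al}[z]=\<h,\al\>e_{\al}[z]$ and $(h[z])_{n}^{\E}e_{\al}[z]=0$ for $n\ge1$ (from (AL3)); that $\mathcal{D}_{\E}\,e_{\al}[z]=\tfrac{d}{dz}e_{\al}[z]=(\al(-1))_{\E}^{\E}e_{\al}[z]$ (from (AL6)), which forces the $E^{\pm}$-exponential shape; and that $(e_{\al}[z])_{-\<\al,\be\>-1}^{\E}e_{\be}[z]=\varepsilon(\al,\be)e_{\al+\be}[z]$ together with $(e_{\al}[z])_{n}^{\E}e_{\be}[z]=0$ for $n\ge-\<\al,\be\>$ (from (AL4), (AL5), (AL7)). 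These are precisely the relations which, by the construction of $V_L$ in \cite{bor,FLM2} and \cite{LL}, characterize a vertex algebra generated by a copy of $\h$ and elements $e_{\al}$ subject to the stated brackets, so they produce $\theta$. Apart from this relation-matching bookkeeping, I expect no new ideas are needed beyond the local-system machinery; the delicate point is simply keeping the $A(L)$-relations and the $\E(W)$-computations in exact correspondence.
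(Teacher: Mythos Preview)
The paper does not supply its own proof of this proposition; it simply records the result with the attribution ``The following result was obtained in \cite{LL}.'' Your sketch is essentially the argument given in \cite[Section~6.5]{LL}: the forward direction reads off (AL1)--(AL7) from the Jacobi identity and the explicit $V_L$-products, and the converse builds a vertex algebra $\langle U\rangle$ from the local set of fields $\{h[z],e_\al[z]\}$ and then produces a homomorphism $V_L\to\langle U\rangle$ by matching generators and relations. So your approach is correct and coincides with the source the paper cites; there is nothing further to compare.
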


The following is a universal property of $V_{L}$ (see \cite{JKLT-G-phi-mod}, \cite{JKLT-Defom-va}):

\begin{prop}\label{prop:AQ-mod-hom-VA-hom}
Let $V$ be a nonlocal vertex algebra
and let $\psi:\h\oplus\C_{\varepsilon}[L]\rightarrow V$ be a linear map such that
$\psi(e_0)={\bf 1}$, the relations (AL1-3) and (AL6) hold with
\begin{align*}
  h[z]= Y(\psi(h),z),\quad e_\al[z]= Y(\psi(e_\al),z)\quad
  \te{for }h\in\h,\  \al\in L,
\end{align*}
and such that the following relation holds for $\al,\be\in L$:
\begin{align}\label{A(L)457}
&(x-z)^{-\<\al,\be\>-1}e_{\alpha}[x]e_{\be}[z]
-(-z+x)^{-\<\al,\be\>-1}e_{\be}[z]e_{\alpha}[x]\nonumber\\
&\quad \quad \quad =  \varepsilon(\al,\be)e_{\al+\be}[z]x^{-1}\delta\left(\frac{z}{x}\right).
\end{align}
Then $\psi$ extends uniquely to a nonlocal vertex algebra morphism from $V_L$ to $V$.
\end{prop}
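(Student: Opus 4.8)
The plan is the following. Uniqueness is immediate once we observe that $\h\cup\{e_{\al}\ |\ \al\in L\}$ generates $V_L$ as a (nonlocal) vertex algebra: the underlying space $S(\wh\h^{-})\ot\C_{\varepsilon}[L]$ is spanned by the iterated products $\al_1(-n_1)\cdots\al_r(-n_r)e_{\be}$ with $\al_i\in\h$, $\be\in L$, $n_i\ge 1$, and any nonlocal vertex algebra morphism out of $V_L$ is determined by its values on a generating set. So it remains to construct a morphism $\phi$ extending $\psi$. The strategy is first to make $V$ itself into a $V_L$-module by means of Proposition \ref{prop:VQ-mod-AQ-mod}, and then to transport that module structure into the desired morphism.

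First I would verify that $V$, equipped with the operators $h[z]:=Y(\psi(h),z)$ for $h\in\h$ and $e_{\al}[z]:=Y(\psi(e_{\al}),z)$ for $\al\in L$, is a restricted $A(L)$-module satisfying (AL6) and (AL7). Here restrictedness, namely $h[z]w,e_{\al}[z]w\in V((z))$, is automatic from the nonlocal vertex algebra axiom $Y(u,x)v\in V((x))$; relation (AL1) holds because $\psi(e_0)=\vac$ forces $e_0[z]=Y(\vac,z)=1$; and (AL2), (AL3), (AL6) are among the hypotheses. The remaining relations (AL4), (AL5) and (AL7) are read off from the single relation \eqref{A(L)457} by routine $\delta$-function calculus: multiplying \eqref{A(L)457} by an appropriate positive power of $x-z$ and using $(x-z)^{k}x^{-1}\delta(z/x)=0$ for $k\ge 1$ produces $[e_{\al}[x],e_{\be}[z]]=0$ when $\<\al,\be\>\ge 0$ and $(x-z)^{-\<\al,\be\>}[e_{\al}[x],e_{\be}[z]]=0$ when $\<\al,\be\><0$, while $\Res_x$ of \eqref{A(L)457} is precisely (AL7). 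Then the second half of Proposition \ref{prop:VQ-mod-AQ-mod} endows $V$ with a $V_L$-module structure $Y_V^{V_L}(\cdot,z)$ satisfying $Y_V^{V_L}(h,z)=Y(\psi(h),z)$ and $Y_V^{V_L}(e_{\al},z)=Y(\psi(e_{\al}),z)$ for $h\in\h$, $\al\in L$.

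Next I would set $\phi(u)=\lim_{z\to 0}Y_V^{V_L}(u,z)\vac$ for $u\in V_L$; this limit exists because $Y_V^{V_L}(u,z)\vac\in V[[z]]$, and it gives $\phi(\vac)=\vac$ together with $\phi|_{\h\oplus\C_{\varepsilon}[L]}=\psi$. The heart of the argument is to prove, by induction along a generating filtration $S^{(1)}\subset S^{(2)}\subset\cdots$ of $V_L$ built from $\h\cup\{e_{\al}\ |\ \al\in L\}$, that $Y_V^{V_L}(u,z)=Y(\phi(u),z)$ for every $u\in V_L$. The base case holds by construction. For the inductive step, take $a,b$ in the previous stage with $Y_V^{V_L}(a,z)=Y(\phi(a),z)$ and $Y_V^{V_L}(b,z)=Y(\phi(b),z)$; combining the weak associativity of the $V_L$-module $V$ with the weak associativity of the nonlocal vertex algebra $V$ (Lemma \ref{two-definitions}) and choosing a common $\ell\in\N$ large enough, one obtains
\[
  (z_0+z)^{\ell}Y_V^{V_L}\big(Y_{V_L}(a,z_0)b,z\big)w=(z_0+z)^{\ell}Y\big(Y(\phi(a),z_0)\phi(b),z\big)w
\]
for all $w\in V$; cancelling the non-zero-divisor $(z_0+z)^{\ell}$ and extracting the coefficient of $z_0^{-m-1}$ yields $Y_V^{V_L}(a_mb,z)=Y((\phi(a))_m\phi(b),z)$, and applying this to $\vac$ and letting $z\to 0$ gives $\phi(a_mb)=(\phi(a))_m\phi(b)$, which both closes the induction and shows $\phi(Y_{V_L}(a,x)b)=Y(\phi(a),x)\phi(b)$. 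Since $\h\cup\{e_{\al}\ |\ \al\in L\}$ generates $V_L$, this establishes that $\phi$ is a nonlocal vertex algebra morphism $V_L\to V$ extending $\psi$, which by the uniqueness remark is the unique such morphism.

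I expect the main obstacle to be the inductive transfer in the last paragraph: one must perform the $(z_0+z)^{\ell}$-cancellation carefully in the appropriate completed space of formal series, and keep the two distinct weak associativity statements (for $V$ as a $V_L$-module and for $V$ as a nonlocal vertex algebra) correctly aligned across the induction. By comparison, the extraction of (AL4), (AL5) and (AL7) from \eqref{A(L)457} is routine, though it does require the expansion conventions to be handled with some care.
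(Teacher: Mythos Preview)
The paper does not include its own proof of this proposition; it simply states the result with a reference to \cite{JKLT-G-phi-mod} and \cite{JKLT-Defom-va}. Your argument is correct and follows what is essentially the standard route for such universal properties: deduce (AL4), (AL5), (AL7) from the single relation \eqref{A(L)457}, invoke Proposition~\ref{prop:VQ-mod-AQ-mod} to make $V$ a $V_L$-module, and then manufacture the morphism via $\phi(u)=\lim_{z\to 0}Y_V^{V_L}(u,z)\vac$, verifying the homomorphism property by induction using the two weak associativities.

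One organizational remark: when you write ``this limit exists because $Y_V^{V_L}(u,z)\vac\in V[[z]]$,'' that regularity is not a general fact about module actions on a distinguished vector---it is exactly what your induction establishes, since $Y_V^{V_L}(u,z)=Y(\phi(u),z)$ forces $Y_V^{V_L}(u,z)\vac\in V[[z]]$ by the creation property of $V$. So the definition of $\phi$ and the inductive proof of $Y_V^{V_L}(u,z)=Y(\phi(u),z)$ are really simultaneous; it would be cleaner either to run the induction first (defining $\phi$ on $S^{(n+1)}$ only after the claim is known on $S^{(n)}$) or to define $\phi(u)$ as the $z^0$-coefficient of $Y_V^{V_L}(u,z)\vac$, which always makes sense, and then let the induction upgrade this to the full limit statement. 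This is a presentational point, not a gap---the cancellation of $(z_0+z)^{\ell}$ in $V((z))((z_0))$ and the alignment of the two associativities are handled correctly.
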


View $L$ as an abelian group and let $\C[L]$ be its group algebra with basis $\set{e^\al}{\al\in L}$.
Recall that $\h=\C\otimes_{\Z}L$, a vector space, and
$\wh\h^-=\h\otimes t^{-1}\C[t^{-1}]$, an abelian Lie algebra.
Note that both $\C[L]$ and $S(\wh\h^-)$ $(=U(\wh\h^-))$ are Hopf algebras.
Set
\begin{align}
  B_L= S(\wh\h^-)\ot \C[L],
\end{align}
which is a Hopf algebra. In particular, $B_{L}$ is a bialgebra,
where the comultiplication $\Delta$ and counit $\varepsilon$
are uniquely determined by
\begin{align}
  &\Delta(h(-n))=h(-n)\ot 1+1\ot h(-n),\quad \Delta(e^\al)=e^\al\ot e^\al,\\
  &\varepsilon(h(-n))=0,\quad \varepsilon(e^\al)=1
\end{align}
for $h\in\h,\  n\in \Z_{+},\ \al\in L$.
As an algebra, $B_L$ admits a derivation $\partial$ which is uniquely determined by
\begin{align}
  \partial(h(-n))=nh(-n-1),\quad \quad\partial(u\ot e^\al)= \al(-1)u\ot e^\al+\partial u \ot e^\al
\end{align}
for $h\in\h$, $n\in \Z_{+}$, $u\in S(\wh\h^-),\ \al\in L$.
Then by the Borcherds construction, $B_L$ becomes a commutative vertex algebra with
$$Y(a,x)b=(e^{x\partial}a)b\   \   \   \mbox{ for }a,b\in B_{L}.$$
Furthermore,  we have (see \cite{Li-smash}):

\begin{lem}\label{BL-vertex-operator}
The vertex algebra $B_L$ equipped with the coalgebra structure stated above is a commutative and cocommutative vertex bialgebra.
\end{lem}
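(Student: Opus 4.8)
The plan is to reduce the statement to the defining property of Borcherds' construction. Recall that for a commutative associative unital algebra $A$ equipped with a derivation $D$, the associated commutative vertex algebra has vacuum the unit $1$, $\der$-operator $D$, and $Y(a,x)b=(e^{xD}a)b$ for $a,b\in A$. I would first record the elementary fact that, given two such data $(A,D,1)$ and $(A',D',1')$, a linear map $f:A\to A'$ is a homomorphism of vertex algebras if and only if it is a unital algebra homomorphism with $f\circ D=D'\circ f$. The ``if'' direction is the computation
\begin{align*}
f(Y(a,x)b)=f\((e^{xD}a)b\)=f(e^{xD}a)f(b)=(e^{xD'}f(a))f(b)=Y'(f(a),x)f(b),
\end{align*}
and the ``only if'' direction holds because $Y(\cdot,x)$ determines both the product on $A$ (from the coefficient of $x^{-1}$) and the operator $D$ (from $\der$).

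Next I would apply this with $A'=B_L\ot B_L$, whose vertex algebra structure is the tensor product of two copies of $B_L$; this is again of Borcherds type, with unit $1\ot 1$ and $\der$-operator $\partial\ot 1+1\ot\partial$. Likewise, $A'=\C$ with the zero derivation gives the trivial vertex algebra. Since $\Delta$ and $\varepsilon$ are already unital algebra homomorphisms, being the bialgebra structure maps of $B_L$, it remains only to verify that they intertwine the derivations, i.e.
\begin{align*}
\Delta\circ\partial=(\partial\ot 1+1\ot\partial)\circ\Delta\quad\text{and}\quad\varepsilon\circ\partial=0.
\end{align*}
In each case both sides are $\Delta$-derivations (resp.\ $\varepsilon$-derivations) of $B_L$, so it suffices to check equality on the algebra generators $h(-n)$ and $e^{\al}$; this is immediate from the formulas defining $\partial$, $\Delta$ and $\varepsilon$ (for instance $\Delta(\partial e^{\al})=\Delta(\al(-1)e^{\al})=\al(-1)e^{\al}\ot e^{\al}+e^{\al}\ot\al(-1)e^{\al}=(\partial\ot 1+1\ot\partial)\Delta(e^{\al})$, and similarly on $h(-n)$, while $\varepsilon(\partial h(-n))=0$ and $\varepsilon(\partial e^{\al})=\varepsilon(\al(-1))\varepsilon(e^{\al})=0$). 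This proves that $B_L$ with the stated coalgebra structure is a vertex bialgebra. Commutativity of the vertex algebra is clear since $Y(a,x)b\in B_L[[x]]$, and cocommutativity follows because $\Delta$ is the tensor product of the comultiplications of the group algebra $\C[L]$ and of $S(\wh\h^-)=U(\wh\h^-)$, both of which are cocommutative (alternatively, one checks $\sigma\Delta=\Delta$ directly on the generators, where $\sigma$ is the flip).

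The only point deserving care is the verification that $B_L\ot B_L$, taken with the ordinary uncompleted tensor product (which is legitimate since $\Delta$ applied to any element of $B_L$ is a finite sum), is again a commutative vertex algebra arising from Borcherds' construction, with $\der$-operator $\partial\ot 1+1\ot\partial$; this is what makes the reduction to ``$\partial$-compatible unital algebra homomorphism'' valid. Beyond that bookkeeping the argument is entirely routine, with no genuine obstacle.
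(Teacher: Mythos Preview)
Your argument is correct and is exactly the standard verification one would expect. The paper itself does not supply a proof of this lemma; it simply attributes the result to \cite{Li-smash}. So there is nothing to compare against except that your proof is a clean self-contained version of what is implicit in the cited reference: reduce to Borcherds' construction, then check that $\Delta$ and $\varepsilon$ are unital algebra maps intertwining the derivations, which you verify on generators.

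One tiny slip: in your parenthetical remark on the ``only if'' direction you say the product is recovered ``from the coefficient of $x^{-1}$''. In the Borcherds construction $Y(a,x)b=(e^{xD}a)b\in A[[x]]$, so the product $ab$ is the constant term (equivalently $a_{-1}b$ in the convention $Y(a,x)=\sum a_n x^{-n-1}$), not the coefficient of $x^{-1}$. This does not affect the proof, since only the ``if'' direction is used.
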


Recall that vertex algebra $V_{L}=S(\wh\h^-)\ot\C_{\epsilon}[L]$ contains $\h$ and $\C_{\epsilon}[L]$ as subspaces.
The following result was obtained in \cite{JKLT-Defom-va}:

\begin{prop}\label{lem:qva-rho-def}
 On the vertex algebra $V_L$, there exists a right $B_L$-comodule vertex algebra structure
 $\rho: V_L\rightarrow V_L\ot B_L$, which is uniquely determined by
\begin{align}
  \rho(h)=h\ot1+{\bf 1}\ot h(-1),\quad\rho(e_\al)=e_\al\ot e^\al\quad\te{for }h\in\h,\   \al\in L.
\end{align}
\end{prop}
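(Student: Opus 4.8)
The plan is to construct $\rho$ from the universal property of $V_L$ in Proposition~\ref{prop:AQ-mod-hom-VA-hom} and then to deduce the two comodule identities using the uniqueness in that property together with the fact that $\h\cup\{e_\al\mid\al\in L\}$ generates $V_L$ as a vertex algebra. Concretely, I would work inside the tensor-product vertex algebra $V_L\ot B_L$, with $Y(u\ot a,x)=Y_{V_L}(u,x)\ot Y_{B_L}(a,x)$ and $Y_{B_L}(a,x)b=(e^{x\partial}a)b$, and define a linear map $\psi:\h\oplus\C_{\varepsilon}[L]\to V_L\ot B_L$ by $\psi(h)=h\ot 1+\vac\ot h(-1)$ for $h\in\h$ and $\psi(e_\al)=e_\al\ot e^\al$ for $\al\in L$; note $\psi(e_0)=\vac$. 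Writing $h[z]=Y(\psi(h),z)=Y_{V_L}(h,z)\ot 1+1\ot Y_{B_L}(h(-1),z)$ and $e_\al[z]=Y(\psi(e_\al),z)=Y_{V_L}(e_\al,z)\ot Y_{B_L}(e^\al,z)$, I would verify hypotheses (AL1)--(AL3), (AL6) and \eqref{A(L)457} of Proposition~\ref{prop:AQ-mod-hom-VA-hom}; this yields a vertex algebra homomorphism $\rho:V_L\to V_L\ot B_L$, unique with the prescribed values on generators.

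Relations (AL1)--(AL3) should be straightforward: (AL1) is $e_0[z]=Y(\vac,z)=1$; and in (AL2), (AL3) the commutator on $V_L\ot B_L$ splits into four terms, two of which vanish because operators on different tensor factors commute, the ``$B_L$--$B_L$'' term vanishes because $B_L$ is commutative, and the ``$V_L$--$V_L$'' term reproduces the required right-hand side by the corresponding relation in $V_L$ (Proposition~\ref{prop:VQ-mod-AQ-mod} applied to $V_L$ as a module over itself). For (AL6) the key inputs are the $B_L$-identities $\tfrac{d}{dz}(e^{z\partial}e^\al)=(e^{z\partial}\al(-1))(e^{z\partial}e^\al)$ and $e^{z\partial}\al(-1)=\sum_{n\ge 1}\al(-n)z^{n-1}=Y_{B_L}(\al(-1),z)$; combining these with (AL6) in $V_L$, the product rule applied to $e_\al[z]=Y_{V_L}(e_\al,z)\ot Y_{B_L}(e^\al,z)$, and the observations $\al[z]^{-}=Y_{V_L}(\al,z)^{-}\ot 1$ and $\al[z]^{+}=Y_{V_L}(\al,z)^{+}\ot 1+1\ot Y_{B_L}(\al(-1),z)$, the three resulting terms match $\al[z]^{+}e_\al[z]+e_\al[z]\al[z]^{-}$.

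For \eqref{A(L)457} I would first note that, since $B_L$ is commutative, the $B_L$-component of both $e_\al[x]e_\be[z]$ and $e_\be[z]e_\al[x]$ is the single element $(e^{x\partial}e^\al)(e^{z\partial}e^\be)\in B_L[[x,z]]$, involving only nonnegative powers of $x,z$; hence the left-hand side of \eqref{A(L)457} in $V_L\ot B_L$ equals
\[
\bigl((x-z)^{-\<\al,\be\>-1}Y_{V_L}(e_\al,x)Y_{V_L}(e_\be,z)-(-z+x)^{-\<\al,\be\>-1}Y_{V_L}(e_\be,z)Y_{V_L}(e_\al,x)\bigr)\ot(e^{x\partial}e^\al)(e^{z\partial}e^\be).
\]
Applying \eqref{A(L)457} in $V_L$ to the first factor and then using $x^{-1}\delta(z/x)g(x,z)=x^{-1}\delta(z/x)g(z,z)$ together with the automorphism identity $(e^{z\partial}e^\al)(e^{z\partial}e^\be)=e^{z\partial}(e^\al e^\be)=e^{z\partial}e^{\al+\be}=Y_{B_L}(e^{\al+\be},z)$, this collapses to $\varepsilon(\al,\be)(Y_{V_L}(e_{\al+\be},z)\ot Y_{B_L}(e^{\al+\be},z))x^{-1}\delta(z/x)=\varepsilon(\al,\be)e_{\al+\be}[z]x^{-1}\delta(z/x)$, which is the desired relation. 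I expect this to be the main obstacle: it requires knowing that the full $\delta$-function form \eqref{A(L)457} already holds in $V_L$ itself (which follows from the Jacobi identity for $V_L$ applied to $e_\al$ and $e_\be$, or from (AL7) together with the locality in (AL4)/(AL5)), and it needs care with the expansion conventions for $(x-z)^{-\<\al,\be\>-1}$ versus $(-z+x)^{-\<\al,\be\>-1}$ and with the $\delta$-substitution in the $B_L$-factor; the other relations are comparatively mechanical.

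Finally, I would verify the comodule axioms via uniqueness. Both $(\rho\ot 1)\rho$ and $(1\ot\Delta)\rho$ are vertex algebra homomorphisms $V_L\to V_L\ot B_L\ot B_L$ (since $\rho$ and $\Delta:B_L\to B_L\ot B_L$ are homomorphisms), and on the generators both send $h\mapsto h\ot 1\ot 1+\vac\ot h(-1)\ot 1+\vac\ot 1\ot h(-1)$ (using $\Delta(h(-1))=h(-1)\ot 1+1\ot h(-1)$) and $e_\al\mapsto e_\al\ot e^\al\ot e^\al$ (using $\Delta(e^\al)=e^\al\ot e^\al$); since $\h\cup\{e_\al\}$ generates $V_L$, they coincide, so $(\rho\ot 1)\rho=(1\ot\Delta)\rho$. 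Likewise $(1\ot\epsilon)\rho$ is a vertex algebra endomorphism of $V_L$ fixing each $h$ and each $e_\al$ (as $\epsilon(h(-1))=0$, $\epsilon(e^\al)=1$), hence equals $\te{id}_{V_L}$. Together with the uniqueness already noted, this proves that $(V_L,\rho)$ is a right $B_L$-comodule vertex algebra with the stated $\rho$.
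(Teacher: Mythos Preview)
Your proposal is correct. Note, however, that the present paper does not give its own proof of this proposition: it states the result and cites \cite{JKLT-Defom-va}. Your argument via the universal property of $V_L$ (Proposition~\ref{prop:AQ-mod-hom-VA-hom}) together with a generator check for the comodule identities is exactly the natural route, and the verifications you outline for (AL1)--(AL3), (AL6) and \eqref{A(L)457} in $V_L\ot B_L$ go through as you describe; in particular the $\delta$-function relation \eqref{A(L)457} does hold in $V_L$ (it is \cite[Proposition~6.5.2]{LL}, also invoked later in the paper in the proof of Proposition~\ref{lem:commutator-V-L-h-eta}), so the step you flagged as the main obstacle is not an issue.
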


Note that $B_L[[\hbar]]$ is naturally an $\hbar$-adic vertex bialgebra and $V_L[[\hbar]]$ is an $\hbar$-adic vertex algebra.
 By canonical extension view $\rho$ as a $\C[[\hbar]]$-module map
 \begin{align}\label{comodule-map-rho}
 \rho: \  V_L[[\hbar]]\rightarrow V_L[[\hbar]]\wh\ot B_{L}[[\hbar]]\ \ (=(V_L\ot B_L)[[\hbar]]).
 \end{align}
 Then $V_L[[\hbar]]$ with $\rho$ becomes a right $B_L[[\hbar]]$-comodule $\hbar$-adic vertex algebra.

For $a\in \h,\ f(x)\in \C((x))$, define (see \cite{EK-qva})
\begin{align}\label{eq:formula-Phi-pre}
\Phi(a,f)(x)=\sum_{n\ge 0}\frac{(-1)^{n}}{n!}f^{(n)}(x)a_{n}
\end{align}
on $V_{L}$ and on every $V_L$-module, where $f^{(n)}(x)$ denotes the $n$-th derivative.
We have
\begin{align}\label{formula-Phi}
\Phi(a,f)(x)e_{\be}=\<\be,a\>f(x)e_{\be}\   \   \   \mbox{ for }\be\in L.
\end{align}
Define $\Phi(G(x))$ for $G(x)\in \h\otimes \C((x))[[\hbar]]$ in the obvious way.
On the other hand, extend the bilinear form $\<\cdot,\cdot\>$ on $\h$ to
a $\C((x))[[\hbar]]$-valued bilinear form on $\h\otimes \C((x))[[\hbar]]$. Then
\begin{align}\label{Phi-G(x)-e-beta}
\Phi(G(x))e_{\be}=\<\be,G(x)\>e_{\be}
\end{align}
for $G(x)\in \h\otimes \C((x))[[\hbar]],\ \be\in L$.

\begin{de}
Denote by
${\rm Hom}(\h, \h\otimes \C((x))[[\hbar]])^{0}$ the space of linear maps
$\eta: \h\rightarrow \h\otimes \C((x))[[\hbar]]$ such that
$\eta_0(\al,x):=\eta(\al,x)|_{\hbar=0}\in \h\ot x\C[[x]]$ for $\al\in \h$.
\end{de}

As an immediate $\hbar$-adic version of \cite[Proposition 5.8]{JKLT-Defom-va}, we have:

 \begin{lem}\label{lem:qva-Y-M-def}
Let $\eta(\cdot,x)\in {\rm Hom}(\h, \h\otimes \C((x))[[\hbar]])^{0}$.
Then there exists a $B_L[[\hbar]]$-module structure $Y_M^\eta(\cdot,x)$ on $V_{L}[[\hbar]]$,
which is uniquely determined by
\begin{eqnarray}
 Y_M^\eta(e^\al,x) =\exp (\Phi( \eta(\alpha,x)))\   \   \   \mbox{ for } \al\in L,
\end{eqnarray}
and $V_{L}[[\hbar]]$ becomes a $B_L[[\hbar]]$-module $\hbar$-adic (nonlocal) vertex algebra.
Furthermore,  $Y_M^\eta(\cdot,x)$ is an invertible element of $\mathfrak L_{B_L[[\hbar]]}^\rho(V_L[[\hbar]])$.
\end{lem}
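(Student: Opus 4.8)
The plan is to deduce the statement from its non-$\hbar$-adic counterpart \cite[Proposition 5.8]{JKLT-Defom-va} by reduction modulo $\hbar^n$. Recall from Section~3 that a $\C[[\hbar]]$-module map $Y_M^\eta(\cdot,x)\colon B_L[[\hbar]]\to(\End_{\C[[\hbar]]}V_L[[\hbar]])[[x,x^{-1}]]$ makes $V_L[[\hbar]]$ a $B_L[[\hbar]]$-module $\hbar$-adic nonlocal vertex algebra with $\rho$ compatible in the sense of \eqref{compatible-relation} (so that $Y_M^\eta\in\mathfrak{L}_{B_L[[\hbar]]}^\rho(V_L[[\hbar]])$) if and only if for every $n$ the reduction $Y_M^{\bar\eta}$ makes $V_L\ot\mathcal{R}_n=V_L[[\hbar]]/\hbar^nV_L[[\hbar]]$ a $B_L\ot\mathcal{R}_n$-module nonlocal vertex algebra over $\mathcal{R}_n$ with $\rho$ compatible; and the values $\exp(\Phi(\eta(\al,x)))$ are recovered from their reductions via $\E_\hbar(V_L[[\hbar]])\cong\varprojlim_n\E(V_L[[\hbar]]/\hbar^nV_L[[\hbar]])$ (Lemma~\ref{lem:E-h}). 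So I would run the construction of \cite[Proposition 5.8]{JKLT-Defom-va} verbatim with the base ring $\C$ replaced by $\mathcal{R}_n$ and $\eta$ replaced by its reduction $\bar\eta\in{\rm Hom}(\h,\h\ot\C((x))\ot\mathcal{R}_n)$; that proof is a purely formal computation, valid over any commutative $\C$-algebra, and yields both the module axioms and the compatibility with $\rho$, provided only that the operators $\exp(\Phi(\bar\eta(\al,x)))$ are well defined. Uniqueness (given the values on the $e^\al$) is then automatic, since $B_L$ is generated as a vertex algebra by $\{e^\al\mid\al\in L\}$ --- one has $h(-1)=(\mathcal D e^\al)\cdot e^{-\al}$ in $B_L$ --- and a module structure over the nonlocal vertex algebra $B_L$ is determined on a generating set by weak associativity.

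The one genuinely new point, compared with \cite{JKLT-Defom-va}, is that $\eta$ now has $\hbar$-dependent coefficients in $\C((x))[[\hbar]]$ rather than in $x\C[[x]]$, so I must check that $\exp(\Phi(\eta(\al,x)))$ is a well-defined invertible element of $\E_\hbar(V_L[[\hbar]])$. For this I would use the decomposition $V_L=\bigoplus_{\be\in L}S(\wh\h^-)\ot\C e_\be$: the operator $\Phi(\eta(\al,x))$ preserves each summand, and by \eqref{formula-Phi} together with $\Phi(a,f)(x)=\sum_{n\ge0}\frac{(-1)^n}{n!}f^{(n)}(x)a_n$ it decomposes there as $\Phi(\eta(\al,x))=\langle\be,\eta(\al,x)\rangle\,\id+N(x)$, where the scalar $\langle\be,\eta(\al,x)\rangle\in\C((x))[[\hbar]]$ is central and $N(x)$ collects the terms built from the modes $h(n)$ with $n\ge1$. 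Since $[h(n),h'(m)]=0$ for $n,m\ge1$, all the operators $N(x)$ (for varying arguments) commute with one another and with the scalar parts; and since each $h(n)$ with $n\ge1$ strictly lowers the $S(\wh\h^-)$-degree, $N(x)$ is locally nilpotent, so $\exp(N(x))$ is a well-defined element of $\E_\hbar(V_L[[\hbar]])$ (a finite sum on each vector). The hypothesis $\eta_0(\al,x)\in\h\ot x\C[[x]]$ enters \emph{exactly here}: it gives $\langle\be,\eta(\al,x)\rangle\in x\C[[x]]+\hbar\,\C((x))[[\hbar]]$, so $\exp(\langle\be,\eta(\al,x)\rangle)$ is a well-defined invertible element of $\C((x))[[\hbar]]$; hence $\exp(\Phi(\eta(\al,x)))=\exp(\langle\cdot,\eta(\al,x)\rangle)\exp(N(x))\in\E_\hbar(V_L[[\hbar]])$, with two-sided inverse $\exp(-\Phi(\eta(\al,x)))$ by linearity of $\Phi$ and the commutativity just noted, and all of this is compatible with reduction modulo $\hbar^n$.

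Feeding this back, \cite[Proposition 5.8]{JKLT-Defom-va} over $\mathcal{R}_n$ produces the $B_L\ot\mathcal{R}_n$-module nonlocal vertex algebra structure $Y_M^{\bar\eta}$ on $V_L\ot\mathcal{R}_n$ with $Y_M^{\bar\eta}(e^\al,x)=\exp(\Phi(\bar\eta(\al,x)))$ and the remaining operators determined by the $\mathcal D$-property and $\partial e^\al=\al(-1)e^\al$; its compatibility with $\rho$ follows since $\rho$ is a homomorphism of $\hbar$-adic vertex algebras with $\rho(h)=h\ot1+{\bf 1}\ot h(-1)$ (Proposition~\ref{lem:qva-rho-def}) and $Y_{B_L}(h(-1),x)$ has no singular part ($B_L$ being commutative), so $\rho(h(n)v)=(h(n)\ot1)\rho(v)$ for all $n\ge0$; hence $\rho$ commutes with every mode occurring in $\Phi(\eta(\al,x))$, giving $\rho\,Y_M^\eta(e^\al,x)=(Y_M^\eta(e^\al,x)\ot1)\,\rho$, and this propagates to all of $B_L$ by weak associativity. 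Taking the inverse limit over $n$ gives $Y_M^\eta\in\mathfrak{L}_{B_L[[\hbar]]}^\rho(V_L[[\hbar]])$. Finally $-\eta$ again lies in ${\rm Hom}(\h,\h\ot\C((x))[[\hbar]])^0$, and since $\Delta(e^\al)=e^\al\ot e^\al$ and $\Phi$ is linear, $(Y_M^\eta\ast Y_M^{-\eta})(e^\al,x)=\exp(\Phi(\eta(\al,x)))\exp(-\Phi(\eta(\al,x)))=1=Y_M^\varepsilon(e^\al,x)$, so $Y_M^{-\eta}$ is the $\ast$-inverse of $Y_M^\eta$; thus $Y_M^\eta$ is an invertible element of $\mathfrak{L}_{B_L[[\hbar]]}^\rho(V_L[[\hbar]])$. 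The main obstacle is the middle paragraph --- making sense of $\exp(\Phi(\eta(\al,x)))$ as an invertible element of $\E_\hbar(V_L[[\hbar]])$, where all the delicate points (convergence of the exponential, absence of unwanted negative powers of $x$ in the scalar part, compatibility with passage to $\mathcal{R}_n$) are concentrated; past that, everything reduces formally, modulo $\hbar^n$, to \cite{JKLT-Defom-va}.
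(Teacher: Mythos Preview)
Your proposal is correct and follows exactly the approach the paper takes: the paper simply declares the lemma ``an immediate $\hbar$-adic version of \cite[Proposition~5.8]{JKLT-Defom-va}'' and gives no further proof, so you have supplied the details (well-definedness of $\exp(\Phi(\eta(\al,x)))$ via the scalar--nilpotent decomposition, reduction modulo $\hbar^n$, compatibility with $\rho$, and the $\ast$-inverse $Y_M^{-\eta}$) that the paper leaves implicit.
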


The following is a straightforward $\hbar$-adic version of \cite[Theorem 5.9]{JKLT-Defom-va}:

\begin{thm}\label{thm:qlva}
Let $\eta(\cdot,x)\in {\rm Hom}(\h, \h\otimes \C((x))[[\hbar]])^{0}$.
Then we have an $\hbar$-adic quantum vertex algebra $V_{L}[[\hbar]]^\eta$, where $V_{L}[[\hbar]]^\eta=V_L[[\hbar]]$
as a $\C[[\hbar]]$-module and its vertex operator map $Y_L^\eta(\cdot,x)$ is uniquely determined by
\begin{eqnarray}
&&Y_L^{\eta}(e_{\alpha},x)=Y(e_\al,x)\exp(\Phi(\eta(\alpha,x)))\quad \text{ for }\alpha\in L,  \\
&&Y_L^{\eta}(a,x)=Y(a,x)+\Phi(\eta'(a,x)) \quad \text{ for }a\in \h.
\end{eqnarray}
\end{thm}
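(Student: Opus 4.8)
The plan is to deduce Theorem \ref{thm:qlva} as a direct application of the $\hbar$-adic deformation machinery, by verifying that the hypotheses of Theorem \ref{prop:deform-va-h} and Theorem \ref{thm:S-op} (in its $\hbar$-adic form) are met for $H=B_L[[\hbar]]$, $V=V_L[[\hbar]]$, the comodule map $\rho$ from \eqref{comodule-map-rho}, and the module structure $Y_M^\eta(\cdot,x)$ from Lemma \ref{lem:qva-Y-M-def}. First I would record that $B_L[[\hbar]]$ is a cocommutative $\hbar$-adic vertex bialgebra (it is commutative and cocommutative by Lemma \ref{BL-vertex-operator}, and cocommutativity passes to the $\hbar$-adic completion) and that $(V_L[[\hbar]],\rho)$ is a right $B_L[[\hbar]]$-comodule $\hbar$-adic vertex algebra, as noted right after \eqref{comodule-map-rho}. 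Then Lemma \ref{lem:qva-Y-M-def} gives $Y_M^\eta\in\mathfrak L^\rho_{B_L[[\hbar]]}(V_L[[\hbar]])$, so Theorem \ref{prop:deform-va-h} produces an $\hbar$-adic nonlocal vertex algebra $\mathfrak D^\rho_{Y_M^\eta}(V_L[[\hbar]])$, which by definition we name $V_L[[\hbar]]^\eta$.

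The next step is to compute the deformed vertex operator $Y_L^\eta=\mathfrak D^\rho_{Y_M^\eta}(Y)$ explicitly on generators. Using $\rho(e_\al)=e_\al\otimes e^\al$ from Proposition \ref{lem:qva-rho-def}, the formula $Y_L^\eta(e_\al,x)=\sum Y((e_\al)_{(1)},x)Y_M^\eta((e_\al)_{(2)},x)$ collapses to $Y(e_\al,x)Y_M^\eta(e^\al,x)=Y(e_\al,x)\exp(\Phi(\eta(\al,x)))$, giving the first displayed formula. For $a\in\h$, $\rho(a)=a\otimes 1+{\bf 1}\otimes a(-1)$, so $Y_L^\eta(a,x)=Y(a,x)Y_M^\eta(1,x)+Y({\bf 1},x)Y_M^\eta(a(-1),x)=Y(a,x)+Y_M^\eta(a(-1),x)$; here one must identify $Y_M^\eta(a(-1),x)$ with $\Phi(\eta'(a,x))$ for the appropriate derivative notation $\eta'$, which follows by applying the derivation-compatibility of a $B_L[[\hbar]]$-module structure and the generating relation $\partial(e^\al)=\al(-1)e^\al$ to the defining exponential formula $Y_M^\eta(e^\al,x)=\exp(\Phi(\eta(\al,x)))$ — differentiating in the group-like direction extracts the linear term $\Phi$ applied to a suitable derivative of $\eta$. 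Since $\{e_\al\mid\al\in L\}\cup\h$ generates $V_L$ as a vertex algebra (and its image generates $V_L/\hbar^nV_L$ for each $n$, hence is a generating subset in the sense of Definition \ref{def-hadic-generating-subset}, via Lemma \ref{prop:gen-set} applied with $S$, $T$ the standard generating submodules), these two formulas determine $Y_L^\eta$ uniquely.

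Finally, to upgrade $\mathfrak D^\rho_{Y_M^\eta}(V_L[[\hbar]])$ to an $\hbar$-adic \emph{quantum} vertex algebra, I would invoke the $\hbar$-adic analogue of Theorem \ref{thm:S-op}: since $Y_M^\eta$ is an invertible element of $\mathfrak L^\rho_{B_L[[\hbar]]}(V_L[[\hbar]])$ (the invertibility is exactly the last assertion of Lemma \ref{lem:qva-Y-M-def}, its inverse being $\exp(-\Phi(\eta(\al,x)))$ on the group-likes), the operator $\mathcal S(x)(v\otimes u)=\sum Y_M^\eta(u_{(2)},-x)v_{(1)}\otimes (Y_M^\eta)^{-1}(v_{(2)},x)u_{(1)}$ is a unitary rational quantum Yang--Baxter operator making the deformation a quantum vertex algebra. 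One checks that all these constructions are compatible with reduction mod $\hbar^n$ (so Proposition \ref{h-adic-quotient} and the non-degeneracy-free formulation of Theorem \ref{thm:S-op} apply level by level), and that $\mathcal S(x)$ indeed lands in $V_L\ptimes V_L\ptimes\C((x))[[\hbar]]$ because each $Y_M^\eta(e^\al,\pm x)$ acts on $\C_\varepsilon[L]$ by the scalar $\exp(\langle\be,\eta(\al,\pm x)\rangle)$, which lies in $\C((x))[[\hbar]]$ thanks to the condition $\eta_0(\al,x)\in\h\otimes x\C[[x]]$ defining $\mathrm{Hom}(\h,\h\otimes\C((x))[[\hbar]])^0$.

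\textbf{Main obstacle.} The routine-but-delicate point is the identification $Y_M^\eta(a(-1),x)=\Phi(\eta'(a,x))$ and, more generally, tracking the exact shape of the $\hbar$-adic $\mathcal S$-operator and its convergence: one must be careful that the infinite sums $\Phi(a,f)(x)=\sum_{n\ge0}\frac{(-1)^n}{n!}f^{(n)}(x)a_n$ and the Sweedler sums in $\mathfrak D^\rho$ and $\mathcal S$ converge $\hbar$-adically, which is where the hypothesis $\eta\in\mathrm{Hom}(\h,\h\otimes\C((x))[[\hbar]])^0$ (forcing $\eta_0$ to have positive order, hence the exponentials to be well-defined operators on the topologically free module $V_L[[\hbar]]$) does the real work. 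Everything else is a bookkeeping translation of the general $\hbar$-adic theorems of Section 4 into this concrete setting.
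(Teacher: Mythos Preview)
Your proposal is correct and follows essentially the same approach as the paper, which presents Theorem \ref{thm:qlva} simply as ``a straightforward $\hbar$-adic version of \cite[Theorem 5.9]{JKLT-Defom-va}'' without further argument. Your outline spells out exactly the machinery that underlies that citation: apply Theorem \ref{prop:deform-va-h} with $H=B_L[[\hbar]]$, $V=V_L[[\hbar]]$, $\rho$, and $Y_M^\eta\in\mathfrak L^\rho_{B_L[[\hbar]]}(V_L[[\hbar]])$ (Lemma \ref{lem:qva-Y-M-def}) to obtain the $\hbar$-adic nonlocal vertex algebra, then invoke the $\hbar$-adic analogue of Theorem \ref{thm:S-op} using the invertibility of $Y_M^\eta$ to upgrade to a quantum vertex algebra; your computation of $Y_L^\eta$ on $e_\al$ and on $a\in\h$ via $\rho(e_\al)=e_\al\otimes e^\al$ and $\rho(a)=a\otimes 1+\vac\otimes a(-1)$, together with the identification $Y_M^\eta(a(-1),x)=\Phi(\eta'(a,x))$ obtained from $\al(-1)=\partial(e^\al)\cdot e^{-\al}$ and the $\mathcal D$-derivative property of module maps, is exactly right.
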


Furthermore, we have:

\begin{prop}\label{prop-comp-sum}
Let $\eta(\cdot,x)\in {\rm Hom}(\h, \h\otimes \C((x))[[\hbar]])^{0}$.
The $\C[[\hbar]]$-module map $\rho: V_{L}[[\hbar]]\rightarrow V_L[[\hbar]]\wh\ot B_L[[\hbar]]$
makes $V_{L}[[\hbar]]^\eta$ a right $B_L[[\hbar]]$-comodule $\hbar$-adic nonlocal vertex algebra,
and for any $\psi(\cdot,x)\in {\rm Hom}(\h, \h\otimes \C((x))[[\hbar]])^{0}$ we have
\begin{align}
Y_M^{\psi}(\cdot,x)\in \mathfrak L_{B_L[[\hbar]]}^\rho(V_L[[\hbar]]^{\eta})
\end{align}
and
$$\mathfrak D_{Y_M^\psi}^\rho\(\mathfrak D_{Y_M^\eta}^\rho(V_L[[\hbar]])\)
  =\mathfrak D_{Y_M^{\psi+\eta}}^\rho(V_L[[\hbar]]).$$
\end{prop}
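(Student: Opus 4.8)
The plan is to deduce all three claims from the general $\hbar$-adic deformation machinery of Section~4 together with the explicit form of $Y_M^\eta$. First recall that, by construction (Theorem~\ref{thm:qlva}, the $\hbar$-adic analogue of \cite{JKLT-Defom-va}), one has $V_L[[\hbar]]^\eta=\mathfrak D_{Y_M^\eta}^\rho(V_L[[\hbar]])$ as $\hbar$-adic nonlocal vertex algebras, where $\rho$ is the comodule map of Proposition~\ref{lem:qva-rho-def} extended $\C[[\hbar]]$-linearly and $Y_M^\eta\in\mathfrak L_{B_L[[\hbar]]}^\rho(V_L[[\hbar]])$ is as in Lemma~\ref{lem:qva-Y-M-def}. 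Since $B_L[[\hbar]]$ is a cocommutative $\hbar$-adic vertex bialgebra, the ``Furthermore'' part of Theorem~\ref{prop:deform-va-h} applies with $H=B_L[[\hbar]]$, $V=V_L[[\hbar]]$, $Y_V^H=Y_M^\eta$, and yields the first assertion at once: $\rho$ makes $V_L[[\hbar]]^\eta$ a right $B_L[[\hbar]]$-comodule $\hbar$-adic nonlocal vertex algebra.

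For the second assertion I would verify the conditions defining $\mathfrak L_{B_L[[\hbar]]}^\rho(V_L[[\hbar]]^\eta)$ one at a time. The conditions $Y_M^\psi(\vac,x)=1$, $Y_M^\psi(h,x)\vac=\varepsilon(h)\vac$, the requirement that $Y_M^\psi$ be a module structure for $B_L[[\hbar]]$ viewed as an $\hbar$-adic nonlocal vertex algebra, and the comodule compatibility $\rho(Y_M^\psi(h,x)v)=(Y_M^\psi(h,x)\ot1)\rho(v)$ (cf.\ \eqref{compatible-relation}) do not refer to the vertex operator of $V_L[[\hbar]]$ at all, so they hold verbatim by Lemma~\ref{lem:qva-Y-M-def}. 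The only new point is the interaction identity \eqref{eq:mod-va-for-vertex-bialg3} with $Y$ replaced by the deformed operator $Y_L^\eta$. Here I would reduce to the generating set $\{e^\al\mid\al\in L\}$ of $B_L[[\hbar]]$ and to $\h\cup\{e_\be\mid\be\in L\}$ inside $V_L[[\hbar]]$: on these elements $Y_L^\eta$ differs from $Y$ only by the right factor $\exp(\Phi(\eta(\be,x)))$ on $e_\be$, respectively the additive summand $\Phi(\eta'(a,x))$ on $a\in\h$, while $Y_M^\psi(e^\al,x)=\exp(\Phi(\psi(\al,x)))$. The crucial observation is that $[\Phi(a,f)(x),\Phi(b,g)(y)]=0$ for all $a,b\in\h$ and all scalar series $f,g$, since $\Phi(a,f)(x)$ involves only the modes $a(n)$ with $n\ge0$ and $[a(m),b(n)]=m\delta_{m+n,0}\<a,b\>$ vanishes on $V_L[[\hbar]]$ for $m,n\ge0$; hence every one of the $\eta$-factors commutes with every $Y_M^\psi$-operator, and, using also $Y_L^\eta(\vac,x)=1$, identity \eqref{eq:mod-va-for-vertex-bialg3} for the pair $(Y_M^\psi,Y_L^\eta)$ collapses to the same identity for $(Y_M^\psi,Y)$, which is part of Lemma~\ref{lem:qva-Y-M-def}. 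One then propagates the identity from generators to all of $B_L[[\hbar]]$ and $V_L[[\hbar]]$ by the routine induction on the generating processes, based on $\hbar$-adic weak associativity. Equivalently, one may check everything modulo $\hbar^n$ for each $n$ and quote the corresponding statement of \cite{JKLT-Defom-va} over the base ring $\mathcal{R}_n$, cf.\ Proposition~\ref{prop:phi-coor-ind} and Lemma~\ref{h-bialgebra-limit}.

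For the third assertion, I would unwind the iterated deformation: using the coassociativity \eqref{eq:comod-cond} of $\rho$, the vertex operator of $\mathfrak D_{Y_M^\psi}^\rho(\mathfrak D_{Y_M^\eta}^\rho(V_L[[\hbar]]))$ acting on $a\in V_L[[\hbar]]$ equals $\sum Y(a_{(1)},x)(Y_M^\eta\ast Y_M^\psi)(a_{(2)},x)$, where $\ast$ denotes the convolution product of Lemma~\ref{lem:L-H-rho-V-compostition} and $\rho(a)=\sum a_{(1)}\ot a_{(2)}$; since $\mathfrak D_{Y_M^{\psi+\eta}}^\rho(V_L[[\hbar]])$ has vertex operator $a\mapsto\sum Y(a_{(1)},x)Y_M^{\psi+\eta}(a_{(2)},x)$, it suffices to prove $Y_M^\eta\ast Y_M^\psi=Y_M^{\psi+\eta}$ inside $\mathfrak L_{B_L[[\hbar]]}^\rho(V_L[[\hbar]])$. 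The operators $Y_M^\eta$ and $Y_M^\psi$ commute (again by the vanishing of $[\Phi(a,f)(x),\Phi(b,g)(y)]$), so, $B_L[[\hbar]]$ being cocommutative, Lemma~\ref{lem:L-H-rho-V-compostition} shows $Y_M^\eta\ast Y_M^\psi\in\mathfrak L_{B_L[[\hbar]]}^\rho(V_L[[\hbar]])$; in particular it and $Y_M^{\psi+\eta}$ are module vertex operator maps for the $\hbar$-adic vertex algebra $B_L[[\hbar]]$, hence agree on $V_L[[\hbar]]$ as soon as they agree on a generating subset, for which $\{e^\al\}$ may be taken (Definition~\ref{def-hadic-generating-subset}). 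On $e^\al$ one has $\Delta(e^\al)=e^\al\ot e^\al$, whence $(Y_M^\eta\ast Y_M^\psi)(e^\al,x)=\exp(\Phi(\eta(\al,x)))\exp(\Phi(\psi(\al,x)))=\exp(\Phi((\psi+\eta)(\al,x)))=Y_M^{\psi+\eta}(e^\al,x)$, using that the two exponents commute and that $\Phi(\cdot)$ is linear in its $\h\ot\C((x))[[\hbar]]$-argument. Hence the two sides agree on generators, therefore everywhere, which gives the third assertion.

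The main obstacle is the second assertion: one must handle all generator cases of \eqref{eq:mod-va-for-vertex-bialg3} for the twisted pair $(Y_M^\psi,Y_L^\eta)$ and carry out the propagation from generators cleanly in the $\hbar$-adic topology, keeping track of the $\hbar$-adically convergent sums $\Delta(h)=\sum h_{(1)}\ot h_{(2)}$ and $\exp(\Phi(\cdots))$ (or, if one prefers the modular route, of the base change to $\mathcal{R}_n$). Once the commutation relation $[\Phi(a,f)(x),\Phi(b,g)(y)]=0$ is in hand, the first and third assertions are essentially formal consequences of the $\hbar$-adic deformation package of Section~4 and of \cite{JKLT-Defom-va}.
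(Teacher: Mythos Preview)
Your proof is correct and essentially supplies the argument that the paper omits (the proposition is stated without proof, as an $\hbar$-adic analogue of results in \cite{JKLT-Defom-va}). The one place where your route differs slightly from what the paper itself uses elsewhere is the second assertion: in the proof of Proposition~\ref{prop:eta-iso-eta+tau} the paper invokes \cite[Proposition~3.4]{JKLT-Defom-va} directly, deducing $Y_M^\psi\in\mathfrak L_{B_L[[\hbar]]}^\rho(V_L[[\hbar]]^\eta)$ from the commutation of $Y_M^\psi$ with $Y_M^\eta$, rather than re-verifying the interaction identity \eqref{eq:mod-va-for-vertex-bialg3} on generators; your more hands-on verification reaches the same conclusion and has the advantage of being self-contained, while the citation would be shorter.
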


We continue to study the $\hbar$-adic quantum vertex algebra $V_{L}[[\hbar]]^\eta$.
For $f(x)\in \C((x))$, let $f(x)^{\pm}$ denote the regular (singular) part of $f(x)$.
Furthermore, for $\al\in \h$, let $\eta(\al,x)^{\pm}$ denote the regular (singular) part of $\eta(\al,x)$.

\begin{prop}\label{lem:commutator-V-L-h-eta}
The following relations hold on $V_L[[\hbar]]^{\eta}$  for $\al, \be\in L$:
\begin{align}
&[Y_{L}^\eta(\al,x_1),Y_{L}^\eta(\be,x_2)]
=\<\al,\be\>\partial_{x_2}x_1\inverse\delta\(\frac{x_2}{x_1}\)\label{new-h-h-relations}\\
&\quad\quad \quad \quad  -\<\eta''(\al,x_1-x_2),\be\> +\<\eta''(\be,x_2-x_1),\al\>, \nonumber\\
&[Y_{L}^\eta(\al,x_1),Y_{L}^\eta(e_\be,x_2)]
=\<\be,\al\>Y_{L}^\eta(e_\be,x_2)x_1\inverse\delta\(\frac{x_2}{x_1}\)\label{new-h-e-beta-relations}\\
&\quad\quad \quad \quad + \(\<\eta'(\al,x_1-x_2),\be\>+ \<\eta'(\beta, x_2-x_1),\al\>\)
    Y_{L}^\eta(e_\be,x_2),\nonumber\\
&e^{-\<\eta(\al,x_1-x_2),\be\>}(x_1-x_2)^{-\<\al,\be\>-1}
Y_{L}^\eta(e_\al,x_1)Y_{L}^\eta(e_\be,x_2)\label{Yeta-ealpha=ebeta}\\
&\quad - e^{-\<\eta(\be,x_2-x_1),\al\>}
  (-x_2+x_1)^{-\<\al,\be\>-1}Y_{L}^\eta(e_\be,x_2)Y_{L}^\eta(e_\al,x_1)\nonumber\\
=\ &  \epsilon(\al,\be)Y_{L}^\eta(e_{\al+\be},x_2)
    x_1\inverse\delta\(\frac{x_2}{x_1}\),\nonumber
\end{align}
 \begin{align}\label{VL-eta-d-dx}
& \frac{d}{dx}Y_{L}^{\eta}(e_{\alpha},x)\\
=\ &Y_L^{\eta}(\al,x)^{+}Y_L^{\eta}(e_{\alpha},x) +Y_L^{\eta}(e_{\alpha},x)Y_L^{\eta}(\al,x)^{-}
+\<\eta'(\al,0)^{+},\al\>Y_L^{\eta}(e_{\alpha},x),\nonumber
\end{align}
where $\eta'(\al,0)^{+}=\eta'(\al,x)^{+}|_{x=0}$.
\end{prop}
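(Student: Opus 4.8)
The plan is to reduce every one of the four identities to a computation inside the \emph{undeformed} lattice vertex algebra $V_L$ (extended $\C[[\hbar]]$-linearly), using the explicit descriptions
\[
Y_L^{\eta}(e_{\al},x)=Y(e_{\al},x)\exp\(\Phi(\eta(\al,x))\),\qquad Y_L^{\eta}(a,x)=Y(a,x)+\Phi(\eta'(a,x))\ \ (a\in\h)
\]
from Theorem~\ref{thm:qlva}, together with the structural relations (AL2), (AL3), (AL6) and the $e_{\al}$--$e_{\be}$ relation \eqref{A(L)457}, all of which hold on $V_L$ by Propositions~\ref{prop:VQ-mod-AQ-mod} and \ref{prop:AQ-mod-hom-VA-hom}. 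The one genuinely new input is a short dictionary for how the operators $\Phi(G(x))$, with $G(x)\in\h\ot\C((x))[[\hbar]]$, interact with $Y(a,z)$ and $Y(e_{\be},z)$. Since $\Phi(a,f)(x)$ is assembled solely from the Heisenberg modes $a(n)$ with $n\ge0$, one reads off from (AL2), (AL3) and \eqref{formula-Phi}: (i) any two operators $\Phi(G_1(x_1))$ and $\Phi(G_2(x_2))$ commute; (ii) $[\Phi(G(x_1)),Y(\be,x_2)]=-\<\be,G'(x_1-x_2)\>$ is a scalar, expanded in the region $|x_1|>|x_2|$; (iii) $[\Phi(G(x_1)),Y(e_{\be},x_2)]=\<\be,G(x_1-x_2)\>Y(e_{\be},x_2)$, whence $\exp(\Phi(G(x_1)))\,Y(e_{\be},x_2)=Y(e_{\be},x_2)\,e^{\<\be,G(x_1-x_2)\>}\exp(\Phi(G(x_1)))$; (iv) the singular part of $\Phi(G(x))$ commutes with $Y(\al,x)^{-}$ at the coincident point; and (v) if $G$ is regular in $x$, then the coincident-point commutator $[\Phi(G(x)),Y(e_{\al},x)]=\<\al,G(0)\>Y(e_{\al},x)$, which rests on the Taylor resummation $\sum_{n\ge0}\frac{(-x)^{n}}{n!}G^{(n)}(x)=G(0)$. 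All infinite sums involved converge $\hbar$-adically by the hypothesis $\eta_0(\al,x)\in\h\ot x\C[[x]]$, which also makes the defining exponentials meaningful as in Lemma~\ref{lem:qva-Y-M-def}.

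For \eqref{new-h-h-relations} and \eqref{new-h-e-beta-relations} I would expand the bracket of the two deformed fields into four pieces. For \eqref{new-h-h-relations}, $[Y(\al,x_1),Y(\be,x_2)]$ supplies the term $\<\al,\be\>\partial_{x_2}x_1\inv\delta(x_2/x_1)$ via (AL2); the two mixed brackets contribute $-\<\eta''(\al,x_1-x_2),\be\>$ and $\<\eta''(\be,x_2-x_1),\al\>$ via (ii); and the $\Phi$--$\Phi$ bracket is zero by (i). For \eqref{new-h-e-beta-relations}, the leading term comes from (AL3), the two mixed brackets produce the $\eta'$-correction terms via (ii) and (iii) (using that the commutator of $Y(\al,x_1)$ with $\Phi(\eta(\be,x_2))$ is a scalar, so the exponential factors out with that scalar), and the $\Phi$--$\Phi$ bracket again drops out. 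In both cases the regular/singular expansions match those implicit in the statement because (ii) and (iii) are derived in the region $|x_1|>|x_2|$ and their ``reversed'' companions in the region $|x_2|>|x_1|$.

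For \eqref{Yeta-ealpha=ebeta} I would use (iii) to push the exponential factors all the way to the right:
\[
Y_L^{\eta}(e_{\al},x_1)Y_L^{\eta}(e_{\be},x_2)=e^{\<\be,\eta(\al,x_1-x_2)\>}\,Y(e_{\al},x_1)Y(e_{\be},x_2)\,F,
\]
where $F=\exp(\Phi(\eta(\al,x_1)))\exp(\Phi(\eta(\be,x_2)))$; by (i) the two exponentials in $F$ commute, so exactly the same $F$ appears on the right of $Y_L^{\eta}(e_{\be},x_2)Y_L^{\eta}(e_{\al},x_1)$ after moving factors with the $|x_2|>|x_1|$ expansion. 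The scalar prefactors in \eqref{Yeta-ealpha=ebeta} were put there precisely to cancel these $e^{\<\cdot,\eta\>}$ factors, so the left-hand side of \eqref{Yeta-ealpha=ebeta} collapses to
\[
\Big[(x_1-x_2)^{-\<\al,\be\>-1}Y(e_{\al},x_1)Y(e_{\be},x_2)-(-x_2+x_1)^{-\<\al,\be\>-1}Y(e_{\be},x_2)Y(e_{\al},x_1)\Big]F,
\]
and the bracket equals $\epsilon(\al,\be)Y(e_{\al+\be},x_2)x_1\inv\delta(x_2/x_1)$ by \eqref{A(L)457} on $V_L$. Finally, multiplying by $x_1\inv\delta(x_2/x_1)$ lets me set $x_1=x_2$ inside $F$, and
\[
F|_{x_1=x_2}=\exp\(\Phi(\eta(\al,x_2))+\Phi(\eta(\be,x_2))\)=\exp\(\Phi(\eta(\al+\be,x_2))\)
\]
by commutativity of the $\Phi$'s and $\C$-linearity of $\eta$ in its first argument; hence $Y(e_{\al+\be},x_2)F|_{x_1=x_2}=Y_L^{\eta}(e_{\al+\be},x_2)$, which is the right-hand side of \eqref{Yeta-ealpha=ebeta}.

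For \eqref{VL-eta-d-dx} I would differentiate $Y_L^{\eta}(e_{\al},x)=Y(e_{\al},x)\exp(\Phi(\eta(\al,x)))$ directly, using (AL6) for $\frac{d}{dx}Y(e_{\al},x)$ and the identity $\frac{d}{dx}\exp(\Phi(\eta(\al,x)))=\Phi(\eta'(\al,x))\exp(\Phi(\eta(\al,x)))$, which holds because $\Phi(\eta(\al,x))$ commutes with its own $x$-derivative by (i). Substituting $Y(\al,x)^{\pm}=Y_L^{\eta}(\al,x)^{\pm}-\Phi(\eta'(\al,x))^{\pm}$ and commuting the singular halves through $\exp(\Phi(\eta(\al,x)))$ by (iv) reorganizes the result into $Y_L^{\eta}(\al,x)^{+}Y_L^{\eta}(e_{\al},x)+Y_L^{\eta}(e_{\al},x)Y_L^{\eta}(\al,x)^{-}$ plus a single leftover, namely the coincident-point commutator of the regular part $\Phi(\eta'(\al,x)^{+})$ with $Y(e_{\al},x)$; by (v) this leftover is a scalar multiple of $Y_L^{\eta}(e_{\al},x)$ with coefficient $\<\eta'(\al,0)^{+},\al\>$, giving the last term of \eqref{VL-eta-d-dx}. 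The hardest step is this last one: one must keep scrupulous track of which pieces are regular and which are singular in the single variable $x$, since the Taylor resummation behind (v) is valid only on the regular part of $\eta'(\al,x)$, and the coincident-point manipulations (the diagonal evaluation of $F$ in \eqref{Yeta-ealpha=ebeta} and the isolation of the leftover in \eqref{VL-eta-d-dx}) must be carried out with the correct expansion prescriptions.
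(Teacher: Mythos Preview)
Your proposal is correct and follows essentially the same route as the paper's proof: the paper likewise establishes the commutator formulas you call (i)--(iii) and (v) (deriving them inline rather than as a separate dictionary), then expands each deformed bracket into its four pieces for \eqref{new-h-h-relations} and \eqref{new-h-e-beta-relations}, pushes the exponentials past $Y(e_\be,x_2)$ to reduce \eqref{Yeta-ealpha=ebeta} to the undeformed relation \eqref{A(L)457} with the same diagonal evaluation of $F$, and handles \eqref{VL-eta-d-dx} by differentiating, substituting $Y(\al,x)^{\pm}=Y_L^{\eta}(\al,x)^{\pm}-\Phi(\eta'(\al,x))^{\pm}$, and isolating the coincident-point commutator $[\Phi(\eta'(\al,x)^{+}),Y(e_\al,x)]=\<\eta'(\al,0)^{+},\al\>Y(e_\al,x)$. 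One small remark: your item (iv) as stated is not quite the fact you use; what is actually needed (and what the paper invokes) is that $\al(x)^{-}$ commutes with $\exp(\Phi(\eta(\al,x)))$, which follows because both are built from nonnegative Heisenberg modes---this is implicit in your setup but not what (iv) literally says.
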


\begin{proof} Notice that for $u,v\in \h,\ g(x)\in \C((x))[[\hbar]]$, we have
\begin{align*}
[\Phi(u\ot g)(x),Y(v,z)]= \ &\sum_{n\ge 0,m\in \Z}(-1)^n\frac{1}{n!}g^{(n)}(x)[u_n,v_m]z^{-m-1}\nonumber\\
= \ &\sum_{n\ge 1}(-1)^n\frac{1}{(n-1)!}g^{(n)}(x)\<u,v\>z^{n-1}\nonumber\\
=\ &- \<u\ot g'(x-z),v\>.
\end{align*}
Then
$$[\Phi(\eta'(a,x)),Y(b,z)]=-\<\eta''(a,x-z),b\>\quad \text{ for } a,b\in \h.$$
We also have $[\Phi(\eta'(a,x)),\Phi(\eta'(b,z))]=0$.
Using these we immediately obtain (\ref{new-h-h-relations}).

For $a\in \h,\  \beta\in L$, we have
\begin{align*}
[\Phi(a\ot g)(x),Y(e_{\beta},z)]= \ &\sum_{n\ge 0}(-1)^n\frac{1}{n!}g^{(n)}(x)[a_n,Y(e_{\beta},z)]\nonumber\\
= \ &\sum_{n\ge 0}(-1)^n\frac{1}{n!}g^{(n)}(x)\<\beta,a\>z^{n}Y(e_{\beta},z)\nonumber\\
=\ & \<a\ot g(x-z),\beta\>Y(e_{\beta},z).
\end{align*}
From this we get
\begin{align*}
[\Phi(\eta'(a,x)),Y(e_{\beta},z)]=\<\eta'(a,x-z),\beta\>Y(e_{\beta},z).
\end{align*}
We also have
\begin{align*}
[Y(\alpha,x),\Phi(\eta(\beta,z))]=\<\eta'(\beta,z-x),\alpha\>,
\end{align*}
which gives
\begin{align*}
[Y(\alpha,x),\exp(\Phi(\eta(\beta,z)))]=\<\eta'(\beta,z-x),\alpha\>\exp(\Phi(\eta(\beta,z))).
\end{align*}
Using these relations we obtain (\ref{new-h-e-beta-relations}).

On the other hand, for $\al,\be\in L$, we have
\begin{align*}
&Y_L^{\eta}(e_{\alpha},x)Y_L^{\eta}(e_{\be},z)\nonumber\\
=\ &Y(e_\al,x)\exp(\Phi(\eta(\alpha,x)))Y(e_\be,z)\exp(\Phi(\eta(\be,z)))\\
=\ &Y(e_\al,x)Y\(\exp(\Phi(\eta(\alpha,x-z)))e_\be,z\)\exp(\Phi(\eta(\alpha,x)))\exp(\Phi(\eta(\be,z)))\\
=\  &e^{\<\beta,\eta(\al,x-z)\>} Y(e_\al,x)Y(e_\be,z)\exp(\Phi(\eta(\alpha,x)))\exp(\Phi(\eta(\be,z))).
\end{align*}
Symmetrically, we get
\begin{align*}
&Y_L^{\eta}(e_{\be},z)Y_L^{\eta}(e_{\alpha},x)\\
=\ &e^{\<\al,\eta(\be,z-x)\>} Y(e_\be,z)Y(e_\al,x)\exp(\Phi(\eta(\alpha,x)))\exp(\Phi(\eta(\be,z))).
\end{align*}
From \cite[Proposition 6.5.2]{LL},  the relation
\begin{align*}
&(x-z)^{-\<\al,\be\>-1}Y(e_{\alpha},x)Y(e_{\be},z)
-(-z+x)^{-\<\al,\be\>-1}Y(e_{\be},z)Y(e_{\alpha},x)\nonumber\\
&\quad \quad \quad =\varepsilon(\al,\be)Y(e_{\al+\be},z)x^{-1}\delta\left(\frac{z}{x}\right)
\end{align*}
holds on $V_{L}$.  Then we obtain (\ref{Yeta-ealpha=ebeta}) as
\begin{align}
&(x-z)^{-\<\al,\be\>-1}e^{-\<\beta,\eta(\al,x-z)\>}Y_L^{\eta}(e_{\alpha},x)Y_L^{\eta}(e_{\be},z)\\
&\   \   -(-z+x)^{-\<\al,\be\>-1}
e^{-\<\al,\eta(\be,z-x)\>}Y_L^{\eta}(e_{\be},z)Y_L^{\eta}(e_{\alpha},x)
\nonumber\\
=\  &\varepsilon(\al,\be)Y(e_{\al+\be},z)x^{-1}\delta\left(\frac{z}{x}\right)
\exp(\Phi(\eta(\alpha,x)))\exp(\Phi(\eta(\be,z)))\nonumber\\
=\  &\varepsilon(\al,\be)Y(e_{\al+\be},z)x^{-1}\delta\left(\frac{z}{x}\right)
\exp(\Phi(\eta(\alpha+\be,z)))\nonumber\\
=\  &\varepsilon(\al,\be)Y_L^\eta(e_{\al+\be},z)x^{-1}\delta\left(\frac{z}{x}\right).\nonumber
\end{align}

Note that
 $$Y_L^{\eta}(\al,x)^{\pm}=\alpha(x)^{\pm}+\Phi(\eta'(\al,x)^{\pm}),$$
$$\frac{d}{dx}Y(e_\al,x)=\al(x)^{+}Y(e_\al,x)+Y(e_\al,x)\al(x)^{-}.$$
For $a\in \h,\ f(t)\in \C((t)),\ \be\in L$, we have
$$[\Phi(a,f)(x),Y(e_\be,z)]=Y\(\Phi(a,f)(x-z)e_\be,z\)=\<a,\be\>f(x-z)Y(e_\be,z).$$
If $f(t)\in \C[[t]]$, we see that both $\Phi(a,f)(x)Y(e_\be,x)$ and $Y(e_\be,x)\Phi(a,f)(x)$
exist in $\Hom(V_L,V_L((x)))$ with
$$[\Phi(a,f)(x),Y(e_\be,x)]=f(0)\<a,\be\>Y(e_\be,x).$$
As $\eta(\al,x)^{+}\in \h\ot \C[[x,\hbar]]$, we see that
both $\Phi(\eta'(\al,x)^{+})Y(e_\be,x)$ and $Y(e_\be,x)\Phi(\eta'(\al,x)^{+})$
exist in $\Hom(V_L,V_L((x))[[\hbar]])$.  Then we have
$$[\Phi(\eta'(\al,x)^{+}),Y(e_\al,x)]=\<\eta'(\al,0)^{+},\al\>Y_L^{\eta}(e_{\alpha},x).$$
Noticing that $[\al(x)^{-},\Phi(\eta(\al,z))]=0$. Thus we obtain
 \begin{align}
& \frac{d}{dx}Y_{L}^{\eta}(e_{\alpha},x)\nonumber\\
=\ & \al(x)^{+}Y_L^{\eta}(e_{\alpha},x)+Y_L^{\eta}(e_{\alpha},x)\al(x)^{-}+Y(e_{\al},x)\Phi(\eta'(\al,x))\exp(\Phi(\eta(\al,x)))\nonumber\\
=\ &\( \al(x)^{+}+\Phi(\eta'(\al,x)^{+})\)Y_L^{\eta}(e_{\alpha},x)
+[Y(e_{\al},x),\Phi(\eta'(\al,x)^{+})]\exp(\Phi(\eta(\al,x))))\nonumber\\
&\   +Y_L^{\eta}(e_{\alpha},x)\(\al(x)^{-}+\Phi(\eta'(\al,x)^{-})\)\nonumber\\
=\ &Y_L^{\eta}(\al,x)^{+}Y_L^{\eta}(e_{\alpha},x) +Y_L^{\eta}(e_{\alpha},x)Y_L^{\eta}(\al,x)^{-}
\nonumber\\
&\  +[Y(e_{\al},x),\Phi(\eta'(\al,x)^{+})]\exp(\Phi(\eta(\al,x))))\nonumber\\
=\ &Y_L^{\eta}(\al,x)^{+}Y_L^{\eta}(e_{\alpha},x) +Y_L^{\eta}(e_{\alpha},x)Y_L^{\eta}(\al,x)^{-}
+\<\eta'(\al,0)^{+},\al\>Y_L^{\eta}(e_{\alpha},x),\nonumber
 \end{align}
proving  (\ref{VL-eta-d-dx}). Now, the proof is complete.
\end{proof}

As a consequence of Proposition \ref{lem:commutator-V-L-h-eta} we have:

\begin{coro}\label{trivial-deformation}
Let $\eta(\cdot,x):\h\to\h\ot x\C[[x,\hbar]]$ be a linear map such that
\begin{align}\label{skew-eta-def}
  \<\eta(\al,x),\be\>=\<\al,\eta(\be,-x)\>\quad \te{for }\al,\be\in\h.
\end{align}
Then there exists an $\hbar$-adic nonlocal vertex algebra isomorphism from $V_{L}[[\hbar]]$ to $V_L[[\hbar]]^{\eta}$,
which extends the identity map on $\h+\C_{\varepsilon}[L]$ uniquely.
\end{coro}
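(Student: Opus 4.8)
The plan is to invoke the universal property of the lattice vertex algebra (Proposition \ref{prop:AQ-mod-hom-VA-hom}) to build a morphism $\psi\colon V_L[[\hbar]]\to V_L[[\hbar]]^\eta$ fixing $\h+\C_\varepsilon[L]$, and then to prove it is an isomorphism by reducing modulo $\hbar$ and applying Lemma \ref{basic-facts}. The key input is Proposition \ref{lem:commutator-V-L-h-eta}: under the hypothesis \eqref{skew-eta-def} together with $\eta(\cdot,x)\colon\h\to\h\ot x\C[[x,\hbar]]$, I claim the generators $\h\cup\C_\varepsilon[L]$ of $V_L[[\hbar]]^\eta$ satisfy exactly the defining relations (AL1)--(AL3), (AL6) and \eqref{A(L)457} of $V_L$, with $h[z]=Y_L^\eta(h,z)$ and $e_\al[z]=Y_L^\eta(e_\al,z)$. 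Differentiating \eqref{skew-eta-def} in the formal variable $k$ times gives $\<\eta^{(k)}(\al,x),\be\>=(-1)^k\<\al,\eta^{(k)}(\be,-x)\>$: the case $k=2$ makes the two correction terms in \eqref{new-h-h-relations} cancel, yielding (AL2); the case $k=1$ makes the correction term in \eqref{new-h-e-beta-relations} vanish, yielding (AL3); taking $\be=\al$ in \eqref{skew-eta-def} shows $\<\eta(\al,x),\al\>$ is even in $x$, so $\<\eta'(\al,0)^+,\al\>=0$ and \eqref{VL-eta-d-dx} becomes (AL6); and (AL1) is immediate from $\eta(0,x)=0$ and $e_0=\vac$.

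For \eqref{Yeta-ealpha=ebeta} the point is that \eqref{skew-eta-def} forces $\<\eta(\al,x_1-x_2),\be\>=\<\eta(\be,x_2-x_1),\al\>$, so the two exponential prefactors there coincide; writing $E(x_1,x_2)$ for this common factor — a power series in $x_1-x_2$ with constant term $1$, hence a unit in $\C[[x_1-x_2]][[\hbar]]$ — I divide \eqref{Yeta-ealpha=ebeta} through by $E$ and use $(x_1-x_2)\,x_1\inverse\delta(x_2/x_1)=0$ to collapse $E\inverse x_1\inverse\delta(x_2/x_1)$ to $x_1\inverse\delta(x_2/x_1)$, obtaining exactly \eqref{A(L)457}. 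With the relations verified, I apply the ($\mathcal{R}_n$-linear form of) Proposition \ref{prop:AQ-mod-hom-VA-hom}, using Proposition \ref{h-adic-quotient} and Remark \ref{rnva-on-R} to the effect that $V_L[[\hbar]]^\eta/\hbar^nV_L[[\hbar]]^\eta$ is a nonlocal vertex algebra over $\mathcal{R}_n$, to the inclusion $\h\oplus\C_\varepsilon[L]\hookrightarrow V_L[[\hbar]]^\eta/\hbar^nV_L[[\hbar]]^\eta$; the resulting morphisms are compatible with the inverse system in $n$, so they assemble into a continuous $\C[[\hbar]]$-module map $\psi\colon V_L[[\hbar]]\to V_L[[\hbar]]^\eta$ which, again by Proposition \ref{h-adic-quotient}, is an $\hbar$-adic nonlocal vertex algebra morphism restricting to the identity on $\h+\C_\varepsilon[L]$; uniqueness is automatic since $\h\oplus\C_\varepsilon[L]$ generates.

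Since $\psi$ is a continuous map between topologically free $\C[[\hbar]]$-modules, Lemma \ref{basic-facts} reduces the isomorphism claim to showing that $\bar\psi\colon V_L\to V_L^{\eta_0}$ is bijective, where $\eta_0=\eta|_{\hbar=0}$ still satisfies \eqref{skew-eta-def}. For this I filter $V_L=S(\wh\h^-)\ot\C_\varepsilon[L]$ by total degree in $\wh\h^-$, refined by $L$-weight so the graded pieces are finite-dimensional, and use the PBW-type spanning set $\{a^{(1)}(-n_1)\cdots a^{(r)}(-n_r)e_\al\}$, each member of which is an iterated $n$-th product of the generators (recall $\al(-n)$ is the vertex-operator mode $(\al)_{-n}$ on $V_L$). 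Because $\eta_0(\al,x)\in\h\ot x\C[[x]]$, the operators $Y_L^{\eta_0}(\al,x)$ and $Y_L^{\eta_0}(e_\al,x)$ differ from $Y(\al,x)$ and $Y(e_\al,x)$ only by terms built from the \emph{nonnegative} modes of $\h$, which strictly lower the $\wh\h^-$-degree relative to the leading creation term; hence $\bar\psi$ preserves the filtration and induces the identity on the associated graded. As the filtration is exhausting and separated with finite-dimensional graded pieces, $\bar\psi$ is bijective, whence $\psi$ is an isomorphism extending the identity on $\h+\C_\varepsilon[L]$.

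I expect the main obstacles to be the degree bookkeeping in the last paragraph — namely checking that every $\eta_0$-correction to $Y_L^{\eta_0}(\al,x)$ and $Y_L^{\eta_0}(e_\al,x)$ genuinely drops the $\wh\h^-$-degree so that $\mathrm{gr}\,\bar\psi$ is literally the identity (the subtle contributions being the degree-$0$ modes $\al(0)$, which act as weight-dependent scalars on $\C_\varepsilon[L]$ and so still lower the degree relative to the raising operator $\al(-n)$) — and, in the second paragraph, making precise that the two exponential prefactors in \eqref{Yeta-ealpha=ebeta} coincide as formal series and that division by $E$ is legitimate. The rest is a direct transcription of Proposition \ref{lem:commutator-V-L-h-eta} together with the universal property and Lemma \ref{basic-facts}.
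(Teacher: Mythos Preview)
Your proof is correct and follows essentially the same overall arc as the paper's: verify, via Proposition \ref{lem:commutator-V-L-h-eta} and the hypothesis \eqref{skew-eta-def}, that $Y_L^\eta(h,z)$ and $Y_L^\eta(e_\al,z)$ satisfy (AL1)--(AL3), (AL6) and \eqref{A(L)457}; invoke the universal property to produce a morphism fixing $\h+\C_\varepsilon[L]$; then argue bijectivity after reducing modulo $\hbar$ via Lemma \ref{basic-facts}.

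The one genuine difference is in how bijectivity of $\bar\psi\colon V_L\to V_L^{\eta_0}$ is established. The paper does not use your filtration/PBW argument. For injectivity it simply observes that $V_L$ is a \emph{simple} vertex algebra, so any nonzero vertex algebra morphism out of it (and $\bar\psi(\vac)=\vac$) is injective; for surjectivity it notes directly that $\h+\C_\varepsilon[L]$ generates $V_L[[\hbar]]^\eta$ as an $\hbar$-adic nonlocal vertex algebra, so the image of $\varphi$, which contains these generators and is closed under $n$-th products, is all of $V_L[[\hbar]]^\eta$. This is shorter and avoids the degree bookkeeping you flagged as delicate. Your filtration argument, on the other hand, is more self-contained (it does not import the simplicity of $V_L$) and gives explicit control over what $\bar\psi$ does on a spanning set; it is a perfectly valid alternative. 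The paper also constructs the morphism slightly more directly---apply Proposition \ref{prop:AQ-mod-hom-VA-hom} once to get $\psi_0\colon V_L\to V_L[[\hbar]]^\eta$ and then extend $\C[[\hbar]]$-linearly---rather than working modulo $\hbar^n$ and taking an inverse limit, though your formulation is arguably more careful about the $\hbar$-adic subtleties.
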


\begin{proof} From (\ref{skew-eta-def}) we have
\begin{align*}
  \<\eta^{(r)}(\al,x),\be\> -(-1)^{r} \<\al,\eta^{(r)}(\be,-x)\>=0
\end{align*}
for $r\in \N,\ \al,\be\in\h$, where $\eta^{(r)}(\al,x)$ denotes the $r$-th derivative.
Note that this implies $\<\eta'(\al,0),\al\>=0$.
From Proposition \ref{lem:commutator-V-L-h-eta}, we get
\begin{align*}
&[Y_{L}^\eta(\al,x_1),Y_{L}^\eta(\be,x_2)]
=\<\al,\be\>\partial_{x_2}x_1\inverse\delta\(\frac{x_2}{x_1}\),\\
&[Y_{L}^\eta(\al,x_1),Y_{L}^\eta(e_\be,x_2)]
=\<\be,\al\>Y_{L}^\eta(e_\be,x_2)x_1\inverse\delta\(\frac{x_2}{x_1}\),\nonumber\\
&(x_1-x_2)^{-\<\al,\be\>-1}
Y_{L}^\eta(e_\al,x_1)Y_{L}^\eta(e_\be,x_2)
- (-x_2+x_1)^{-\<\al,\be\>-1} Y_{L}^\eta(e_\be,x_2)Y_{L}^\eta(e_\al,x_1)\nonumber\\
=&\  \epsilon(\al,\be)Y_{L}^\eta(e_{\al+\be},x_2)
    x_1\inverse\delta\(\frac{x_2}{x_1}\),\nonumber \\
    & \frac{d}{dx}Y_{L}^{\eta}(e_{\alpha},x)
=Y_L^{\eta}(\al,x)^{+}Y_L^{\eta}(e_{\alpha},x) +Y_L^{\eta}(e_{\alpha},x)Y_L^{\eta}(\al,x)^{-}
\end{align*}
for $\al,\beta\in L$.
By Proposition \ref{prop:AQ-mod-hom-VA-hom}, there exists a vertex algebra morphism
$\psi_0$ from $V_{L}$ to $V_L[[\hbar]]^\eta$ with
$\psi_0(u)=u$ for $u\in\h+\C_{\epsilon}[L]$, which is canonically extended to
an $\hbar$-adic nonlocal vertex algebra morphism
$\varphi: V_L[[\hbar]]\rightarrow V_L[[\hbar]]^\eta$.
 We see that the derived $\C$-linear map from
 $V_L[[\hbar]]/\hbar V_L[[\hbar]]$ to $V_L[[\hbar]]^{\eta}/\hbar V_L[[\hbar]]^{\eta}$ is a one-to-one vertex algebra morphism
 (noticing that $V_L$ is a simple vertex algebra). Thus $\psi$ is also one-to-one.
 On the other hand, as $\h+\C_{\epsilon}[L]$ generates $V_L[[\hbar]]^{\eta}$ as an $\hbar$-adic nonlocal vertex algebra,
 it follows that $\varphi$ is onto. Therefore, $\psi$ is an isomorphism.
\end{proof}

The following is another consequence of Proposition \ref{lem:commutator-V-L-h-eta}:

\begin{coro}\label{prop:classical-limit-qva}
Let $\eta(\cdot,x)\in\Hom(\h, \h\ot \C((x))[[\hbar]])^0$ and set $\eta_0(\al,x)=\eta(\al,x)|_{\hbar=0}$.
Then $V_{L}[[\hbar]]^\eta/\hbar V_{L}[[\hbar]]^\eta\cong V_L^{\eta_0}$ as nonlocal vertex algebras (over $\C$).
Furthermore, if
\begin{align}
\<\eta_0(\al,x),\be\>=\<\al,\eta_0(\be,-x)\>\quad \te{ for }\al,\be\in \h,\label{zero-condition2}
\end{align}
then $V_{L}[[\hbar]]^\eta/\hbar V_{L}[[\hbar]]^\eta\cong V_L^{\eta_0}\cong V_L$.
\end{coro}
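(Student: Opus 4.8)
The plan is to reduce everything modulo $\hbar$ using the explicit vertex-operator formulas from Theorem~\ref{thm:qlva}, and then, for the second assertion, to invoke Corollary~\ref{trivial-deformation}.

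First I would note that the underlying $\C[[\hbar]]$-module of $V_{L}[[\hbar]]^{\eta}$ is $V_L[[\hbar]]$, so $V_{L}[[\hbar]]^{\eta}/\hbar V_{L}[[\hbar]]^{\eta}$ has underlying space $V_L$ and, by Proposition~\ref{h-adic-quotient}, carries a nonlocal vertex algebra structure over $\C$ whose vertex-operator map is $\pi_1(Y_L^{\eta}(\cdot,x))$. On the other hand, since $\eta_0(\al,x)\in\h\ot x\C[[x]]$ for $\al\in\h$, the hypotheses of \cite[Theorem 5.9]{JKLT-Defom-va} hold for $\eta_0$, so the nonlocal (indeed quantum) vertex algebra $V_L^{\eta_0}$ over $\C$ is well defined, its vertex operators being $Y_L^{\eta_0}(e_{\al},x)=Y(e_{\al},x)\exp(\Phi(\eta_0(\al,x)))$ and $Y_L^{\eta_0}(a,x)=Y(a,x)+\Phi(\eta_0'(a,x))$. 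Since a nonlocal vertex algebra structure on a fixed space is determined by its restriction to a generating set and $\h+\C_{\varepsilon}[L]$ generates $V_L$, it suffices to check $\pi_1(Y_L^{\eta}(v,x))=Y_L^{\eta_0}(v,x)$ for $v\in\h+\C_{\varepsilon}[L]$. This is immediate from Theorem~\ref{thm:qlva}: because $\eta\equiv\eta_0\pmod{\hbar}$, $\Phi$ is $\C[[\hbar]]$-linear, and $\pi_1$ is a continuous $\C$-algebra homomorphism that reduces $\exp(\Phi(\eta(\al,x)))$ to $\exp(\Phi(\eta_0(\al,x)))$, one gets $\pi_1(Y_L^{\eta}(a,x))=Y(a,x)+\Phi(\eta_0'(a,x))$ and $\pi_1(Y_L^{\eta}(e_{\al},x))=Y(e_{\al},x)\exp(\Phi(\eta_0(\al,x)))$. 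Hence the identity map on $V_L$ extends to the asserted isomorphism $V_{L}[[\hbar]]^{\eta}/\hbar V_{L}[[\hbar]]^{\eta}\cong V_L^{\eta_0}$.

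For the ``furthermore'' part, I would observe that condition~(\ref{zero-condition2}) is exactly condition~(\ref{skew-eta-def}) for $\eta_0$. Viewing $\eta_0$ as an element of $\Hom(\h,\h\ot x\C[[x,\hbar]])$ constant in $\hbar$, Corollary~\ref{trivial-deformation} then supplies an isomorphism of $\hbar$-adic nonlocal vertex algebras $V_L[[\hbar]]\cong V_L[[\hbar]]^{\eta_0}$ extending the identity on $\h+\C_{\varepsilon}[L]$. Reducing this isomorphism modulo $\hbar$ and applying the first part with $\eta$ replaced by $\eta_0$ (note $\eta_0|_{\hbar=0}=\eta_0$) yields $V_L\cong V_L^{\eta_0}$; composing with the isomorphism of the first part gives $V_{L}[[\hbar]]^{\eta}/\hbar V_{L}[[\hbar]]^{\eta}\cong V_L^{\eta_0}\cong V_L$. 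Essentially the whole argument is bookkeeping; the only point that warrants any attention --- and the nearest thing to an obstacle --- is confirming that $\pi_1$ commutes with the exponential $\exp(\Phi(\eta(\al,x)))$, which is harmless since this operator is already well defined on $V_L[[\hbar]]$ (cf.\ Lemma~\ref{lem:qva-Y-M-def}) and $\pi_1$ is a continuous ring homomorphism.
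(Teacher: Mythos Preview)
Your proof is correct and follows essentially the same route as the paper: reduce the explicit vertex-operator formulas of Theorem~\ref{thm:qlva} modulo $\hbar$ to identify the quotient with $V_L^{\eta_0}$, then invoke Corollary~\ref{trivial-deformation} (applied to $\eta_0$) for the second assertion. The paper's proof is terser but structurally identical.
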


\begin{proof} For $\alpha\in L$, we have
\begin{align*}
&Y_{L}^{\eta}(e_\al,x)\equiv Y(e_\al,x)\exp (\Phi(\eta_0(\al,x)))\  \mod \hbar V_{L}[[\hbar]],\\
&Y_{L}^{\eta}(\al,x)\equiv Y(\al,x)+\Phi(\eta'_0(\al,x))\  \mod \hbar V_{L}[[\hbar]].
\end{align*}
It follows that  $V_L[[\hbar]]^\eta/\hbar V_L[[\hbar]]^\eta\cong V_L^{\eta_0}$ as nonlocal vertex algebras over $\C$.
For the furthermore assertion, from (the proof of) Corollary \ref{trivial-deformation}  we see that $V_L^{\eta_0}\cong V_L$.
Consequently, we have $V_L[[\hbar]]^\eta/\hbar V_L[[\hbar]]^\eta\cong V_L$ as nonlocal vertex algebras.
\end{proof}

Furthermore, we have:

\begin{prop}\label{prop:eta-iso-eta+tau}
Let $\tau:\h\to\h\ot x\C[[x,\hbar]]$ be a linear map such that
\begin{align}
  \<\tau(\al,x),\beta\>=\<\al,\tau(\beta,-x)\>\quad \te{for }\al,\beta\in\h,
\end{align}
and let $\varphi_{\tau}$ be the $\hbar$-adic nonlocal vertex algebra isomorphism from $V_L[[\hbar]]$ to $V_L[[\hbar]]^{\tau}$
obtained in Corollary \ref{trivial-deformation}.
Then $\varphi_{\tau}$ is an $\hbar$-adic nonlocal vertex algebra isomorphism from  $V_L[[\hbar]]^\eta$ to $V_L[[\hbar]]^{\eta+\tau}$
 for any $\eta(\cdot,x)\in {\rm Hom}(\h, \h\otimes \C((x))[[\hbar]])^{0}$.
 \end{prop}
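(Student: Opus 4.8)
The plan is to realize $\varphi_{\tau}$ as the map obtained by applying the deformation construction $\mathfrak D_{Y_M^{\eta}}^{\rho}(\cdot)$ to the isomorphism of Corollary \ref{trivial-deformation}, and then to quote Proposition \ref{prop-comp-sum}. Recall that $V_L[[\hbar]]^{\eta}=\mathfrak D_{Y_M^{\eta}}^{\rho}(V_L[[\hbar]])$ and, by Proposition \ref{prop-comp-sum}, that $V_L[[\hbar]]^{\eta+\tau}=\mathfrak D_{Y_M^{\eta}}^{\rho}(\mathfrak D_{Y_M^{\tau}}^{\rho}(V_L[[\hbar]]))=\mathfrak D_{Y_M^{\eta}}^{\rho}(V_L[[\hbar]]^{\tau})$, with $Y_M^{\eta}\in\mathfrak L_{B_L[[\hbar]]}^{\rho}(V_L[[\hbar]]^{\tau})$. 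Since the defining formula $\mathfrak D_{Y_M}^{\rho}(Y)(a,x)=\sum Y(a_{(1)},x)Y_M(a_{(2)},x)$ is visibly natural, any homomorphism $f\colon(V,\rho)\to(V',\rho')$ of right $B_L[[\hbar]]$-comodule $\hbar$-adic nonlocal vertex algebras for which moreover $f(Y_M(h,x)v)=Y_M'(h,x)f(v)$ (with $Y_M\in\mathfrak L_{B_L[[\hbar]]}^{\rho}(V)$, $Y_M'\in\mathfrak L_{B_L[[\hbar]]}^{\rho}(V')$) is automatically a homomorphism $\mathfrak D_{Y_M}^{\rho}(V)\to\mathfrak D_{Y_M'}^{\rho}(V')$. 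Thus it is enough to check that $\varphi_{\tau}\colon V_L[[\hbar]]\to V_L[[\hbar]]^{\tau}$ is a comodule homomorphism and that $\varphi_{\tau}(Y_M^{\eta}(h,x)v)=Y_M^{\eta}(h,x)\varphi_{\tau}(v)$ for all $h\in B_L[[\hbar]]$, $v\in V_L[[\hbar]]$: these yield that $\varphi_{\tau}$ is an $\hbar$-adic nonlocal vertex algebra homomorphism $V_L[[\hbar]]^{\eta}\to V_L[[\hbar]]^{\eta+\tau}$, and it is an isomorphism because its underlying $\C[[\hbar]]$-module map is bijective by Corollary \ref{trivial-deformation}.

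For the comodule condition, observe that $(\varphi_{\tau}\ot 1)\rho$ and $\rho\,\varphi_{\tau}$ are both $\hbar$-adic nonlocal vertex algebra homomorphisms $V_L[[\hbar]]\to V_L[[\hbar]]^{\tau}\wh\ot B_L[[\hbar]]$ (here $\rho$ is a homomorphism for the undeformed structure, and, by Proposition \ref{prop-comp-sum}, also for the $\tau$-deformed one); they agree on the generating subset $\h+\C_{\varepsilon}[L]$ of $V_L[[\hbar]]$, where both send $a\mapsto a\ot 1+\vac\ot a(-1)$ and $e_{\al}\mapsto e_{\al}\ot e^{\al}$, so they coincide.

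The crux is the commutation identity $\varphi_{\tau}\,Y_M^{\eta}(h,x)=Y_M^{\eta}(h,x)\,\varphi_{\tau}$. The module structure $Y_M^{\eta}$ is generated, through the module axioms, by $Y_M^{\eta}(e^{\al},x)=\exp(\Phi(\eta(\al,x)))$ and $Y_M^{\eta}(a(-1),x)=\Phi(\eta'(a,x))$, and by the very definition of $\Phi$ every operator $\Phi(G(x))$ with $G(x)\in\h\ot\C((x))[[\hbar]]$ is a $\C((x))[[\hbar]]$-combination of the modes $a(n)$ ($a\in\h$, $n\ge 0$) of the \emph{undeformed} vertex algebra $V_L$. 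Hence it suffices to show that $\varphi_{\tau}$ commutes with each $a(n)$, $n\ge 0$. But $\varphi_{\tau}$ is a homomorphism to $V_L[[\hbar]]^{\tau}$, so $\varphi_{\tau}(a(n)v)$ is the $n$-th mode of $Y_L^{\tau}(a,x)=Y(a,x)+\Phi(\tau'(a,x))$ applied to $\varphi_{\tau}(v)$; since $\tau$ maps $\h$ into $\h\ot x\C[[x,\hbar]]$, the correction $\Phi(\tau'(a,x))$ lies in $(\End_{\C[[\hbar]]}V_L[[\hbar]])[[x,\hbar]]$ and so does not affect modes of nonnegative index, giving $\varphi_{\tau}(a(n)v)=a(n)\varphi_{\tau}(v)$ for $n\ge 0$. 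A short induction---using that the set of $h$ for which $\varphi_{\tau}$ commutes with $Y_M^{\eta}(h,x)$ is $\hbar$-adically closed, contains $\vac$ and the generators $e^{\al}$, $a(-1)$ of $B_L[[\hbar]]$, and is stable under the module $n$-th products (module weak associativity)---then gives the identity for all $h\in B_L[[\hbar]]$. I expect the only mildly delicate points to be recording the functoriality of $\mathfrak D_{Y_M}^{\rho}(\cdot)$ and the two reduction-to-generators steps carefully; the rest is an assembly of Corollary \ref{trivial-deformation}, Proposition \ref{prop-comp-sum} and Lemma \ref{lem:qva-Y-M-def} together with the elementary $x$-degree observation above.
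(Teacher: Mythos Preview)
Your proposal is correct and follows essentially the same route as the paper: both verify that $\varphi_{\tau}$ is a right $B_L[[\hbar]]$-comodule map and that it intertwines the $B_L[[\hbar]]$-module action $Y_M^{\eta}$, then invoke Proposition~\ref{prop-comp-sum} to identify $\mathfrak D_{Y_M^{\eta}}^{\rho}(V_L[[\hbar]]^{\tau})$ with $V_L[[\hbar]]^{\eta+\tau}$. Your reduction of the intertwining step to the observation that $\varphi_{\tau}$ commutes with the nonnegative modes $a_n$ ($a\in\h$, $n\ge 0$) is a clean and slightly more explicit justification than the paper's terse ``from the definition of $Y_M^{\eta}$''; otherwise the arguments coincide.
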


\begin{proof} Recall that $\varphi_\tau|_{\h+ \C_\varepsilon[L]}=1$.
From Theorem \ref{thm:qlva}, $(V_L[[\hbar]]^\tau,\rho)$ is a right $B_L[[\hbar]]$-comodule nonlocal vertex algebra,
where $\rho$ was defined in Proposition \ref{lem:qva-rho-def}. We have
\begin{align*}
  \rho\(\varphi_\tau(u)\)=\rho(u)=(\varphi_\tau\ot 1)\rho(u)\quad \te{for }u\in \h+ \C_\varepsilon[L].
\end{align*}
Since both $V_L[[\hbar]]$ and $V_L[[\hbar]]^\tau$ are generated by $\h+\C_\varepsilon[L]$,
we conclude that
\begin{align*}
  \rho\circ\varphi_\tau=(\varphi_\tau\ot 1)\circ \rho.
\end{align*}
Noticing that $Y_M^\tau$ and $Y_M^{\eta}$ commute,
we obtain from \cite[Proposition 3.4]{JKLT-Defom-va} that $Y_M^\eta\in \mathfrak L_{B_L[[\hbar]]}^\rho(V_L[[\hbar]]^\tau)$.
From the definition of $Y_M^\eta$, we have
\begin{align*}
  \varphi_\tau(Y_M^\eta(a,x)u)=Y_M^\eta(a,x)\varphi_\tau(u)
\end{align*}
for $a\in\h+\C[L]\subset B_L,\  u\in\h+\C_\varepsilon[L]\subset V_L$.
Since $B_L[[\hbar]]$ is generated by $\h+\C[L]$ and since $V_L[[\hbar]]$ is generated by $\h+\C_\varepsilon[L]$, we get
\begin{align*}
  \varphi_\tau(Y_M^\eta(a,x)u)=Y_M^\eta(a,x)\varphi_\tau(u)\quad\te{for }a\in B_L[[\hbar]],\,u\in V_L[[\hbar]].
\end{align*}
Then it follows that $\varphi_\tau$ is an $\hbar$-adic nonlocal vertex algebra isomorphism
from $V_L[[\hbar]]^\eta=\mathfrak D_{Y_M^\eta}^\rho(V_L[[\hbar]])$ to
$\mathfrak D_{Y_M^\eta}^\rho(V_L[[\hbar]]^\tau)$, where by Proposition \ref{prop-comp-sum} we have
\begin{align*}
  \mathfrak D_{Y_M^\eta}^\rho(V_L[[\hbar]]^\tau)
  =\mathfrak D_{Y_M^\eta}^\rho\(\mathfrak D_{Y_M^\tau}^\rho(V_L[[\hbar]])\)
   =\mathfrak D_{Y_M^{\eta+\tau}}^\rho(V_L[[\hbar]])=V_L[[\hbar]]^{\eta+\tau}.
\end{align*}
Consequently, $\varphi_{\tau}$ is also an $\hbar$-adic nonlocal vertex algebra isomorphism from
$V_L[[\hbar]]^\eta$ to $V_L[[\hbar]]^{\eta+\tau}$.
\end{proof}

The following are some structural results on the $\hbar$-adic quantum vertex algebras $V_L[[\hbar]]^\eta$:

\begin{prop}\label{VL-eta-S(X)-general}
Let $\eta(\cdot,x)\in {\rm Hom}(\h, \h\otimes \C((x))[[\hbar]])^{0}$.
Then $V_L[[\hbar]]^{\eta}$ is non-degenerate.
Furthermore, the following relations hold for $\al,\be\in L$:
\begin{eqnarray*}
&&\mathcal{S}(x)(\be\otimes \al)=\be\ot \al\ot 1+{\bf 1}\ot {\bf 1}\ot (\<\eta''(\be,x),\al\>-\<\eta''(\al,-x),\be\>),\\
&&\mathcal{S}(x)(e_{\be}\ot \al)=e_\be\ot \al\ot 1+e_\be \ot \vac \ot (\<\eta'(\be,x),\al\>+\<\eta'(\al,-x),\be\>),\\
&&\mathcal{S}(x)(e_{\beta}\otimes e_{\al})=(e_{\be}\otimes e_{\al})\ot e^{\<\beta,\eta(\al,-x)\>-\<\al,\eta(\be,x)\>},
\end{eqnarray*}
and
\begin{align}
&Y_L^{\eta}(\al,x)^{-}\beta=\(\<\al,\be\>x^{-2}-\<\eta''(\al,x),\be\>^{-}\)\vac,\label{eta-al-be-sing}\\
&Y_L^{\eta}(\al,x)^{-}e_{\beta}=\(\<\al,\be\>x^{-1}+\<\eta'(\al,x),\be\>^{-}\)e_{\be},\label{eta-al-ebe-sing}\\
&Y_L^{\eta}(e_\al,x)e_\be =\varepsilon(\al,\be)x^{\<\al,\be\>}e^{\<\be,\eta(\al,x)\>}E^{-}(-\al,x)e_{\al+\be}.\label{Y-eta-e-alha-e-be}
\end{align}
\end{prop}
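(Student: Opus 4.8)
The plan is to reduce everything to computations already carried out in Proposition \ref{lem:commutator-V-L-h-eta} together with the abstract machinery of non-degeneracy and the uniqueness of the quantum Yang--Baxter operator. First I would establish non-degeneracy: by the definition of non-degeneracy for an $\hbar$-adic nonlocal vertex algebra, it suffices to show that $V_L[[\hbar]]^{\eta}/\hbar V_L[[\hbar]]^{\eta}$ is non-degenerate. By Corollary \ref{prop:classical-limit-qva} this quotient is $V_L^{\eta_0}$, and by \cite[Theorem 5.9]{JKLT-Defom-va} (or its proof) $V_L^{\eta_0}$ is non-degenerate -- indeed $V_L^{\eta_0}=\mathfrak D_{Y_M^{\eta_0}}^\rho(V_L)$ has the same underlying space and the same vacuum-to-spanning data as $V_L$, so the maps $Z_n$ are injective exactly as for $V_L$. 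With non-degeneracy in hand, Proposition (the $\hbar$-adic analogue of Proposition \ref{nondeg-wqva}) guarantees that $\mathcal S(x)$ is the unique $\C[[\hbar]]$-module map determined by $\hbar$-adic $\mathcal S$-locality, so to compute it on a pair it is enough to exhibit one relation of $\mathcal S$-locality type and read off $\mathcal S$.

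Next I would compute $\mathcal S(x)$ on the three types of pairs. The key point is that the commutator-type relations \eqref{new-h-h-relations}, \eqref{new-h-e-beta-relations}, \eqref{Yeta-ealpha=ebeta} in Proposition \ref{lem:commutator-V-L-h-eta} are already in the form ``$Y(u,x_1)Y(v,x_2)$ equals a twisted $Y(v,x_2)Y(u,x_1)$ plus delta-function correction terms,'' which is precisely the shape that Lemma \ref{tech-extra} converts into an $\mathcal S$-locality statement. For the pair $(\be,\al)$ with $\al,\be\in L\subset\h$, \eqref{new-h-h-relations} gives, after moving the correction terms around via Lemma \ref{tech-extra}, that $Y_L^\eta(\al,x_1)Y_L^\eta(\be,x_2)\sim Y_L^\eta(\be,x_2)Y_L^\eta(\al,x_1)+\langle\eta''(\be,x_2-x_1),\al\rangle-\langle\eta''(\al,x_1-x_2),\be\rangle$, so comparing with the $\mathcal S$-locality formula $Y(x_1)(1\otimes Y(x_2))(\mathcal S(x_1-x_2)(\be\otimes\al)\otimes w)$ forces $\mathcal S(x)(\be\otimes\al)=\be\otimes\al\otimes 1+\vac\otimes\vac\otimes(\langle\eta''(\be,x),\al\rangle-\langle\eta''(\al,-x),\be\rangle)$. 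The same procedure applied to \eqref{new-h-e-beta-relations} yields the formula for $\mathcal S(x)(e_\be\otimes\al)$, and \eqref{Yeta-ealpha=ebeta} -- whose twisting factor is the scalar $e^{\langle\be,\eta(\al,x_1-x_2)\rangle-\langle\al,\eta(\be,x_2-x_1)\rangle}$ times a rational function that becomes trivial after the delta-function cancellation -- yields $\mathcal S(x)(e_\be\otimes e_\al)=(e_\be\otimes e_\al)\otimes e^{\langle\be,\eta(\al,-x)\rangle-\langle\al,\eta(\be,x)\rangle}$. One must be careful that the scalar series in these formulas lie in $\C((x))[[\hbar]]$; this holds because $\eta_0(\al,x)\in\h\otimes x\C[[x]]$, so the $\hbar=0$ parts of the exponents are regular at $x=0$ and the exponentials make sense.

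Then I would read off the singular parts. The identities \eqref{eta-al-be-sing} and \eqref{eta-al-ebe-sing} are the ``second half'' of Lemma \ref{tech-extra}: once \eqref{new-h-h-relations} is written in the normalized form of \eqref{S-bracket-simple}, the lemma identifies $Y_L^\eta(\al,x)^- v$ for $v=\be$ or $v=e_\be$ as the sum of the singular part of the ``$f$-term'' applied to $\vac$ (respectively $e_\be$) plus the explicit $c^{(r)}$-coefficients. Concretely, $Y(\al,x)^-\be=\langle\al,\be\rangle x^{-2}\vac$ and $Y(\al,x)^- e_\be=\langle\al,\be\rangle x^{-1}e_\be$ are the classical lattice values, and the $\Phi(\eta'(\al,x))$ correction contributes exactly $-\langle\eta''(\al,x),\be\rangle^-\vac$ and $\langle\eta'(\al,x),\be\rangle^- e_\be$ respectively, using $\Phi(\eta'(\al,x))\be = -\langle\eta''(\al,x),\be\rangle$ from the commutator computation in the proof of Proposition \ref{lem:commutator-V-L-h-eta} and $\Phi(\eta'(\al,x))e_\be=\langle\eta'(\al,x),\be\rangle e_\be$ from \eqref{formula-Phi}. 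Finally \eqref{Y-eta-e-alha-e-be} follows by applying the defining formula $Y_L^\eta(e_\al,x)=Y(e_\al,x)\exp(\Phi(\eta(\al,x)))$ to $e_\be$: $\exp(\Phi(\eta(\al,x)))e_\be=e^{\langle\be,\eta(\al,x)\rangle}e_\be$ by \eqref{Phi-G(x)-e-beta}, and $Y(e_\al,x)e_\be=\varepsilon(\al,\be)x^{\langle\al,\be\rangle}E^-(-\al,x)e_{\al+\be}$ is the standard lattice vertex algebra formula (the $E^+(-\al,x)$ factor acts as $1$ on $e_{\al+\be}$ and $z^\al$ contributes $x^{\langle\al,\be\rangle}$).

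The main obstacle I expect is bookkeeping rather than conceptual: correctly tracking which arguments get the substitution $x\mapsto x_1-x_2$ versus $x\mapsto x_2-x_1$ (hence the sign flips $\eta''(\al,-x)$ versus $\eta''(\be,x)$ in $\mathcal S$), and verifying that after the delta-function cancellations the rational prefactors $(x_1-x_2)^{-\langle\al,\be\rangle-1}$ and their exponential twists in \eqref{Yeta-ealpha=ebeta} combine to give precisely the scalar $e^{\langle\be,\eta(\al,-x)\rangle-\langle\al,\eta(\be,x)\rangle}$ with no leftover rational function -- this is where the even-lattice integrality $\langle\al,\be\rangle\in\Z$ and the structure of the delta-function identity do the work. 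A secondary point of care is confirming that Lemma \ref{tech-extra} genuinely applies here, i.e.\ that the sums $\sum_r Y(c^{(r)},x_2)\tfrac1{r!}(\partial_{x_2})^r x_1^{-1}\delta(x_2/x_1)$ arising from the $\eta$-corrections converge $\hbar$-adically; this follows from $\eta(\al,x)^+\in\h\otimes\C[[x,\hbar]]$, which makes $c^{(r)}\to 0$ as $r\to\infty$.
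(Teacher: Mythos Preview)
Your proposal is essentially the paper's own proof: reduce non-degeneracy to the classical limit via Corollary \ref{prop:classical-limit-qva}, then extract $\mathcal S(x)$ and the singular parts from Proposition \ref{lem:commutator-V-L-h-eta} through Lemma \ref{tech-extra}. One small correction on the non-degeneracy step: your remark that ``the maps $Z_n$ are injective exactly as for $V_L$'' is not literally true, since $Z_n$ depends on the vertex operator map, which has been deformed; the paper instead uses that \cite[Theorem 5.9]{JKLT-Defom-va} gives irreducibility of $V_L^{\eta_0}$ as a module over itself, and then invokes the criterion from \cite{Li-qva2} (irreducible of countable dimension $\Rightarrow$ non-degenerate).
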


\begin{proof} From Corollary \ref{prop:classical-limit-qva},
$V_L[[\hbar]]^{\eta}/\hbar V_L[[\hbar]]^{\eta}$ is isomorphic to $V_L^{\eta_0}$.
It was proved in \cite{JKLT-Defom-va} (Theorem 5.9) that $V_{L}^{\eta_0}$ as a module for itself
 is irreducible.  It is clear that $V_L^{\eta_0}$ is of countable dimension over $\C$.
 Then $V_{L}^{\eta_0}$ is non-degenerate by \cite{Li-qva2}.
Thus $V_L[[\hbar]]^{\eta}$ is non-degenerate,
 and hence the quantum Yang-Baxter operator $\mathcal{S}(x)$ is uniquely determined.
Using Proposition \ref{lem:commutator-V-L-h-eta} and Lemma \ref{tech-extra}, we immediately obtain all the relations.
\end{proof}

Let $G$ be an automorphism group of $V_{L}$  such that $G(\h)=\h$.
This implies that $G$ preserves the bilinear form on $\h$ and hence
preserves the standard conformal vector $\omega$ of $V_L$. Then
 $gL(0)=L(0)g$ on $V_L$ for $g\in G$. For any linear character $\chi$ of $G$,
$V_L$ is a $(G,\chi)$-module vertex algebra with $R(g)=\chi(g)^{L(0)}g$ for $g\in G$.
The following is a straightforward $\hbar$-adic analogue of \cite[Proposition 5.10]{JKLT-Defom-va}:

\begin{prop}\label{VL-eta-G-algebra}
Assume that $G$ is an automorphism group of $V_{L}$  such that $G(\h)=\h$ and $\chi$ is a linear character of $G$.
Let $\eta(\cdot,x)\in\Hom(\h,\h\ot \C((x))[[\hbar]])^0$ be such that
\begin{eqnarray}\label{eta-G-equivarance}
(\mu\ot 1) \eta(a,x)=\eta(\mu(a),\chi(\mu)x)\quad \text{ for }\mu\in G, \ a\in\h.
\end{eqnarray}
Define $R$ as above. Then $V_L[[\hbar]]^\eta$ is a $(G,\chi)$-module $\hbar$-adic quantum vertex algebra.
\end{prop}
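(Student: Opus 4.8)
The plan is to reduce the statement to a computation on the generating set $\h+\C_{\varepsilon}[L]$. Since $G$ preserves $\h$ and the bilinear form $\<\cdot,\cdot\>$, it fixes the standard conformal vector of $V_L$, so $gL(0)=L(0)g$ on $V_L$ and $R(g):=\chi(g)^{L(0)}g$ defines a group representation $R:G\to\te{GL}(V_L)$ with $R(g)\vac=\vac$ (this is the hypothesis ``define $R$ as above''); by canonical $\C[[\hbar]]$-linear continuous extension we obtain $R:G\to\te{GL}(V_L[[\hbar]])$, and note $R(g)a=\chi(g)\,ga\in\h$ for $a\in\h$. The vertex algebra $V_L$ is already a $(G,\chi)$-module vertex algebra with this $R$, and by Theorem \ref{thm:qlva} $V_L[[\hbar]]^{\eta}$ is an $\hbar$-adic quantum vertex algebra, so the only thing left to prove is the equivariance identity
\begin{align*}
R(g)Y_L^{\eta}(v,x)R(g)\inverse=Y_L^{\eta}(R(g)v,\chi(g)x)\qquad(g\in G,\ v\in V_L[[\hbar]]^{\eta}).
\end{align*}
By the usual vertex-algebra argument via the $\hbar$-adic weak associativity of $V_L[[\hbar]]^{\eta}$, the set of $v$ for which this holds is a $\C[[\hbar]]$-submodule containing $\vac$, closed under $(u,v)\mapsto u_nv$ for every $n$, and $\hbar$-adically closed; hence it is an $\hbar$-adic nonlocal vertex subalgebra, and since $\h+\C_{\varepsilon}[L]$ is a generating subset in the sense of Definition \ref{def-hadic-generating-subset} (by the universal property underlying Theorem \ref{thm:qlva}), it suffices to check the identity for $v\in\h$ and $v=e_{\al}$ with $\al\in L$.

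For these generators I would use the explicit formulas of Theorem \ref{thm:qlva}, namely $Y_L^{\eta}(a,x)=Y(a,x)+\Phi(\eta'(a,x))$ for $a\in\h$ and $Y_L^{\eta}(e_{\al},x)=Y(e_{\al},x)\exp(\Phi(\eta(\al,x)))$ for $\al\in L$. The contributions $Y(a,x)$ and $Y(e_{\al},x)$ transform correctly because $V_L$ is a $(G,\chi)$-module vertex algebra; the substance is the conjugation of the operators $\Phi$. Using \eqref{formula-Phi}, that $g$ is an isometry of $\<\cdot,\cdot\>$, and that $\chi(g)^{L(0)}$ rescales the formal variable by $\chi(g)$ with the appropriate weight shift on the modes, one computes $R(g)\Phi(G(x))R(g)\inverse=\Phi\big((g\ot1)G\big)$ with the variable rescaled by $\chi(g)$ in the manner dictated by weights, for $G(x)\in\h\ot\C((x))[[\hbar]]$. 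Differentiating \eqref{eta-G-equivarance} in $x$ gives $(g\ot1)\eta'(a,x)=\chi(g)\,\eta'(ga,\chi(g)x)$, so feeding this (resp. \eqref{eta-G-equivarance} itself) into the conjugation formula yields $R(g)\Phi(\eta'(a,x))R(g)\inverse=\Phi(\eta'(R(g)a,\chi(g)x))$ (resp. $R(g)\Phi(\eta(\al,x))R(g)\inverse=\Phi(\eta(g\al,\chi(g)x))$). Combined with the transformation of $Y(a,x)$ and $Y(e_{\al},x)$, and using that $R(g)e_{\al}$ is a scalar multiple of $e_{g\al}$ together with the $\C[[\hbar]]$-linearity of $v\mapsto Y_L^{\eta}(v,x)$, this gives the desired equivariance on both families of generators.

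This shows that $V_L[[\hbar]]^{\eta}$ is a $(G,\chi)$-module $\hbar$-adic nonlocal vertex algebra, and together with Theorem \ref{thm:qlva} it is a $(G,\chi)$-module $\hbar$-adic quantum vertex algebra; since $V_L[[\hbar]]^{\eta}$ is non-degenerate by Proposition \ref{VL-eta-S(X)-general}, its rational quantum Yang--Baxter operator $\mathcal{S}(x)$ is uniquely determined by the $\hbar$-adic $\mathcal{S}$-locality, so its compatibility with the $G$-action is automatic. I expect the main obstacle to be purely bookkeeping: tracking the variable rescalings and weight shifts in the conjugation of $\Phi$ and matching them against \eqref{eta-G-equivarance}. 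Two conceptually cleaner routes avoiding this direct computation are: (i) argue modulo $\hbar^n$ for each $n$, so that $V_L[[\hbar]]^{\eta}/\hbar^nV_L[[\hbar]]^{\eta}$ is a lattice-type quantum vertex algebra over $\mathcal{R}_n$ to which the $\mathcal{R}_n$-analogue of \cite[Proposition 5.10]{JKLT-Defom-va} applies, the one point to verify being that the proof of loc. cit. (non-degeneracy and generation by $\h+\C_{\varepsilon}[L]$) goes through verbatim over $\mathcal{R}_n$; or (ii) use $V_L[[\hbar]]^{\eta}=\mathfrak D_{Y_M^{\eta}}^{\rho}(V_L[[\hbar]])$ and invoke an $\hbar$-adic analogue of Proposition \ref{dual-vertex-G-algebra}, reducing the work to checking that $B_L[[\hbar]]$ is a $(G,\chi)$-module $\hbar$-adic vertex bialgebra, that $\rho$ is a $G$-module map, and that $R_V(g)Y_M^{\eta}(e^{\al},x)v=Y_M^{\eta}(R_{B_L}(g)e^{\al},\chi(g)x)R_V(g)v$, this last identity again being equivalent to \eqref{eta-G-equivarance}.
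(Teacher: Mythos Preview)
Your proposal is correct, and in fact it is more detailed than the paper: the paper simply records this proposition as ``a straightforward $\hbar$-adic analogue of \cite[Proposition 5.10]{JKLT-Defom-va}'' without spelling out a proof, which is exactly your route (i). Your direct computation on the generating set $\h+\C_{\varepsilon}[L]$ and your route (ii) via the $\hbar$-adic analogue of Proposition \ref{dual-vertex-G-algebra} are both valid ways to unpack that analogue, and likely mirror the argument in \cite{JKLT-Defom-va}.

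One small remark on the bookkeeping you flagged: your sentence ``$R(g)\Phi(G(x))R(g)\inverse=\Phi\big((g\ot1)G\big)$ with the variable rescaled by $\chi(g)$'' is correct but a bit imprecise. The clean statement is that for $b\in\h$ and $f(x)\in\C((x))$, one has $R(g)\,b_n\,R(g)\inverse=\chi(g)^{-n}(gb)_n$, hence
\[
R(g)\Phi(b,f)(x)R(g)\inverse=\sum_{n\ge 0}\frac{(-1)^n}{n!}f^{(n)}(x)\,\chi(g)^{-n}(gb)_n .
\]
On the other side, writing $\eta(a,x)=\sum_j b_j\ot F_j(x)$, the equivariance \eqref{eta-G-equivarance} yields $\eta(ga,y)=\sum_j gb_j\ot F_j(\chi(g)^{-1}y)$, and substituting $y=\chi(g)x$ in $\Phi(\eta'(R(g)a,y))$ produces exactly the same factor $\chi(g)^{-n}$ from the chain rule in $F_j^{(n+1)}$. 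So the two sides match term by term; this is precisely the ``weight-shift'' cancellation you anticipated. The computation for $e_{\al}$ is analogous (and slightly simpler, since no derivative of $\eta$ is involved).
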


\subsection{Equivariant $\phi$-coordinated quasi $V_L[[\hbar]]^{\eta}$-modules}\label{sec:phi-lattice-mod}

We here use equivariant $\phi$-coordinated quasi $V_L$-modules
to construct equivariant $\phi$-coordinated quasi modules
for the $\hbar$-adic quantum vertex algebra $V_L[[\hbar]]^{\eta}$.

We assume that $G:=\<\mu\>$ is an order $N$ cyclic automorphism group of $V_L$ with $\mu(\h)=\h$.
Set $\chi=1$ (the trivial character), and  assume
$$\chi_{\phi}(\mu^k)=\xi^k\quad \text{ for }k\in \Z_N,$$
$$\phi(x,z)=xe^z; \  \ p(x)=x.$$
Then $V_L$ is a $G$-module vertex algebra and
$V_{L}[[\hbar]]$ is a $G$-module $\hbar$-adic vertex algebra.
Let $(W,Y_W)$ be a $(G,\chi_{\phi})$-equivariant $\phi$-coordinated quasi $V_{L}$-module.

\begin{de}
For $a\in \h,\ g(x)\in \C(x)[[\hbar]]$, define
\begin{align}
&\wh{\Phi}_{V}^{\phi}(a,g)(x_1,x_2)=\te{Res}_{z}Y(a,z)e^{zx_2\partial_{x_2}}g(x_1/x_2)\  \  \  \text{on }V_L[[\hbar]],\\
&\Phi_{W}(a,g)(x)=\te{Res}_{x_2}x_2^{-1}Y_W(a,x_2)\iota_{x,x_2}g(x/x_2)\  \  \  \text{on }W[[\hbar]].
\end{align}
Then define $\wh{\Phi}_{V}^{\phi}(A)(x_1,x_2)$ and $\Phi_{W}(A)(x)$ for $A\in  \h\ot \C(x)[[\hbar]]$ by linearity.
\end{de}

As the main result of this section, we have:

\begin{thm}\label{thm:phi-mod}
Let $f(\cdot,x):\  \h\to \h\ot \hbar\C(x)[[\hbar]]$ be a linear map such that
\begin{align}
  &(\mu \ot 1) f(a,x)=f(\mu (a),x)\quad \text{ for }a\in \h.\label{eq:cond-phi-mod-1}
\end{align}
Define a linear map $\eta^f(\cdot,z):\  \h\to\h\ot  \hbar\C((z))[[\hbar]]$ by
\begin{align}
  \eta^f(a,z)=\sum_{k\in\Z_N}(\mu^{k}\ot 1)f(a,\xi^{k}e^z)=\sum_{k\in\Z_N}f(\mu^{k}a,\xi^{k}e^z)
  \quad \text{ for }a\in\h.
\end{align}
Let $\eta_{(0)}(\cdot,x):\ \h \to  \h\ot x\C[[x,\hbar]]$ be a linear map such that
\begin{align}\label{eta0-general}
  \<\eta_{(0)}(\al,x),\be\>=\<\al,\eta_{(0)}(\be,-x)\>\quad \text{  for }\al,\beta\in\h.
\end{align}
Set $\eta=\eta^f+\eta_{(0)}:\  \h\to\h\ot \C((x))[[\hbar]]$.
Then $\eta(\cdot,x)\in{\rm Hom}(\h, \h\otimes \C((x))[[\hbar]])^{0}$ and
we have an $\hbar$-adic quantum vertex algebra $V_L[[\hbar]]^\eta$
on which $G$ acts as an automorphism group.
On the other hand, let $(W,Y_W)$ be any $(G,\chi_\phi)$-equivariant $\phi$-coordinated quasi
$V_L$-module. Then we have a $(G,\chi_\phi)$-equivariant $\phi$-coordinated quasi
$V_L[[\hbar]]^\eta$-module $W[[\hbar]]^f$, where $W[[\hbar]]^f=W[[\hbar]]$ as a $\C[[\hbar]]$-module and
the vertex operator map $Y_W^f(\cdot,x)$ is uniquely determined by
\begin{align}
  &Y_W^f(a,x)=Y_W(a,x)+\Phi_{W}(f'(a,x))\quad\text{ for }a\in \h,\label{YTf-h-general}\\
  &Y_W^f(e_\al,x)=Y_W(e_\al,x)\exp\(\Phi_{W}(f(\al,x))\)\quad\text{ for }\al\in L.\label{YTf-e-alpha-general}
\end{align}
\end{thm}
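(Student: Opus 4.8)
The plan is to reduce everything to the abstract machinery already in place, namely the smash-product/comodule deformation theorems (Theorem~\ref{prop:deform-va-h}, Theorem~\ref{deform-phi-quasi-mod-h}) together with the explicit lattice constructions (Lemma~\ref{lem:qva-Y-M-def}, Theorem~\ref{thm:qlva}, Proposition~\ref{prop-comp-sum}, Proposition~\ref{VL-eta-G-algebra}), so that the only genuinely new verifications concern: (i) that $\eta=\eta^f+\eta_{(0)}$ lands in ${\rm Hom}(\h,\h\ot\C((x))[[\hbar]])^{0}$ and is $G$-equivariant in the sense of \eqref{eta-G-equivarance} with $\chi=1$; and (ii) that the operators $\Phi_W(f(\al,x))$ on $W[[\hbar]]$ are exactly the module realization, on an equivariant $\phi$-coordinated quasi $V_L$-module, of the $B_L[[\hbar]]$-module vertex operators $Y_M^\eta(e^\al,x)=\exp(\Phi(\eta(\al,x)))$ appearing in $\mathfrak D^\rho_{Y_M^\eta}(V_L[[\hbar]])=V_L[[\hbar]]^\eta$. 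Granting (i) and (ii), the theorem is immediate: $V_L[[\hbar]]^\eta$ is an $\hbar$-adic quantum vertex algebra by Theorem~\ref{thm:qlva}, it is a $(G,1)$-module $\hbar$-adic quantum vertex algebra by Proposition~\ref{VL-eta-G-algebra}, and the deformed module structure on $W[[\hbar]]^f$ is produced by Theorem~\ref{deform-phi-quasi-mod-h} applied with $H=B_L[[\hbar]]$, $V=V_L[[\hbar]]$, $Y_V^H=Y_M^\eta$, $\rho$ as in \eqref{comodule-map-rho}, and $Y_W^{\sharp\rho}(v,x)=\sum Y_W^V(v_{(1)},x)Y_W^H(v_{(2)},x)$, which on $\h$ and on $\C_\varepsilon[L]$ reads precisely \eqref{YTf-h-general} and \eqref{YTf-e-alpha-general} because $\rho(a)=a\ot1+\vac\ot a(-1)$ and $\rho(e_\al)=e_\al\ot e^\al$.

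For step (i), the key point is that $\eta^f(a,z)=\sum_{k\in\Z_N}(\mu^k\ot1)f(a,\xi^k e^z)$ has $\hbar$-valuation $\ge1$ since $f$ takes values in $\h\ot\hbar\C(x)[[\hbar]]$; hence $\eta^f_0=0$ and $\eta_0=\eta_{(0),0}\in\h\ot x\C[[x]]$, giving membership in the superscript-$0$ space. For $G$-equivariance: on the $\eta_{(0)}$ part one checks $\mu\ot1$ commutes because \eqref{eta0-general} forces nothing extra and one takes $\eta_{(0)}$ $\mu$-invariant (or, more carefully, one invokes the averaging already built into the furthermore direction of Corollary~\ref{trivial-deformation}); on the $\eta^f$ part the invariance is a one-line consequence of \eqref{eq:cond-phi-mod-1} and the fact that summing over $k\in\Z_N$ against $\xi^k e^z$ with $\chi_\phi(\mu)=\xi$ is manifestly $\mu$-stable. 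I expect this step to be short once the indexing conventions $\phi(x,z)=xe^z$, $p(x)=x$, $\chi_\phi(\mu^k)=\xi^k$, $\chi=1$ are fixed, and it is compatible with the compatibility condition \eqref{p(x)-compatibility} since $r=0$ there forces $\chi=1$.

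The main obstacle will be step (ii): identifying $\Phi_W(f'(a,x))$ and $\exp(\Phi_W(f(\al,x)))$ as the $\phi$-coordinated quasi module operators $Y_W^H$ for $B_L[[\hbar]]$ corresponding to $a(-1)\in\wh\h^-$ and $e^\al\in\C[L]$ under $\eta=\eta^f+\eta_{(0)}$, and checking that the pair $(H,V)=(B_L[[\hbar]],V_L[[\hbar]])$ together with $W[[\hbar]]$ satisfies the hypotheses of Theorem~\ref{deform-phi-quasi-mod-h} — in particular that $Y_V^H=Y_M^\eta$ is $(\phi,\chi_\phi)$-compatible (Lemma~\ref{lem:qva-Y-M-def} plus the structure of $\C_\phi^{\chi_\phi(G)}((x_1,x_2))$ from Remark~\ref{rem-p(x)=1}) and that $(W[[\hbar]],Y_W^{B_L},Y_W^{V_L})$ is a $(G,\chi_\phi)$-equivariant $\phi$-coordinated quasi $(B_L[[\hbar]],V_L[[\hbar]])$-module. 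The heart of this is a residue/change-of-variable computation: one must show that the ``$\Phi$'' operators on $W$ built from $Y_W(a,x_2)$ by the prescription $\Phi_W(a,g)(x)={\rm Res}_{x_2}x_2^{-1}Y_W(a,x_2)\iota_{x,x_2}g(x/x_2)$ assemble under $\exp$ and under $\sum_{k\in\Z_N}$-averaging (the $\eta^f$-sum) into the operators predicted by the comodule formula, and that the contribution of $\eta_{(0)}$ to $Y_W^f$ is trivial on $W$ because, by Corollary~\ref{trivial-deformation} and Proposition~\ref{prop:eta-iso-eta+tau}, the $\eta_{(0)}$-deformation is an isomorphism of $\hbar$-adic nonlocal vertex algebras that is the identity on $\h+\C_\varepsilon[L]$ and hence transports module structures trivially. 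Since all of these are $\hbar$-adic statements, throughout I would pass to $W/\hbar^n W$ and $V_L/\hbar^n V_L$ and appeal to the non-$\hbar$-adic version of this construction already recorded in \cite{JKLT-Defom-va} (in the spirit of Theorem~\ref{thm:phi-mod}'s predecessor there), so that the $\hbar$-adic conclusion follows from Proposition~\ref{prop:phi-coor-ind} and the levelwise results. The one place requiring care is convergence of the exponential and of the $\Z_N$-sum in the $\hbar$-adic topology, which holds because $f$ is $\hbar$-divisible and $\eta^f$ therefore has strictly positive $\hbar$-valuation, so $\exp(\Phi_W(f(\al,x)))$ is a well-defined element of $\E_\hbar(W[[\hbar]])$.
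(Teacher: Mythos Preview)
Your overall strategy and list of ingredients match the paper's, but there is a genuine gap in how you handle the split $\eta=\eta^f+\eta_{(0)}$.

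You propose in step (i) to verify that the full $\eta$ is $G$-equivariant in the sense of \eqref{eta-G-equivarance} and then apply Proposition~\ref{VL-eta-G-algebra} to $\eta$; likewise in step (ii) you want $(\phi,\chi_\phi)$-compatibility for $Y_M^\eta$ so as to apply Theorem~\ref{deform-phi-quasi-mod-h} with $Y_V^H=Y_M^\eta$. Neither verification goes through in general. The only hypothesis on $\eta_{(0)}$ is the symmetry \eqref{eta0-general}; it does not imply $(\mu\ot1)\eta_{(0)}(a,x)=\eta_{(0)}(\mu a,x)$, so \eqref{eta-G-equivarance} can fail for $\eta$. Separately, $(\phi,\chi_\phi)$-compatibility of $Y_M^\eta$ requires $\eta(a,z)\in\h\ot\C_\phi((z))[[\hbar]]=\h\ot\C_e((z))[[\hbar]]$; this holds for $\eta^f$ because $f$ is rational in $x$, but $\eta_{(0)}(a,x)\in\h\ot x\C[[x,\hbar]]$ has no reason to land in $\C_e((z))$. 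So your direct route with $\eta$ is blocked on two independent counts.

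The fix is exactly your parenthetical hedge, promoted to the main line of argument (and this is what the paper does): apply Proposition~\ref{VL-eta-G-algebra} and the deformation-of-modules theorem to $\eta^f$ alone, obtaining the $G$-action on $V_L[[\hbar]]^{\eta^f}$ and the $(G,\chi_\phi)$-equivariant $\phi$-coordinated quasi $V_L[[\hbar]]^{\eta^f}$-module structure on $W[[\hbar]]$ with vertex operator map satisfying \eqref{YTf-h-general}--\eqref{YTf-e-alpha-general}. Then invoke the isomorphism $\Psi_{\eta_{(0)}}:V_L[[\hbar]]^{\eta^f}\to V_L[[\hbar]]^\eta$ from Corollary~\ref{trivial-deformation} and Proposition~\ref{prop:eta-iso-eta+tau} to transport both the $G$-action and the module structure. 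Because $\Psi_{\eta_{(0)}}$ is the identity on $\h+\C_\varepsilon[L]$, the transported module map is still determined by \eqref{YTf-h-general}--\eqref{YTf-e-alpha-general}. Note that the resulting $G$-action on $V_L[[\hbar]]^\eta$ agrees with the original $V_L$-action on the generators $\h+\C_\varepsilon[L]$, but there is no claim that it coincides with the naive action globally. The residue computation you flag as ``the heart'' is indeed the one genuinely new calculation: it is the compatibility relation $[\Phi_W(f(a,x_1)),Y_W(v,x_2)]=Y_W\big(\wh\Phi_V^\phi(\wh{\eta^f}(a,x_1,x_2))v,x_2\big)$, derived from Proposition~\ref{prop:tech-reverse-calculations-h}, and it is carried out for $\eta^f$, not for $\eta$.
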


\begin{proof} The first assertion is clear. From the definition of $\eta^f(\cdot,x)$, it follows that
\begin{align}
(\mu \otimes 1)\eta^f(a,x)=\eta^{f}(\mu a,x)\quad \te{ for }a\in \h,
\end{align}
and we have $\eta^f(a,x)\in \h\ot  \hbar\C((x))[[\hbar]]$.   By Proposition \ref{VL-eta-G-algebra},
$V_L[[\hbar]]^{\eta^f}$ is a $G$-module $\hbar$-adic quantum vertex algebra
 with the same action of $G$ on $V_L[[\hbar]]$, i.e.,
 an  $\hbar$-adic quantum vertex algebra on which $G$ acts as an automorphism group.
 On the other hand,  given $\eta_{(0)}$ with the condition (\ref{eta0-general}), by Corollary \ref{trivial-deformation}
we have an $\hbar$-adic quantum vertex algebra isomorphism
$$\Psi_{\eta_{(0)}}:\  V_L[[\hbar]]\rightarrow V_L[[\hbar]]^{\eta_{(0)}}$$
with $\Psi_{\eta_{(0)}}=1$ on  $\h+\C_{\varepsilon}[L]$.
Furthermore, by Proposition \ref{prop:eta-iso-eta+tau} $\Psi_{\eta_{(0)}}$ is
also an $\hbar$-adic quantum vertex algebra isomorphism
$$\Psi_{\eta_{(0)}}:\  V_L[[\hbar]]^{\eta^f}\rightarrow V_L[[\hbar]]^{\eta}.$$
Consequently, $G$ acts on $V_L[[\hbar]]^{\eta}$ as an automorphism group
whose action is uniquely determined by the action of $G$ on $\h+\C_{\varepsilon}[L]$ as a subspace of $V_L[[\hbar]]^{\eta}$.

For the second part on the $(G,\chi_{\phi})$-equivariant $\phi$-coordinated quasi module structure,
we first show that there is a $(G,\chi_{\phi})$-equivariant $\phi$-coordinated quasi $V_L[[\hbar]]^{\eta^f}$-module
structure on $W[[\hbar]]$ such that (\ref{YTf-h-general}) and (\ref{YTf-e-alpha-general}) hold.
Note that for $\alpha\in L\subset \h$, as $L(0)\alpha=\alpha$ and $\chi(\mu)=1$, we have
\begin{align*}
&\Phi_{W}(f(\mu \alpha,x))=\Res_z z^{-1}Y_W(f(\mu \alpha,x/z),z)
=\Res_z z^{-1}Y_W((\mu\ot 1)f(\alpha,x/z),z)\\
=&\ \Res_z z^{-1}Y_W(f(\alpha,x/z),\xi^{-1}z)=\Res_z z^{-1}Y_W(f(\alpha,\xi^{-1}x/z),z)
=\Phi_{W}(f(\alpha,\xi^{-1}x)).
\end{align*}
As $f(\alpha,x)\in \h\ot \hbar\C(x)[[\hbar]]$, $\exp \Phi_{W}(f(\alpha,x))$ is well defined.
Furthermore, we have
$$\exp \Phi_{W}(f(0,x))=1,$$
$$\exp \Phi_{W}(f(\alpha,x))\exp \Phi_{W}(f(\beta,x))=\exp \Phi_{W}(f(\alpha+\beta,x))
\quad \text{ for }\alpha,\beta\in L. $$
Let $K$ be the subalgebra of $\Hom (W[[\hbar]],W[[\hbar]]\wh\ot \C((x))[[\hbar]])$, generated by
$$\(x\frac{d}{dx}\)^n \exp \Phi_{W}(f(\alpha,x))\quad \text{  for }n\in \N,\ \alpha\in L.$$
It follows that there exists a differential algebra morphism $\psi: B_L\rightarrow K$ such that
$$\psi(e^\alpha)=\exp \Phi_{W}(f(\alpha,x))\quad \text{ for }\alpha\in L.$$
Consequently, there is a $(G,\chi_{\phi})$-equivariant $\phi$-coordinated quasi $B_L[[\hbar]]$-module structure
$Y_{W}^{B_L}(\cdot,x)$ on $W[[\hbar]]$ with
\begin{align}
Y_{W}^{B_L}(e^\alpha,x)=\exp \Phi_{W}(f(\alpha,x))\quad \text{ for }\alpha\in L.
\end{align}

We next show that the $\phi$-coordinated quasi module structures on $W[[\hbar]]$
for $V_L[[\hbar]]$ and $B_L[[\hbar]]$ are compatible.
For $u,v\in V_L[[\hbar]]$, from Proposition \ref{prop:tech-reverse-calculations-h}  with $G^0=G$,  we get
\begin{align*}
  [Y_W(u,x_1),Y_W(v,x_2)]
  =\sum_{k\in \Z_N}\te{Res}_zY_W(Y(\mu^k(u),z)v,x_2)
  e^{zx_2\partial_{x_2}}\delta\(\xi^{-k}\frac{x_2}{x_1}\).
\end{align*}
Then, for $a\in \h,\ v\in V_L[[\hbar]]$  we have
\begin{align*}\label{prop:fk-to-eta}
  [\Phi_{W}(f(a,x_1)),Y_W(v,x_2)]
  =\sum_{k\in \Z_N}\Res_ze^{zx_2\partial_{x_2}}Y_W( Y(f(\mu^ka,\xi^kx_1/x_2),z)v,x_2).
\end{align*}
Noticing that
\begin{align}
\wh{\eta^f}(a,x_1,x_2)=(1\ot \pi_{\phi}^{-1})\eta^f(a,x)=\sum_{k\in\Z_N}f(\mu^{k}a,\xi^{k}x_1/x_2),
\end{align}
we have
\begin{align*}
\wh \Phi_V^{\phi}\(\wh{\eta^f}(a,x_1,x_2)\)=\ &\Res_z e^{zx_2\partial_{x_2}}Y\(\wh{\eta^f}(a,x_1,x_2),z\)\nonumber\\
=\ &\Res_z \sum_{k\in\Z_N}e^{zx_2\partial_{x_2}}Y(f(\mu^{k}a,\xi^{k}x_1/x_2),z).
\end{align*}
Thus
\begin{align}\label{prop:fk-to-eta}
  [\Phi_{W}(f(a,x_1)),Y_W(v,x_2)]=Y_W\(\wh \Phi_V^{\phi}(\wh\eta^f(a,x_1,x_2))v,x_2\)
\end{align}
for $a\in\h,\  v\in V_L$.  From this, the compatibility follows as $\C[L]$ generates $B_L$ as a vertex algebra.
Then by Theorem \ref{prop:deform-phi-quasi-mod},  there exists a $(G,\chi_\phi)$-equivariant $\phi$-coordinated quasi
$V_L[[\hbar]]^{\eta^f}$-module structure $Y_W^f(\cdot,x)$ on $W[[\hbar]]$, which is uniquely determined by
(\ref{YTf-h-general}) and (\ref{YTf-e-alpha-general}).

Finally, using the isomorphism $\Psi_{\eta_{(0)}}$ we obtain a $(G,\chi_\phi)$-equivariant $\phi$-coordinated quasi
$V_L[[\hbar]]^{\eta}$-module structure on $W[[\hbar]]$, where the vertex operator map is also uniquely determined by
(\ref{YTf-h-general}) and (\ref{YTf-e-alpha-general}) as $\Psi_{\eta_{(0)}}=1$ on  $\h+\C_{\varepsilon}[L]$.
\end{proof}

\begin{rem}
{\em Suppose that $\mu$ is an automorphism of $V_L$, which is lifted from
an isometry $\mu_0$ of lattice $L$ with a period $N$ such that
\begin{align}\label{mu-condition}
  \sum_{k\in\Z_N}\<\mu_0^k(\al),\al\>\in 2\Z\quad\te{for }\al\in L.
\end{align}
It was proved in \cite{JKLT-G-phi-mod} that on Lepowsky's twisted $V_L$-module $V_T$ (see \cite{Lep}),
there exists a canonical $(G,\chi_\phi)$-equivariant $\phi$-coordinated quasi
$V_L$-module structure. }
\end{rem}

\section{Twisted quantum affine algebras and $\phi$-coordinated $V_L[[\hbar]]^{\eta}$-modules}

In this section, as one of the main results of this paper, we give a canonical connection between twisted quantum affine algebras
and $\hbar$-adic quantum vertex algebras $V_L[[\hbar]]^{\eta}$ with particularly defined linear maps $\eta$.

\subsection{Specialized $\hbar$-adic quantum vertex algebras $V_{L}[[\hbar]]^{\eta}$}

Let $A=(a_{i,j})_{i,j\in I}$ be a Cartan matrix of type $A$, $D$, or $E$. In particular,
$A$ is a symmetric matrix with
\begin{align*}
  a_{i,i}=2\  \  \te{for }i\in I\quad\te{and}\quad a_{i,j}\in \{0, -1\}\ \te{for }i,j\in I\ \te{with } i\ne j.
\end{align*}
Let $\mu\in\Aut (A)$, i.e., $\mu$ is a permutation on $I$ such that
\begin{align}
  a_{\mu(i),\mu(j)}=a_{i,j}\quad\te{for }i,j\in I.
\end{align}
Set
\begin{align}
N=o(\mu)\   \ \text{ and }\   \xi=\exp\(2\pi i/N\).
\end{align}

Let $\g:=\g(A)$ be the corresponding finite-dimensional simple Lie algebra with a set of simple roots $\set{\al_i}{i\in I}$
and let $\<\cdot,\cdot\>$ be the Killing form on $\g$, normalized so that the squared length of all roots equals $2$.
Set
\begin{align}
L=\oplus_{i\in I}\Z\al_i,
\end{align}
 the root lattice of $\g$ with
 \begin{align}
  \<\al_i,\al_j\>=a_{i,j}\quad\te{for }i,j\in I.
\end{align}
Extend $\mu$ to an isometry on $L$, which is still denoted by $\mu$,  such that
\begin{align}
  \mu(\al_i)=\al_{\mu(i)}\quad\te{for }i\in I.
\end{align}

Let  $\varepsilon(\cdot,\cdot)$ be a {\em bilinear} character of $L$ with
\begin{align}\label{2cocycle-condition}
\varepsilon(\al_i,\al_i)=1\quad \text{ for }i\in I.
\end{align}
In particular,  $\varepsilon(\cdot,\cdot)$ is a $2$-cocycle satisfying the normalizing conditions
that are assumed in the construction of $V_L$.
From Section \ref{sec:phi-lattice-mod}, $\mu$ gives rise to a vertex operator algebra automorphism of $V_L$,
which is also denoted by $\mu$.
Set
\begin{align}
G=\<\mu\>\subset \te{Aut}(V_L).
\end{align}

As in Section 5, we assume
$$\phi(x,z)=e^{z x\partial_x}x=xe^z,$$
$\chi=1$, the trivial character of $G$, and let $\chi_\phi:G\to\C^\times$ be defined by
\begin{align*}
  \chi_\phi(\mu^k)=\xi^k\quad\te{for }k\in\Z.
\end{align*}

Recall  the field embedding
$$\iota_{x=e^z}:\   \C(x)\to\C((z)),$$
 which is the natural extension of the ring embedding of $\C[x]$ into $\C((z))$ with $x\mapsto e^{z}$.
The following is a straightforward fact:

\begin{lem}\label{xdx-dx-connection}
Let $f(z)\in \C[[z]],\ Q(x)\in \C(x)$. Then
\begin{align}
\left(f(\hbar x\partial_x)Q(x)\right)|_{x=e^z}=f(\hbar \partial_z)Q(e^z),
\end{align}
where it is understood that $Q(e^z)=\iota_{x=e^z}(Q(x))$,  an element of $\C((z))$.
\end{lem}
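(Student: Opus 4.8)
The plan is to reduce the claim to the key identity $(x\partial_x)^n Q(x)\big|_{x=e^z} = \partial_z^n\big(Q(e^z)\big)$ for all $n\in\N$, after which the general statement follows by linearity and $\hbar$-adic continuity. First I would establish the case $n=1$: for any $Q(x)\in\C(x)$ we have, by the chain rule, $\partial_z\big(\iota_{x=e^z}Q(x)\big) = \big(\partial_x Q\big)(e^z)\cdot \partial_z(e^z) = e^z\,(\partial_x Q)(e^z) = \big((x\partial_x Q)(x)\big)\big|_{x=e^z}$, using that $\iota_{x=e^z}$ is a ring homomorphism commuting with $\partial_x$ in the sense that $\iota_{x=e^z}(\partial_x Q) = (\partial_x Q)(e^z)$. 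Then I would iterate: assuming $(x\partial_x)^n Q(x)\big|_{x=e^z}=\partial_z^n\big(Q(e^z)\big)$, apply $\partial_z$ to both sides and use the $n=1$ case with $(x\partial_x)^n Q$ in place of $Q$ to get the statement for $n+1$. This induction is the technical heart; it is essentially the familiar operator identity $\partial_z = e^z\partial_x$ under $x=e^z$, hence $\partial_z^n = (x\partial_x)^n$.

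Next, writing $f(z)=\sum_{n\ge 0}c_n z^n$ with $c_n\in\C$, the operator $f(\hbar x\partial_x)$ is the $\hbar$-adically convergent sum $\sum_{n\ge 0}c_n\hbar^n (x\partial_x)^n$ acting on $Q(x)$, landing in $\C(x)[[\hbar]]$; applying the field embedding $\iota_{x=e^z}$ (extended $\C[[\hbar]]$-linearly and continuously) term by term and invoking the identity just proved gives
\begin{align*}
\big(f(\hbar x\partial_x)Q(x)\big)\big|_{x=e^z}
=\sum_{n\ge 0}c_n\hbar^n\big((x\partial_x)^n Q(x)\big)\big|_{x=e^z}
=\sum_{n\ge 0}c_n\hbar^n\,\partial_z^n\big(Q(e^z)\big)
=f(\hbar\partial_z)\big(Q(e^z)\big),
\end{align*}
where the interchange of $\iota_{x=e^z}$ with the infinite sum is justified since each application raises the $\hbar$-order and $\iota_{x=e^z}$ is continuous for the $\hbar$-adic topologies on $\C(x)[[\hbar]]$ and $\C((z))[[\hbar]]$.

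The only point requiring a little care — and the step I expect to be the main (minor) obstacle — is the bookkeeping around the embedding: one must check that $\iota_{x=e^z}$ does commute with $\partial_x$ in the precise sense used above (i.e. that differentiating the rational function $Q$ and then substituting $x=e^z$ agrees with differentiating the Laurent series $Q(e^z)$ with respect to $z$ after dividing by $e^z$), and that the substitution is well-defined on all of $\C(x)$, not merely $\C[x]$. This is immediate because $\iota_{x=e^z}$ sends the invertible element $x$ to the invertible element $e^z\in\C((z))$, so rational functions in $x$ map to elements of $\C((z))$, and $\partial_x$ and the substitution are both ring homomorphisms' derivations compatible via the chain rule. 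With these verifications the lemma follows.
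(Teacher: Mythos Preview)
Your proof is correct. The paper itself does not give a proof, stating only that the lemma is ``a straightforward fact''; your argument via the chain-rule identity $(x\partial_x)^n Q(x)\big|_{x=e^z}=\partial_z^n\big(Q(e^z)\big)$ together with termwise substitution in the $\hbar$-adic expansion of $f$ is exactly the natural verification the authors had in mind.
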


 For any integer $m$, set
 \begin{align}
 [m]_z=\frac{z^m-z^{-m}}{z-z^{-1}}\in \C[z,z^{-1}],
 \end{align}
with $z$ a formal variable as usual.
On the other hand, for any integer $n$ and for any invertible element $v$ of $\C[[\hbar]]$ with $v^2\ne1$, set
\begin{align}
  [n]_v=\frac{v^n-v^{-n}}{v-v\inverse}\in \Z[v,v^{-1}].
\end{align}
The following are some simple facts:
\begin{align}
[n]_v=n\ \  \text{ for }n\in \{-1,0,1\},\quad
[-n]_v=-[n]_v,\quad  [n]_{v^{-1}}=[n]_v.
\end{align}
Furthermore,  for $n\in \Z,\  s\in \N,\ 0\le k\le s$, define
\begin{align*}
  [s]_v!=[s]_v[s-1]_v\cdots [1]_v,\quad
  \binom{s}{k}_v=\frac{[s]_v!}{[s-k]_v![k]_v!}.
\end{align*}

As a convention, we write
\begin{align}
[m]_{e^x}=\iota_{z=e^x}([m]_z)\in\C[[x^2]].
\end{align}
Define
\begin{align}
  A(x)=([a_{i,j}]_{e^x})_{i,j\in I},
  \end{align}
 a square matrix with entries in $\C[[x^2]]$.
And we define the square matrix $C(x)$ by
\begin{align}
e^xA(x)A^{-1}-{\rm I}=xC(x);\quad  xC(x)A=e^xA(x)-A.
\end{align}
Write $C(x)=(c_{i,j}(x))_{i,j\in I}$.
Notice that
\begin{align}\label{C-property}
c_{i,j}(x)\in \C[[x]], \quad c_{i,j}(0)=\delta_{i,j},\quad c_{\mu(i),j}(x)=c_{i,\mu^{-1}(j)}(x)  \quad \text{ for }i,j\in I.
\end{align}

\begin{de}\label{def-special-f}
Let $f(\cdot,x):\  \h\to\h\ot\C(x)[[\hbar]]$ be the  linear map defined  by
\begin{align}
  f(\al_i,x)=\frac{1}{2}\hbar \sum_{j\in I}\al_j\ot c_{i,j}(\hbar x\partial_x)\(\frac{x+1}{x-1}\)
\end{align}
for $i\in I$. On the other hand, define a linear map $\eta^f: \ \h\rightarrow \h\ot \C((x))[[\hbar]]$ by
\begin{align}\label{def-eta-f}
\eta^f(\al_i,x)
= \frac{1}{2}\hbar \sum_{k\in \Z_N}\sum_{j\in I} \al_{\mu^k(j)}\ot  c_{i,j}(\hbar \partial_x)\(\frac{\xi^{k}e^x+1}{\xi^{k}e^x-1}\).
\end{align}
\end{de}

Notice that  $\eta^f(\alpha,x)|_{\hbar=0}=0$ and by (\ref{C-property}) we have
\begin{align}\label{eta-f-invariance}
(\mu\ot 1)\eta^f(\al,x)=\eta^f(\mu \alpha,x)\  \(=\eta^f(\mu \alpha,\chi(\mu)x)\) \quad \text{ for }\al\in \h.
\end{align}

Recall that for any $Q(x)\in x\C[[x]]$, by definition we have
$$\log (1+Q(x))=\sum_{n\ge 1}\frac{1}{n}(-1)^{n-1}Q(x)^{n}\in x\C[[x]].$$

\begin{de}\label{vartheta-def}
Introduce the following nonnegative integer power series
\begin{align}\label{eq:vartheta}
  \vartheta_0(x)&=\log\frac{e^{\half x}-e^{-\half x}}{x},\\
  \vartheta_k(x)&=\log\frac{\xi^{k}e^{\half x}-e^{-\half x}} {\xi^{k}-1}
      \quad \te{for }0\ne k\in\Z_N.
\end{align}
\end{de}

It is straightforward to show that the following relations hold for $k\in \Z_N$:
\begin{align}
 & \vartheta_k(x)\in x\C[[x]]\quad\te{and}\quad
  \vartheta_k(x)=\vartheta_{-k}(-x),\label{vartheta-property}\\
&\frac{d}{dx}\vartheta_k(x)= \frac{1}{2}\(\frac{\xi^{k}e^{x}+1}{\xi^{k}e^{x}-1}\)-\delta_{k,0}x^{-1}.
\label{der-vartheta}
\end{align}
Note that $\vartheta_k(x)$ as a power series is uniquely determined by (\ref{der-vartheta}) with $\vartheta_k(0)=0$.

\begin{de}\label{def-eta}
Define a linear map $\eta(\cdot,x):\  \h\to\h\ot\C((x))[[\hbar]]$  by
\begin{align}\label{def-eta-app}
  \eta(\al_i,x)
  =\frac{1}{2}\hbar \sum_{k\in\Z_N}\sum_{j\in I}\al_{\mu^k(j)}\ot  c_{i,j}(\hbar\partial_{x})\(\frac{\xi^{k}e^x+1}{\xi^{k}e^x-1}\)
 +\sum_{k\in\Z_N}\al_{\mu^k(i)}\ot \vartheta_k(x)
\end{align}
for $i\in I$.
\end{de}

Writing $\eta(\cdot,x)=\sum_{n\ge 0}\eta_n(\cdot,x)\hbar^n$ with $\eta_n(\cdot,x): \  \h\rightarrow \h\ot \C((x))$,
we have
\begin{align}\label{eta0-1}
\eta_0(\al_i,x)=\sum_{k\in\Z_N}\al_{\mu^k(i)}\ot \vartheta_k(x)\in \h\ot x\C[[x]]
\end{align}
for $i\in I$. Then
\begin{align}
\eta(\al,x)=  \eta^f(\al,x)+ \eta_0(\al,x)
\end{align}
for $\alpha\in \h$, where by (\ref{vartheta-property}),
\begin{align}\label{eta0-2}
\<\eta_0(\al,x),\beta\>=\<\al,\eta_0(\beta,-x)\>\quad \text{ for }\alpha,\beta\in \h.
\end{align}

By Theorem \ref{thm:phi-mod}, we immediately have:

\begin{prop}\label{prop:qlva-special}
With $\eta$ and $f$ defined as above,
we have an $\hbar$-adic quantum vertex algebra $V_L[[\hbar]]^{\eta}$ on which $G$ acts as an automorphism group,
where $V_L[[\hbar]]^{\eta}=V_L[[\hbar]]$ as a $\C[[\hbar]]$-module and
the vertex operator map $Y_L^{\eta}(\cdot,x)$ is uniquely determined by
\begin{align}
  &Y_L^{\eta}(a,x)=Y(a,x)+\Phi(\eta'(a,x))\quad \text{ for }a\in\h,\\
  &Y_L^{\eta}(e_{\al},x)=Y(e_{\al},x)\exp\(\Phi(\eta(\al,x))\)\quad \text{ for }\al\in L.
\end{align}
On the other hand, for any $(G,\chi_\phi)$-equivariant $\phi$-coordinated quasi
$V_L$-module $(W,Y_W)$, we have a $(G,\chi_\phi)$-equivariant $\phi$-coordinated quasi
$V_L[[\hbar]]^{\eta}$-module  $W[[\hbar]]^f$, where $W[[\hbar]]^f=W[[\hbar]]$ as a $\C[[\hbar]]$-module
and the vertex operator map $Y_W^f(\cdot,x)$ is uniquely determined by
\begin{align}
  &Y_W^f(a,x)=Y_W(a,x)+\Phi_{W}(f'(a,x))\quad \text{ for }a\in\h,\label{YTf-h}\\
  &Y_W^f(e_\al,x)=Y_W(e_\al,x)\exp\(\Phi_{W}(f(\al,x))\)\quad \text{ for }\al\in L.\label{YTf-e-alpha}
\end{align}
\end{prop}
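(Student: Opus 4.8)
The plan is to reduce everything to the two already-proved general results in this section: Theorem~\ref{thm:phi-mod} and Proposition~\ref{VL-eta-G-algebra}, together with the decomposition $\eta=\eta^f+\eta_0$ recorded in \eqref{eta0-1}--\eqref{eta0-2}. First I would check that the linear map $f(\cdot,x)$ of Definition~\ref{def-special-f} actually lands in $\h\ot\hbar\C(x)[[\hbar]]$: each entry $c_{i,j}(\hbar x\partial_x)\bigl(\frac{x+1}{x-1}\bigr)$ lies in $\C(x)[[\hbar]]$ because $c_{i,j}(w)\in\C[[w]]$ (by \eqref{C-property}) and $x\partial_x$ preserves $\C(x)$, while the overall factor $\tfrac12\hbar$ supplies the needed $\hbar$-divisibility. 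Next I would verify the $G$-equivariance hypothesis \eqref{eq:cond-phi-mod-1} of Theorem~\ref{thm:phi-mod}, i.e. $(\mu\ot1)f(\al,x)=f(\mu\al,x)$; this is exactly the computation in \eqref{eta-f-invariance} (which in turn rests on the symmetry $c_{\mu(i),j}(x)=c_{i,\mu^{-1}(j)}(x)$ from \eqref{C-property}), done at the level of $f$ rather than $\eta^f$ by the same reindexing $j\mapsto\mu^{-1}(j)$ of the sum defining $f(\mu(\al_i),x)=f(\al_{\mu(i)},x)$.

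Then I would identify the $\eta^f$ produced by the recipe in Theorem~\ref{thm:phi-mod} with the $\eta^f$ of Definition~\ref{def-special-f}: the theorem sets $\eta^f(a,z)=\sum_{k\in\Z_N}(\mu^k\ot1)f(a,\xi^ke^z)$, and substituting $f(\al_i,x)=\tfrac12\hbar\sum_{j}\al_j\ot c_{i,j}(\hbar x\partial_x)\bigl(\frac{x+1}{x-1}\bigr)$ and invoking Lemma~\ref{xdx-dx-connection} to replace $c_{i,j}(\hbar x\partial_x)(\cdots)|_{x=\xi^ke^z}$ by $c_{i,j}(\hbar\partial_z)\bigl(\frac{\xi^ke^z+1}{\xi^ke^z-1}\bigr)$ reproduces \eqref{def-eta-f} after moving $\mu^k$ onto $\al_j$. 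Here one small point: $\frac{x+1}{x-1}$ evaluated at $x=\xi^ke^z$ must be interpreted via $\iota_{x=e^z}$ exactly as the $\C((z))$-element it becomes, and $\hbar x\partial_x\mapsto\hbar\partial_z$ is the content of the lemma. Next, the auxiliary map $\eta_{(0)}$ of Theorem~\ref{thm:phi-mod} is taken to be $\eta_0$ of \eqref{eta0-1}; one must check it maps into $\h\ot x\C[[x,\hbar]]$ (true since $\vartheta_k(x)\in x\C[[x]]$ by \eqref{vartheta-property} and there is no $\hbar$ at all) and satisfies the skew-symmetry \eqref{eta0-general}, which is precisely \eqref{eta0-2}, itself an immediate consequence of $\vartheta_k(x)=\vartheta_{-k}(-x)$.

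With these three verifications in hand, Theorem~\ref{thm:phi-mod} applies verbatim: it yields the $\hbar$-adic quantum vertex algebra $V_L[[\hbar]]^{\eta}$ with $\eta=\eta^f+\eta_0$ on which $G$ acts as an automorphism group, and for every $(G,\chi_\phi)$-equivariant $\phi$-coordinated quasi $V_L$-module $(W,Y_W)$ it produces the module $W[[\hbar]]^f$ whose vertex operators are given by \eqref{YTf-h-general}--\eqref{YTf-e-alpha-general}, which are literally the displayed formulas \eqref{YTf-h}--\eqref{YTf-e-alpha}. Finally I would record the explicit form of $Y_L^{\eta}$ on $\h$ and on $\C_\varepsilon[L]$: these are just the general formulas of Theorem~\ref{thm:qlva} specialized to the present $\eta$, so nothing new is needed. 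In short, the statement is essentially a corollary — the only real work is the bookkeeping matching of Definition~\ref{def-special-f} and Definition~\ref{def-eta} against the hypotheses of Theorem~\ref{thm:phi-mod}. The main obstacle, such as it is, will be the careful application of Lemma~\ref{xdx-dx-connection} to turn $c_{i,j}(\hbar x\partial_x)$ acting on a rational function into $c_{i,j}(\hbar\partial_x)$ acting on its $\iota_{x=e^x}$-image, and keeping the $\mu^k$-conjugations and the $k$-sum consistent between $f$, $\eta^f$ and $\eta$; everything else is a direct substitution.
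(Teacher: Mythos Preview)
Your proposal is correct and follows essentially the same approach as the paper. The paper's proof is literally the single sentence ``By Theorem~\ref{thm:phi-mod}, we immediately have,'' since all the hypothesis-checking you outline (the decomposition $\eta=\eta^f+\eta_0$, the properties of $\eta_0$, the $\mu$-equivariance) has already been carried out in the text between Definition~\ref{def-special-f} and the statement of the proposition; your plan simply makes that bookkeeping explicit, including the identification via Lemma~\ref{xdx-dx-connection} of the $\eta^f$ in Definition~\ref{def-special-f} with the $\eta^f$ produced by the recipe in Theorem~\ref{thm:phi-mod}.
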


In the following, we give additional structural results about $V_L[[\hbar]]^{\eta}$.
Though we normally use integer power series of formal variables,
it is very convenient to have logarithm $\log x$.
Consider the commutative and associative algebra $\C((x))[\log x]$ with $\log x$ viewed as a formal variable,
equipped with a derivation $\partial_x$ defined by
$$\partial_x \log x=x^{-1}\  \text{ and } \  \partial_x g(x)=\frac{d}{dx}g(x)\quad \text{ for }g(x)\in \C((x)).$$
Note that $\Z \log x+x\C[[x]]$ is an additive subgroup of $\C((x))[\log x]$.
We have a natural group homomorphism
\begin{align}
\exp:\  \Z \log x+x\C[[x]]\rightarrow \C((x))^{\times}
\end{align}
with $\exp (m\log x)=x^m$ for $m\in \Z$. Then extend $\exp$ to a group homomorphism
 $$\exp:\  \Z \log x+x\C[[x]]+\hbar\C((x))[[\hbar]]\rightarrow (\C((x))[[\hbar]])^{\times}.$$

Recall that $[a_{i,j}]_{e^x}\in \C[[x^2]]$ for $i,j\in I$. Note that
$$[a_{i,j}]_{e^{-x}}=[a_{i,j}]_{e^x}=[a_{j,i}]_{e^x}\ \text{ and }\  (\partial_x^{m}\log x)|_{x=-z}=(-\partial_z)^{m}\log z\ \text{ for }m\ge 1.$$

 \begin{lem}\label{eta-basic-facts}
 For $i,j\in I$, we have
\begin{align}\label{eta-key}
&\<\eta(\al_i,x),\al_j\>=\sum_{k\in \Z_N}[a_{\mu^k(i),j}]_{q^{\partial_x}}q^{\partial_x}\vartheta_k(x)
+\([a_{i,j}]_{q^{\partial_x}}q^{\partial_x}-a_{i,j}\)\log x.
\end{align}
\end{lem}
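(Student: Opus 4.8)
The plan is to prove (\ref{eta-key}) by a direct computation that reduces everything to two ingredients already at hand: the defining relation $xC(x)A=e^xA(x)-A$ of the matrix $C(x)$, and the differential equation (\ref{der-vartheta}) for the power series $\vartheta_k$. First I would pair the formula (\ref{def-eta-app}) for $\eta(\al_i,x)$ with $\al_j$ against the second tensor factor, using $\<\al_p,\al_q\>=a_{p,q}$ for $p,q\in I$, obtaining
\begin{align*}
\<\eta(\al_i,x),\al_j\>={}&\tfrac12\hbar\sum_{k\in\Z_N}\sum_{l\in I}a_{\mu^k(l),j}\,c_{i,l}(\hbar\partial_x)\!\(\frac{\xi^k e^x+1}{\xi^k e^x-1}\)\\
&{}+\sum_{k\in\Z_N}a_{\mu^k(i),j}\,\vartheta_k(x).
\end{align*}
Then I would use (\ref{der-vartheta}) in the form $\tfrac12\!\(\tfrac{\xi^k e^x+1}{\xi^k e^x-1}\)=\vartheta_k'(x)+\delta_{k,0}x^{-1}$ and absorb the leftover factor $\hbar$ into a derivative, writing $\hbar\vartheta_k'(x)=\hbar\partial_x\vartheta_k(x)$ and $\hbar x^{-1}=\hbar\partial_x\log x$ inside the differential algebra $\C((x))[\log x][[\hbar]]$. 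Writing $D:=\hbar\partial_x$ (which commutes with every $c_{i,l}(D)$), the first (``$f$-part'') sum becomes $\sum_{k,l}a_{\mu^k(l),j}\,c_{i,l}(D)\,D\bigl(\vartheta_k(x)+\delta_{k,0}\log x\bigr)$.

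Next I would feed in the structure of $C$. Since $\mu$ is a Cartan-matrix automorphism, $a_{\mu^k(l),j}=a_{l,\mu^{-k}(j)}$ and $a_{\mu^k(i),j}=a_{i,\mu^{-k}(j)}$, so for each $k$ the inner $l$-sum collapses to the operator $\bigl(D\,C(D)A\bigr)_{i,\mu^{-k}(j)}$ applied to $\vartheta_k(x)$ (and, when $k=0$, to $\log x$). Reading the identity $xC(x)A=e^xA(x)-A$ entrywise as a power-series identity in $\C[[x]]$ and specializing at $x=D$ — legitimate because every entry of $C$ lies in $\C[[x]]$, so each $\hbar^n$-coefficient of $c_{i,l}(D)$ is a finite-order differential operator — gives
$$\bigl(D\,C(D)A\bigr)_{i,m}=\bigl(e^{D}A(D)-A\bigr)_{i,m}=[a_{i,m}]_{q^{\partial_x}}q^{\partial_x}-a_{i,m},$$
where $q^{\partial_x}=e^{\hbar\partial_x}$ and $A(D)_{i,m}=[a_{i,m}]_{q^{\partial_x}}$ by the definitions of $A(x)$ and of $[\,\cdot\,]_{e^x}$; in particular the right-hand side vanishes at $D=0$, matching the regularity of $C$. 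Substituting $m=\mu^{-k}(j)$ and undoing the symmetry, the $f$-part equals $\sum_{k\in\Z_N}\bigl([a_{\mu^k(i),j}]_{q^{\partial_x}}q^{\partial_x}-a_{\mu^k(i),j}\bigr)\vartheta_k(x)+\bigl([a_{i,j}]_{q^{\partial_x}}q^{\partial_x}-a_{i,j}\bigr)\log x$.

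Finally I would add back the $\vartheta$-part $\sum_{k\in\Z_N}a_{\mu^k(i),j}\vartheta_k(x)$: the terms $-a_{\mu^k(i),j}\vartheta_k(x)$ cancel, and what remains is precisely the right-hand side of (\ref{eta-key}) (using that $q^{\partial_x}$ commutes with $[a]_{q^{\partial_x}}$). The computation is otherwise routine; the step I expect to require the most care — and which I regard as the main obstacle — is the handling of the $k=0$ singular piece: one must identify the simple pole $x^{-1}$ of $\tfrac{e^x+1}{e^x-1}$ with $\partial_x\log x$, observe that $[a_{i,j}]_{q^{\partial_x}}q^{\partial_x}-a_{i,j}$ is divisible by $\hbar\partial_x$ (so that no genuine $\tfrac1{\partial_x}$ is ever needed, despite the apparent division in the ``quantum integer'' notation), and check that these manipulations are consistent with specializing a power-series identity in $x$ at the operator $x=\hbar\partial_x$ acting on $\C((x))[\log x][[\hbar]]$.
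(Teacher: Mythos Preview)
Your proposal is correct and follows essentially the same route as the paper's proof: pair (\ref{def-eta-app}) with $\al_j$, rewrite the rational factor via (\ref{der-vartheta}) as $(\hbar\partial_x)(\vartheta_k(x)+\delta_{k,0}\log x)$, use $a_{\mu^k(l),j}=a_{l,\mu^{-k}(j)}$ to recognize the $l$-sum as the $(i,\mu^{-k}(j))$-entry of $(\hbar\partial_x)C(\hbar\partial_x)A=e^{\hbar\partial_x}A(\hbar\partial_x)-A$, and then cancel the $a_{\mu^k(i),j}\vartheta_k(x)$ terms. Your extended discussion of the $k=0$ singular piece and of specializing the power-series identity at $\hbar\partial_x$ is sound and matches exactly what the paper does implicitly.
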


\begin{proof} Using (\ref{der-vartheta}) and the relation $xC(x)A=A(x)e^x-A$,  we get
\begin{align*}
&\<\eta(\al_i,x),\al_j\>\\
=\ & \sum_{k\in\Z_N}\sum_{r\in I}a_{\mu^k(r),j} c_{i,r}(\hbar\partial_{x}) (\hbar\partial_x)\(\vartheta_k(x) +\delta_{k,0}\log x\)
 +\sum_{k\in\Z_N}a_{\mu^k(i),j} \vartheta_k(x)      \\
 =\ & \sum_{k\in \Z_N}\([a_{i,\mu^{-k}(j)}]_{q^{\partial_x}}q^{\partial_x}-a_{i,\mu^{-k}(j)}\)(\vartheta_k(x)+\delta_{k,0}\log x)
  +\sum_{k\in\Z_N}a_{\mu^k(i),j} \vartheta_k(x)  \\
 =\ & \sum_{k\in \Z_N}[a_{\mu^k(i),j}]_{q^{\partial_x}}q^{\partial_x}\vartheta_k(x)
+\([a_{i,j}]_{q^{\partial_x}}q^{\partial_x}-a_{i,j}\)\log x,
\end{align*}
as claimed.
\end{proof}

For $i,j\in I$, set
\begin{align}\label{kappa-ij-def}
  \kappa_{i,j}(x)=\exp\(\sum_{k\in\Z_N}
  [a_{\mu^k(i),j}]_{q^{\partial_{x}}}q^{\partial_{x}}\vartheta_k(x) \).
\end{align}
 As $\vartheta_k(x)\in x\C[[x]]$, for any $m\in \Z$ we have
$$q^{m\partial_{x}}\vartheta_k(x)=e^{m\hbar \partial_{x}}\vartheta_k(x)\in \vartheta_k(x)+\hbar \C[[x,\hbar]]
\subset x\C[[x]]+\hbar \C[[x,\hbar]].$$
Then
\begin{align}\label{6.35}
[a_{\mu^k(i),j}]_{q^{\partial_{x}}}q^{\partial_{x}}\vartheta_k(x)\in  x\C[[x]]+\hbar\C[[x,\hbar]].
\end{align}
Thus $\kappa_{i,j}(x)$ exist in $\C[[x,\hbar]]$ and are invertible.
We immediately have:

\begin{lem}\label{eta-kappa-relation}
For $i,j\in I$,
\begin{align*}
e^{\<\eta(\al_i,x),\al_j\>}=\kappa_{i,j}(x)\exp \([a_{i,j}]_{q^{\partial_x}} q^{\partial_x}-a_{i,j})\log x \).
\end{align*}
\end{lem}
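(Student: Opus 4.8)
The plan is simply to combine Lemma~\ref{eta-basic-facts} with the definition of $\kappa_{i,j}(x)$ in \eqref{kappa-ij-def}, using the fact that $\exp$ is a group homomorphism from the additive group $\Z\log x+x\C[[x]]+\hbar\C((x))[[\hbar]]$ to $(\C((x))[[\hbar]])^{\times}$. First I would recall from \eqref{eta-key} that
\begin{align*}
\<\eta(\al_i,x),\al_j\>=\sum_{k\in \Z_N}[a_{\mu^k(i),j}]_{q^{\partial_x}}q^{\partial_x}\vartheta_k(x)
+\([a_{i,j}]_{q^{\partial_x}}q^{\partial_x}-a_{i,j}\)\log x,
\end{align*}
so the right-hand side is a sum of two terms, the first lying in $x\C[[x]]+\hbar\C[[x,\hbar]]$ by \eqref{6.35} and the second being a $\C[[\hbar]]$-multiple of $\log x$ (note that $[a_{i,j}]_{q^{\partial_x}}q^{\partial_x}-a_{i,j}$ applied to $\log x$ produces $\bigl([a_{i,j}]_{q^{\partial_x}}q^{\partial_x}-a_{i,j}\bigr)\log x$, which makes sense in $\Z\log x+\hbar\C[[\hbar]]\log x$ since $q^{\partial_x}\log x=\log x+\hbar x^{-1}\cdot(\text{stuff})$; more precisely $q^{m\partial_x}\log x=\log x+$ an element of $\hbar\C((x))[[\hbar]]$, so the whole expression lies in the domain of $\exp$).

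Then I would apply $\exp$ to both sides of \eqref{eta-key}. Because $\exp$ is a homomorphism on the relevant additive group, it carries the sum to a product:
\begin{align*}
e^{\<\eta(\al_i,x),\al_j\>}
=\exp\(\sum_{k\in \Z_N}[a_{\mu^k(i),j}]_{q^{\partial_x}}q^{\partial_x}\vartheta_k(x)\)\cdot
\exp\(\bigl([a_{i,j}]_{q^{\partial_x}}q^{\partial_x}-a_{i,j}\bigr)\log x\).
\end{align*}
The first factor is exactly $\kappa_{i,j}(x)$ by definition \eqref{kappa-ij-def}, and the second factor is the advertised $\exp\(\bigl([a_{i,j}]_{q^{\partial_x}}q^{\partial_x}-a_{i,j}\bigr)\log x\)$. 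This gives the claimed identity.

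There is really no serious obstacle here; the one point requiring a word of care is making sure all the exponentials are well defined in $(\C((x))[[\hbar]])^{\times}$. For $\kappa_{i,j}(x)$ this was already checked before the statement (via \eqref{6.35}, the argument lies in $x\C[[x]]+\hbar\C[[x,\hbar]]$, so $\kappa_{i,j}(x)\in\C[[x,\hbar]]$ is invertible). For the second factor one observes that $[a_{i,j}]_{q^{\partial_x}}q^{\partial_x}$ is a power series in $\hbar\partial_x$ with constant term $1$ (since $[a_{i,j}]_v|_{v=1}=a_{i,j}$, and here $a_{i,i}=2$ or $a_{i,j}\in\{0,-1\}$, so the $\hbar=0$ specialization of $[a_{i,j}]_{q^{\partial_x}}q^{\partial_x}$ is multiplication by $a_{i,j}$ — wait, we must be slightly more careful: $[a_{i,j}]_{q^{\partial_x}}q^{\partial_x}$ evaluated at $\hbar=0$ equals $a_{i,j}$, so $[a_{i,j}]_{q^{\partial_x}}q^{\partial_x}-a_{i,j}\in\hbar\cdot\C[[\hbar]][\partial_x]$, hence applied to $\log x$ it lands in $\hbar\C((x))[[\hbar]]$), so that $\exp$ of it is a well-defined element of $1+\hbar\C((x))[[\hbar]]\subset(\C((x))[[\hbar]])^{\times}$. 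With these remarks, the identity follows immediately by applying $\exp$ to \eqref{eta-key}.
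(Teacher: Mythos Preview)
Your proposal is correct and follows exactly the paper's approach: the paper simply writes ``We immediately have'' before this lemma, indicating that one just applies $\exp$ to both sides of \eqref{eta-key} and identifies the first factor with the definition \eqref{kappa-ij-def} of $\kappa_{i,j}(x)$. Your additional remarks checking that both exponentials are well defined in $(\C((x))[[\hbar]])^{\times}$ are helpful and accurate.
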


Using (\ref{der-vartheta}) and (\ref{eta-key}) we get
\begin{align}
&\< \eta'(\al_i,x),\al_j\>=\frac{1}{2}\sum_{k\in\Z_N}[a_{\mu^k(i),j}]_{q^{\partial_{x}}} q^{\partial_x}
 \(\frac{\xi^{k}e^x+1}{\xi^{k}e^x-1}\)-a_{i,j}x^{-1},\\
&\< \eta''(\al_i,x),\al_j\>
=-\sum_{k\in\Z_N}[a_{\mu^k(i),j}]_{q^{\partial_{x}}} q^{\partial_x}
\( \frac{\xi^{k}e^x}{(\xi^{k}e^x-1)^2}\)+a_{i,j}x^{-2}.
\end{align}
Also, from (\ref{eta-key}) we have
\begin{align}
&\< \eta'(\al_i,x),\al_j\>^{-}=\partial_x\< \eta(\al_i,x),\al_j\>^{-}  =[a_{i,j}]_{q^{\partial_{x}}} q^{\partial_x}x^{-1}-a_{i,j}x^{-1},
\label{eta-first-der-sing}\\
&\< \eta''(\al_i,x),\al_j\>^{-}=-[a_{i,j}]_{q^{\partial_{x}}} q^{\partial_x}x^{-2}+a_{i,j}x^{-2}.\label{eta-second-der-sing}
\end{align}

We shall often use the following straightforward fact:
\begin{align}\label{singular-fact}
{\rm Sing}_x (x-b\hbar)^{-1}F(x,\hbar)=(x-b\hbar)^{-1}F(b\hbar, \hbar)
 \end{align}
for any $b\in \C,\ F(x,\hbar)\in \C[[x,\hbar]]$.
Recall that $Y_L^{\eta}(v,x)^{-}$ denotes the singular part of $Y_L^{\eta}(v,x)$ for $v\in V_L[[\hbar]]^{\eta}$.
Specializing Proposition \ref{VL-eta-S(X)-general} we have:

\begin{lem}\label{lem:Y-eta-Sing}
Let $i,j\in I$. Then
\begin{align}
  &Y_L^\eta(\al_i,x)^-\al_j=[a_{i,j}]_{q^{\partial_{x}}}q^{\partial_{x}}x^{-2}\vac,
    \label{eq:Y-eta-Sing-h-h}\\
  &Y_L^\eta(\al_i,x)^-e_{\pm\al_j}=\pm e_{\pm\al_j}[a_{i,j}]_{q^{\partial_{x}}}q^{\partial_{x}}x^{-1},
   \label{eq:Y-eta-Sing-h-e}\\
  &Y_L^\eta(e_{\delta \al_i},x)^-e_{\delta' \al_j}=0\quad\te{for }\delta,\delta'\in \{\pm 1\} \
  \text{ with }\delta\delta' a_{i,j}\ge 0,\label{eq:Y-eta-Sing-e-e>=0}\\
  &Y_L^\eta(e_{\pm\al_i},x)^-e_{\pm\al_j}
  = \epsilon(\al_i,\al_j)(x+\hbar)\inv\kappa_{i,j}(-\hbar)
  A_i(-\hbar)^{\pm 1}
  e_{\pm(\al_i+\al_j)}\label{eq:Y-eta-Sing-e-e<0}
  \end{align}
  for $i,j\in I$ with $a_{i,j}=-1$, and
  \begin{align}
  &Y_L^\eta(e_{\pm\al_i},x)^-e_{\mp\al_i}=
  \frac{1}{2\hbar}\(\kappa_{i,i}(0)\inv x\inv
  -\kappa_{i,i}(-2\hbar)\inv
  A_i(-2\hbar)^{\pm 1}
  (x+2\hbar)\inv\)\vac,\label{eq:Y-eta-Sing-e-e-to-h>0}
\end{align}
where
\begin{align}
  A_i(x)=\exp \left(\sum_{n\in \Z_{+}}\frac{ \al_i(-n)}{n}x^n\right)=E^-(-\al_i,x).
\end{align}
\end{lem}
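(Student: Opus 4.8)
The plan is to specialize the general formulas of Proposition~\ref{VL-eta-S(X)-general}, namely \eqref{eta-al-be-sing}, \eqref{eta-al-ebe-sing} and \eqref{Y-eta-e-alha-e-be}, to the specific $\eta$ of Definition~\ref{def-eta}, and then simplify using the structure lemmas accumulated just before the statement. First I would treat \eqref{eq:Y-eta-Sing-h-h} and \eqref{eq:Y-eta-Sing-h-e}: by \eqref{eta-al-be-sing} we need the singular part of $\<\eta''(\al_i,x),\al_j\>$, which is given by \eqref{eta-second-der-sing} as $-[a_{i,j}]_{q^{\partial_x}}q^{\partial_x}x^{-2}+a_{i,j}x^{-2}$; subtracting this from $\<\al_i,\al_j\>x^{-2}=a_{i,j}x^{-2}$ leaves exactly $[a_{i,j}]_{q^{\partial_x}}q^{\partial_x}x^{-2}$, giving \eqref{eq:Y-eta-Sing-h-h}. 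Similarly \eqref{eta-al-ebe-sing} together with \eqref{eta-first-der-sing} gives $\<\al_i,\al_j\>x^{-1}+\<\eta'(\al_i,x),\al_j\>^- = a_{i,j}x^{-1}+[a_{i,j}]_{q^{\partial_x}}q^{\partial_x}x^{-1}-a_{i,j}x^{-1}=[a_{i,j}]_{q^{\partial_x}}q^{\partial_x}x^{-1}$, and the sign for $e_{-\al_j}$ flips because $\<\al_i,-\al_j\>=-a_{i,j}$; this yields \eqref{eq:Y-eta-Sing-h-e}.

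Next, for the $e$-$e$ relations I would start from \eqref{Y-eta-e-alha-e-be}: $Y_L^\eta(e_{\al},x)e_{\be}=\varepsilon(\al,\be)x^{\<\al,\be\>}e^{\<\be,\eta(\al,x)\>}E^-(-\al,x)e_{\al+\be}$. When $\delta\delta' a_{i,j}\ge 0$ (so $\<\delta\al_i,\delta'\al_j\>\ge 0$), by Lemma~\ref{eta-kappa-relation} the factor $x^{\<\al,\be\>}e^{\<\be,\eta(\al,x)\>}$ equals $\kappa$-type$(x)\cdot x^{[\,\cdot\,]_{q^{\partial_x}}q^{\partial_x}}$ which lies in $x^{\mathbb N}\C[[x,\hbar]]$ (a regular series since $\vartheta_k\in x\C[[x]]$ and $[m]_{q^{\partial_x}}q^{\partial_x}$ applied to $\log x$ contributes only nonnegative powers when $\<\al,\be\>\ge 0$), so $E^-(-\al,x)$ being regular forces the singular part to vanish, giving \eqref{eq:Y-eta-Sing-e-e>=0}. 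For $a_{i,j}=-1$ (case \eqref{eq:Y-eta-Sing-e-e<0}) the exponent $\<\al_i,\al_j\>=-1$ combines with $[a_{i,j}]_{q^{\partial_x}}q^{\partial_x}=-q^{\partial_x}$ acting on $\log x$ to produce, after Lemma~\ref{eta-kappa-relation}, a factor $\kappa_{i,j}(x)\exp((-q^{\partial_x}+1)\log x)=\kappa_{i,j}(x)\,x\,(q^{\partial_x}x^{-1})$; since $q^{\partial_x}x^{-1}=e^{\hbar\partial_x}x^{-1}=(x+\hbar)^{-1}$, the single pole is at $x=-\hbar$, and extracting the residue via \eqref{singular-fact} evaluates $\kappa_{i,j}$ and $E^-(-\al_i,x)=A_i(x)$ at $x=-\hbar$, where $A_i(-\hbar)^{\pm1}$ handles the $\pm\al_i$ cases; this gives \eqref{eq:Y-eta-Sing-e-e<0}. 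Finally for \eqref{eq:Y-eta-Sing-e-e-to-h>0}, here $\al_j=\al_i$, $\<\al_i,-\al_i\>=-2$ and one must keep the $\vartheta_0$-generated $\log x$ term: $x^{-2}e^{\<-\al_i,\eta(\al_i,x)\>}=\kappa_{i,i}(x)^{-1}\exp(-([2]_{q^{\partial_x}}q^{\partial_x}-2)\log x)\,x^{-2}$ — wait, I should instead use the symmetric form of \eqref{Y-eta-e-alha-e-be} and the $\mathcal S$-skew relation; the cleanest route is to apply Lemma~\ref{tech-extra} to the commutator \eqref{Yeta-ealpha=ebeta} with $\be=-\al$, extracting $Y_L^\eta(e_{\al_i},x)^- e_{-\al_i}$ from the singular part, which after the $\kappa$-substitutions produces two simple poles at $x=0$ (from $\vartheta_0$'s $\log x$, contributing $\kappa_{i,i}(0)^{-1}$) and at $x=-2\hbar$ (from $q^{2\partial_x}x^{-1}=(x+2\hbar)^{-1}$, contributing $\kappa_{i,i}(-2\hbar)^{-1}A_i(-2\hbar)^{\pm1}$), with overall constant $\frac{1}{2\hbar}$ coming from $[2]_v=v+v^{-1}$ and the $q-q^{-1}$ normalization.

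Throughout, the key computational device is \eqref{singular-fact}: once a rational-in-$x$ (over $\C[[\hbar]]$) coefficient has been written with its poles located at $x=b\hbar$, its singular part is read off by evaluating the analytic remainder at $x=b\hbar$. The main obstacle I anticipate is the bookkeeping in \eqref{eq:Y-eta-Sing-e-e<0} and \eqref{eq:Y-eta-Sing-e-e-to-h>0}: one must correctly track how the operator $q^{\partial_x}=e^{\hbar\partial_x}$ shifts $x\mapsto x+\hbar$ inside both the $\kappa_{i,j}(x)$ factor and the vertex-operator tail $E^-(-\al_i,x)$, and verify that $E^+(-\al_i,x)e_{\al_i+\al_j}$ contributes nothing to the pole while $E^-$ survives and gets evaluated at the shifted point; the $\pm$ exponent on $A_i$ must be matched against whether one deforms $e_{\al_i}$ or $e_{-\al_i}$, using that $Y(e_{-\al_i},x)=E^-(\al_i,x)E^+(\al_i,x)e_{-\al_i}x^{-\al_i}$ has $E^-(\al_i,x)=A_i(x)^{-1}$. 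I would organize the proof as four short paragraphs, one per displayed relation (grouping the $\pm$ cases), each reducing to an application of Proposition~\ref{VL-eta-S(X)-general} followed by Lemmas~\ref{eta-basic-facts}, \ref{eta-kappa-relation} and the elementary identities \eqref{eta-first-der-sing}--\eqref{eta-second-der-sing}, \eqref{singular-fact}.
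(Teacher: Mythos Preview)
Your approach is essentially the paper's: specialize Proposition~\ref{VL-eta-S(X)-general} via \eqref{eta-al-be-sing}--\eqref{Y-eta-e-alha-e-be}, feed in \eqref{eta-first-der-sing}--\eqref{eta-second-der-sing} for the first two identities, and for the $e$--$e$ cases rewrite $x^{\<\al,\be\>}e^{\<\be,\eta(\al,x)\>}$ using Lemma~\ref{eta-kappa-relation} and then extract poles with \eqref{singular-fact}. Your treatment of \eqref{eq:Y-eta-Sing-h-h}, \eqref{eq:Y-eta-Sing-h-e}, \eqref{eq:Y-eta-Sing-e-e>=0}, \eqref{eq:Y-eta-Sing-e-e<0} matches the paper line for line.

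For \eqref{eq:Y-eta-Sing-e-e-to-h>0} you were on the right track and then unnecessarily swerved. The direct computation you began is exactly what the paper does: from \eqref{Y-eta-e-alha-e-be} and Lemma~\ref{eta-kappa-relation},
\[
x^{-2}\,e^{-\<\al_i,\eta(\al_i,x)\>}
=\kappa_{i,i}(x)^{-1}\exp\bigl(-[2]_{q^{\partial_x}}q^{\partial_x}\log x\bigr)
=\kappa_{i,i}(x)^{-1}\,\frac{1}{x(x+2\hbar)}
=\frac{\kappa_{i,i}(x)^{-1}}{2\hbar}\Bigl(x^{-1}-(x+2\hbar)^{-1}\Bigr),
\]
since $[2]_{q^{\partial_x}}q^{\partial_x}=q^{2\partial_x}+1$; now \eqref{singular-fact} evaluates $\kappa_{i,i}(x)^{-1}A_i(x)^{\pm1}$ at $x=0$ and $x=-2\hbar$ (using $\varepsilon(\al_i,\al_i)=1$ from \eqref{2cocycle-condition} and $e_{\al_i-\al_i}=\vac$). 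There is no need to pass through the commutator \eqref{Yeta-ealpha=ebeta} or Lemma~\ref{tech-extra}. Also, your worry about an $E^+$ contribution is moot: formula \eqref{Y-eta-e-alha-e-be} already contains only the creation part $E^-(-\al,x)=A_i(x)^{\pm1}$, so no annihilation tail needs to be tracked.
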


\begin{proof} Using (\ref{eta-al-be-sing}), (\ref{eta-al-ebe-sing}), (\ref{eta-second-der-sing}), (\ref{eta-first-der-sing})
we get
\begin{align*}
 & Y_L^\eta(\al_i,x)^-\al_j =  \(\<\al_i,\al_j\>x^{-2}-\<\eta''(\al_i,x),\al_j\>^{-}\){\bf 1}
  =  [a_{i,j}]_{q^{\partial_{x}}}q^{\partial_{x}}x^{-2}\vac,\\
  &Y_L^\eta(\al_i,x)^-e_{\pm\al_j}=\( \pm \<\al_i,\al_j\>x^{-1}-\<\eta'(\al_i,x),\pm \al_j\>^{-}\)e_{\pm \al_j}
= \pm e_{\pm\al_j}[a_{i,j}]_{q^{\partial_{x}}}q^{\partial_{x}}x^{-1}.
\end{align*}
Recall (\ref{Y-eta-e-alha-e-be}):
\begin{align*}
Y_L^{\eta}(e_\al,x)e_\be=\varepsilon(\al,\be)x^{\<\al,\be\>}e^{\<\be,\eta(\al,x)\>}E^{-}(-\al,x)e_{\al+\be}.\nonumber
\end{align*}
Taking $\alpha=\delta\al_i,\ \be=\delta'\al_j$ with $\delta,\delta'\in \{\pm 1\}$,
using Lemma \ref{eta-kappa-relation}, we get
\begin{align}\label{general-delta-delta'}
&Y_L^\eta(e_{\delta\al_i},x)e_{\delta'\al_j}\\
=\ & \epsilon(\al_i,\al_j)^{\delta\delta'}x^{\delta\delta'a_{i,j}}
(e^{\<\al_j,\eta(\al_i,x)\>})^{\delta\delta'}E^{-}(-\delta\al_i,x)e_{\delta \al_i+\delta' \al_j}\nonumber   \\
=\ &\epsilon(\al_i,\al_j)^{ \delta\delta'}A_i(x)^{\delta}e_{\delta\al_i+\delta'\al_j}
  \exp\(\delta\delta' a_{i,j}(\hbar\partial_{x})q^{\partial_{x}}\log x\)\kappa_{i,j}(x)^{\delta\delta'}.\nonumber
\end{align}
Noticing that  $\exp (mq^{n\partial_{x}}\log x)=(x+n\hbar)^m$ for $m,n\in \Z$,
we see that for $k\in \N$, $\exp\([k]_{q^{\partial_{x}}}q^{\partial_{x}}\log x\)$ is regular in $x$.
Then (\ref{eq:Y-eta-Sing-e-e>=0}) follows from (\ref{general-delta-delta'}).

In case $a_{i,j}=-1$, we have
$$\exp\([a_{i,j}]_{q^{\partial_{x}}}q^{\partial_{x}}\log x\)=\exp\(-q^{\partial_{x}}\log x\)=(x+\hbar)^{-1}.$$
Then using (\ref{singular-fact}) we obtain (\ref{eq:Y-eta-Sing-e-e<0}).
As for the last assertion, noticing that
$$\exp\(-[a_{i,i}]_{q^{\partial_{x}}}q^{\partial_{x}}\log x\)
=\exp\(-(q^{2\partial_x}+1)\log x\)=\frac{1}{2\hbar}(x^{-1}-(x+2\hbar)^{-1}),$$
using (\ref{singular-fact}) we get
\begin{align*}
  Y_L^\eta(e_{\pm\al_i},x)^-e_{\mp\al_i}
  =\  &\epsilon(\al_i,\al_i)\inv
  \frac{1}{2\hbar}\Sing_x\(A_i(x)^{\pm 1}\vac
  \kappa_{i,i}(x)\inv (x\inv- (x+2\hbar)\inv\)\\
  =\  &\frac{1}{2\hbar}\(\kappa_{i,i}(0)\inv x\inv-
  \kappa_{i,i}(-2\hbar)\inv A_i(-2\hbar)^{\pm 1}
  (x+2\hbar)\inv \)\vac.
\end{align*}
Note that we use the assumption $\epsilon(\al_i,\al_i)=1$ and $\epsilon(\delta\al_i,\delta'\al_j)=\delta\delta'\epsilon(\al_i,\al_j)$.
\end{proof}

Using (\ref{eta-key}), we also get
\begin{align}\label{theta-difference}
&\<\eta(\al_i,-x),\al_j\>-\<\eta(\al_j,x),\al_i\>\\
=\ &  \sum_{k\in \Z_N}\(q^{-a_{\mu^k(i),j}\partial_x}-q^{a_{\mu^k(i),j}\partial_x}\)\vartheta_{-k}(x)
+\(q^{-a_{i,j}\partial_x}-q^{a_{i,j}\partial_x}\)\log x.\nonumber
\end{align}
Note that for $k\in \Z_N,\ m\in \Z$, we have
\begin{align}\label{vartheta-diff}
\vartheta_k(x-m\hbar)-\vartheta_k(x+m\hbar)=\log \(\frac{q^m-\xi^{k}e^{x}}{1-q^m\xi^{k}e^{x}}\)
-\delta_{k,0}\log \(\frac{1-m\hbar/x}{1+m\hbar/x}\).
\end{align}
Then
\begin{align}
\<\eta(\al_i,-x),\al_j\>-\<\eta(\al_j,x),\al_i\>
=\log \prod_{k\in \Z_N}\(\frac{q^{a_{\mu^k(i),j}}-\xi^{-k}e^{x}}{1-q^{a_{\mu^k(i),j}}\xi^{-k}e^{x}}\).
\end{align}
On the other hand, (\ref{theta-difference}) together with (\ref{der-vartheta}) directly gives
\begin{align*}
&\<\eta'(\al_j,x),\al_i\>+\<\eta'(\al_i,-x),\al_j\>
= \frac{1}{2} \sum_{k\in \Z_N}\(q^{a_{\mu^k(i),j}\partial_x}-q^{-a_{\mu^k(i),j}\partial_x}\)\(\frac{\xi^{-k}e^{x}+1}{\xi^{-k}e^{x}-1}\),\nonumber\\
&\<\eta''(\al_j,x),\al_i\>-\<\eta''(\al_i,-x),\al_j\>
=\sum_{k\in \Z_N}\(q^{-a_{\mu^k(i),j}\partial_x}-q^{a_{\mu^k(i)j}\partial_x}\) \frac{\xi^{-k}e^{x}}{(\xi^{-k}e^{x}-1)^2}.
\nonumber
\end{align*}

\begin{de}\label{def-tilde-g-ij}
For $i,j\in I$, define a rational function (lying in $\C[q,q^{-1}](x)$)
\begin{align}
\tilde{g}_{i,j}(x)=\prod_{k\in \Z_N}\(\frac{q^{a_{\mu^k(i),j}}-\xi^{-k}x}{1-q^{a_{\mu^k(i),j}}\xi^{-k}x}\).
\end{align}
\end{de}

The following is an immediate consequence of Proposition \ref{VL-eta-S(X)-general}:

\begin{lem}\label{VL-eta-S(X)-special}
For the $\hbar$-adic quantum vertex algebra $V_L[[\hbar]]^{\eta}$,  for $i,j\in I$ we have
\begin{align*}
  &\mathcal{S}(x)(\al_j\ot \al_i)
  =\al_j\ot\al_i+\vac\ot\vac\ot \sum_{k\in\Z_N}
  \(q^{-a_{\mu^k(i),j}\pd{x}}-q^{a_{\mu^k(i),j}\pd{x}}\)
  \frac{\xi^{-k}e^{x}}{(\xi^{-k}e^x-1)^2},\\
    &\mathcal{S}(x)(e_{\pm\al_j}\ot\al_i)
  =e_{\pm\al_j}\ot\al_i\pm\half e_{\pm\al_j}\ot\vac\ot \sum_{k\in\Z_N}
  \(q^{a_{\mu^k(i),j}\pd{x}}-q^{-a_{\mu^k(i),j}\pd{x}}\)
    \frac{\xi^{-k}e^{x}+1}{\xi^{-k}e^{x}-1},\\
   &\mathcal{S}(x)(e_{\delta \al_j}\ot e_{\delta' \al_i})  =e_{\delta \al_j}\ot e_{\delta'\al_i}\ot \tilde{g}_{i,j}(e^{x})^{\delta\delta'},
\end{align*}
where $\delta,\delta'\in \{\pm 1\}$ and $\tilde{g}_{i,j}(e^{x})^{\delta\delta'}$ are understood as elements of $\C((x))[[\hbar]]$.
\end{lem}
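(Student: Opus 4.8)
The plan is to obtain all three identities by specializing the three formulas for $\mathcal{S}(x)$ recorded in Proposition \ref{VL-eta-S(X)-general} to the vectors $\al_i,\al_j\in\h$ and $e_{\pm\al_i},e_{\pm\al_j}\in\C_\varepsilon[L]$, and then rewriting the resulting scalar coefficients by means of the explicit expressions for $\eta$, $\eta'$ and $\eta''$ that were computed just above Definition \ref{def-tilde-g-ij}. First I would take $\be=\al_j$ and $\al=\al_i$ in the first identity of Proposition \ref{VL-eta-S(X)-general}, giving $\mathcal{S}(x)(\al_j\ot\al_i)=\al_j\ot\al_i\ot 1+\vac\ot\vac\ot\big(\<\eta''(\al_j,x),\al_i\>-\<\eta''(\al_i,-x),\al_j\>\big)$; substituting the already-established identity $\<\eta''(\al_j,x),\al_i\>-\<\eta''(\al_i,-x),\al_j\>=\sum_{k\in\Z_N}\big(q^{-a_{\mu^k(i),j}\partial_x}-q^{a_{\mu^k(i),j}\partial_x}\big)\frac{\xi^{-k}e^{x}}{(\xi^{-k}e^x-1)^2}$ then yields the first displayed formula.

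For the second identity I would take $\be=\pm\al_j$, $\al=\al_i$ in the second formula of Proposition \ref{VL-eta-S(X)-general}. Using $\C$-linearity of $\eta(\cdot,x)$ in its first argument and of $\<\cdot,\cdot\>$ in its second argument, the scalar $\<\eta'(\pm\al_j,x),\al_i\>+\<\eta'(\al_i,-x),\pm\al_j\>$ equals $\pm\big(\<\eta'(\al_j,x),\al_i\>+\<\eta'(\al_i,-x),\al_j\>\big)$, and the latter was shown above Definition \ref{def-tilde-g-ij} to equal $\half\sum_{k\in\Z_N}\big(q^{a_{\mu^k(i),j}\partial_x}-q^{-a_{\mu^k(i),j}\partial_x}\big)\frac{\xi^{-k}e^{x}+1}{\xi^{-k}e^{x}-1}$; this produces the second displayed formula. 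For the third, take $\be=\delta\al_j$, $\al=\delta'\al_i$ in the third formula of Proposition \ref{VL-eta-S(X)-general}: by bilinearity of $\<\cdot,\cdot\>$, linearity of $\eta$ in the first slot, and symmetry of the form, the exponent $\<\delta\al_j,\eta(\delta'\al_i,-x)\>-\<\delta'\al_i,\eta(\delta\al_j,x)\>$ collapses to $\delta\delta'\big(\<\eta(\al_i,-x),\al_j\>-\<\eta(\al_j,x),\al_i\>\big)$, which by the relation displayed just after \eqref{vartheta-diff} equals $\delta\delta'\log\tilde g_{i,j}(e^x)$ with $\tilde g_{i,j}$ as in Definition \ref{def-tilde-g-ij}; exponentiating gives the factor $\tilde g_{i,j}(e^x)^{\delta\delta'}$.

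I do not expect a substantial obstacle here, since the statement is a direct specialization of Proposition \ref{VL-eta-S(X)-general}: the only points that need care are the sign bookkeeping under $\al\mapsto\pm\al$ in the second and third cases and the identification of the emerging scalar series with the named objects (the $q^{\pm a_{\mu^k(i),j}\partial_x}$-operators applied to the elementary kernels, and $\tilde g_{i,j}$), which is exactly what the computations preceding Definition \ref{def-tilde-g-ij} supply. For completeness one should also remark that each scalar produced belongs to the target $\C((x))[[\hbar]]$ of $\mathcal{S}(x)$, because $q^{\pm a\partial_x}$ acts as $e^{\pm a\hbar\partial_x}$ and therefore sends the relevant elements of $x\C[[x]]$ and the elementary rational kernels into $\C((x))[[\hbar]]$.
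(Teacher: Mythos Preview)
Your proposal is correct and matches the paper's approach exactly: the paper states the lemma as ``an immediate consequence of Proposition \ref{VL-eta-S(X)-general}'' and gives no further proof, relying precisely on the scalar identities for $\<\eta(\al_i,-x),\al_j\>-\<\eta(\al_j,x),\al_i\>$, $\<\eta'(\al_j,x),\al_i\>+\<\eta'(\al_i,-x),\al_j\>$, and $\<\eta''(\al_j,x),\al_i\>-\<\eta''(\al_i,-x),\al_j\>$ computed just before Definition \ref{def-tilde-g-ij}. Your sign and linearity bookkeeping is accurate.
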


The following is a key result closely related to the Serre relations:

\begin{lem}\label{lem:Y-eta-rel-5}
Let $i,j\in I$ such that $a_{i,j}=\<\al_i,\al_j\>=-1$. Then
\begin{align}
(e_{\pm\al_i})_0^{\eta}(e_{\pm\al_i})_0^{\eta} e_{\pm\al_j}=0
\end{align}
in $V_L[[\hbar]]^{\eta}$, where $Y_L^{\eta}(v,x)=\sum_{n\in \Z}v_n^{\eta}x^{-n-1}$ for $v\in V_L[[\hbar]]^{\eta}$.
\end{lem}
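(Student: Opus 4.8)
The plan is to reduce the claimed identity $(e_{\pm\al_i})_0^{\eta}(e_{\pm\al_i})_0^{\eta} e_{\pm\al_j}=0$ to an explicit computation inside $V_L[[\hbar]]^{\eta}$, using the product formula \eqref{Y-eta-e-alha-e-be} together with the fact that $V_L[[\hbar]]^{\eta}$ is torsion-free (being topologically free), so it suffices to verify the identity modulo each power of $\hbar$, or equivalently to carry out the computation in $V_L[[\hbar]]^{\eta}$ directly since all the ingredients are explicit power series. First I would extract $(e_{\pm\al_i})_0^{\eta} e_{\pm\al_j}$ from \eqref{Y-eta-e-alha-e-be}: since $\langle \al_i,\al_j\rangle=-1$, we have
\begin{align*}
Y_L^{\eta}(e_{\pm\al_i},x)e_{\pm\al_j}
=\varepsilon(\al_i,\al_j)\,x^{-1}e^{\langle \al_j,\eta(\pm\al_i,x)\rangle}E^{-}(\mp\al_i,x)e_{\pm(\al_i+\al_j)},
\end{align*}
so that $(e_{\pm\al_i})_0^{\eta} e_{\pm\al_j}=\operatorname{Res}_x Y_L^{\eta}(e_{\pm\al_i},x)e_{\pm\al_j}$ picks out the $x^0$-coefficient of $e^{\langle \al_j,\eta(\pm\al_i,x)\rangle}E^{-}(\mp\al_i,x)$ times $\varepsilon(\al_i,\al_j)e_{\pm(\al_i+\al_j)}$. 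By Lemma \ref{eta-kappa-relation}, the scalar factor $e^{\langle \al_j,\eta(\pm\al_i,x)\rangle}$ equals $\kappa_{i,j}(x)^{\pm1}\exp\big(\pm([a_{i,j}]_{q^{\partial_x}}q^{\partial_x}-a_{i,j})\log x\big)=\kappa_{i,j}(x)^{\pm1}(x+\hbar)^{\mp1}x^{\pm1}$ (using $a_{i,j}=-1$ and $\exp(\mp q^{\partial_x}\log x)=(x+\hbar)^{\mp1}$), so after multiplying by $x^{-1}$ the vertex operator becomes a Laurent series with a clean pole structure, and one can compute $(e_{\pm\al_i})_0^{\eta} e_{\pm\al_j}$ as a finite sum of terms of the form (polynomial in $\al_i(-n)$, $\hbar$) $\otimes\, e_{\pm(\al_i+\al_j)}$.

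Next I would compute $(e_{\pm\al_i})_0^{\eta}$ applied to this vector. Note $\langle \al_i,\al_i+\al_j\rangle=2+(-1)=1$, so \eqref{Y-eta-e-alha-e-be} gives $Y_L^{\eta}(e_{\pm\al_i},x)e_{\pm(\al_i+\al_j)}=\varepsilon(\al_i,\al_i+\al_j)x^{1}e^{\langle \al_i+\al_j,\eta(\pm\al_i,x)\rangle}E^{-}(\mp\al_i,x)e_{\pm(2\al_i+\al_j)}$ — crucially with a positive power $x^{+1}$ rather than a pole. Combined with the interaction of $E^{-}(\mp\al_i,x)$ with the $\al_i(-n)$'s already present (which only produces extra factors of $x$ and the exponential scalar, all regular or vanishing at $x=0$ to the right order), the upshot is that $Y_L^{\eta}(e_{\pm\al_i},x)\big((e_{\pm\al_i})_0^{\eta} e_{\pm\al_j}\big)$ lies in $x\,\big(V_L[[\hbar]]^{\eta}\big)[[x]][[\hbar]]$, i.e. has no $x^{-1}$ term, hence $\operatorname{Res}_x$ of it — which is exactly $(e_{\pm\al_i})_0^{\eta}(e_{\pm\al_i})_0^{\eta} e_{\pm\al_j}$ — vanishes. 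This is the vertex-algebraic shadow of the usual fact that $e_{2\al_i+\al_j}$ has norm $\langle 2\al_i+\al_j,2\al_i+\al_j\rangle=8-4-1+\dots$; more precisely the relevant exponent $\langle \al_i,\al_i+\al_j\rangle=1>0$ forces regularity.

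The step I expect to be the main obstacle is bookkeeping the exponential scalar factors $e^{\langle\cdot,\eta(\pm\al_i,x)\rangle}$ and the operator $E^{-}(\mp\al_i,x)$ carefully enough to be sure no hidden $\hbar$-denominators or $x^{-1}$-contributions survive after taking the two residues: the factor $\kappa_{i,j}(x)$ and the $\log x$-shift terms involve $q^{\partial_x}=e^{\hbar\partial_x}$, so one must track how these shift the power of $x$ and confirm — via \eqref{singular-fact} and \eqref{6.35} — that all contributions to the final residue either cancel or are manifestly zero. An alternative, possibly cleaner, route would be to pass to $V_L[[\hbar]]^{\eta}/\hbar^n V_L[[\hbar]]^{\eta}$ and argue inductively, or to use the $\mathcal{S}$-locality relations of Lemma \ref{VL-eta-S(X)-special} together with \eqref{Yeta-ealpha=ebeta} to rewrite $(e_{\pm\al_i})_0^{\eta}(e_{\pm\al_i})_0^{\eta}$ symmetrically and invoke $(e_{\pm\al_i})_0^{\eta}e_{\pm\al_i}=0$ (which follows from \eqref{eq:Y-eta-Sing-e-e>=0} with $a_{i,i}=2\ge0$); but I expect the direct residue computation above to be the most transparent.
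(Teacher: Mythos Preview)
Your overall approach is the same as the paper's: compute $(e_{\pm\al_i})_0^\eta e_{\pm\al_j}$ explicitly (it equals $\varepsilon(\al_i,\al_j)\kappa_{i,j}(-\hbar)A_i(-\hbar)^{\pm1}e_{\pm(\al_i+\al_j)}$, by Lemma~\ref{lem:Y-eta-Sing}), then apply $Y_L^\eta(e_{\pm\al_i},x)$ to that and show regularity in $x$. However, the key step in your argument is wrong.

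You assert that $Y_L^\eta(e_{\pm\al_i},x)e_{\pm(\al_i+\al_j)}$ has ``a positive power $x^{+1}$ rather than a pole'' and that the interaction with the $\al_i(-n)$'s in $A_i(-\hbar)^{\pm1}$ ``only produces extra factors \ldots\ all regular.'' Neither claim is true. First, the factor $e^{\langle\al_i+\al_j,\eta(\al_i,x)\rangle}$ hides a genuine singularity: by Lemma~\ref{eta-kappa-relation} it equals $\kappa_{i,i}(x)\kappa_{i,j}(x)(x+2\hbar)(x+\hbar)^{-1}$, so $Y_L^\eta(e_{\pm\al_i},x)e_{\pm(\al_i+\al_j)}$ carries the factor $x(x+2\hbar)(x+\hbar)^{-1}$, which in $\C((x))[[\hbar]]$ is \emph{not} regular. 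Second, the relevant interaction is not with $E^-$ (which commutes with $A_i(-\hbar)$) but with $E^+$ and $\exp(\Phi(\eta(\pm\al_i,x)))$; the paper computes this explicitly and finds
\[
Y_L^\eta(e_{\pm\al_i},x)A_i(-\hbar)^{\pm1}
=\frac{(x+\hbar)(x+3\hbar)}{x(x+2\hbar)}\,\kappa_{i,i}(x+\hbar)\kappa_{i,i}(x)^{-1}\,A_i(-\hbar)^{\pm1}Y_L^\eta(e_{\pm\al_i},x),
\]
which has a pole at $x=0$. The point is that these two singular factors cancel \emph{exactly}: $\frac{(x+\hbar)(x+3\hbar)}{x(x+2\hbar)}\cdot\frac{x(x+2\hbar)}{x+\hbar}=x+3\hbar$, leaving $Y_L^\eta(e_{\pm\al_i},x)(e_{\pm\al_i})_0^\eta e_{\pm\al_j}$ in $V_L[[\hbar,x]]$ (regular, but not in $x\cdot V_L[[\hbar,x]]$ as you claim --- the constant term is nonzero). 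So the vanishing of the residue is not a consequence of each piece being regular, but of a precise cancellation that has to be checked by an explicit commutation-relation computation. This computation is the actual content of the lemma, and your proposal does not supply it.
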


\begin{proof} From Lemma \ref{lem:Y-eta-Sing} we have
\begin{align}\label{ei-0-ej}
(e_{\pm\al_i})_0^{\eta}e_{\pm\al_j}=\varepsilon(\al_i,\al_j)\kappa_{i,j}(-\hbar)A_i(-\hbar)^{\pm 1}e_{\pm(\al_i+\al_j)}.
\end{align}
Let $\al,\be\in L$. As $\be(j)e_{\al}=\delta_{j,0}\<\al,\be\>e_{\al}$ for $j\in \N$, we have
$$[\beta(m),Y(e_\al,x)]=\<\al,\be\>x^mY(e_\al,x)\quad \text{ for }m\in \Z.$$
By using this, it is straightforward to show that
\begin{align*}
E^{-}(\beta,z)Y(e_{\al},x)=\(1-\frac{z}{x}\)^{\<\al,\be\>}Y(e_{\al},x)E^{-}(\be,z).
\end{align*}
 It is also straightforward to show that
\begin{align*}
\left[\Phi(\eta(\al,x)),\sum_{n\ge 1}\frac{\be(-n)}{-n}z^n\right]=\<\eta(\al,x)-\eta(\al,x-z),\be\>.
\end{align*}
Then
\begin{align}
&E^{-}(\be,z)Y_L^{\eta}(e_{\al},x)\\
=\ & Y_L^{\eta}(e_{\al},x)E^{-}(\be,z)\(1-\frac{z}{x}\)^{\<\al,\be\>}\exp (\<\eta(\al,x-z)-\eta(\al,x),\be\>).\nonumber
\end{align}
With $A_i(z)^{\pm 1}=E^{-}(\mp \al_i,z)$, using Lemma \ref{eta-kappa-relation} we have
\begin{align*}
 Y_L^\eta(e_{\pm\al_i},x)A_i(z)^{\pm 1}
 = A_i(z)^{\pm 1}Y_L^\eta(e_{\pm\al_i},x)
  \frac{\kappa_{i,i}(x-z)}{\kappa_{i,i}(x)}\frac{(x-z)(x+2\hbar-z)}{x(x+2\hbar)},
\end{align*}
which gives
\begin{align}\label{eq:Y-eta-A-i-comm}
  &Y_L^\eta(e_{\pm\al_i},x)A_i(-\hbar)^{\pm 1}\\
=\ & \frac{(x+\hbar)(x+3\hbar)}{x(x+2\hbar)} \kappa_{i,i}(x+\hbar)\kappa_{i,i}(x)^{-1}
A_i(-\hbar)^{\pm 1}Y_L^\eta(e_{\pm\al_i},x).\nonumber
\end{align}
Recall (\ref{Y-eta-e-alha-e-be}):
\begin{align*}
Y_L^{\eta}(e_\al,x)e_\be=\varepsilon(\al,\be)x^{\<\al,\be\>}e^{\<\be,\eta(\al,x)\>}E^{-}(-\al,x)e_{\al+\be}.\nonumber
\end{align*}
Using Lemma \ref{eta-kappa-relation} we get
$$e^{\<\al_i+\al_j,\eta(\al_i,x)\>}=e^{\<\al_i,\eta(\al_i,x)\>}e^{\<\al_j,\eta(\al_i,x)\>}
=\kappa_{i,i}(x)\kappa_{i,j}(x)(x+2\hbar)(x+\hbar)^{-1}.$$
As $\varepsilon$ is a bilinear character with $\varepsilon(\al_i,\al_i)=1$, we have
\begin{align*}
  Y_L^\eta(e_{\pm\al_i},x)e_{\pm(\al_i+\al_j)}
  =\epsilon(\al_i,\al_j) A_i(x)^{\pm 1}e_{\pm(2\al_i+\al_j)}\kappa_{i,i}(x)\kappa_{i,j}(x)
  (x+\hbar)\inv x(x+2\hbar).
\end{align*}
Combining this with \eqref{eq:Y-eta-A-i-comm} and (\ref{ei-0-ej}), we obtain
\begin{align*}
&Y_L^\eta(e_{\pm\al_i},x)(e_{\pm\al_i})_0^{\eta}e_{\pm\al_j}\\
=\ &\epsilon(\al_i,\al_j)^2(x+3\hbar)
\kappa_{i,j}(-\hbar)\kappa_{i,j}(x)\kappa_{i,i}(x+\hbar)A_i(x)^{\pm 1}A_i(-\hbar)^{\pm 1}e_{\pm (2\al_i+\al_j)},
\end{align*}
which lies in $V_L[[\hbar,x]]$. Then  $(e_{\pm\al_i})_0^{\eta}(e_{\pm\al_i})_0^{\eta} e_{\pm\al_j}=0$.
\end{proof}

\subsection{Twisted quantum affine algebras and the main result}
Here, we recall the Drinfeld realization of twisted quantum affine algebras from \cite{Dr-new}
and give a different presentation on restricted modules.

For $i\in I$, set
\begin{align}
\mathcal O(i)=\{ \mu^k(i)\ |\ k\in \Z_N\}\subset I\ \text{and }N_i=|\mathcal{O}(i)|.
\end{align}
Furthermore, set
\begin{align}
d_i=|\{ k\in \Z_N\ |\ \mu^k(i)=i\}|=N/N_i.
\end{align}

The following result can be found in \cite[Section 2.2]{FSS}:

\begin{lem}\label{lem:linking}
For every $i\in I$, the Dynkin diagram of the orbit $\mathcal O(i)$ is either

\te{(1)} a direct sum of some copies of $A_1$, in which case set $s_i=1$,  or

\te{(2)} a direct sum of some copies of $A_2$ with $N_i$ even and $a_{\mu^{N_i/2}(i),i}=-1$,
in which case set $s_i=2$.
\end{lem}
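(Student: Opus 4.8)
This final statement is Lemma~\ref{lem:linking}, a purely combinatorial fact about Dynkin diagrams of type $A$, $D$, $E$ and their diagram automorphisms. Let me sketch how I would prove it.

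\medskip

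\textbf{Plan.} The claim concerns the full subgraph of the Dynkin diagram spanned by a single $\mu$-orbit $\mathcal{O}(i)=\{\mu^k(i)\mid k\in\Z_N\}$. The key structural input is that $\mu$ acts on $\mathcal{O}(i)$ as a single cycle: since $\mathcal{O}(i)$ is by definition an orbit of the cyclic group $\langle\mu\rangle$, the induced action is transitive, and the stabilizer of $i$ has index $N_i=|\mathcal{O}(i)|$, so $\mu^{N_i}$ fixes $i$ and $\mu^k(i)$ runs through distinct vertices for $0\le k<N_i$. First I would record that for $j\in\mathcal{O}(i)$ the values $a_{\mu^k(i),j}$ depend only on the ``distance'' in the cycle, i.e.\ $a_{\mu^k(i),\mu^\ell(i)}=a_{\mu^{k-\ell}(i),i}$, which follows from $a_{\mu(r),\mu(s)}=a_{r,s}$. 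So the subgraph on $\mathcal{O}(i)$ is a \emph{circulant} graph on $N_i$ vertices: vertex $\mu^\ell(i)$ is joined to $\mu^{\ell+m}(i)$ precisely for those $m$ in a symmetric subset $S\subset\Z_{N_i}\setminus\{0\}$ determined by $a_{\mu^m(i),i}=-1$. (Symmetry of $S$ comes from $a_{r,s}=a_{s,r}$ for a symmetric Cartan matrix.)

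\medskip

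\textbf{Key step: ruling out edges.} The heart of the argument is that a connected subgraph of a simply-laced Dynkin diagram is itself a simply-laced Dynkin diagram (type $A$, $D$, or $E$), since any full subgraph of $A_n$, $D_n$, $E_n$ is a disjoint union of such. In particular it contains no cycle and each connected component has bounded valence. Now suppose $|S|\ge 2$; then the circulant graph on $\mathcal{O}(i)$ has every vertex of valence $\ge 2$, so it contains a cycle, contradicting the tree property of Dynkin diagrams — \emph{unless} $N_i$ is so small that ``valence $\ge 2$'' is vacuous, i.e.\ $N_i\le 2$. If $N_i=1$ the orbit is a single vertex ($A_1$, case (1)). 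If $N_i=2$, the only possible edge is between the two vertices, giving either no edge (two copies of $A_1$) or a single edge $A_2$; but two vertices $\mu^{N_i/2}(i)=\mu(i)$ and $i$ joined by an edge means $a_{\mu(i),i}=-1$, which is exactly case (2) with $s_i=2$. The remaining case is $|S|\le 1$ with $N_i\ge 3$: if $S=\varnothing$ the subgraph is $N_i$ isolated vertices, a sum of $A_1$'s (case (1)); if $S=\{m,-m\}$ with $m\not\equiv -m$, the circulant graph is a disjoint union of $\gcd(N_i,m)$ cycles of length $N_i/\gcd(N_i,m)\ge 3$, again a contradiction with acyclicity; if $S=\{m\}$ with $2m\equiv 0\pmod{N_i}$, i.e.\ $m=N_i/2$, then $N_i$ is even and the graph is $N_i/2$ disjoint copies of $A_2$ with $a_{\mu^{N_i/2}(i),i}=-1$ (case (2)). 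Thus the only surviving possibilities are exactly (1) and (2), and when $N_i=2$ case (2) already requires $a_{\mu(i),i}=-1$; in every case of type (2), $N_i$ is even and $a_{\mu^{N_i/2}(i),i}=-1$, as asserted.

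\medskip

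\textbf{Main obstacle.} The only real content is the ``no cycles / bounded valence'' property of simply-laced Dynkin diagrams, which is standard (it is part of the classification, or can be seen from positive-definiteness of the Cartan matrix: a cycle of length $\ge 3$ or a vertex of valence $\ge 3$ together with long tails fails to have a positive-definite Gram matrix). I expect the bookkeeping of the circulant/cyclotomic case analysis — tracking $\gcd(N_i,m)$ and the parity condition — to be the fiddliest part, but it is routine once the acyclicity input is in hand. An equally clean alternative is simply to cite the standard theory of \emph{folding}: the orbit types of diagram automorphisms of $ADE$ diagrams are classified, and the only ``links'' that occur locally are the trivial one and the $A_2$-flip (this is precisely the content of \cite[Section 2.2]{FSS}), so one may also regard the lemma as a direct quotation of that classification with $s_i$ named to match.
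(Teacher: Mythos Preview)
Your argument is correct, and it supplies more than the paper does: the paper simply records the lemma as a quotation from \cite[Section 2.2]{FSS} and gives no proof. Your direct combinatorial route---observing that the orbit subgraph is a circulant graph on $N_i$ vertices and then invoking acyclicity of $ADE$ Dynkin diagrams to force $|S|\le 1$---is self-contained and elementary, whereas the paper's citation is quicker but leaves the reader to consult \cite{FSS}. Two minor remarks on exposition: in your ``remaining case'' paragraph the subcase $S=\{m,-m\}$ with $m\not\equiv -m$ has $|S|=2$, which you already disposed of in the previous step, so it is redundant rather than new; and the escape clause $N_i\le 2$ under $|S|\ge 2$ is vacuous since $N_i\le 2$ already forces $|S|\le 1$. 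Neither affects the correctness.
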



For $i,j\in I$, set
\begin{align}\label{gammaij}
d_{ij}=|\Z_N^{(i,j)}|,\quad \text{where }\Z_N^{(i,j)}=\{k\in \Z_N\mid a_{\mu^k(i),j}\ne 0\}\subset \Z_N.
\end{align}
Note that $d_{j,i}=d_{i,j}$, $d_{\mu(i),j}=d_{i,j}$, and $d_i|d_{i,j}$. From Lemma \ref{lem:linking} we have
\begin{align}
d_{i,i}=s_id_i.
\end{align}

Follow \cite{Dr-new} (cf. \cite{J-inv}, \cite{Dam}, \cite{CJKT-qeala-II-twisted-qaffinization}) to introduce polynomials
\begin{align}\label{pij}
p_{i,j}^\pm(x_1,x_2)=
    \left(
        x_1^{d_i}+q^{\mp d_i}x_2^{d_i}
    \right)^{s_i-1}
    \frac{
        q^{\pm 2d_{ij}}x_1^{d_{ij}}-x_2^{d_{ij}}
    }{
        q^{\pm 2d_i}x_1^{d_{i}}-x_2^{d_i}
    }.
\end{align}


\begin{de}
For $i,j\in I$, set
\begin{align}\label{eq:def-F-G}
&F_{i,j}^\pm(x_1,x_2)=\prod_{k\in\Z_N;\ a_{\mu^k(i),j}\ne 0}(x_1-\xi^{-k} q^{\pm a_{\mu^k(i),j}}x_2),\\
&G_{i,j}^\pm(x_1,x_2)=\prod_{k\in\Z_N;\ a_{\mu^k(i),j}\ne 0}(q^{\pm a_{\mu^k(i),j}}x_1-\xi^{-k}x_2),
\end{align}
which are elements of $\C[q, q^{-1}][x_1,x_2]$.
\end{de}

The following are some simple facts:
\begin{align}
&F_{i,\mu(j)}^\pm(x_1,x_2)=F_{i,j}^{\pm}(x_1,\xi^{-1}x_2),\ \  F_{\mu(i),j}^\pm(x_1,x_2)=F_{i,j}^{\pm}(x_1,\xi x_2), \\
&G_{i,\mu(j)}^{\pm}(x_1,x_2)=G_{i,j}^{\pm}(x_1,\xi^{-1}x_2),\ \  G_{\mu(i),j}^{\pm}(x_1,x_2)=G_{i,j}^{\pm}(x_1,\xi x_2),\\
&G_{i,j}^{\pm}(x_1,x_2)=q^{\pm \bar{a}_{ij}}F_{i,j}^{\mp}(x_1,x_2),\label{G-F-relation}\\
&G_{i,j}^{\pm}(x_1,x_2)=\left(\prod_{k\in \Z_N;\ a_{\mu^k(i),j}\ne 0}(-\xi^{-k})\right) F_{j,i}^{\pm}(x_2,x_1),
\end{align}
where
\begin{align}
\bar{a}_{i,j}=\sum_{k\in \Z_N} a_{\mu^k(i),j}\in \Z.
\end{align}

Recall from Definition \ref{def-tilde-g-ij} that for $i,j\in I$,
\begin{align*}
\tilde{g}_{i,j}(x)=\prod_{k\in \Z_N} \frac{ q^{a_{\mu^k(i),j}}-\xi^{-k} x}
{1-\xi^{-k}q^{a_{\mu^k(i),j}}x}=\prod_{k\in \Z_N;\ a_{\mu^k(i),j}\ne 0} \frac{ q^{a_{\mu^k(i),j}}-\xi^{-k} x}
{1-\xi^{-k}q^{a_{\mu^k(i),j}}x},
\end{align*}
 a {\em rational function} lying in $\C[q,q^{-1}](x)$.

\begin{de}
For $i,j\in I$,  denote by
$g_{i,j}(x)$ the Taylor series of $\tilde{g}_{i,j}(x)$:
\begin{align}
g_{i,j}(x)=\iota_{x;0}(\tilde{g}_{i,j}(x)) \in \C[q,q^{-1}][[x]].
\end{align}
\end{de}

Note that as $\tilde{g}_{i,j}(0)=q^{\bar{a}_{i,j}}$, $\tilde{g}_{i,j}(x)$ is analytic at $0$ and
$g_{i,j}(x)$ is an invertible element of $\C[q,q^{-1}][[x]]$.
We have
\begin{align}
&\quad\quad  \tilde{g}_{i,j}(x)\tilde{g}_{j,i}(1/x)=1,\\
&\tilde{g}_{i,j}(x_2/x_1)=\frac{G^+_{i,j}(x_1,x_2)}{F^+_{i,j}(x_1,x_2)}=\frac{F^-_{i,j}(x_1,x_2)}{G^{-}_{i,j}(x_1,x_2)}
\quad \text{ for }i,j\in I.\label{g-G-F}
\end{align}

{\bf Convention:} We view $g_{i,j}(x_2/x_1)^{\pm 1}$ as elements of $\C[q,q^{-1}]((x_1))((x_2))$:
\begin{equation}
g_{i,j}(x_2/x_1)^{\pm 1}=g_{i,j}(x)^{\pm 1}|_{x=x_2/x_1}
=\iota_{x_1,x_2}\left(\frac{G^+_{i,j}(x_1,x_2)}{F^+_{i,j}(x_1,x_2)}\right)^{\pm 1}.
\end{equation}

For $i,j\in I$, we have
\begin{align}
\log g_{i,j}(x)&=\sum_{k\in \Z_N}\log \left( q^{a_{\mu^k(i),j}}\cdot
\frac{1-\xi^{-k}q^{-a_{\mu^k(i),j}}x}{1-\xi^{-k}q^{a_{\mu^k(i),j}}x}\right)\\
&=\bar{a}_{i,j}\hbar+\sum_{k\in \Z_N}
\left(q^{-a_{\mu^k(i),j}x\partial_{x}}-q^{a_{\mu^k(i),j}x\partial_{x}}\right)\log (1-\xi^{-k}x),\nonumber
\end{align}
(where $\bar{a}_{i,j}=\sum_{k\in \Z_N}a_{\mu^k(i),j}$), and
\begin{align}
x\partial_{x}\log g_{i,j}(x)
=\sum_{k\in \Z_N}\left(q^{a_{\mu^k(i),j}x\partial{x}}-q^{-a_{\mu^k(i),j}x\partial_{x}}\right)\frac{\xi^{-k}x}{1-\xi^{-k}x}.
\end{align}

Note that  as  $A$ is simply-laced by assumption, for $k\in \Z_N, i, j\in I$,
we have
$$a_{\mu^k(i),j}=\<\alpha_{\mu^k(i)},\alpha_j\>=-1, 0, \text{ or }2,$$
where $a_{\mu^k(i),j}=2$ if and only if $\mu^k(i)=j$.
Then
\begin{align}
 F_{i, j}^{\pm}(x_1,x_2)=\prod_{k\in\Z_N;\ a_{\mu^k(i),j}=-1}(x_1-\xi^{-k} q^{\mp 1}x_2)
\prod_{k\in\Z_N;\ \mu^k(i)=j}(x_1-\xi^{-k} q^{\pm 2}x_2).
 \end{align}


The following notion was introduced in \cite{Dr-new} (cf. \cite{J-inv}):

\begin{de}\label{de:tqaffine}
{\em Twisted quantum affine algebra} $\qtar$ is defined to be a topological
 unital associative  $\C[[\hbar]]$-algebra generated by vectors
\begin{eqnarray}\label{eq:tqagenerators}
c,\ H_{i}(n),\  E^\pm_{i}(n)\  \   (\text{where }i\in I,\  n\in\Z)
\end{eqnarray}
with $c$ central, subject to a set of relations written in terms of generating functions:
\begin{eqnarray}
& \Psi_i^\pm(x)=q^{\pm H_{i}(0)}\, \te{exp}
    \left(
        \pm (q-q\inverse)\sum\limits_{\pm n> 0}H_{i}(n)x^{-n}
    \right),\\
&E^\pm_i(x)=\sum\limits_{n\in \Z} E^\pm_{i}(n)x^{-n}.
\end{eqnarray}
The following are the relations for $i,j\in I$:
\begin{align*}
&\te{(Q0)  }&&E^\pm_{\mu(i)}(x)=E^\pm_i(\xi\inverse x),
    \quad \Psi^\pm_{\mu(i)}(x)=\Psi^\pm_i(\xi\inverse x),\\
&\te{(Q1)  }&& [\Psi_i^\pm(x_1),\Psi_j^\pm(x_2)]=0,\\
&\te{(Q2) }&& \Psi^+_i(x_1)\Psi^-_j(x_2)=\Psi^-_j(x_2)\Psi^+_i(x_1)
    g_{i,j}(q^c x_2/x_1)\inverse g_{i,j}(q^{-c} x_2/x_1),\\
&\te{(Q3) }&&\Psi^+_i(x_1)E^\pm_j(x_2)=E^\pm_j(x_2)\Psi^+_i(x_1)
    g_{i,j}(q^{\mp\half c}x_2/x_1)^{\pm 1}
    ,\\
&\te{(Q4) }&& \Psi^-_i(x_1)E^\pm_j(x_2)=E^\pm_j(x_2)\Psi^-_i(x_1)
    g_{j,i}(q^{\mp\half c}x_1/x_2)^{\mp 1},\\
&\te{(Q5)  }&&[E_i^+(x_1),E_j^-(x_2)]
=\frac{1}{q-q\inverse}\\
 &&&  \ \  \times
    \ksum\delta_{\mu^k(i),j} \Bigg(
        \Psi_j^+(q^{\half c}x_2)\delta\left(
            \frac{q^c\xi^{-k}x_2}{x_1}
        \right)
        -
        \Psi_j^-(q^{-\half c}x_2)\delta\left(
            \frac{q^{-c}\xi^{-k}x_2}{x_1}
        \right)
    \Bigg),\\
&\te{(Q6)  }&&F^\pm_{i,j}(x_1,x_2)E^\pm_i(x_1)E^\pm_j(x_2)=
    G^\pm_{ i,j}(x_1,x_2)E^\pm_j(x_2)E^\pm_i(x_1),\\
&\te{(Q7)  }&&\sum_{\sigma\in S_{2}}\Big\{
    p_{i,j}^\pm(x_{\sigma(1)},x_{\sigma(2)})\big(\sum_{r=0}^{2}
    (-1)^r \binom{2}{r}_{q^{d_{ij}}}E_i^\pm(x_{\sigma(1)})\cdots E_i^\pm(x_{\sigma(r)})\\
 &\qquad&&\quad   \cdot E_j^\pm(y)
       E_i^\pm(x_{\sigma(r+1)})\cdots E_i^\pm(x_{\sigma(2)})\big)
\Big\}=0\quad
    \te{if }j\notin \mathcal{O}(i),\ a_{i,j}=-1,\\
&\te{(Q8) }&&\sum_{\sigma\in S_{3}}
    \left(q^{\mp \frac{3}{2}}x_{\sigma(1)}
        -(q^{\frac{1}{2}}+q^{-\frac{1}{2}})
            x_{\sigma(2)}
        +q^{\pm \frac{3}{2}}
            x_{\sigma(3)}\right)
                E_i^\pm(x_{\sigma(1)})E_i^\pm(x_{\sigma(2)})E_i^\pm(x_{\sigma(3)})
=0\\
&&&\quad\quad\quad\quad
    \te{if }s_i=2.
\end{align*}
\end{de}

\begin{rem}
{\em For $i\in I$, form generating functions
\begin{align}
\psi_i^{\pm}(x)=\pm \hbar \(H_i(0)+\frac{e^{\hbar}-e^{-\hbar}}{\hbar}\sum_{n\in \Z_+}H_i(\pm n)x^{\mp n}\).
\end{align}
 Then $\Psi_i^{\pm}(x)=\exp \psi_i^{\pm}(x)$. We see that  for $i,j\in I$,
$[\Psi_i^{\pm}(x_1),\Psi_j^{\pm}(x_2)]=0$ if and only if  $[\psi_i^{\pm}(x_1),\psi_j^{\pm}(x_2)]=0$,
which is equivalent to
\begin{align}
[H_i(m),H_j(n)]=0=[H_i(-m),H_j(-n)]\quad \text{ for } m,n\in \N.
\end{align}}
\end{rem}

Denote by $\qtarl$ the topological unital associative algebra over $\C[[\hbar]]$ generated by
the vectors in \eqref{eq:tqagenerators}, subject to the relations (Q0-Q6).
By definition, $\qtar$ is naturally a quotient algebra of $\qtarl$.

Next, we discuss restricted  modules for $\qtarl$ and $\qtar$.
A $\qtarl$-module $W$ is said to be \emph{restricted} if
$W$ is a topologically free $\C[[\hbar]]$-module and if
\begin{align}
  \Psi_i^\pm(x),  \   E_i^\pm(x)\in\E_\hbar(W)\quad\te{for }i\in I.
\end{align}
A $\qtar$-module is said to be \emph{restricted} if it is a restricted $\qtarl$-module.


\begin{de}
For $i,j\in I$, set
\begin{align}
 & f_{i,j}^\pm(x_1,x_2)=F_{i,j}^\pm(x_1,x_2)\prod_{\substack{k\in\Z_N\\ \mu^k(i)=j}}(x_1-\xi^{-k} x_2)\inv
 \in \C[q,q^{-1}]((x_1))((x_2)),\\
&C_{i,j}=\prod_{\substack{k\in\Z_N\\ a_{\mu^k(i),j}<0}}(-\xi^{-k})
 =\prod_{\substack{k\in\Z_N\\ a_{\mu^k(i),j}=-1}}(-\xi^{-k})\in \C^{\times},
\end{align}
where it is understood that $f_{i,j}^\pm(x_1,x_2)=F_{i,j}^\pm(x_1,x_2)$ if $j\notin \mathcal{O}(i)$
and $C_{i,j}=1$ if $a_{\mu^k(i),j}\ge 0$ for all $k\in \Z_N$.
\end{de}

Note that if $j\in \mathcal{O}(i)$ with $j=\mu^k(i)$ for some $k\in \Z_N$, then
\begin{align}
f_{i,j}^\pm(x_1,x_2)=\frac{F_{i,j}^\pm(x_1,x_2)}{x_1^{d_i}-\xi^{-kd_i}x_2^{d_i}}.
\end{align}

The following is a result of \cite{CJKT-qeala-II-twisted-qaffinization} (Proposition 5.6):

\begin{lem}\label{lem:locality}
Let $W$ be a restricted $\qtarl$-module. For $i_1,\dots,i_m\in I$, set
\begin{align}\label{eq:multi-func}
  E_{i_1,\dots,i_m}^\pm(x_1,\dots,x_m)=
  \left(\prod_{1\le r<s\le m}f_{i_r,i_s}^\pm(x_r,x_s)\right)E_{i_1}^\pm(x_1)\cdots E_{i_m}^\pm(x_m).
\end{align}
Then for every positive integer $n$,
\begin{align}
  E_{i_1,\dots,i_m}^\pm(x_1,\dots,x_m)w\in W((x_1,\dots,x_m))+\hbar^n W[[x_1^{\pm 1},\dots,x_m^{\pm 1}]]
\end{align}
for all $w\in W$.
Moreover, for $\sigma\in S_m$ (the symmetric group), we have
\begin{align}
  E_{i_1,\dots,i_m}^\pm(x_1,\dots,x_m)=\(\prod_{\substack{1\le s<t\le m\\ \sigma(s)>\sigma(t)}}C_{i_s,i_t}\)
  E_{i_{\sigma(1)},\dots,i_{\sigma(m)}}^\pm(x_{\sigma(1)},\dots,x_{\sigma(m)}).
\end{align}
\end{lem}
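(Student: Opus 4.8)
The plan is to deduce both conclusions of Lemma~\ref{lem:locality} from the defining relations (Q0)--(Q6) of $\qtarl$, treating the $\hbar$-adic character of the module by working modulo $\hbar^n$ for each fixed positive integer $n$ and then letting $n\to\infty$. First I would establish the two-variable case, i.e.\ the assertion for $m=2$: for $i,j\in I$ one has the relation
\begin{align*}
F_{i,j}^\pm(x_1,x_2)E_i^\pm(x_1)E_j^\pm(x_2)=G_{i,j}^\pm(x_1,x_2)E_j^\pm(x_2)E_i^\pm(x_1)
\end{align*}
from (Q6), together with (Q0), which identifies $E_{\mu(i)}^\pm(x)=E_i^\pm(\xi^{-1}x)$ and hence accounts for the contributions of the indices $k$ with $\mu^k(i)=j$. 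Dividing through by $\prod_{k:\,\mu^k(i)=j}(x_1-\xi^{-k}x_2)$ turns $F_{i,j}^\pm$ into $f_{i,j}^\pm$ and $G_{i,j}^\pm$ into $C_{i,j}\cdot f_{j,i}^\pm(x_2,x_1)$ (using the product identities $G_{i,j}^\pm(x_1,x_2)=\bigl(\prod_{k:\,a_{\mu^k(i),j}\ne0}(-\xi^{-k})\bigr)F_{j,i}^\pm(x_2,x_1)$ recorded before Definition~\ref{de:tqaffine}). This gives both the $m=2$ symmetry
\begin{align*}
f_{i,j}^\pm(x_1,x_2)E_i^\pm(x_1)E_j^\pm(x_2)=C_{i,j}\,f_{j,i}^\pm(x_2,x_1)E_j^\pm(x_2)E_i^\pm(x_1)
\end{align*}
and, upon applying $E_i^+(x_1)E_j^-(x_2)$--type arguments via (Q5), (Q3), (Q4), the statement that $f_{i,j}^\pm(x_1,x_2)E_i^\pm(x_1)E_j^\pm(x_2)w$ has no negative powers of $x_2$ relative to $x_1$ modulo $\hbar^n$.

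Next I would bootstrap from $m=2$ to general $m$ by a standard induction. For the holomorphy statement, the key point is that $\prod_{1\le r<s\le m}f_{i_r,i_s}^\pm(x_r,x_s)$ contains, for each ordered pair $(r,s)$, exactly the factor needed to cancel the pole that $E_{i_r}^\pm(x_r)E_{i_s}^\pm(x_s)$ develops when these two operators are brought adjacent; one moves $E_{i_s}^\pm(x_s)$ leftward past $E_{i_{s-1}}^\pm,\dots,E_{i_{r+1}}^\pm$ using the already-established $m=2$ commutation relation, absorbs the resulting rational factors, and reads off that the total expression applied to $w$ lies in $W((x_1,\dots,x_m))$ up to $\hbar^n$. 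For the symmetry statement, it suffices to treat a single adjacent transposition $\sigma=(s,s+1)$, which is exactly the $m=2$ relation applied to the $s$-th and $(s+1)$-th slots (all other factors of the product of $f$'s are unaffected or merely relabeled), and then to compose transpositions, tracking how the constant $C_{i_s,i_t}$ accumulates precisely over the inversions of $\sigma$; the bookkeeping is that $\prod_{s<t,\ \sigma(s)>\sigma(t)}C_{i_s,i_t}$ is multiplicative under composition of permutations because each $C_{i,j}$ is a nonzero scalar and $\prod_{k:\,a_{\mu^k(i),j}=-1}(-\xi^{-k})$ behaves well under swapping $i\leftrightarrow j$ (using $d_{j,i}=d_{i,j}$ and the simply-laced hypothesis).

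The main obstacle I anticipate is the careful handling of the case $j\in\mathcal O(i)$, where the naive product $F_{i,j}^\pm$ acquires "extra" factors $(x_1-\xi^{-k}x_2)$ coming from indices $k$ with $\mu^k(i)=j$ (equivalently $a_{\mu^k(i),j}=2$). These factors are not poles of $E_{i_r}^\pm(x_r)E_{i_s}^\pm(x_s)$ in the usual locality sense but rather reflect the identification (Q0) $E_{\mu^k(i)}^\pm(x)=E_i^\pm(\xi^{-k}x)$, so one must verify that dividing them out (which is what passing from $F$ to $f$ does) is legitimate at the level of the module, i.e.\ that $f_{i,j}^\pm(x_1,x_2)E_i^\pm(x_1)E_j^\pm(x_2)w$ still makes sense in $W((x_1))((x_2))+\hbar^nW[[x_1^{\pm1},x_2^{\pm1}]]$ and is symmetric; this is precisely where one invokes that $\tilde g_{i,j}$ is analytic and invertible at $0$ (so $g_{i,j}^{\pm1}\in\C[q,q^{-1}][[x]]$) and that, by Lemma~\ref{VL-eta-S(X)-special}, the ``braiding'' governing $E_i^\pm$ and $E_j^\pm$ is exactly $\tilde g_{i,j}(e^x)^{\delta\delta'}$ with no spurious singularities. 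Once the $m=2$ case with the $\mathcal O(i)$-subtlety is settled, the induction is routine. Since Lemma~\ref{lem:locality} is quoted from \cite{CJKT-qeala-II-twisted-qaffinization}, I would in fact simply cite that reference and sketch only the reduction above, noting that the $\hbar$-adic refinement (working modulo $\hbar^n$) follows verbatim from the argument there because all the rational functions involved lie in $\C[q,q^{-1}]((x_1))\cdots((x_m))$ and reduce compatibly modulo $\hbar^n$.
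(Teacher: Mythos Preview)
The paper does not prove this lemma at all; it simply states it as a citation of \cite[Proposition~5.6]{CJKT-qeala-II-twisted-qaffinization}. Your final paragraph correctly recognizes this, and your overall strategy---establish the two-variable braided relation
\[
f_{i,j}^\pm(x_1,x_2)E_i^\pm(x_1)E_j^\pm(x_2)=C_{i,j}\,f_{j,i}^\pm(x_2,x_1)E_j^\pm(x_2)E_i^\pm(x_1)
\]
from (Q0) and (Q6), then induct on $m$ via adjacent transpositions---is exactly the argument carried out in that reference (and is the relation later recorded in the paper as \eqref{braided-relation-e-i-j}).

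One correction: in your discussion of the ``main obstacle'' (the case $j\in\mathcal O(i)$) you invoke Lemma~\ref{VL-eta-S(X)-special}, but that lemma computes the quantum Yang--Baxter operator of $V_L[[\hbar]]^\eta$ and has nothing to do with restricted $\qtarl$-modules; at this point in the paper no connection between the two sides has been established, so the reference is circular. The legitimacy of dividing by $\prod_{k:\mu^k(i)=j}(x_1-\xi^{-k}x_2)$ is handled in \cite{CJKT-qeala-II-twisted-qaffinization} purely from (Q0): since $E_{\mu^k(i)}^\pm(x)=E_i^\pm(\xi^{-k}x)$, the polynomial $F_{i,i}^\pm(x_1,x_2)$ applied to $E_i^\pm(x_1)E_i^\pm(x_2)$ yields an expression that is already divisible by $x_1^{d_i}-x_2^{d_i}$ (one specializes $x_1\mapsto\xi^{-k}x_2$ and uses (Q6) at coincident arguments), so the quotient $f_{i,j}^\pm$ makes sense on the module. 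Apart from this misattribution, your sketch is sound and matches the cited proof.
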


As a special case of \cite[Theorem 5.14]{CJKT-qeala-II-twisted-qaffinization},
we have:

\begin{prop}\label{prop:qSerre}
Let $W$ be a restricted $\qtarl$-module.
Then $W$ reduces to a $\qtar$-module if and only if for $i,j\in I$ with $a_{i,j}=-1$,
\begin{align}
  E_{i,i,j}^\pm(q^{-1}x,qx,x)=0.
\end{align}
\end{prop}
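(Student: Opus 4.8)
The plan is to reduce the statement to \cite[Theorem 5.14]{CJKT-qeala-II-twisted-qaffinization}. By Definition \ref{de:tqaffine}, $\qtar$ is generated subject to all the relations (Q0)--(Q8), whereas $\qtarl$ is generated subject only to (Q0)--(Q6); thus $\qtar$ is the quotient of $\qtarl$ by the closed two-sided ideal generated by the left-hand sides of (Q7) and (Q8). Hence a restricted $\qtarl$-module $W$ reduces to a $\qtar$-module if and only if (Q7) and (Q8) hold as operator identities on $W$, and the content to be proved is that, on an arbitrary restricted $\qtarl$-module, the family consisting of (Q7) together with (Q8) is equivalent to the family
\[
E_{i,i,j}^{\pm}(q^{-1}x,qx,x)=0,\qquad i,j\in I,\ a_{i,j}=-1.
\]

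First I would match up the index sets using Lemma \ref{lem:linking}. Given $i,j\in I$ with $a_{i,j}=-1$, either $j\notin\mathcal{O}(i)$, and then the relevant Serre-type relation is (Q7); or $j\in\mathcal{O}(i)$, in which case the Dynkin diagram of $\mathcal{O}(i)$ carries an edge joining $i$ to $j$, so by Lemma \ref{lem:linking} we are in the $A_2$-case, $s_i=2$, and $j=\mu^{N_i/2}(i)$ is the unique node of $\mathcal{O}(i)$ linked to $i$; the relevant relation is then (Q8). Conversely, every instance of (Q7) comes from such a pair with $j\notin\mathcal{O}(i)$, and every instance of (Q8) comes from an $i$ with $s_i=2$, i.e.\ from the pair $(i,\mu^{N_i/2}(i))$. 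So the two families are indexed compatibly.

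Next I would invoke \cite[Theorem 5.14]{CJKT-qeala-II-twisted-qaffinization}. By Lemma \ref{lem:locality}, after multiplying by the scalar locality factors $f_{i_r,i_s}^{\pm}$ the series $E_{i_1,\dots,i_m}^{\pm}(x_1,\dots,x_m)$ act, modulo $\hbar^n$ for every $n$, as $W((x_1,\dots,x_m))$-valued series, so the specialization $x_1=q^{-1}x,\ x_2=qx,\ x_3=x$ is well defined; the cited theorem identifies the vanishing of $E_{i,i,j}^{\pm}(q^{-1}x,qx,x)$ with (Q7) when $j\notin\mathcal{O}(i)$ and with (Q8) when $s_i=2$. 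The mechanism is that the permutation formula of Lemma \ref{lem:locality} makes $E_{i,i,j}^{\pm}(x_1,x_2,x_3)$ invariant, up to the fixed nonzero scalar $C_{i,i}$, under $x_1\leftrightarrow x_2$, so the symmetrization over $S_2$, resp.\ $S_3$, in (Q7), resp.\ (Q8), collapses after the specialization; and the rational prefactors $p_{i,j}^{\pm},F_{i,j}^{\pm},G_{i,j}^{\pm}$ together with the $q^{d_{ij}}$-binomials in (Q7) and (Q8) turn out to be exactly the specializations of $\prod_{r<s}f_{i_r,i_s}^{\pm}$ at $(q^{-1}x,qx,x)$, up to invertible scalars.

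The main obstacle is precisely this identification of coefficients: one must check that removing the ``diagonal'' factors $x_1-\xi^{-k}x_2$ (arising from $\mu^k(i)=j$) in passing from $F_{i,j}^{\pm}$ to $f_{i,j}^{\pm}$ yields exactly the combination of $p_{i,j}^{\pm}$ and $F_{i,j}^{\pm}$ demanded by (Q7)/(Q8) upon setting $x_1=q^{-1}x,\ x_2=qx$, that no spurious pole or zero is created at that point, and that the $s_i=2$ case correctly reproduces the cubic relation (Q8) rather than a quadratic one. Since all of this is carried out in \cite[Theorem 5.14]{CJKT-qeala-II-twisted-qaffinization}, it remains only to verify that the conventions adopted here for $\qtarl$, $\qtar$ and for the polynomials $F_{i,j}^{\pm},G_{i,j}^{\pm},f_{i,j}^{\pm}$ agree with those of that reference, whereupon the assertion follows by quotation.
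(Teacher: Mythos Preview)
Your proposal is correct and follows the same approach as the paper: both treat the proposition as a direct consequence of \cite[Theorem 5.14]{CJKT-qeala-II-twisted-qaffinization}. In fact the paper simply states it ``as a special case'' of that theorem without further argument, so your explanation of the index matching via Lemma~\ref{lem:linking} and of the role of Lemma~\ref{lem:locality} in making the specialization well defined is more detailed than what the paper itself provides.
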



For $i,j\in I$, simply write $F_{i,j}(x_1,x_2)$, $G_{i,j}(x_1,x_2)$ for $F_{i,j}^{+}(x_1,x_2)$, $G_{i,j}^{+}(x_1,x_2)$.
The following is a key result:

\begin{prop}\label{prop:locality-DY}
Let $W$ be a topologically free $\C[[\hbar]]$-module and assume
$$H_{i}(n),\  E^\pm_{i}(n),\ c\in \End W \   (\text{where }i\in I,\  n\in\Z)$$
such that
\begin{align*}
&[c,H_i(n)]=0=[c,E^{\pm}_i(n)]\quad \text{ for }i\in I,\ n\in \Z,\\
&\lim_{n\rightarrow \infty}H_i(n)w=0=\lim_{n\rightarrow \infty}E^{\pm}_i(n)w\quad \text{ for }w\in W.
\end{align*}
Define $\Psi_i^{\pm}(x), E_i^{\pm}(x)$ for $i\in I$ as in Definition \ref{de:tqaffine}.  On the other hand,  define
  \begin{align}
h_{i,q}^\pm(x)= \frac{1}{2}H_i(0)+ \sum_{\pm n\in \Z_+}\frac{n}{[n]_q}q^{\mp \frac{1}{2}nc}H_i(n)x^{-n},
   \end{align}
and furthermore set
\begin{align}
  &h_{i,q}(x)=h_{i,q}^+(x)+h_{i,q}^-(x), \label{eq:def-h-x-1}\\
  & e_{i,q}^+(x)=E_i^+(x), \quad  e_{i,q}^-(x)=E_i^-(xq^{-c})\Psi_i^+(xq^{-\half c})\inv.\label{eq:def-h-x-2}
\end{align}
 Then $W$ is a $\qtar$-module if and only if the following relations hold for $i,j\in I$:
\begin{align*}
  &\te{\em(Q0$'$) }&&h_{\mu(i),q}(x)=h_{i,q}(\xi\inv x),\quad e_{\mu(i),q}^\pm(x)=e_{i,q}^\pm(\xi\inv x),\\
  &\te{\em(Q2$'$) }&&[h_{i,q}(x_1),h_{j,q}(x_2)]=\sum_{k\in\Z_N}[a_{\mu^k(i), j}]_{q^{x_2\partial_{x_2}}}[c]_{q^{x_2\partial_{x_2}}}
  \nonumber\\
  &&&\quad\times
  \(\iota_{x_1,x_2}q^{-c x_2\partial_{x_2}}-\iota_{x_2,x_1}q^{c x_2\partial_{x_2}}\) \frac{\xi^{-k}x_2/x_1}{(1-\xi^{-k}x_2/x_1)^2},\\
  &\te{\em(Q3$'$) }&&[h_{i,q}(x_1),e_{j,q}^\pm(x_2)]=\pm e_{j,q}^\pm(x_2)\sum_{k\in\Z_N}[a_{\mu^k(i),j}]_{q^{x_2\partial_{x_2}}}
  \nonumber\\
  &&&\quad\times
  \(\iota_{x_1,x_2}q^{-c x_2\partial_{x_2}}-\iota_{x_2,x_1}q^{c x_2\partial_{x_2}}\)
  \half\(\frac{1+\xi^{-k}x_2/x_1}{1-\xi^{-k}x_2/x_1}\),\\
  &\te{\em(Q4$'$) }&&e_{i,q}^+(x_1)e_{j,q}^-(x_2)-g_{j,i}(x_1/x_2)e_{j,q}^-(x_2)e_{i,q}^+(x_1)
  =\frac{1}{q-q\inv}\nonumber\\
  &&&\times
\sum_{k\in\Z_N}\delta_{\mu^k(i),j}  \(\delta\(\frac{\xi^{-k}x_2}{x_1}\)-\Psi_j^-(x_2q^{-\frac 32 c})\Psi_j^+(x_2q^{-\half c})\inv \delta\(\frac{\xi^{-k}q^{-2c}x_2}{x_1}\)\),\\
  &\te{\em(Q5$'$) }&&F_{i,j}(x_1,x_2)e_{i,q}^\pm(x_1)e_{j,q}^\pm(x_2)
  =G_{i,j}(x_1,x_2)e_{j,q}^\pm(x_2)e_{i,q}^\pm(x_1),\\
  &\te{\em(Q6$'$) }&&e_{i,i,j;q}^\pm(qx,q\inv x,x)=0\quad\te{if }a_{i,j}=-1,
\end{align*}
where $e^{\pm}_{i,i,j;q}(x_1,x_2,x_3)$ are defined as in (\ref{eq:multi-func})
with $e^{\pm}_{i,q}(x)$ in place of $E^{\pm}_i(x)$.
\end{prop}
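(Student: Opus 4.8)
The plan is to set up an explicit, invertible change of variables carrying the Drinfeld generating functions $\Psi_i^\pm(x),E_i^\pm(x)$ to the new series $h_{i,q}(x),e_{i,q}^\pm(x)$, and then to check that under this dictionary each defining relation (Q0)--(Q8) of $\qtar$ becomes, in the presence of the previously translated ones, exactly the corresponding primed relation. First I would note that, since $W$ is topologically free and $[c,H_i(n)]=[c,E_i^\pm(n)]=0$ with $\lim_{n\to\infty}H_i(n)w=\lim_{n\to\infty}E_i^\pm(n)w=0$, Lemma \ref{lem:E-h} puts $\Psi_i^\pm(x),E_i^\pm(x),h_{i,q}^\pm(x),e_{i,q}^\pm(x)$ all in $\E_\hbar(W)$, so that every generating-function identity below is to be read modulo $\hbar^n$ for each $n$, where it becomes an ordinary identity over $\mathcal R_n$; in particular all delta-function manipulations and all products of $E$'s are legitimate. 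Writing $\psi_i^\pm(x)=\log\Psi_i^\pm(x)=\pm\hbar H_i(0)\pm(q-q\inverse)\sum_{\pm n>0}H_i(n)x^{-n}$, one reads off from the definitions that $h_{i,q}^\pm(x)=\mathcal{T}_x^\pm\,\psi_i^\pm(x)$, where $\mathcal{T}_x^\pm$ is the invertible scalar difference operator in $x\partial_x$ sending the coefficient $\pm(q-q\inverse)H_i(n)$ of $x^{-n}$ in $\psi_i^\pm(x)$ to the coefficient $\frac{n}{[n]_q}q^{\mp\frac12 nc}H_i(n)$ of $x^{-n}$ in $h_{i,q}^\pm(x)$, and that $e_{i,q}^+(x)=E_i^+(x)$, $e_{i,q}^-(x)=E_i^-(xq^{-c})\Psi_i^+(xq^{-\half c})\inverse$. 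Both $\mathcal{T}_x^\pm$ and the assignment $E_i^-(x)\mapsto e_{i,q}^-(x)$ are invertible (one recovers $E_i^-$ by multiplying back by $\Psi_i^+$ and rescaling the variable), so this dictionary is a bijection and it suffices to translate the relations group by group.

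For the translation I would proceed as follows. (Q0) $\Leftrightarrow$ (Q0$'$) is immediate, since $\mathcal{T}_x^\pm$ and the substitution defining $e_{i,q}^-$ commute with $x\mapsto\xi\inverse x$. For (Q1)+(Q2) $\Leftrightarrow$ (Q2$'$): taking logarithms turns (Q1) into $[\psi_i^\pm(x_1),\psi_j^\pm(x_2)]=0$ and (Q2) into $[\psi_i^+(x_1),\psi_j^-(x_2)]=\log g_{i,j}(q^{-c}x_2/x_1)-\log g_{i,j}(q^{c}x_2/x_1)$; applying $\mathcal{T}_{x_1}^{\pm}\mathcal{T}_{x_2}^{\pm}$ to all four sign combinations and summing gives $[h_{i,q}(x_1),h_{j,q}(x_2)]$, which collapses to the right-hand side of (Q2$'$) after using the identity
\begin{align*}
  x\partial_x\log g_{i,j}(x)=\sum_{k\in\Z_N}\bigl(q^{a_{\mu^k(i),j}x\partial_x}-q^{-a_{\mu^k(i),j}x\partial_x}\bigr)\frac{\xi^{-k}x}{1-\xi^{-k}x}
\end{align*}
recorded before Definition \ref{de:tqaffine}: the factors $[a_{\mu^k(i),j}]_{q^{x_2\partial_{x_2}}}[c]_{q^{x_2\partial_{x_2}}}$ in (Q2$'$) arise from the numerator $q^{a x\partial}-q^{-a x\partial}$ together with the $q^{\mp c}$-shift of (Q2), divided by the denominator $q^{x\partial}-q^{-x\partial}$ hidden in $\mathcal{T}^\pm$, and the prescriptions $\iota_{x_1,x_2}$ versus $\iota_{x_2,x_1}$ record which of $h^+,h^-$ is applied first. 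Likewise (Q3)+(Q4) $\Leftrightarrow$ (Q3$'$): logarithms turn these into $[\psi_i^\pm(x_1),E_j^\pm(x_2)]=\pm E_j^\pm(x_2)\cdot(\text{an explicit scalar built from }\log g_{i,j},\ \log g_{j,i})$; the $\Psi_i^+$-conjugation hidden in $e_{j,q}^-$ is diagonal and contributes nothing new, so applying $\mathcal{T}^\pm$ and the same $x\partial_x\log g$ identity yields (Q3$'$).

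For (Q5) $\Leftrightarrow$ (Q4$'$) I would compute directly: substituting $e_{i,q}^+(x_1)=E_i^+(x_1)$ and $e_{j,q}^-(x_2)=E_j^-(x_2q^{-c})\Psi_j^+(x_2q^{-\half c})\inverse$, and commuting $\Psi_j^+(x_2q^{-\half c})\inverse$ past $E_i^+(x_1)$ by (Q3), one obtains
\[
  e_{i,q}^+(x_1)e_{j,q}^-(x_2)-g_{j,i}(x_1/x_2)e_{j,q}^-(x_2)e_{i,q}^+(x_1)=[E_i^+(x_1),E_j^-(x_2q^{-c})]\,\Psi_j^+(x_2q^{-\half c})\inverse;
\]
evaluating the commutator by (Q5) with $x_2$ replaced by $x_2q^{-c}$, the $\Psi_j^+(q^{-\half c}x_2)$-term cancels the conjugating $\Psi_j^+(x_2q^{-\half c})\inverse$ identically, while the $\Psi_j^-$-term becomes $\Psi_j^-(x_2q^{-\frac32 c})\Psi_j^+(x_2q^{-\half c})\inverse$ times $\delta(\xi^{-k}q^{-2c}x_2/x_1)$, which is exactly the right-hand side of (Q4$'$). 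For (Q6) $\Leftrightarrow$ (Q5$'$): the $+$ case is literally (Q5$'$) because $e_{i,q}^+=E_i^+$ and $F_{i,j}=F_{i,j}^+$, $G_{i,j}=G_{i,j}^+$; for the $-$ case the common $q^{-c}$-rescaling of variables is harmless (it scales $F^-$ and $G^-$ by equal powers), and moving the two $\Psi^+$ factors to the right introduces scalar $g$-factors whose ratios equal $G^\pm/F^\pm=F^\mp/G^\mp$ by \eqref{G-F-relation} and \eqref{g-G-F}, converting the $F^-/G^-$ form of (Q6) into the $F^+/G^+$ form of (Q5$'$). Finally, (Q7)+(Q8) $\Leftrightarrow$ (Q6$'$) follows from Proposition \ref{prop:qSerre}, which characterizes when a restricted $\qtarl$-module is a $\qtar$-module by $E_{i,i,j}^\pm(q\inverse x,qx,x)=0$ for $a_{i,j}=-1$: for $+$ this coincides with $e_{i,i,j;q}^+(qx,q\inverse x,x)=0$ up to the nonzero swap constant $C_{i,i}$ of Lemma \ref{lem:locality}, and for $-$ the gauge transformation writes $e_{i,i,j;q}^-(x_1,x_2,x_3)$ as an invertible scalar times $E_{i,i,j}^-(x_1q^{-c},x_2q^{-c},x_3q^{-c})$ times an invertible product of $\Psi^+$-factors, so its vanishing at $(qx,q\inverse x,x)$ is equivalent to that of $E_{i,i,j}^-$ at the rescaled point $(q\cdot q^{-c}x,q\inverse\cdot q^{-c}x,q^{-c}x)$, hence (rescaling $x$ and swapping the first two slots) to (Q6$'$). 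Collecting these equivalences and recalling that (Q0)--(Q8) define $\qtar$ (and (Q0)--(Q6) define $\qtarl$) finishes the proof.

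The step I expect to be the main obstacle is the bookkeeping of the many $q^{\pm c},q^{\pm\frac12 c},q^{-\frac32 c}$ shifts entering (Q4$'$), (Q5$'$), (Q6$'$) and, above all, verifying that the operator $\mathcal{T}_x^\pm$ coming from the definition of $h_{i,q}^\pm$ is precisely the inverse of the difference operator that $\log g_{i,j}$ contributes in (Q2$'$)--(Q3$'$): a discrepancy by a single factor $q^{\pm c}$ or a single $[n]_q$ would be invisible in the leading terms yet fatal. All convergence and well-definedness issues are handled uniformly by passing to $W/\hbar^nW$ and invoking Lemma \ref{lem:E-h} together with the vanishing hypotheses on the $H_i(n)$ and $E_i^\pm(n)$.
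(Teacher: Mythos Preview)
Your proposal is correct and follows essentially the same approach as the paper: both set up the invertible dictionary between $(\Psi_i^\pm,E_i^\pm)$ and $(h_{i,q},e_{i,q}^\pm)$ via an explicit difference operator in $x\partial_x$ (the paper writes it as $\frac{1}{2\hbar}F(\hbar x\partial_x)$ with $F(x)=2x/(e^x-e^{-x})$, your $\mathcal{T}_x^\pm$), take logarithms to reduce (Q1)--(Q4) to additive identities handled by the $x\partial_x\log g_{i,j}$ formula, compute (Q4$'$) directly from (Q5) via the $\Psi^+$-conjugation, convert (Q6) to (Q5$'$) using the $F^\pm/G^\pm$ relations \eqref{G-F-relation}--\eqref{g-G-F}, and finally invoke Proposition \ref{prop:qSerre} after checking that $e_{i,i,j;q}^\pm(qx,q^{-1}x,x)$ differs from $E_{i,i,j}^\pm$ at the corresponding point only by invertible $g$- and $\Psi^+$-factors. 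Your caveat about the $q^{\pm c}$ bookkeeping is exactly where the paper spends its effort, and the symmetry $C_{i,i}=1$ (so the swap of the first two slots is free) is used in the paper just as you anticipate.
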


\begin{proof} Set $G(x)=\frac{e^x-e^{-x}}{2x}\in \C[[x]]$ and
let $F(x)$ be the Taylor series of function $\frac{2x}{e^x-e^{-x}}$ (at $0$).
Then
$F(x), G(x) \in \C[[x]]$ with $F(0)=1=G(0)$ such that
$$ F(x)G(x)=1,\  \  \  \frac{1}{2}F(x)(e^x-e^{-x})=x,\  \  \ G(x)x=e^x-e^{-x}.$$

Note that  for $n\in \Z\backslash\{0\}$,
$$F(\hbar x\partial_{x})x^{-n}=F(-n\hbar)x^{-n}=\frac{-2n\hbar}{e^{-n\hbar}-e^{n\hbar}}x^{-n}
=\frac{2\hbar}{q-q^{-1}}\frac{n}{[n]_q}x^{-n}$$
and $F(\hbar x\partial_{x})1=1$. Then
 \begin{align}\label{eq:def-h-x-0}
h_{i,q}^\pm(x)=\ & F(\hbar x\partial_{x})\(\frac{1}{2}H_i(0)+\frac{q-q^{-1}}{2\hbar}
    \sum_{\pm n\in \Z_+}q^{\mp \frac{1}{2}nc}H_i(n)x^{-n}\)\\
=\ &\pm \frac{1}{2\hbar}F(\hbar x\partial_{x})
     \log \Psi_i^\pm(xq^{\pm\half c}). \nonumber
   \end{align}
On the other hand, we have
\begin{align}
 & \Psi_{i}^\pm(x)=\exp\( \pm 2\hbar G(\hbar x\partial_{x})h_{i,q}^\pm(xq^{\mp\half c})\),\label{Psi-h}\\
&  E_i^+(x)=e_{i,q}^+(x), \quad E_i^-(x)=e_{i,q}^-(xq^{c}) \Psi_{i}^+(xq^{\half c}).\label{E-e}
\end{align}

In the following, we show that (Q2) together with (Q1) is  equivalent to (Q$2'$),
(Q3) together with (Q4) is equivalent to (Q$3'$). It is straightforward to show that  in the presence of (Q3),
(Q4$'$) is equivalent to (Q5), and (Q5$'$) is equivalent to (Q6).

Note that
\begin{align*}
&\log g_{i,j}(x)^{-1}g_{i,j}(q^{-2c}x)=\log g_{i,j}(q^{-2c}x)-\log g_{i,j}(x)\\
=\ & \sum_{k\in \Z_N}\(q^{a_{\mu^k(i),j}x\partial_{x}}-q^{-a_{\mu^k(i),j}x\partial_{x}}-
q^{(a_{\mu^k(i),j}-2c)x\partial_{x}}+q^{-(2c+a_{\mu^k(i),j})x\partial_{x}}\)  \log(1-\xi^{-k}x)\\
=\ &  \sum_{k\in \Z_N}\(q^{a_{\mu^k(i),j}x\partial_{x}}-q^{-a_{\mu^k(i),j}x\partial_{x}}\)
\(q^{c x\partial_{x}}-q^{-c x\partial_{x}}\)q^{-c x\partial_{x}} \log(1-\xi^{-k}x).
\end{align*}
With this we see that (Q2) is equivalent to
\begin{align}\label{log-Psi-comm}
  &[\log \Psi_i^+(x_1q^{\half c}),\log \Psi_j^-(x_2q^{-\half c})]\\
  =\ &(q^{x_2\partial_{x_2}}-q^{-x_2\partial_{x_2}})^2\sum_{k\in\Z_N}
  [a_{\mu^k(i),j}]_{q^{x_2\partial_{x_2}}}[c]_{q^{x_2\partial_{x_2}}}q^{-c x_2\partial_{x_2}}\log(1-\xi^{-k}x_2/x_1).\nonumber
\end{align}
Note that $\frac{1}{2\hbar}F(\hbar x\partial_{x})(q^{x\partial_{x}}-q^{-x\partial_{x}})=x\partial_{x}$,
 $2\hbar G(\hbar x\partial_{x})=q^{x\partial_{x}}-q^{-x\partial_{x}}$, and $x_1\partial_{x_1}M(x_2/x_1)=-x_2\partial_{x_2}M(x_2/x_1)$
 for $M(z)\in \C[[z,z^{-1}]]$.
 Then (\ref{log-Psi-comm}) is equivalent to
\begin{align}
 & [h_{i,q}^+(x_1),h_{j,q}^-(x_2)]\label{h-i-pm-j-mp}\\
  =\ & \sum_{k\in\Z_N}[a_{\mu^k(i),j}]_{q^{x_2\partial_{x_2}}}[c]_{q^{x_2\partial_{x_2}}}
  q^{-c x_2\partial_{x_2}}x_1\partial_{x_1}x_2\partial_{x_2}\log(1-\xi^{-k}x_2/x_1),\nonumber
  \end{align}
  where
  \begin{align*}
  \partial_{(x_1)}\partial_{(x_2)}\log(1-\xi^{-k}x_2/x_1)=\frac{\xi^{-k}x_2/x_1}{(1-\xi^{-k}x_2/x_1)^2}.
\end{align*}
It is clear that  (Q2$'$) amounts to (\ref{h-i-pm-j-mp}) and $[h_{i,q}^{\pm}(x_1),h_{j,q}^{\pm}(x_2)]=0$.

In the presence of (Q1) and (Q2),  we see that (Q3) and (Q4)  are equivalent to
\begin{align}
&\Psi_i^+(x_1)e_{j,q}^\pm(x_2)=e_{j,q}^\pm(x_2)\Psi_i^+(x_1)g_{i,j}(q^{-\half c}x_2/x_1)^{\pm 1},\label{eq:locality-DY-temp001}\\
&\Psi_i^-(x_1)e_{j,q}^\pm(x_2)=e_{j,q}^\pm(x_2)\Psi_i^-(x_1)g_{j,i}(q^{-\half c}x_1/x_2)^{\mp 1}.\label{eq:locality-DY-temp002}
\end{align}
Note that \eqref{eq:locality-DY-temp001} is equivalent to
\begin{align*}
\left[\log \Psi_i^+(x_1q^{\half c}),e_{j,q}^\pm(x_2)\right]=\pm e_{j,q}^\pm(x_2)\log g_{i,j}(q^{-c}x_2/x_1),
\end{align*}
where
\begin{align*}
&\log g_{i,j}(q^{-c}x_2/x_1)\nonumber\\
=\ &\bar{a}_{i,j}\hbar
-(q^{x_2\partial_{x_2}}-q^{-x_2\partial_{x_2}})
  \sum_{k\in\Z_N}[a_{\mu^k(i),j}]_{q^{x_2\partial_{x_2}}}
  q^{-c x_2\partial_{x_2}}   \log (1-\xi^{-k}x_2/x_1).
\end{align*}
Then we have
\begin{align}\label{eq:locality-DY-temp004}
  &[h_{i,q}^+(x_1),e_{j,q}^\pm(x_2)]=\pm e_{j,q}^\pm(x_2)\sum_{k\in\Z_N}[a_{\mu^k(i),j}]_{q^{x_2\partial_{x_2}}}
    q^{-c x_2\partial_{x_2}}\left(\frac{1}{2}+\frac{\xi^{-k}\frac{x_2}{x_1}}{1-\xi^{-k}\frac{x_2}{x_1}}\right).
\end{align}
Similarly,  \eqref{eq:locality-DY-temp002} is equivalent to
\begin{align}\label{eq:locality-DY-temp005}
  &[h_{i,q}^-(x_1),e_{j,q}^\pm(x_2)]=\pm e_{j,q}^\pm(x_2)\sum_{k\in\Z_N}[a_{\mu^k(i),j}]_{q^{x_2\partial_{x_2}}}
    q^{c x_2\partial_{x_2}}\left(\frac{1}{2}+\frac{\xi^{k}\frac{x_1}{x_2}}{1-\xi^{k}\frac{x_1}{x_2}}\right).
\end{align}
Notice that
\begin{align*}
\frac{1}{2}+\frac{\xi^{-k}x_2/x_1}{1-\xi^{-k}x_2/x_1}
=\frac{1}{2}\frac{1+\xi^{-k}x_2/x_1}{1-\xi^{-k}x_2/x_1},\ \ \ \
\frac{1}{2}+\frac{\xi^{k}x_1/x_2}{1-\xi^{k}x_1/x_2}
=-\frac{1}{2}\frac{1+\xi^{-k}x_2/x_1}{1-\xi^{-k}x_2/x_1}.
\end{align*}
Then (Q3$'$) follows immediately.

The equivalence between (Q5$'$) and (Q6) follows from the following facts (from (Q3), (\ref{g-G-F}),  (\ref{G-F-relation})):
\begin{align*}
&\Psi_i^{+}(x_1q^{-\frac{1}{2}c})^{-1}E_j^{-}(x_2q^{-c})=
E_j^{-}(x_2q^{-c})\Psi_i^{+}(x_1q^{-\frac{1}{2}c})^{-1}g_{i,j}(x_2/x_1),\\
&E_i^{-}(x_1q^{-c})\Psi_j^{+}(x_2q^{-\frac{1}{2}c})^{-1}=
\Psi_j^{+}(x_2q^{-\frac{1}{2}c})^{-1}E_i^{-}(x_1q^{-c})g_{j,i}(x_1/x_2)^{-1},\\
&F_{i,j}^{+}(x_1,x_2)g_{i,j}(x_2/x_1)=G^{+}_{i,j}(x_1,x_2)=q^{\bar{a}_{i,j}}F_{i,j}^{-}(x_1,x_2),\\
&q^{\bar{a}_{i,j}}G_{i,j}^{-}(x_1,x_2)g_{j,i}(x_1/x_2)^{-1}=F_{i,j}^{+}(x_1,x_2)g_{j,i}(x_1/x_2)^{-1}=G^{+}_{i,j}(x_1,x_2).
\end{align*}

Finally, consider relation (Q6$'$).
In the presence of (Q5$'$), from \cite{CJKT-qeala-II-twisted-qaffinization} (Lemma 5.4), we have
\begin{align}\label{braided-relation-e-i-j}
  f_{i,j}^+(x_1,x_2)e_{i,q}^\pm(x_1)e_{j,q}^\pm(x_2)
  =C_{i,j}f_{j,i}^+(x_2,x_1)e_{j,q}^\pm(x_2)e_{i,q}^\pm(x_1)\quad \text{for }i,j\in I.
\end{align}
For $i_1,i_2,\dots,i_n\in I$, set
\begin{align}
  e_{i_1,\dots,i_n;q}^\pm(x_1,\dots,x_n)
  =\prod_{1\le a<b\le n}f_{i_a,i_b}^+(x_a,x_b)
  e_{i_1,q}^\pm(x_1)\cdots e_{i_n,q}^\pm(x_n),
\end{align}
which lie in $\E_{\hbar}^{(n)}(W)$. From definition, we have $e^{+}_{i,i,j;q}(x_1,x_2,x_3)=E_{i,i,j}^{+}(x_1,x_2,x_3)$ and
\begin{align*}
&e^{-}_{i,i,j;q}(x_1,x_2,x_3)=g_{i,i}(x_2/x_1)g_{i,j}(x_3/x_1)g_{i,j}(x_3/x_2)\\
&\quad \quad \cdot E_{i,i,j}^{-}(q^{-c}x_1,q^{-c}x_2,q^{-c}x_3)
\Psi_i^+(x_1q^{-\half c})\inv\Psi_i^+(x_2q^{-\half c})\inv\Psi_j^+(x_3q^{-\half c})\inv.
\end{align*}
From this we see that $e_{i,i,j;q}^\pm(qx,q\inv x,x)=0$ if and only if $E_{i,i,j}^\pm(qx,q\inv x,x)=0$.
Then it follows from Proposition \ref{prop:qSerre}.
\end{proof}

Now, let $W$ be a restricted $\qtarl$-module.  
Using (\ref{braided-relation-e-i-j}) we get
\begin{align}\label{e-quasi-compatbilty}
  e_{i_1,\dots,i_n;q}^\pm(x_1,\dots,x_n)\in \E_\hbar^{(n)}(W).
\end{align}
Then $U^\pm:=\set{e_{i,q}^\pm(x)}{i\in I}$ are $\hbar$-adically quasi compatible subsets of $\E_\hbar(W)$.
By Theorem \ref{thm:abs-construct}, $U^\pm$ generate $\hbar$-adic nonlocal vertex algebras
$\<U^\pm\>_\phi\subset\E_\hbar(W)$ with $Y_\E^\phi(\cdot,z)$ denoting the vertex operator map,
where for $\al(x),\beta(x)\in\<U^\pm\>_\phi$,
\begin{align*}
  Y_\E^\phi(\al(x),z)\beta(x)=\sum_{n\in\Z}\al(x)_n^\phi\beta(x)z^{-n-1}.
\end{align*}

We have the following result:

\begin{prop}\label{prop:Q6'=Q7'}
Let $W$ be a restricted $\qtarl$-module. Assume $i,j\in I$ with $a_{i,j}=\<\al_i,\al_j\>=-1$. Then
\begin{align}
  e_{i,q}^\pm(x)_0^\phi e_{i,q}^\pm(x)_0^\phi e_{j,q}^\pm(x)=c(x)e_{i,i,j,q}^\pm(q\inv x,qx,x)
\end{align}
for  some invertible $c(x)\in \C((x))[[\hbar]]$. Furthermore, (Q6$'$) holds if and only if
\begin{align*}
  &\te{\em (Q7$'$)}\qquad e_{i,q}^\pm(x)_0^\phi e_{i,q}^\pm(x)_0^\phi e_{j,q}^\pm(x)=0.
\end{align*}
\end{prop}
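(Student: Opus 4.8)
The plan is to compute the product $e_{i,q}^\pm(x)_0^\phi e_{j,q}^\pm(x)$ explicitly in terms of the ``naive'' product $e_{i,q}^\pm(x_1)e_{j,q}^\pm(x_2)$, iterate once more, and compare with the vector $e_{i,i,j;q}^\pm(q^{-1}x,qx,x)$ appearing in (Q6$'$). Recall that for $\phi(x,z)=xe^z$ and an $\hbar$-adically quasi-compatible pair $(\alpha(x),\beta(x))$,
\[
Y_\E^\phi(\alpha(x),z)\beta(x)=\iota_{x,z}\big(q_0(\phi(x,z),x)^{-1}\big)\big(q_0(x_1,x)\alpha(x_1)\beta(x)\big)\big|_{x_1=\phi(x,z)},
\]
where $q_0$ is any nonzero series annihilating the singularity of $\alpha(x_1)\beta(x_2)$. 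Since $a_{i,j}=-1$, the braided commutation relation \eqref{braided-relation-e-i-j} shows that $f_{i,j}^+(x_1,x_2)=F_{i,j}^+(x_1,x_2)$ kills the singularity (there is no $k$ with $\mu^k(i)=j$, as $a_{i,j}=-1$ forces $j\notin\mathcal O(i)$), so I would take $q_0=F_{i,j}^+$. The coefficient $\alpha(x)_0^\phi\beta(x)$ is then $\operatorname{Res}_z z^{-1}$ of the above, i.e.\ the constant term in $z$, which by the structure of $F_{i,j}^+$ evaluates to a product of $e_{i,q}^\pm(x')e_{j,q}^\pm(x)$-type expressions evaluated along the diagonal $x_1=xe^z$, rescaled by an invertible power series in $\C((x))[[\hbar]]$ coming from $F_{i,j}^+(xe^z,x)^{-1}$ and its residue.

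First I would carry out the single-step computation: show that $e_{i,q}^\pm(x)_0^\phi e_{j,q}^\pm(x)$ equals an invertible scalar series $c_1(x)\in\C((x))[[\hbar]]$ times the expression obtained by specializing $x_1$ in $f_{i,j}^+(x_1,x_2)e_{i,q}^\pm(x_1)e_{j,q}^\pm(x_2)$ at a specific point determined by $\phi$. Here the key mechanism is that $\phi(x,z)=xe^z$ converts the \emph{trigonometric/multiplicative} shifts (the factors $x_1-\xi^{-k}q^{\mp1}x_2$ in $F_{i,j}^\pm$) into \emph{additive} shifts under $\iota_{x=e^z}$, matching Lemma~\ref{xdx-dx-connection}; this is exactly the compatibility that makes the vertex-algebraic $n=0$ product reproduce the $q$-deformed ``diagonal'' $e_{i,i,j;q}^\pm(q^{-1}x,qx,x)$. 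Then I would iterate: apply the same formula with $\alpha(x)=e_{i,q}^\pm(x)$ and $\beta(x)=e_{i,q}^\pm(x)_0^\phi e_{j,q}^\pm(x)$, using Lemma~\ref{lem:quasi-compatible} to know the triple is still quasi-compatible and that the relevant desingularizing series is (a product of) $F_{i,i}^+$ and $F_{i,j}^+$ factors. The double residue extraction, after bookkeeping of all the invertible scalar prefactors, should yield precisely $c(x)\,e_{i,i,j;q}^\pm(q^{-1}x,qx,x)$ with $c(x)\in\C((x))[[\hbar]]^\times$; the specific arguments $q^{-1}x,qx,x$ arise because the two successive $_0^\phi$-products each shift by a $q$-power dictated by the order of the pole of $e_{i,q}^\pm(x_1)e_{i,q}^\pm(x_2)$ (governed by $a_{i,i}=2$, cf.\ the $(x+\hbar)$ and $(x+2\hbar)$ denominators in Lemma~\ref{lem:Y-eta-Sing}).

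The main obstacle will be the precise identification of the scalar $c(x)$ and, more importantly, of the \emph{shift arguments} $q^{-1}x, qx, x$: one must track how the ordering of the three factors and the choices of desingularizing polynomials $F_{i,i}^+, F_{i,j}^+$ interact under the two nested substitutions $x_1\mapsto xe^{z}$, and verify that the residues do not introduce extra $z$-dependent corrections that would spoil the diagonal specialization. This is essentially the content of the ``$\phi$-coordinated calculus converts $q$-shifts to exponentials'' principle, and the cleanest route is probably to reduce everything, via Remark~\ref{prop:Y-E-hom-h-adic} and the map $\pi_\phi$, to an identity in $\C_\ast((x_1,x_2,x_3))[[\hbar]]$ that can be checked by the ring embedding of Lemma~\ref{diff-algebra-embeding}. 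Once the displayed identity $e_{i,q}^\pm(x)_0^\phi e_{i,q}^\pm(x)_0^\phi e_{j,q}^\pm(x)=c(x)e_{i,i,j;q}^\pm(q^{-1}x,qx,x)$ with $c(x)$ invertible is established, the final equivalence is immediate: (Q6$'$) asserts $e_{i,i,j;q}^\pm(q^{-1}x,qx,x)=0$ (note (Q6$'$) is stated as $e_{i,i,j;q}^\pm(qx,q^{-1}x,x)=0$, but by the symmetry in \eqref{braided-relation-e-i-j} applied to the two equal indices $i$ this is the same condition up to an invertible scalar), and since $c(x)$ is a unit in $\C((x))[[\hbar]]$ this holds if and only if $e_{i,q}^\pm(x)_0^\phi e_{i,q}^\pm(x)_0^\phi e_{j,q}^\pm(x)=0$, which is (Q7$'$).
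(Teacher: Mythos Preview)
Your overall strategy matches the paper's --- iterate the $_0^\phi$-product using $f_{i,j}^+$ as the desingularizing series, locate the simple pole of $Y_\E^\phi(e_{i,q}^\pm(x),z)e_{j,q}^\pm(x)$ in $z$, and read off the residue --- but two concrete errors would derail the computation as you have written it.

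First, the assertion that $a_{i,j}=-1$ forces $j\notin\mathcal O(i)$ is false: case~(2) of Lemma~\ref{lem:linking} (orbit of type $A_2$, $s_i=2$) gives $a_{\mu^{N_i/2}(i),i}=-1$ with $\mu^{N_i/2}(i)\in\mathcal O(i)$. So one cannot assume $f_{i,j}^+=F_{i,j}^+$; the paper explicitly splits into the two cases $j\notin\mathcal O(i)$ and $j\in\mathcal O(i)$ to verify that $f_{i,j}^+(xe^z,x)(z+a_{i,j}\hbar)^{-1}$ is a unit in $\C((x))[[z,\hbar]]$. Second, $\alpha(x)_0^\phi\beta(x)$ is the coefficient of $z^{-1}$ in $Y_\E^\phi(\alpha(x),z)\beta(x)=\sum_m\alpha(x)_m^\phi\beta(x)z^{-m-1}$ (i.e.\ $\Res_z$), not the constant term $\Res_z z^{-1}$; what you describe would extract $\alpha(x)_{-1}^\phi\beta(x)$ instead. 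With the correct convention and the invertibility of $p(x,z)=(z-\hbar)/f_{i,j}^+(xe^z,x)$, the singular part is $(z-\hbar)^{-1}p(x,\hbar)e_{i,j;q}^\pm(qx,x)$, so $e_{i,q}^\pm(x)_0^\phi e_{j,q}^\pm(x)=p(x,\hbar)e_{i,j;q}^\pm(qx,x)$. For the second step the desingularizing series against $e_{i,j;q}^\pm(qx_2,x_2)$ is $f_{i,i}^+(x_1,qx_2)f_{i,j}^+(x_1,x_2)$, and the paper checks that $p_2(x,z)=(z+\hbar)\big(f_{i,i}^+(xe^z,qx)f_{i,j}^+(xe^z,x)\big)^{-1}$ is a unit, so the pole is now at $z=-\hbar$ and one obtains $c(x)=p_2(x,-\hbar)p(x,\hbar)$. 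Thus the specific arguments $(q^{-1}x,qx,x)$ arise from the pole \emph{locations} $z=\hbar$ and $z=-\hbar$ of these simple poles, not (as you suggested) from the pole order of $e_i\cdot e_i$ alone. Your treatment of the final equivalence, including the swap of the first two arguments via $C_{i,i}=1$, is correct.
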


\begin{proof} By dividing it into cases $j\notin \mathcal{O}(i)$ and $j\in \mathcal{O}(i)$,
 it is straightforward to show that $f_{i,j}^{+}(xe^z,x)(z+a_{i,j}\hbar)^{-1}$ is an invertible element of $\C((x))[[z,\hbar]]$.
Set
\begin{align*}
 p(x,z)=\frac{z+a_{i,j}\hbar}{f_{i,j}^+(xe^z,x)}=\frac{z-\hbar}{f_{i,j}^+(xe^z,x)},
\end{align*}
which is also an invertible element of $\C((x))[[z,\hbar]]$. With (\ref{e-quasi-compatbilty}), we have
$$Y_\E^\phi(e_{i,q}^\pm(x),z)e_{j,q}^\pm(x)=\iota_{x,z}(f_{i,j}^+(xe^z,x)\inv) e_{i,j;q}^\pm(xe^z,x).   $$
Then
\begin{align*}
  &\Sing_z Y_\E^\phi(e_{i,q}^\pm(x),z)e_{j,q}^\pm(x)
  =\Sing_z \iota_{x,z}f_{i,j}^+(xe^z,x)\inv e_{i,j;q}^\pm(xe^z,x)\\
  =&\Sing_z \frac{1}{z-\hbar}p(x,z)e_{i,j;q}^\pm(xe^z,x)
  =\frac{1}{z-\hbar}p(x,\hbar)e_{i,j;q}^\pm(qx,x),\nonumber
\end{align*}
which implies
\begin{align}\label{two-layers}
p(x,\hbar)e_{i,j;q}^\pm(xq,x)=e_{i,q}^\pm(x)_{0}^{\phi}e_{j,q}^\pm(x)\in \<U^\pm\>_\phi.
\end{align}

Note that $e_{i,j;q}^\pm(xq,x)\in  \E_\hbar(W)$ and
\begin{align*}
  f_{i,i}^+(x_1,x_2q)f_{i,j}^+(x_1,x_2)e_{i,q}^\pm(x_1)e_{i,j;q}^\pm(qx_2,x_2)
  =e_{i,i,j,q}^\pm(x_1,qx_2,x_2).
\end{align*}
As before, we can show that
\begin{align*}
p_2(x,z):=\frac{z+\hbar}{f_{i,i}^+(xe^z,qx) f_{i,j}^+(xe^z,x)}
\end{align*}
is an invertible element of $\C((x))[[z,\hbar]]$. Then using (\ref{two-layers}) we get
\begin{align*}
&\Sing_z Y_\E^\phi(e_{i,q}^\pm(x),z)\left(e_{i,q}^\pm(x)_{0}^{\phi}e_{j,q}^\pm(x)\right)\\
 =\ &\Sing_z Y_\E^\phi(e_{i,q}^\pm(x),z)(p(x,\hbar)e_{i,j;q}^\pm(xq,x))\\
  =\  &\Sing_z f_{i,i}^+(xe^z,xq)\inv f_{i,j}^+(xe^z,x)\inv p(x,\hbar)e_{i,i,j;q}^\pm(xe^z,qx,x)\\
  =\  &\Sing_z \frac{1}{z+\hbar}p_2(x,z)p(x,\hbar)e_{i,i,j;q}^\pm(xe^z,qx,x)\\
  =\  &\frac{1}{z+\hbar}p_2(x,-\hbar)p(x,\hbar)e_{i,i,j;q}^\pm(q\inv x,qx,x),
\end{align*}
which implies
$$e_{i,q}^\pm(x)_0^{\phi}e_{i,q}^\pm(x)_{0}^{\phi}e_{j,q}^\pm(x)
=p_2(x,-\hbar)p(x,\hbar)e_{i,i,j;q}^\pm(q\inv x,qx,x),$$
proving the first assertion. The second assertion follows immediately.
 Note that
\begin{align*}
  e_{i,i,j;q}^\pm(q\inv x,qx,x)=e_{i,i,j;q}^\pm(qx,q\inv x,x)
\end{align*}
because $e_{i,i,j;q}^\pm(x_1,x_2,x)=e_{i,i,j;q}^\pm(x_2,x_1,x)$ as $C_{i,i}=1$.
\end{proof}

Note that from (\ref{6.35}) we have $\log \kappa_{i,j}(x)\in x\C[[x]]+\hbar\C[[x,\hbar]]$ for $i,j\in I$.
Then $$\log \frac{\kappa_{i,i}(0)}{\kappa_{i,i}(2\hbar)}\in \hbar\C[[\hbar]].$$
Now, we are in a position to present the main result of this paper.

\begin{thm}\label{final-theorem}
Let $(W,Y_W)$ be an equivariant $\phi$-coordinated quasi $V_L[[\hbar]]^{\eta}$-module.
Then there is a level $1$ restricted $\qtar$-module structure on $W$ such that
\begin{align*}
  &h_{i,q}(x)=Y_W(\al_i,x)+\frac{1}{4\hbar}\log \frac{\kappa_{i,i}(0)}{\kappa_{i,i}(2\hbar)},\\
  &e_{i,q}^\pm(x)=B_i Y_W(e_{\pm\al_i},x)
\end{align*}
for $i\in I$, where
$$B_i=\(\frac{e^{\hbar}-e^{-\hbar}}{2\hbar}\)^{-1/2}\kappa_{i,i}(0)^{1/2}\in \C[[\hbar]].$$
\end{thm}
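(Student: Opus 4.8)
The plan is to deduce the theorem from Proposition \ref{prop:locality-DY} together with Proposition \ref{prop:Q6'=Q7'}. I would first put $c=1_W$ and take $h_{i,q}(x)$, $e_{i,q}^\pm(x)$ to be the operators displayed in the statement. Since $(W,Y_W)$ is a $\phi$-coordinated quasi module for an $\hbar$-adic nonlocal vertex algebra, $Y_W$ takes values in $\E_\hbar(W)$, and as $B_i\in\C[[\hbar]]^{\times}$ and $\tfrac{1}{4\hbar}\log\tfrac{\kappa_{i,i}(0)}{\kappa_{i,i}(2\hbar)}\in\C[[\hbar]]$ (by (\ref{6.35})), these series again lie in $\E_\hbar(W)$ and their coefficients satisfy the truncation property, so they determine operators $H_i(n)$, $E_i^\pm(n)$ and hence $\Psi_i^\pm(x)$, $E_i^\pm(x)$ as in Proposition \ref{prop:locality-DY}. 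It then remains to verify the relations (Q0$'$), (Q2$'$), (Q3$'$), (Q4$'$), (Q5$'$) and, in place of (Q6$'$) via Proposition \ref{prop:Q6'=Q7'}, (Q7$'$); the module will then automatically be restricted, and of level $1$ since $c=1_W$.

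The verification engine is Proposition \ref{prop:reverse-cal}: applied to the $\hbar$-adic quantum vertex algebra $V_L[[\hbar]]^{\eta}$ and the equivariant $\phi$-coordinated quasi module $W$, it converts each commutation-type relation among $Y_L^\eta(\al_i,x)$, $Y_L^\eta(e_{\pm\al_i},x)$ in $V_L[[\hbar]]^{\eta}$ into the corresponding relation among $Y_W(\al_i,x)$, $Y_W(e_{\pm\al_i},x)$ on $W$---the operator $\cS(x)$ supplying the braiding kernels and the singular parts $Y_L^\eta(\cdot,x)^{-}(\cdot)$ supplying the $\delta$-function terms (with $G^0=G=\{\mu^k\mid k\in\Z_N\}$, so $\sum_{\sigma\in G^0}=\sum_{k\in\Z_N}$). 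Relation (Q0$'$) is immediate from equivariance, $Y_W(R(\mu)v,x)=Y_W(v,\xi^{-1}x)$, once one notes that $a_{\mu^{k+1}(i),\mu(i)}=a_{\mu^k(i),i}$ gives $\kappa_{\mu(i),\mu(i)}=\kappa_{i,i}$, hence $B_{\mu(i)}=B_i$ and equality of the additive shifts. Relations (Q2$'$) and (Q3$'$) come from the commutator formulas (\ref{new-h-h-relations}) and (\ref{new-h-e-beta-relations}) of Proposition \ref{lem:commutator-V-L-h-eta}: after transfer, the $\cS$-matrix entries $\cS(x)(\al_j\ot\al_i)$ and $\cS(x)(e_{\pm\al_j}\ot\al_i)$ of Lemma \ref{VL-eta-S(X)-special} yield exactly the $[a_{\mu^k(i),j}]_{q^{x_2\partial_{x_2}}}$-kernels appearing there (with $[c]_{q^{x_2\partial_{x_2}}}=1$ at $c=1$), the singular-part formulas (\ref{eq:Y-eta-Sing-h-h}) and (\ref{eq:Y-eta-Sing-h-e}) of Lemma \ref{lem:Y-eta-Sing} produce the scalar $\delta$-terms, and the additive shift in $h_{i,q}(x)$, being a scalar, drops out of every bracket.

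For (Q5$'$) I would transfer the $e_{\al_i}$--$e_{\al_j}$ relation: as $A$ is simply-laced, $a_{\mu^k(i),j}\in\{-1,0,2\}$, so by (\ref{eq:Y-eta-Sing-e-e>=0}) the transferred identity has nonzero $\delta$-terms only at the indices $k$ with $a_{\mu^k(i),j}=-1$ or $\mu^k(i)=j$; multiplying through by $F_{i,j}(x_1,x_2)$ clears precisely these poles and, using $\tilde g_{i,j}(x_2/x_1)=G_{i,j}(x_1,x_2)/F_{i,j}(x_1,x_2)$ from (\ref{g-G-F}), converts the $\cS$-twisted reverse product into $G_{i,j}(x_1,x_2)e_{j,q}^\pm(x_2)e_{i,q}^\pm(x_1)$. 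For (Q7$'$), the module identity of Remark \ref{prop:Y-E-hom-h-adic}, $Y_W(Y_L^\eta(u,z)v,x)=Y_\E^\phi(Y_W(u,x),z)Y_W(v,x)$, gives $e_{i,q}^\pm(x)_0^\phi e_{i,q}^\pm(x)_0^\phi e_{j,q}^\pm(x)=B_i^2B_j\,Y_W\big((e_{\pm\al_i})_0^\eta(e_{\pm\al_i})_0^\eta e_{\pm\al_j},x\big)$, which vanishes by Lemma \ref{lem:Y-eta-rel-5}.

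The hard part is (Q4$'$). Transferring the $V_L[[\hbar]]^{\eta}$-relation between $Y_L^\eta(e_{\al_i},x_1)$ and $Y_L^\eta(e_{-\al_j},x_2)$ (whose $\cS$-entry is $\tilde g_{i,j}(e^x)^{-1}$ by Lemma \ref{VL-eta-S(X)-special}), in Proposition \ref{prop:reverse-cal} the only nonzero singular contributions come from the indices $k$ with $\mu^k(i)=j$, where by (\ref{eq:Y-eta-Sing-e-e-to-h>0}) the series $Y_L^\eta(e_{\al_j},x)^{-}e_{-\al_j}$ has a pole at $x=0$ with residue $\tfrac{1}{2\hbar}\kappa_{j,j}(0)^{-1}\vac$ and a pole at $x=-2\hbar$ with coefficient $-\tfrac{1}{2\hbar}\kappa_{j,j}(-2\hbar)^{-1}E^{-}(-\al_j,-2\hbar)$. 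The $x=0$ pole produces the term $B_iB_j\,\tfrac{1}{2\hbar}\kappa_{j,j}(0)^{-1}\sum_{k:\,\mu^k(i)=j}\delta(\xi^{-k}x_2/x_1)$; since $\mu^k(i)=j$ forces $\kappa_{j,j}=\kappa_{i,i}$, hence $B_i=B_j$, and since $B_j^{2}=\tfrac{2\hbar}{q-q^{-1}}\kappa_{j,j}(0)$ by the definition of $B_j$, this coefficient equals $\tfrac{1}{q-q^{-1}}$, which is exactly the first term on the right side of (Q4$'$). The $x=-2\hbar$ pole produces $-\tfrac{1}{q-q^{-1}}\sum_k\delta_{\mu^k(i),j}\,\Theta_j(x_2)\,\delta(\xi^{-k}q^{-2}x_2/x_1)$ for an operator $\Theta_j(x_2)$ assembled from $Y_W(E^{-}(-\al_j,-2\hbar),x_2)$ and the scalars $\kappa_{j,j}(-2\hbar),\kappa_{j,j}(0)$. \textbf{The main obstacle} is to identify $\Theta_j(x_2)$ with $\Psi_j^{-}(x_2q^{-3/2})\Psi_j^{+}(x_2q^{-1/2})^{-1}$: this is a bookkeeping computation with the exponential generating functions, in which one rewrites $E^{-}(-\al_j,-2\hbar)$ through $Y_L^\eta(\al_j,\cdot)$ via (\ref{VL-eta-d-dx}), transfers the result to $W$, and matches it against $h_{j,q}^\pm(x)=\pm\tfrac{1}{2\hbar}F(\hbar x\partial_x)\log\Psi_j^\pm(xq^{\pm1/2})$ and $\Psi_j^\pm(x)=\exp\!\big(\pm2\hbar\,G(\hbar x\partial_x)h_{j,q}^\pm(xq^{\mp1/2})\big)$ from the proof of Proposition \ref{prop:locality-DY}. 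It is precisely this matching that forces the additive shift $\tfrac{1}{4\hbar}\log\tfrac{\kappa_{i,i}(0)}{\kappa_{i,i}(2\hbar)}$ in $h_{i,q}(x)$: it feeds into $H_j(0)$ and hence into the $q^{H_j(0)}$ prefactor of $\Psi_j^\pm$, and absorbs exactly the discrepancy between the two residues $\kappa_{j,j}(0)^{-1}$ and $\kappa_{j,j}(-2\hbar)^{-1}$. Once (Q4$'$) is established this way, Proposition \ref{prop:locality-DY} delivers the asserted level $1$ restricted $\qtar$-module structure on $W$.
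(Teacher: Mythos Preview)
Your overall architecture is exactly the paper's: set $c=1_W$, define $h_{i,q}$ and $e_{i,q}^\pm$ as stated, verify (Q0$'$)--(Q5$'$) and (Q7$'$) via Proposition \ref{prop:reverse-cal} (transferring the $\cS$-locality of Lemma \ref{VL-eta-S(X)-special} and the singular parts of Lemma \ref{lem:Y-eta-Sing}), then invoke Proposition \ref{prop:Q6'=Q7'} and Proposition \ref{prop:locality-DY}. Your treatment of (Q0$'$), (Q2$'$), (Q3$'$), (Q5$'$), (Q7$'$), and your check that $B_j^2\cdot\frac{1}{2\hbar}\kappa_{j,j}(0)^{-1}=\frac{1}{q-q^{-1}}$ for the first $\delta$-term in (Q4$'$), all match the paper's Section 7.

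The gap is in the step you flag as ``the main obstacle'' in (Q4$'$): computing $Y_W\big(E^{-}(-\al_j,-2\hbar)\vac,x_2\big)=Y_W(A_j(-2\hbar)\vac,x_2)$ and identifying it with $\Psi_j^{-}(x_2q^{-3/2})\Psi_j^{+}(x_2q^{-1/2})^{-1}$ up to the requisite scalar. Your proposed mechanism---rewriting $E^{-}(-\al_j,-2\hbar)$ through $Y_L^\eta(\al_j,\cdot)$ via (\ref{VL-eta-d-dx})---does not work: (\ref{VL-eta-d-dx}) is a differential identity for $Y_L^\eta(e_\al,x)$ and gives no handle on the element $A_j(-2\hbar)\vac\in V_L[[\hbar]]$. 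The paper proceeds quite differently (and this is not mere bookkeeping). First one observes that $\Phi(\eta'(\al,x))\vac=0$, so $\al_i^\eta(-n)\vac=\al_i(-n)\vac$, whence $A_j(a\hbar)\vac=\exp\big((\wt\al_j)^\eta_{-1}\big)\vac$ with $\wt\al_j=\sum_{n\ge 1}\frac{(a\hbar)^n}{n}\al_j(-n)\vac$; the homomorphism property then gives $Y_W(A_j(a\hbar)\vac,x)=\exp\big(Y_W(\wt\al_j,x)^\phi_{-1}\big)1_W$. Second, since $Y_W(\wt\al_j,x)$ has both nonnegative- and negative-power parts, a Baker--Campbell--Hausdorff step (the paper's Lemma \ref{lem:exp-cal}) is needed to normal-order this exponential; the scalar this produces is exactly $\frac{\kappa_{j,j}(0)}{\kappa_{j,j}(a\hbar)^{1/2}\kappa_{j,j}(-a\hbar)^{1/2}}$. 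This yields the closed form (Proposition \ref{lem:exp-al-no})
\[
Y_W(A_j(a\hbar)^{\pm 1}\vac,x)=\frac{\kappa_{j,j}(0)}{\kappa_{j,j}(a\hbar)^{1/2}\kappa_{j,j}(-a\hbar)^{1/2}}\,\Lambda_j^{+}(a,x)^{\pm 1}\Lambda_j^{-}(a,x)^{\pm 1}.
\]
Only then does the identification with $\Psi_j^{\pm}$ go through: one computes directly that $\Lambda_j^{\pm}(-2,x)=\big(\kappa_{j,j}(0)/\kappa_{j,j}(2\hbar)\big)^{1/4}\exp\big(-2\hbar\,G(\hbar x\partial_x)h_{j,q}^{\pm}(xq^{-1})\big)$, and it is here, as you correctly surmised, that the shift $\frac{1}{4\hbar}\log\frac{\kappa_{i,i}(0)}{\kappa_{i,i}(2\hbar)}$ in $h_{i,q}$ is absorbed (together with $\kappa_{j,j}(-2\hbar)=\kappa_{j,j}(0)$ from Lemma \ref{lem:kappa-ij-com}). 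Replacing your reference to (\ref{VL-eta-d-dx}) by this two-step argument (Lemma \ref{lem:exp-cal} plus Proposition \ref{lem:exp-al-no}) closes the gap.
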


\begin{rem}
{\em Note that it can be shown that $B_i=\(\frac{s_i}{d_i}\)^{\frac{1}{2}}\(\left[\frac{d_i}{s_i}\right]_q\)^{\frac{1}{2}}$. }
\end{rem}

\section{Proof of Theorem \ref{final-theorem}}

This section is devoted to the proof of Theorem \ref{final-theorem}.
Let $(W,Y_W)$ be a $(G,\chi_{\phi})$-equivariant $\phi$-coordinated quasi $V_L[[\hbar]]^{\eta}$-module,
which is fixed throughout this section.
We first give some properties of the power series $\kappa_{i,j}(x)$.

\begin{lem}\label{lem:kappa-ij-com}
The following relations hold for $i,j\in I$:
\begin{align*}
  &\kappa_{i,j}(-x)\kappa_{j,i}(x)\inv \cdot  \frac{x-a_{i,j}\hbar}{x+a_{i,j}\hbar}=
  g_{j,i}(e^{-x})^{-1}=g_{i,j}(e^x),\\
  &\kappa_{i,j}(x-2\hbar)=\kappa_{j,i}(-x).
\end{align*}
\end{lem}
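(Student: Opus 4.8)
The plan is to prove both identities by passing to logarithms and reducing everything to the operator expression for $\<\eta(\al_i,x),\al_j\>$ in \eqref{eta-key}, the formula \eqref{kappa-ij-def} for $\kappa_{i,j}(x)$ (equivalently, the logarithmic form of Lemma~\ref{eta-kappa-relation}), and the identity $\<\eta(\al_i,-x),\al_j\>-\<\eta(\al_j,x),\al_i\>=\log\prod_{k\in\Z_N}\frac{q^{a_{\mu^k(i),j}}-\xi^{-k}e^x}{1-q^{a_{\mu^k(i),j}}\xi^{-k}e^x}$ established just above, which by Definition~\ref{def-tilde-g-ij} is exactly $\log g_{i,j}(e^x)$. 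The one structural point used repeatedly is that substituting $x\mapsto -x$ transforms a differential operator in $\partial_x$ by $\partial_x\mapsto-\partial_x$, hence $q^{\partial_x}\mapsto q^{-\partial_x}$, while $[a]_{q^{\partial_x}}$ is \emph{unchanged} since $[a]_{v^{-1}}=[a]_v$. Combining this with \eqref{vartheta-property} ($\vartheta_k(-x)=\vartheta_{-k}(x)$), the reindexing $k\mapsto-k$ on $\Z_N$, and the identity $a_{\mu^{-k}(i),j}=a_{i,\mu^{k}(j)}=a_{\mu^{k}(j),i}$ (from $\mu$-invariance and symmetry of $A$), one gets the closed form
\[
\log\kappa_{j,i}(-x)=\sum_{k\in\Z_N}[a_{\mu^k(i),j}]_{q^{\partial_x}}q^{-\partial_x}\vartheta_k(x).
\]

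For the second identity, I would apply the shift $x\mapsto x-2\hbar$ to $\log\kappa_{i,j}(x)=\sum_k[a_{\mu^k(i),j}]_{q^{\partial_x}}q^{\partial_x}\vartheta_k(x)$; since the shift commutes with $\partial_x$ this gives $\sum_k[a_{\mu^k(i),j}]_{q^{\partial_x}}\bigl(q^{\partial_x}\vartheta_k(x-2\hbar)\bigr)$, and then $q^{\partial_x}\vartheta_k(x-2\hbar)=\vartheta_k(x-\hbar)=q^{-\partial_x}\vartheta_k(x)$. Comparing with the boxed closed form for $\log\kappa_{j,i}(-x)$ shows the two logarithms agree term by term, whence $\kappa_{i,j}(x-2\hbar)=\kappa_{j,i}(-x)$.

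For the first identity, from \eqref{eta-key} and \eqref{kappa-ij-def} I would write $\<\eta(\al_i,x),\al_j\>=\log\kappa_{i,j}(x)+\phi_{a_{i,j}}(x)$, where $\phi_a(x):=\bigl([a]_{q^{\partial_x}}q^{\partial_x}-a\bigr)\log x$. A direct computation over the only possible values $a\in\{2,0,-1\}$ (recall $A$ is simply-laced) gives $\phi_{-1}(x)=\log\frac{x}{x+\hbar}$, $\phi_2(x)=\log\frac{x+2\hbar}{x}$, $\phi_0(x)=0$, and in each case $\phi_a(-x)-\phi_a(x)=\log\frac{x-a\hbar}{x+a\hbar}$. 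Substituting $x\mapsto-x$ in $\<\eta(\al_i,x),\al_j\>$ and using $a_{j,i}=a_{i,j}$ then yields
\[
\<\eta(\al_i,-x),\al_j\>-\<\eta(\al_j,x),\al_i\>=\log\kappa_{i,j}(-x)-\log\kappa_{j,i}(x)+\log\frac{x-a_{i,j}\hbar}{x+a_{i,j}\hbar};
\]
equating the left-hand side with $\log g_{i,j}(e^x)$ and exponentiating gives $\kappa_{i,j}(-x)\kappa_{j,i}(x)\inv\frac{x-a_{i,j}\hbar}{x+a_{i,j}\hbar}=g_{i,j}(e^x)$. The remaining equality $g_{j,i}(e^{-x})\inv=g_{i,j}(e^x)$ follows from $\tilde g_{i,j}(x)\tilde g_{j,i}(1/x)=1$ (stated above), or equivalently by applying the identity just proved with $(i,j)$ and $x$ replaced by $(j,i)$ and $-x$.

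The argument is essentially bookkeeping, and no new estimate or construction is required beyond the three inputs listed above. The only place demanding care is keeping the $\hbar$-shifts aligned when $q^{\pm\partial_x}$ acts on $\vartheta_k$ and on $\log x$, and insisting that $x\mapsto-x$ be applied to the \emph{operator} (producing the sign flip $\partial_x\mapsto-\partial_x$) rather than naively to a closed-form expression; this is exactly what makes the $\frac{x-a_{i,j}\hbar}{x+a_{i,j}\hbar}$ correction factor appear with the correct sign.
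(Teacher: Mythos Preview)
Your proof is correct. For the second identity it is essentially the paper's argument: both amount to applying $q^{-2\partial_x}$ to $\log\kappa_{i,j}(x)$ and then reindexing via $\vartheta_k(-x)=\vartheta_{-k}(x)$ together with $a_{\mu^{-k}(j),i}=a_{\mu^k(i),j}$.

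For the first identity your route differs from the paper's. The paper computes $\kappa_{i,j}(-x)\kappa_{j,i}(x)^{-1}$ directly: it takes logarithms, collapses $[a]_{q^{\partial_x}}(q^{-\partial_x}-q^{\partial_x})=q^{-a\partial_x}-q^{a\partial_x}$, and then invokes \eqref{vartheta-diff} to evaluate each $\vartheta_k(x-a_{\mu^k(j),i}\hbar)-\vartheta_k(x+a_{\mu^k(j),i}\hbar)$, producing the product form $\tilde g_{j,i}(e^{-x})^{-1}$ and the rational correction factor simultaneously. You instead decompose $\<\eta(\al_i,x),\al_j\>=\log\kappa_{i,j}(x)+\phi_{a_{i,j}}(x)$ via \eqref{eta-key}, feed this into the already-proved identity $\<\eta(\al_i,-x),\al_j\>-\<\eta(\al_j,x),\al_i\>=\log g_{i,j}(e^x)$, and isolate the $\kappa$-contribution by subtracting $\phi_a(-x)-\phi_a(x)=\log\frac{x-a\hbar}{x+a\hbar}$, which you check case by case for $a\in\{2,0,-1\}$. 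Your argument is a clean reuse of an identity the paper has already established (itself via \eqref{vartheta-diff}), so the underlying computation is the same; the trade-off is that the paper's direct computation is self-contained, whereas yours makes the origin of the $\frac{x-a_{i,j}\hbar}{x+a_{i,j}\hbar}$ factor more transparent as the ``$\log x$ discrepancy'' between $\eta$ and $\kappa$. One small remark: the simply-laced case check can be replaced by the uniform observation that $\phi_a(x)=(q^{a\partial_x}-1)\log x$ (since $[a]_v v=\frac{v^a-v^{-a}}{v-v^{-1}}\cdot v$ is not needed here; one only uses $[a]_{q^{\partial_x}}q^{\partial_x}\log x=\sum_{r=1}^{|a|}\log(x+\mathrm{sgn}(a)(a-2r+2)\hbar)$ telescoping, but your case analysis is equally valid).
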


\begin{proof} Let $i,j\in I$. Using the relation $\vartheta_{-k}(x)=\vartheta_k(-x)$ and
(\ref{vartheta-diff}), we get
\begin{align*}
 \kappa_{i,j}(-x)\kappa_{j,i}(x)\inv
 =\  &\prod_{k\in\Z_N}\exp\(
   [a_{\mu^k(i),j}]_{q^{-\partial_{x}}}q^{-\partial_{x}}\vartheta_k(-x)
    -[a_{\mu^{k}(j),i}]_{q^{\partial_{x}}}q^{\partial_{x}}\vartheta_{k}(x)  \)\\
  =\  &\prod_{k\in\Z_N}\exp\(
    [a_{\mu^k(j),i}]_{q^{\partial_{x}}}\(q^{-\partial_{x}}-q^{\partial_{x}}\)\vartheta_k(x)  \)\\
  =\  &\prod_{k\in\Z_N}\exp\(
   \(q^{-a_{\mu^k(j),i}\partial_{x}}-q^{a_{\mu^k(j),i}\partial_{x}}\)\vartheta_k(x)  \)\\
  =\  &\prod_{k\in\Z_N}\exp\(\vartheta_k(x-a_{\mu^k(j),i}\hbar)- \vartheta_k(x+a_{\mu^k(j),i}\hbar)\)\\
  =\ &\(\frac{x+a_{j,i}\hbar}{x-a_{j,i}\hbar}\)\prod_{k\in\Z_N}
  \frac{1-\xi^{-k}e^{-x}q^{{a_{\mu^k(j),i}}}} {q^{{a_{\mu^k(j),i}}}-\xi^{-k}e^{-x}}\\
 =\ &\(\frac{x+a_{j,i}\hbar}{x-a_{j,i}\hbar}\)g_{j,i}(e^{-x})^{-1}.
\end{align*}
Using the fact that $q^{-2\partial_x}\ (=e^{-2\hbar \partial_x})$ is an automorphism of $\C[[x,\hbar]]$, we have
\begin{align*}
& \kappa_{i,j}(x-2\hbar)=q^{-2\partial_x} \kappa_{i,j}(x)=q^{-2\partial_x}\exp\(\sum_{k\in\Z_N}
  [a_{\mu^k(i),j}]_{q^{\partial_{x}}}q^{\partial_{x}}\vartheta_k(x) \)\\
  =\ & \exp\(\sum_{k\in\Z_N}
  [a_{\mu^k(i),j}]_{q^{\partial_{x}}}q^{-\partial_{x}}\vartheta_k(x) \)\nonumber\\
   =\ & \exp\(\sum_{k\in\Z_N}
  [a_{i,\mu^{-k}(j)}]_{q^{-\partial_{x}}}q^{-\partial_{x}}\vartheta_{-k}(-x) \)\nonumber\\
   =\ & \exp\(\sum_{k\in\Z_N}
  [a_{\mu^k(j),i}]_{q^{-\partial_{x}}}q^{-\partial_{x}}\vartheta_{k}(-x) \)\nonumber\\
  =\ &\kappa_{j,i}(-x),\nonumber
\end{align*}
as desired.
\end{proof}

Recall from Remark \ref{rem-p(x)=1} that for any $g(x)\in \C(x)$, we have
$g(x_1/x_2), \ g(x_2/x_1)\in \C_{\phi}((x_1,x_2))$
with $\pi_{\phi}g(x_1/x_2)=g(e^z)$ and  $\pi_{\phi}g(x_2/x_1)=g(e^{-z})$.
Set
\begin{align*}
&\C(x_1/x_2)=\{ g(x_1/x_2)\ |\ g(x)\in \C(x)\}\subset \C(x_1,x_2)\cap \C_{\phi}((x_1,x_2)),\\
&\ \  \C_e((z))=\{ g(e^z)\ |\ g(x)\in \C(x)\} \subset \C((z)).
\end{align*}
Then $\pi_{\phi}: \C(x_1/x_2)\rightarrow \C_e((z))$ is an algebra isomorphism.
From definition (see (\ref{def-eta-f})), we have
$\eta^f(\al,x)\in \mathfrak{h}\ot \hbar\C_e((x))[[\hbar]]$ for $\al\in \mathfrak{h}$.
Recall from Lemma \ref{lem:qva-Y-M-def} that
the $B_L[[\hbar]]$-module structure $Y_M^{\eta^f}(\cdot,x)$ on $V_L[[\hbar]]$ is uniquely determined by
$$Y_M^{\eta^f}(e^{\al},x)=\exp \Phi(\eta^f(\al,x))\quad \text{ for }\al\in L.$$
By Theorem \ref{thm:S-op}, for the quantum Yang-Baxter operator $\mathcal{S}(x)$ of $V_L[[\hbar]]^{\eta^f}$ we have
$$\mathcal{S}(x)(u\ot v)\in V_L[[\hbar]]\wh\ot V_L[[\hbar]]\wh\ot \C_{e}((x))[[\hbar]]
\quad \text{for }u,v\in V_L[[\hbar]].$$
This is also true for $V_L[[\hbar]]^{\eta}$ as $V_L[[\hbar]]^{\eta}\simeq V_L[[\hbar]]^{\eta^f}$ by Proposition \ref{prop:eta-iso-eta+tau}.
Then define $$\wh{\mathcal{S}}(x_1,x_2)=(1\ot 1\ot \pi_{\phi}^{-1})\mathcal{S}(x):\
V_L[[\hbar]]\wh\ot V_L[[\hbar]]\rightarrow V_L[[\hbar]]\wh\ot V_L[[\hbar]]\wh\ot \C(x_1,x_2)[[\hbar]].$$

Combining Lemmas \ref{VL-eta-S(X)-special} and \ref{xdx-dx-connection}, we immediately get:

\begin{lem}\label{lem:wh-S}
For $i,j\in I$,
\begin{align*}
  &\wh{\mathcal{S}}(x_2,x_1)(\al_j\ot \al_i)\\
=\ & \al_j\ot\al_i+\vac\ot\vac\sum_{k\in\Z_N}
  \(q^{-a_{\mu^k(i),j}x_2\partial_{x_2}}-q^{a_{\mu^k(i),j}x_2\partial_{x_2}}\)
  \frac{\xi^{-k}x_2/x_1}{(1-\xi^{-k}x_2/x_1)^2},\\
    &\wh{\mathcal{S}}(x_2,x_1)(e_{\pm\al_j}\ot\al_i)\\
  =\ & e_{\pm\al_j}\ot\al_i\mp\half e_{\pm\al_j}\ot\vac\sum_{k\in\Z_N}
  \(q^{a_{\mu^k(i),j}x_2\partial_{x_2}}-q^{-a_{\mu^k(i),j}x_2\partial_{x_2}}\)
    \frac{\xi^{-k}x_2/x_1+1}{\xi^{-k}x_2/x_1-1},\\
  &\wh{\mathcal{S}}(x_2,x_1)(e_{\pm\al_j}\ot e_{\pm\al_i})
  =  g_{i,j}(x_2/x_1)(e_{\pm\al_j}\ot e_{\pm\al_i}),\\
     &\wh{\mathcal{S}}(x_2,x_1)(e_{\pm\al_j}\ot e_{\mp\al_i})
  =  g_{i,j}(x_2/x_1)\inv (e_{\pm\al_j}\ot e_{\mp\al_i}).
   \end{align*}
\end{lem}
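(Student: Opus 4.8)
The plan is to deduce all four identities of Lemma~\ref{lem:wh-S} by applying the operator $1\ot 1\ot\pi_\phi^{-1}$ to the four formulas for $\mathcal{S}(x)$ recorded in Lemma~\ref{VL-eta-S(X)-special}. Recall from the discussion preceding that lemma that, for $u,v\in V_L[[\hbar]]$, the scalar (third) tensor factor of $\mathcal{S}(x)(u\ot v)$ already lies in $\C_e((x))[[\hbar]]$, so that $\wh{\mathcal{S}}(x_1,x_2):=(1\ot 1\ot\pi_\phi^{-1})\mathcal{S}(x)$ is well defined with scalar factor in $\C(x_1,x_2)[[\hbar]]$; here $\pi_\phi^{-1}$ is the inverse of the algebra embedding $\C(x_1/x_2)\to\C_e((x))$, $g(x_1/x_2)\mapsto g(e^x)$, extended continuously over $\C[[\hbar]]$. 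Since by Remark~\ref{rem-p(x)=1} together with Lemma~\ref{diff-algebra-embeding} the map $\pi_\phi$ restricts to an isomorphism of differential algebras onto $\C_e((x))$, its inverse commutes with every power series in the derivation; in particular, Lemma~\ref{xdx-dx-connection} with $Q\in\C(x)$ and $f(w)=e^{\pm w}$ says exactly that $\pi_\phi^{-1}$ transports the shift operator $q^{\pm\partial_x}=e^{\pm\hbar\partial_x}$ on $\C_e((x))[[\hbar]]$ to $q^{\pm x_1\partial_{x_1}}$ on $\C(x_1/x_2)[[\hbar]]$, and hence, after the interchange $x_1\leftrightarrow x_2$ implicit in passing to $\wh{\mathcal{S}}(x_2,x_1)$, to $q^{\pm x_2\partial_{x_2}}$ acting on $\C(x_2/x_1)[[\hbar]]$, with the scalar argument $e^x$ going to $x_2/x_1$.

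With these identifications fixed, each of the four entries is a one-line substitution. For the pair $e_{\delta\al_j}\ot e_{\delta'\al_i}$ with $\delta,\delta'\in\{\pm1\}$, Lemma~\ref{VL-eta-S(X)-special} gives $\mathcal{S}(x)(e_{\delta\al_j}\ot e_{\delta'\al_i})=e_{\delta\al_j}\ot e_{\delta'\al_i}\ot\tilde g_{i,j}(e^x)^{\delta\delta'}$, and applying $\pi_\phi^{-1}$ followed by the variable swap sends the scalar $\tilde g_{i,j}(e^x)^{\pm1}$ to $g_{i,j}(x_2/x_1)^{\pm1}$, this being precisely the Convention by which $g_{i,j}(x_2/x_1)^{\pm1}$ is defined as $\iota_{x_1,x_2}$ of the rational function $\tilde g_{i,j}(x_2/x_1)^{\pm1}$; this yields the last two displayed formulas. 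For the entries $\al_j\ot\al_i$ and $e_{\pm\al_j}\ot\al_i$, I would feed the rational-function scalars $\sum_{k\in\Z_N}(q^{\mp a_{\mu^k(i),j}\partial_x}-q^{\pm a_{\mu^k(i),j}\partial_x})R_k(e^x)$ of Lemma~\ref{VL-eta-S(X)-special} through $1\ot1\ot\pi_\phi^{-1}$; by the previous paragraph each summand becomes $(q^{\mp a_{\mu^k(i),j}x_2\partial_{x_2}}-q^{\pm a_{\mu^k(i),j}x_2\partial_{x_2}})R_k(x_2/x_1)$, and reconciling the signs thus produced with the outer $\pm$-signs carried in Lemma~\ref{VL-eta-S(X)-special} (which originate from $\eta'(\pm\al_j,x)=\pm\eta'(\al_j,x)$ via Proposition~\ref{VL-eta-S(X)-general}) reproduces exactly the stated expressions.

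There is essentially no difficulty here; the only point that demands attention is the bookkeeping of the two formal variables and the attendant signs. Concretely one must keep straight that $\pi_\phi$ matches $g(x_2/x_1)$ with $g(e^{-x})$ rather than $g(e^x)$, so that passing from $\wh{\mathcal{S}}(x_1,x_2)$ to the swapped operator $\wh{\mathcal{S}}(x_2,x_1)$ appearing in the statement turns $q^{\partial_x}$ into $q^{x_2\partial_{x_2}}$ and the argument $e^x$ into $x_2/x_1$. Once these conventions are pinned down, combining Lemmas~\ref{VL-eta-S(X)-special} and~\ref{xdx-dx-connection} yields Lemma~\ref{lem:wh-S} immediately.
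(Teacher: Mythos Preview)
Your proposal is correct and follows exactly the paper's own approach: the paper states the lemma with the one-line justification ``Combining Lemmas~\ref{VL-eta-S(X)-special} and~\ref{xdx-dx-connection}, we immediately get,'' and your argument simply fleshes out that sentence by applying $1\ot1\ot\pi_\phi^{-1}$ to each formula of Lemma~\ref{VL-eta-S(X)-special} and invoking Lemma~\ref{xdx-dx-connection} to convert $q^{\pm\partial_x}$ into $q^{\pm x_2\partial_{x_2}}$ after the variable swap. The only content is the bookkeeping of variables and signs, which you track correctly.
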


The following are analogues of relations ($Q2'$) and ($Q3'$):

\begin{prop}\label{lem:Y-eta-rel-12}
The following relations hold on $W$ for $i,j\in I$:
\begin{align}
  &[Y_W(\al_i,x_1),Y_W(\al_j,x_2)]\label{eq:Y-eta-rel1}\\
  &=\ \sum_{k\in\Z_N}[a_{\mu^k(i), j}]_{q^{x_2\partial_{x_2}}}
    \(\iota_{x_1,x_2}q^{-x_2\partial_{x_2}}-\iota_{x_2,x_1}q^{x_2\partial_{x_2}}\)
    \frac{\xi^{-k}x_2/x_1}{(1-\xi^{-k}x_2/x_1)^2},\nonumber\\
  &[Y_W(\al_i,x_1),Y_W(e_{\pm\al_j},x_2)]=\pm Y_W(e_{\pm\al_j},x_2)\label{eq:Y-eta-rel2}\\
 &\quad\times \sum_{k\in\Z_N}[a_{\mu^k(i), j}]_{q^{x_2\partial_{x_2}}}
   \(\iota_{x_1,x_2}q^{-x_2\partial_{x_2}}-\iota_{x_2,x_1}q^{x_2\partial_{x_2}}\)
    \half\(\frac{1+\xi^{-k}x_2/x_1}{1-\xi^{-k}x_2/x_1}\).\nonumber
\end{align}
\end{prop}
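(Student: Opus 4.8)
The plan is to apply Proposition~\ref{prop:reverse-cal} to the $(G,\chi)$-module $\hbar$-adic quantum vertex algebra $V_L[[\hbar]]^{\eta}$, with $\chi=1$, $G=\langle\mu\rangle$ of order $N$, $p(x)=x$, $\phi(x,z)=xe^{z}$ and $\chi_{\phi}(\mu^{k})=\xi^{k}$, together with the given $(G,\chi_{\phi})$-equivariant $\phi$-coordinated quasi module $W$; the required operator $\wh{\mathcal{S}}(x_{1},x_{2})$ on $V_L[[\hbar]]^{\eta}$ was produced just before Lemma~\ref{lem:wh-S}. Since $\chi_{\phi}$ is injective on $G$ we have $\ker\chi_{\phi}=\{1\}$, so one may take $G^{0}=G$ in Proposition~\ref{prop:reverse-cal}, and the outer sum in \eqref{phi-module-commuator} then runs over $k\in\Z_{N}$. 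Relation \eqref{eq:Y-eta-rel1} will come from setting $(u,v)=(\al_{i},\al_{j})$ and \eqref{eq:Y-eta-rel2} from $(u,v)=(\al_{i},e_{\pm\al_{j}})$.

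Next I would substitute the values of $\wh{\mathcal{S}}(x_{2},x_{1})$ furnished by Lemma~\ref{lem:wh-S}: $\wh{\mathcal{S}}(x_{2},x_{1})(\al_{j}\ot\al_{i})$ equals $\al_{j}\ot\al_{i}$ plus a term $\vac\ot\vac\ot s_{ij}(x_{1},x_{2})$, and $\wh{\mathcal{S}}(x_{2},x_{1})(e_{\pm\al_{j}}\ot\al_{i})$ equals $e_{\pm\al_{j}}\ot\al_{i}$ plus a term $e_{\pm\al_{j}}\ot\vac\ot t^{\pm}_{ij}(x_{1},x_{2})$. Because $Y_{W}(\vac,x)=1_{W}$, the ``$\mathcal{S}$-term'' $Y_{W}(x_{2})(1\ot Y_{W}(x_{1}))\bigl(\iota_{x_{2},x_{1}}\wh{\mathcal{S}}(x_{2},x_{1})(v\ot u)\ot w\bigr)$ splits into $Y_{W}(\al_{j},x_{2})Y_{W}(\al_{i},x_{1})w$ (resp.\ $Y_{W}(e_{\pm\al_{j}},x_{2})Y_{W}(\al_{i},x_{1})w$) plus a scalar series times $w$ (resp.\ times $Y_{W}(e_{\pm\al_{j}},x_{2})w$), so the left side of \eqref{phi-module-commuator} becomes a genuine commutator. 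On the right side I would use $\chi=1$, which gives $R(\mu^{k})\al_{i}=\mu^{k}\al_{i}=\al_{\mu^{k}(i)}$, together with Lemma~\ref{lem:Y-eta-Sing}, namely $Y_{L}^{\eta}(\al_{\mu^{k}(i)},z)^{-}\al_{j}=[a_{\mu^{k}(i),j}]_{q^{\partial_{z}}}q^{\partial_{z}}z^{-2}\vac$ and $Y_{L}^{\eta}(\al_{\mu^{k}(i)},z)^{-}e_{\pm\al_{j}}=\pm e_{\pm\al_{j}}[a_{\mu^{k}(i),j}]_{q^{\partial_{z}}}q^{\partial_{z}}z^{-1}$. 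Thus each $(\al_{\mu^{k}(i)})_{m}\al_{j}$ is a $\C[[\hbar]]$-scalar multiple of $\vac$ and each $(\al_{\mu^{k}(i)})_{m}e_{\pm\al_{j}}$ a $\C[[\hbar]]$-scalar multiple of $e_{\pm\al_{j}}$, so $Y_{W}((\al_{\mu^{k}(i)})_{m}\al_{j},x_{2})w$ is a scalar series times $w$ and $Y_{W}((\al_{\mu^{k}(i)})_{m}e_{\pm\al_{j}},x_{2})w$ a scalar series times $Y_{W}(e_{\pm\al_{j}},x_{2})w$. Collecting everything, both assertions reduce to identities among scalar formal series.

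The hard part will be the final matching of these scalar identities with the stated closed forms. For each $k$ one must check that the $\mathcal{S}$-contribution $\iota_{x_{2},x_{1}}$ of $(q^{-a_{\mu^{k}(i),j}x_{2}\partial_{x_{2}}}-q^{a_{\mu^{k}(i),j}x_{2}\partial_{x_{2}}})$ applied to $\frac{\xi^{-k}x_{2}/x_{1}}{(1-\xi^{-k}x_{2}/x_{1})^{2}}$ (resp.\ to $\half\frac{1+\xi^{-k}x_{2}/x_{1}}{1-\xi^{-k}x_{2}/x_{1}}$), together with the $\delta$-sum $\sum_{m\ge0}\frac{1}{m!}(x_{2}\partial_{x_{2}})^{m}\delta(\xi^{-k}x_{2}/x_{1})$ weighted by the coefficients of $[a_{\mu^{k}(i),j}]_{q^{\partial_{z}}}q^{\partial_{z}}z^{-2}$ (resp.\ $z^{-1}$), combine into $[a_{\mu^{k}(i),j}]_{q^{x_{2}\partial_{x_{2}}}}\bigl(\iota_{x_{1},x_{2}}q^{-x_{2}\partial_{x_{2}}}-\iota_{x_{2},x_{1}}q^{x_{2}\partial_{x_{2}}}\bigr)$ applied to the same rational function. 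This is a bookkeeping computation with the formal delta function, its $x_{2}\partial_{x_{2}}$-derivatives, and the scaling operators $q^{\pm x_{2}\partial_{x_{2}}}$: one peels off the $\iota_{x_{2},x_{1}}$-part using the elementary identity $[a]_{v}(v^{-1}-v)=v^{-a}-v^{a}$ with $v=q^{x_{2}\partial_{x_{2}}}$, and accounts for the residual $\delta$-sum via the standard expansion of $(\iota_{x_{1},x_{2}}-\iota_{x_{2},x_{1}})(x_{1}-\xi^{-k}x_{2})^{-2}$ in terms of $\delta(\xi^{-k}x_{2}/x_{1})$ and $x_{2}\partial_{x_{2}}\delta(\xi^{-k}x_{2}/x_{1})$. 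The signs $\pm$ and the factor $\half$ in \eqref{eq:Y-eta-rel2} are inherited directly from the corresponding features of Lemmas~\ref{lem:wh-S} and~\ref{lem:Y-eta-Sing}.
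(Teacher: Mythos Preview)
Your proposal is correct and follows essentially the same route as the paper: the paper applies Proposition~\ref{prop:tech-reverse-calculations-h} (you use its packaged form Proposition~\ref{prop:reverse-cal}) together with the $\mathcal{S}$-values from Lemma~\ref{VL-eta-S(X)-special}/Lemma~\ref{lem:wh-S} and the singular parts from Lemma~\ref{lem:Y-eta-Sing}, and then performs the same scalar bookkeeping you outline, using the identity $\Res_{z}\bigl(P(\partial_{z})z^{-r-1}\bigr)e^{zx}=\tfrac{1}{r!}x^{r}P(-x)$ to collapse the $\delta$-sum.
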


\begin{proof} Note that (\ref{eq:Y-eta-rel1}) amounts to
\begin{align}
&Y_W(\al_i,x_1)Y_W(\al_j,x_2)-Y_W(\al_j,x_2)Y_W(\al_i,x_1)\label{Y-f-eta-alpha-i-j-new}\\
 &\quad \quad -\sum_{k\in\Z_N}[a_{\mu^k(i), j}]_{q^{x_2\partial_{x_2}}}
  (q^{-x_2\partial_{x_2}}-q^{x_2\partial_{x_2}}) \iota_{x_2,x_1} \frac{\xi^{-k}x_2/x_1}{(1-\xi^{-k}x_2/x_1)^2}\nonumber\\
=\ &\sum_{k\in\Z_N}[a_{\mu^k(i), j}]_{q^{x_2\partial_{x_2}}}
q^{-x_2\partial_{x_2}}(\iota_{x_1,x_2}-\iota_{x_2,x_1}) \frac{\xi^{-k}x_2/x_1}{(1-\xi^{-k}x_2/x_1)^2}\nonumber\\
=\ &\sum_{k\in\Z_N}[a_{\mu^k(i), j}]_{q^{x_2\partial_{x_2}}}
q^{-x_2\partial_{x_2}}x_2\partial_{x_2}\delta\left(\xi^{-k}\frac{x_2}{x_1}\right).\nonumber
\end{align}
Using Proposition \ref{VL-eta-S(X)-special} we have
\begin{align*}
&Y_L^\eta(\al_i,x_1)Y_L^\eta(\al_j,x_2)\nonumber\\
 &\  -Y_L^\eta(\al_j,x_2)Y_L^\eta(\al_i,x_1)-\sum_{k\in\Z_N}[a_{\mu^k(i), j}]_{q^{\pd{x_2}}}
  (q^{-\pd{x_2}}-q^{\pd{x_2}}) \iota_{x_2,x_1} \frac{\xi^{-k}e^{x_2-x_1}}{(1-\xi^{-k}e^{x_2-x_1})^2}\nonumber\\
   =\ &\Res_z Y_L^{\eta}\(Y_{L}^{\eta}(\al_i,z)\al_j,x_2\)e^{z\partial_{x_2}}x_1^{-1} \delta\left(\frac{x_2}{x_1}\right),
\end{align*}
noticing that for any $P(t)\in \C[t][[\hbar]],\ g(x)\in \C((x))$,
$$(P(\partial_x)g(x))|_{x=x_2-x_1}=P(\partial_{x_2})g(x_2-x_1).$$
From Lemma \ref{lem:Y-eta-Sing}, for $k\in \Z_N$ we have
$$Y_L^\eta(\mu^k\al_i,x)^{-}\al_j=Y_L^\eta(\al_{\mu^k (i)},x)^{-}\al_j
=[a_{\mu^k(i),j}]_{q^{\partial_x}}q^{\partial_x}x^{-2}{\bf 1}.$$
Using Proposition \ref{prop:tech-reverse-calculations-h} and this relation, we obtain (\ref{Y-f-eta-alpha-i-j-new}) as
\begin{align*}
&Y_W(\al_i,x_1)Y_W(\al_j,x_2)-Y_W(\al_j,x_2)Y_W(\al_i,x_1)\nonumber\\
 &\quad \quad -\sum_{k\in\Z_N}[a_{\mu^k(i), j}]_{q^{x_2\partial_{x_2}}}
  (q^{-x_2\partial_{x_2}}-q^{x_2\partial_{x_2}}) \iota_{x_2,x_1} \frac{\xi^{-k}x_2/x_1}{(1-\xi^{-k}x_2/x_1)^2}\nonumber\\
=\ &\sum_{k\in\Z_N}\Res_z Y_W\(Y_L^{\eta}(\mu^k\al_i,z)\al_j,x_2\)e^{zx_2\partial_{x_2}}\delta\left(\xi^{-k}\frac{x_2}{x_1}\right)\nonumber\\
=\ &\sum_{k\in \Z_N} \Res_z \([a_{\mu^k(i),j}]_{q^{\partial_z}}q^{\partial_z}z^{-2}\)
e^{zx_2\partial_{x_2}}\delta\left(\xi^{-k}\frac{x_2}{x_1}\right)\\
=\ & \sum_{k\in \Z_N} [a_{\mu^k(i),j}]_{q^{-x_2\partial_{x_2}}}q^{-x_2\partial_{x_2}}x_2\partial_{x_2}
\delta\left(\xi^{-k}\frac{x_2}{x_1}\right),
\end{align*}
where we also use the fact that for any $P(t)\in \C[t][[\hbar]],\ r\in \N$,
\begin{align}\label{simple-fact-F}
\Res_z (P(\partial_z)z^{-r-1})e^{zx}=\frac{1}{r!}x^{r}P(-x).
\end{align}
 Relation (\ref{eq:Y-eta-rel2}) can be proved similarly.
\end{proof}

For  $i\in I$, write $Y_W(\al_i,x)=\sum_{n\in\Z}\al_{i}^\phi(n)x^{-n}$ on $W$, and set
\begin{align}
Y_W(\al_i,x)^{\pm}=\sum_{n\ge 1}\al_{i}^\phi(\mp n)x^{\pm n}+\half \al_{i}^\phi(0).
\end{align}

From \eqref{eq:Y-eta-rel1} we readily have:

\begin{coro}\label{lem:al-negative-stay}
The following relations hold on $W$ for $i,j\in I,\ m,n\in \N$:
\begin{align}
[\al_{i}^\phi(m),\al_{j}^\phi(n)]=0=[\al_{i}^\phi(-m),\al_{j}^\phi(-n)].
\end{align}
In particular, $[\al_{i}^\phi(m),\al_{j}^\phi(0)]=0$ for all $m\in \Z$.
\end{coro}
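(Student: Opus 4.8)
The plan is to read the assertion off directly from the commutator formula \eqref{eq:Y-eta-rel1} by tracking the $x_1$- and $x_2$-degree structure of its right-hand side. First I would expand the rational function $R_k(x_1,x_2):=\dfrac{\xi^{-k}x_2/x_1}{(1-\xi^{-k}x_2/x_1)^{2}}$ in the two regions. Since $R_k$ has the shape $\sum_{r\ge1} r\,w^{r}$ after the substitution $w=\xi^{-k}x_2/x_1$ (resp.\ $w=\xi^{k}x_1/x_2$), one gets $\iota_{x_1,x_2}R_k=\sum_{r\ge 1} r\,\xi^{-kr}x_1^{-r}x_2^{r}$ and $\iota_{x_2,x_1}R_k=\sum_{r\ge 1} r\,\xi^{kr}x_1^{r}x_2^{-r}$. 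Thus each of the two expansions is supported on monomials $x_1^{-r}x_2^{r}$ with $r\ne 0$ (positive $r$ for the first, negative $r$ for the second); in particular neither contributes a term with $x_2^{0}$.

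Next I would observe that the operators $q^{\pm x_2\partial_{x_2}}=e^{\pm\hbar x_2\partial_{x_2}}$ and $[a_{\mu^k(i),j}]_{q^{x_2\partial_{x_2}}}$ act diagonally on monomials in $x_2$, sending $x_2^{r}\mapsto q^{\pm r}x_2^{r}$, resp.\ $x_2^{r}\mapsto [a_{\mu^k(i),j}]_{q^{r}}x_2^{r}$, the latter scalar being a well-defined element of $\C[[\hbar]]$ precisely because $r\ne 0$ and $a_{\mu^k(i),j}\in\{-1,0,2\}$. Hence applying these operators to the two expansions above, and then summing over $k\in\Z_N$, leaves the right-hand side of \eqref{eq:Y-eta-rel1} inside $\bigl(\sum_{r\in\Z\setminus\{0\}}\C[[\hbar]]\,x_1^{-r}x_2^{r}\bigr)\cdot 1_W$ (note the entire right-hand side carries no operator part on $W$): there are scalars $\gamma_r\in\C[[\hbar]]$ with
\[
[Y_W(\al_i,x_1),Y_W(\al_j,x_2)]\;=\;\sum_{r\in\Z\setminus\{0\}}\gamma_r\, x_1^{-r}x_2^{r}\cdot 1_W .
\]

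Finally, comparing coefficients of $x_1^{-m}x_2^{-n}$ gives $[\al_i^\phi(m),\al_j^\phi(n)]=\gamma_m\,1_W$ when $n=-m\ne 0$ and $[\al_i^\phi(m),\al_j^\phi(n)]=0$ otherwise. For $m,n\in\N$ the condition $n=-m\ne0$ is impossible, so $[\al_i^\phi(m),\al_j^\phi(n)]=0$; replacing $(m,n)$ by $(-m,-n)$ (coefficient of $x_1^{m}x_2^{n}$) yields $[\al_i^\phi(-m),\al_j^\phi(-n)]=0$ by the same reasoning; and taking $n=0$, for which no $x_2^{0}$-term is present on the right-hand side, gives $[\al_i^\phi(m),\al_j^\phi(0)]=0$ for every $m\in\Z$. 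There is no genuine obstacle here; the only point needing a little care is that the vanishing of the $r=0$ term must be recorded \emph{before} the operator $[a_{\mu^k(i),j}]_{q^{x_2\partial_{x_2}}}$ is applied, so that it is evaluated only on monomials $x_2^{r}$ with $r\ne0$ where it is well defined.
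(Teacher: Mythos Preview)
Your proof is correct and matches the paper's approach exactly: the paper simply says the corollary follows ``readily'' from \eqref{eq:Y-eta-rel1}, and you have spelled out precisely the coefficient comparison that makes this work. One minor point: your caveat about needing $r\ne 0$ for $[a_{\mu^k(i),j}]_{q^{r}}$ to be well-defined is unnecessary, since $[m]_{e^x}$ is defined in the paper as an element of $\C[[x^2]]$, so substituting $x=r\hbar$ is harmless for any $r$ (yielding $m$ at $r=0$); but this over-caution does not affect the argument.
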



We need the following two technical results:

\begin{lem}\label{lem:exp-cal}
Let $W$ be a topologically free $\C[[\hbar]]$-module and let
\begin{align}\label{eq:exp-cal-def}
  \al(x)\in &\Hom (W,W\wh\ot \C[x,x\inv][[\hbar]]),\ \
  \beta(x)\in(\End W)[[x]],\ \  \gamma(x)\in \C(x)[[\hbar]],
\end{align}
satisfying the condition that
\begin{align}
  &[\al(x_1),\al(x_2)]=0=[\beta(x_1),\beta(x_2)],\label{eq:exp-cal-cond1}\\
  &  [\al(x_1),\beta(x_2)]=\iota_{x_1,x_2}\gamma(x_2/x_1).\label{eq:exp-cal-cond2}
\end{align}
Assume $\al(x),\be(x)\in \hbar (\End W)[[x,x^{-1}]]$. Set $E_{\gamma}=\Res_zz\inv \gamma(e^{-z})\in \hbar\C[[\hbar]]$.
Then
\begin{align*}
 \exp\(\(\al(x)+\beta(x)\)_{-1}^\phi\)1_W=\exp \al(x)\exp \be(x)\exp\(\frac{1}{2}E_\gamma\).
\end{align*}
\end{lem}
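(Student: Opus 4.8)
Lemma \ref{lem:exp-cal} is a computation inside the $\hbar$-adic nonlocal vertex algebra $\langle U\rangle_\phi$ (or more precisely inside $\E_\hbar(W)$) involving the $\phi$-coordinated product $Y_\E^\phi$. The plan is to reduce the statement to a generating-function identity and exploit the exponential/commutator structure, mirroring the classical computation of vertex operators built from a Heisenberg field plus a ``zero mode'' piece.

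First I would set up notation: write $a(x)=\al(x)+\be(x)$ and observe from \eqref{eq:exp-cal-cond1}, \eqref{eq:exp-cal-cond2} that $[a(x_1),a(x_2)]=\iota_{x_1,x_2}\gamma(x_2/x_1)-\iota_{x_2,x_1}\gamma(x_1/x_2)$, a scalar (a multiple of $1_W$); since $\al(x),\be(x)\in\hbar(\End W)[[x,x^{-1}]]$, everything converges $\hbar$-adically and the exponentials $\exp\al(x)$, $\exp\be(x)$, $\exp(a(x))$ make sense in $\E_\hbar(W)$. The key structural input is a formula for $\left(\exp\,u(x)\right)_{-1}^\phi\left(\exp\,v(x)\right)$ type products; rather than quoting that directly, I would build up $\left(a(x)+\be(x)\right)_{-1}^\phi 1_W$ stepwise. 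The cleanest route: use Remark \ref{prop:Y-E-hom-h-adic} and the iterate formula to compute $Y_\E^\phi(a(x),z)1_W$. Since $a(x)_{-1}^\phi 1_W = a(x)$ (this follows from $1_W$ being the vacuum and $Y_\E^\phi(\alpha(x),z)1_W\in\E_\hbar(W)[[z]]$ with constant term $\alpha(x)$, exactly as in the non-$\hbar$-adic case \cite{Li-phi-coor}), one has $Y_\E^\phi(a(x),z)1_W = e^{z\D}a(x)$ where $\D$ is the canonical derivation; and then the standard exponential formula gives $\exp(a(x))_{-1}^\phi 1_W$ in terms of a normal-ordered exponential times a scalar correction coming from the bracket $[a(x_1),a(x_2)]$.

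The heart of the matter is the scalar correction term. I would compute it as follows. By the general exponential identity in a nonlocal vertex algebra (valid $\hbar$-adically by Proposition \ref{h-adic-quotient}, reducing mod $\hbar^n$ to the classical statement),
\[
\exp\!\big(a(x)+\be(x)\big)_{-1}^\phi 1_W \;=\; \exp\!\big(\al(x)\big)\exp\!\big(\be(x)\big)\cdot\exp\!\big(\tfrac12\,\kappa\big),
\]
where $\kappa$ is obtained by extracting the ``contraction'' between $\al(x)$ and $\be(x)$, i.e. $\kappa = \Res_z\,z^{-1}\,\iota_{x,z}\big(\gamma\big(\phi(x,-z)/x\big)\big)$ with $\phi(x,z)=xe^z$ by Remark \ref{rem-p(x)=1}; one then uses $\phi(x,-z)/x = e^{-z}$ to get $\kappa = \Res_z z^{-1}\gamma(e^{-z}) = E_\gamma$. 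The point is that in the $\phi$-coordinate the two-variable function $\gamma(x_2/x_1)\in\C_\phi((x_1,x_2))$ maps under $\pi_\phi$ to $\gamma(e^{-z})$, so the residue computation that in the classical Heisenberg case produces a $\log$ or a rational contraction here produces exactly $E_\gamma$. I would carefully justify that $\al(x)$ contributes its ``positive/creation'' part and $\be(x)$ its ``negative/annihilation'' part against $1_W$ — but since by hypothesis $\al(x)\in\Hom(W,W\wh\ot\C[x,x^{-1}][[\hbar]])$ acts without poles and $\be(x)\in(\End W)[[x]]$, the $\phi$-product has no genuine singular part in $z$ except the scalar term, which is where $E_\gamma$ enters; the normal ordering in this degenerate situation is just the product $\exp\al(x)\exp\be(x)$ in the stated order.

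The main obstacle I anticipate is bookkeeping the $\phi$-coordinate contraction correctly: specifically verifying that the $\frac12$ appears (it comes from the symmetric part of $[a(x_1),a(x_2)]$, since $\kappa$ is computed from $a$ with itself, and the cross term $\al$–$\be$ plus $\be$–$\al$ gives $\kappa = E_\gamma$ but the self-contraction in $\exp(a)_{-1}^\phi$ carries a combinatorial $\tfrac12$ as in the classical Wick/normal-ordering identity $\,\no{e^{a+b}}= e^a e^b e^{\frac12\langle a,b\rangle}$ when $[a,b]$ is central), and that the direction of the shift $\phi(x,-z)$ versus $\phi(x,z)$ is the one producing $\gamma(e^{-z})$ rather than $\gamma(e^{z})$. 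I would pin this down by first doing the analogous computation for $r=2$ exponents (two generic elements $u(x)+v(x)$ with central bracket) to fix all signs, then specialize. Everything else — convergence, passing to $W/\hbar^n W$, the identification $a(x)_{-1}^\phi 1_W = a(x)$ — is routine given Lemma \ref{lem:E-h}, Proposition \ref{prop:phi-coor-ind}, and Theorem \ref{prop:abs-construct}.
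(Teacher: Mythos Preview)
Your proposal has the right ingredients—quasi-compatibility, the scalar commutator, the Baker--Campbell--Hausdorff combinatorics, and the regularity of $\al$ and $\be$—but the organization obscures what makes the argument work, and one step is circular.

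The ``general exponential identity in a nonlocal vertex algebra'' you invoke is not a pre-existing result you can cite; it is essentially the content of the lemma. The paper avoids this by a cleaner factorization. Since $Y_\E^\phi(\cdot,z)$ is linear, $(\al(x)+\be(x))_{-1}^\phi = \al(x)_{-1}^\phi + \be(x)_{-1}^\phi$ as operators on $\langle U\rangle_\phi$. One then computes the single scalar $[\al(x)_{-1}^\phi,\be(x)_{-1}^\phi]=E_\gamma$ by first lifting \eqref{eq:exp-cal-cond2} via Proposition~\ref{prop:tech-calculations} to $[Y_\E^\phi(\al(x),x_1),Y_\E^\phi(\be(x),x_2)]=\iota_{x_1,x_2}\gamma(e^{x_2-x_1})$ and then taking $\Res_{x_1}x_1^{-1}\Res_{x_2}x_2^{-1}$. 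Ordinary Baker--Campbell--Hausdorff now gives
\[
\exp\big((\al+\be)_{-1}^\phi\big)=\exp\big(\be_{-1}^\phi\big)\exp\big(\al_{-1}^\phi\big)\exp\big(\tfrac12 E_\gamma\big).
\]
This is the step you are gesturing at with your Wick formula, but framed as an identity of \emph{operators}, not of ``$\exp(a(x))_{-1}^\phi$ applied to $1_W$''; your phrasing blurs the distinction between $\exp(u(x)_{-1}^\phi)$ and $(\exp u(x))_{-1}^\phi$, and only the former is relevant here.

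The second half of the paper's argument is where the specific hypotheses on $\al$ and $\be$ enter, and you underuse them. Because $\al(x)\in\Hom(W,W\wh\ot\C[x,x^{-1}][[\hbar]])$, the product $\al(x_1)\al(x_2)^n$ already lies in $\E_\hbar^{(2)}(W)$ with no denominator, so from the definition of $Y_\E^\phi$ one reads off $\al(x)_{-1}^\phi\al(x)^n=\al(x)^{n+1}$; hence $\exp(\al_{-1}^\phi)1_W=\exp\al(x)$. Similarly $\be(x)\in(\End W)[[x]]$ forces $\be(x_1)u(x_2)\in\E_\hbar^{(2)}(W)$ for every $u(x)\in\langle U\rangle_\phi$, giving $\be(x)_{-1}^\phi u(x)=\be(x)u(x)$ and thus $\exp(\be_{-1}^\phi)\exp\al(x)=\exp\be(x)\exp\al(x)$. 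These two direct computations replace your ``normal-ordered exponential'' heuristic and make the sign and factor-of-$\tfrac12$ bookkeeping automatic. Your route could be made to work, but you would end up re-deriving exactly these facts inside a more complicated packaging.
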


\begin{proof} It follows from \eqref{eq:exp-cal-cond1} and \eqref{eq:exp-cal-cond2} that
$U:=\{\al(x),\beta(x)\}$ is an $\hbar$-adically quasi-compatible subset of $\E_\hbar(W)$.
By Theorem \ref{thm:abs-construct} we have an $\hbar$-adic nonlocal vertex algebra $\<U\>_\phi$.
With \eqref{eq:exp-cal-cond2}, from Proposition \ref{prop:tech-calculations} we get
\begin{align*}
  [Y_\E^\phi(\al(x),x_1),Y_\E^\phi(\beta(x),x_2)]=\iota_{x_1,x_2}\gamma(e^{x_2-x_1})
  \  \ \(=e^{-x_2\frac{\partial}{\partial x_1}} \gamma(e^{-x_1})\in \C((x_1))[[x_2,\hbar]]\),
\end{align*}
which in particular implies
\begin{align}\label{eq:exp-cal-temp1}
  [\al(x)^{\phi}_{-1},\beta(x)^{\phi}_{-1}]=E_{\gamma}.
\end{align}
By the Baker-Campbell-Hausdorff formula we obtain
\begin{align*}
  \exp\((\al(x)+\beta(x))^{\phi}_{-1}\)
  =\exp\(\beta(x)^{\phi}_{-1}\)\exp\(\al(x)^{\phi}_{-1}\)\exp\(\frac{1}{2} E_{\gamma}\).
  \end{align*}
On the other hand, from  \eqref{eq:exp-cal-def}, we have
\begin{align*}
  \al(x_1)\al(x_2)^n,\,\beta(x_1)u(x_2)\in \E_\hbar^{(2)}(W)\quad\te{for }n\in\N,\,\,u(x)\in \<U\>_\phi.
\end{align*}
Then from the definition of $Y_\E^\phi$ (see Definition \ref{de:Y-E-phi}) we get
\begin{align}
  &\al(x)^{\phi}_{-1}\al(x)^n=\al(x)^{n+1}\quad \te{for }n\in\N,\label{eq:exp-cal-temp2}\\
  &\beta(x)^{\phi}_{-1}u(x)=\beta(x)u(x)\quad\te{for }u(x)\in \<U\>_\phi.\label{eq:exp-cal-temp3}
\end{align}
Using these relations we obtain
\begin{align*}
  &\exp\((\al(x)+\beta(x))^{\phi}_{-1}\)1_W\\
  =&\exp\(\beta(x)^{\phi}_{-1}\)\exp\(\al(x)^{\phi}_{-1}\)\exp\(\half E_\gamma\)1_W\\
  =&\exp\(\beta(x)^{\phi}_{-1}\)\exp\(\al(x)\)\exp\(\half E_\gamma\)\\
  =&\exp\(\beta(x)\)\exp\(\al(x)\)\exp\(\half E_\gamma\),
\end{align*}
as desired.
\end{proof}

\begin{prop}\label{lem:exp-al-no}
For $i\in I,\  a\in\C$, we have
\begin{align}\label{Y-T-f-Ai-expression}
  Y_W(A_i&(a\hbar)^{\pm 1}\vac,x)=
  \frac{\kappa_{i,i}(0)}{\kappa_{i,i}(a\hbar)^\half \kappa_{i,i}(-a\hbar)^\half}
  \Lambda_i^+(a,x)^{\pm 1}
  \Lambda_i^-(a,x)^{\pm 1},
\end{align}
where
\begin{align}
  \Lambda_i^\pm(a,x)=q^{\half a\al_i^\phi(0)}\exp\( \sum_{n\ge 1}
    \frac{\al_{i}^\phi(\mp n)}{ \pm n}\( q^{\pm an}-1 \)x^{\pm n}\).
\end{align}
\end{prop}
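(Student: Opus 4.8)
The plan is to compute $Y_W$ of the Heisenberg Fock vector $A_i(a\hbar)^{\pm1}\vac=\exp\!\bigl(\pm\sum_{n\ge1}\tfrac1n\al_i(-n)(a\hbar)^n\bigr)\vac$ by reducing everything to the single field $h_i(x):=Y_W(\al_i,x)\in\E_\hbar(W)$ and invoking the exponentiation formula of Lemma~\ref{lem:exp-cal}. Since $A_i(a\hbar)^{\pm1}\vac$ lies in the $\hbar$-adic (Heisenberg) vertex subalgebra $\langle\al_i\rangle\subset V_L[[\hbar]]^{\eta}$, the homomorphism property $Y_W(Y_L^{\eta}(u,z)v,x)=Y_\E^\phi(Y_W(u,x),z)Y_W(v,x)$ of Remark~\ref{prop:Y-E-hom-h-adic} shows that $Y_W(A_i(a\hbar)^{\pm1}\vac,x)$ is built out of $h_i(x)$ and $1_W$ using the $\phi$-products $Y_\E^\phi$; the task is to identify this as an exponential of the type covered by Lemma~\ref{lem:exp-cal} and then to evaluate the resulting normal-ordering constant.

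First I would record the structure of $h_i(x)$. Writing $Y_W(\al_i,x)=\sum_n\al_i^\phi(n)x^{-n}$ and splitting it into its creation part $h_i(x)^{+}=\sum_{n\ge1}\al_i^\phi(-n)x^{n}+\tfrac12\al_i^\phi(0)$ and annihilation part $h_i(x)^{-}=\sum_{n\ge1}\al_i^\phi(n)x^{-n}+\tfrac12\al_i^\phi(0)$, Corollary~\ref{lem:al-negative-stay} gives $[h_i(x_1)^{+},h_i(x_2)^{+}]=0=[h_i(x_1)^{-},h_i(x_2)^{-}]$, while Proposition~\ref{lem:Y-eta-rel-12} together with the singular-part formula $Y_L^{\eta}(\al_i,x)^{-}\al_i=[a_{i,i}]_{q^{\partial_x}}q^{\partial_x}x^{-2}\vac$ of Lemma~\ref{lem:Y-eta-Sing} pins down the (scalar) two-point function $[h_i(x_1)^{-},h_i(x_2)^{+}]=\iota_{x_1,x_2}\gamma_i(x_2/x_1)$ explicitly, where $\gamma_i$ is a finite combination of $[a_{\mu^k(i),i}]_{q^{x\partial_x}}$-shifts of $\xi^{-k}x(1-\xi^{-k}x)^{-2}$. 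Using $\phi(x,z)=xe^z$, so $p(x)=x$ and $Y_W(\der v,x)=x\partial_x Y_W(v,x)$, together with $\al_i(-n)\vac=\tfrac1{(n-1)!}\der^{\,n-1}\al_i$ (the same element for $\der$ and $\der^{\eta}$, since $Y_L^{\eta}(\al_i,x)\vac=Y(\al_i,x)\vac$), one gets $Y_W(\al_i(-n)\vac,x)=\tfrac1{(n-1)!}(x\partial_x)^{n-1}h_i(x)$, so the relevant object is the $\phi$-antiderivative $b_i(x):=-\sum_{n\ne0}\tfrac{\al_i^\phi(n)}{n}x^{-n}+\al_i^\phi(0)\log x$, characterized by $x\partial_x b_i(x)=h_i(x)$.

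Next, expanding $A_i(a\hbar)^{\pm1}\vac$ in the PBW basis of $\langle\al_i\rangle$, applying $Y_W$ termwise, and resumming, I claim that — after pulling out the zero-mode contribution $q^{\pm a\al_i^\phi(0)}$ coming from $\log(q^{\pm a}x)-\log x=\pm a\hbar$ — one has $Y_W(A_i(a\hbar)^{\pm1}\vac,x)=q^{\pm a\al_i^\phi(0)}\exp\!\bigl((\alpha^{\pm}(x)+\beta^{\pm}(x))^{\phi}_{-1}\bigr)1_W$ up to a scalar, where $\alpha^{\pm}(x),\beta^{\pm}(x)$ are $\pm$ the annihilation/creation parts of $b_i(q^{\pm a}x)-b_i(x)$. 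Because $q^{\pm an}-1\in\hbar\C[[\hbar]]$, the hypotheses $\alpha^{\pm}(x),\beta^{\pm}(x)\in\hbar(\End W)[[x^{\pm1}]]$ of Lemma~\ref{lem:exp-cal} hold; the lemma then produces $\exp\alpha^{\pm}(x)\exp\beta^{\pm}(x)\exp(\tfrac12E_{\gamma_i})$, which after reordering into creation-left form (at the cost of a further explicit scalar) is precisely $\Lambda_i^{+}(a,x)^{\pm1}\Lambda_i^{-}(a,x)^{\pm1}$. It then remains to evaluate the constant $E_{\gamma_i}=\Res_z z^{-1}\gamma_i(e^{-z})$ together with the reordering scalar: using $\vartheta_k\in z\C[[z]]$, the identity $\tfrac{d}{dx}\vartheta_k(x)=\tfrac12\bigl(\tfrac{\xi^{k}e^x+1}{\xi^ke^x-1}\bigr)-\delta_{k,0}x^{-1}$, the definition $\kappa_{i,i}(x)=\exp\!\bigl(\sum_k[a_{\mu^k(i),i}]_{q^{\partial_x}}q^{\partial_x}\vartheta_k(x)\bigr)$, and the symmetry $\kappa_{i,i}(x-2\hbar)=\kappa_{i,i}(-x)$ from Lemma~\ref{lem:kappa-ij-com}, one computes the total constant to be $\kappa_{i,i}(0)\kappa_{i,i}(a\hbar)^{-1/2}\kappa_{i,i}(-a\hbar)^{-1/2}$, which is the asserted prefactor. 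The $-$ case also follows from the $+$ case applied to $-\al_i\in L$, under which $\eta$ is odd but $a_{i,i}=2$ and hence $\kappa_{i,i}$ is unchanged.

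The main obstacle I anticipate is the second step: rewriting $A_i(a\hbar)^{\pm1}\vac$, which is defined through the \emph{bare} Heisenberg modes $\al_i(-n)$, in terms of iterated $\phi$-products of $h_i(x)$ generates extra scalar corrections from the $\Phi(\eta')$-terms in $Y_L^{\eta}(\al_i,z)$ acting on multi-excitation Fock vectors, and one must show these corrections merge cleanly with the normal-ordering constant $E_{\gamma_i}$ of Lemma~\ref{lem:exp-cal}; isolating the combined constant is the residue computation against the $\vartheta_k$ and $\kappa_{i,j}$ formulas and is the technical heart of the argument. A secondary point, easily handled by the divisibility $q^{\pm an}-1\in\hbar\C[[\hbar]]$, is the $\hbar$-adic convergence of $A_i(a\hbar)^{\pm1}\vac$ and of the resummed exponential, which is what makes the whole manipulation legitimate.
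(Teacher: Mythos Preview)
Your strategy matches the paper's: both reduce everything to the single field $Y_W(\al_i,x)$, invoke Lemma~\ref{lem:exp-cal}, and evaluate the resulting normal-ordering constant via the $\vartheta_k$ identities to produce the $\kappa_{i,i}$ prefactor. The paper avoids your PBW-and-resum step (and the extraneous antiderivative $b_i(x)$ with its $\log x$) by packaging everything into the single vector
\[
\wt\al_i=\sum_{n\ge 1}\frac{(a\hbar)^n}{n}\al_i(-n)\vac
=\sum_{n\ge 0}\frac{(a\hbar)^{n+1}}{(n+1)!}\der_\eta^{\,n}\al_i\in\hbar V_L[[\hbar]]^{\eta},
\]
the two expressions agreeing because $Y_L^{\eta}(v,x)\vac=Y(v,x)\vac$ forces $\der_\eta=\der$. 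One then writes $A_i(a\hbar)^{\pm 1}\vac=\exp(\pm(\wt\al_i)^{\eta}_{-1})\vac$, and the homomorphism property of Remark~\ref{prop:Y-E-hom-h-adic} gives $Y_W(A_i(a\hbar)^{\pm 1}\vac,x)=\exp(\pm Y_W(\wt\al_i,x)^{\phi}_{-1})1_W$ in one stroke; after that, $Y_W(\wt\al_i,x)=\sum_{n\ge 0}\frac{(a\hbar)^{n+1}}{(n+1)!}(x\partial_x)^nY_W(\al_i,x)$ is computed directly from $Y_W(\al_i,x)$ and plays exactly the role of your ``$b_i(q^{\pm a}x)-b_i(x)$'' without any $\log x$. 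The commutator $[Y_W(\wt\al_i,x_1)^{-},Y_W(\wt\al_i,x_2)^{+}]$ is then read off from \eqref{eq:Y-eta-rel1}, Lemma~\ref{lem:exp-cal} is applied, and the constant is identified as $\kappa_{i,i}(0)^2/\kappa_{i,i}(a\hbar)\kappa_{i,i}(-a\hbar)$ by evaluating $\Res_x x^{-1}\lambda(e^{-x})$ in terms of the definition \eqref{kappa-ij-def}. Your anticipated obstacle about extra scalars from the $\Phi(\eta')$ terms is thus largely sidestepped by never unpacking into bare PBW monomials---though the equality $\exp(\sum_n\frac{(a\hbar)^n}{n}\al_i(-n))\vac=\exp((\wt\al_i)^{\eta}_{-1})\vac$ is precisely where any such scalar would have to be absorbed, so if you implement this route that step deserves explicit verification rather than being taken as obvious.
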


\begin{proof}  Recall that
$$A_i(a\hbar)^{\pm 1}=\exp\left(\pm \sum_{n\ge 1} \frac{\al_i(-n)}{n}(a\hbar)^n\right)$$
on $V_{L}[[\hbar]]^{\eta}\ (=V_{L}[[\hbar]])$.
For $\al\in \h$, we have $Y_L^{\eta}(\al,x)=\sum_{n\in \Z}\al^{\eta}(n)x^{-n-1}$,  where
$Y_{L}^{\eta}(\al,x)=Y(\al,x)+\Phi(\eta'(\al,x))$.
Noticing that $\Phi(\eta'(\al,x)){\bf 1}=0$, we get
\begin{align}
\al^{\eta}(m){\bf 1}=\al_{m}{\bf 1}=\al(m){\bf 1}\quad \text{ for } m\in \Z.
\end{align}
Set
\begin{align}
  \wt\al_i=\sum_{n\ge 1}\frac{1}{n}(a\hbar)^n \al_i(-n)\vac=\sum_{n\ge 1}\frac{1}{n}(a\hbar)^n \al_i^{\eta}(-n)\vac
 \in \hbar V_L[[\hbar]].
\end{align}
Note that
\begin{align}
(\wt\al_i)^{\eta}_{-1}:=\Res_x x^{-1}Y^{\eta}_{L}(\wt\al_i,x)=\sum_{n\ge 1}\frac{1}{n}(a\hbar)^n \al_i^{\eta}(-n).
\end{align}
(A general fact is that  for  $n\in \Z_+$, $(v_{-n}{\bf 1})_{-1}=v_{-n}$ for any vector $v$.)
Thus
\begin{align}
A_i(a\hbar)^{\pm 1}\vac=\exp\(\pm (\wt\al_i)^{\eta}_{-1}\)\vac.
\end{align}
Denoting by ${\mathcal{D}}_{\eta}$ the $D$-operator of $V_{L}[[\hbar]]^{\eta}$, we have
\begin{align}\label{wt-al-i-expression}
 \wt\al_i =\sum_{n\ge0}\frac{(a\hbar)^{n+1}}{(n+1)!}{\mathcal{D}}_{\eta}^n\al_i\in \hbar V_{L}[[\hbar]]^{\eta}.
 \end{align}

On the other hand, write $Y_W(\wt\al_i,x)=\sum_{n\in\Z}\wt\al_{i}^\phi(n)x^{-n}$
and set
\begin{align}
 Y_W(\wt\al_i,x)^\pm=\sum_{n\ge 1}\wt\al_{i}^\phi(\mp n)x^{\pm n}+\half \wt\al_{i}^\phi(0).
\end{align}
As $Y_W({\mathcal{D}}_{\eta}v,x)=x\partial_{x}Y_W(v,x)$ for $v\in V_L[[\hbar]]^{\eta}$,
 by (\ref{wt-al-i-expression}) and \eqref{eq:Y-eta-rel1} we have
\begin{align*}
  &[Y_W(\wt\al_i,x_1),Y_W(\wt\al_i,x_2)]\\
  =\ & \sum_{m,n\ge0}\frac{(a\hbar)^{m+n+2}}{(m+1)!(n+1)!}
  \partial_{(x_1)}^m\partial_{(x_2)}^n
  [Y_W(\al_i,x_1),Y_W(\al_i,x_2)]\\
  =\ &\sum_{m,n\ge0}\frac{(a\hbar)^{m+n+2}}{(m+1)!(n+1)!}
  \partial_{(x_1)}^{m+1}\partial_{(x_2)}^{n+1}\nonumber\\
  &\quad \times \sum_{k\in\Z_N}
  [a_{\mu^k(i),i}]_{q^{x_2\partial_{x_2}}}  \(\iota_{x_1,x_2}q^{x_2\partial_{x_2}}-\iota_{x_2,x_1}q^{-x_2\partial_{x_2}}\)\log(1-\xi^{-k}x_2/x_1)\\
  = \ &\sum_{k\in\Z_N}
  [a_{\mu^k(i),i}]_{q^{x_2\partial_{x_2}}}  \(\iota_{x_1,x_2}q^{x_2\partial_{x_2}}-\iota_{x_2,x_1}q^{-x_2\partial_{x_2}}\)\\
  &\quad \times  \(q^{-ax_2\partial_{x_2}}-1\)\(q^{ax_2\partial_{x_2}}-1\) \log(1-\xi^{-k}x_2/x_1).
\end{align*}
From this we get
\begin{align*}
  [Y_W(\wt\al_i,x_1)^\pm,Y_W(\wt\al_i,x_2)^\pm]=0
\end{align*}
and
\begin{align*}
  &[Y_W(\wt\al_i,x_1)^-,Y_W(\wt\al_i,x_2)^+]\\
 =&\sum_{k\in\Z_N}[a_{\mu^k(i),i}]_{q^{x_2\partial_{x_2}}}
    q^{-x_2\partial_{x_2}}
   \(q^{-ax_2\partial_{x_2}}-1\)\(q^{ax_2\partial_{x_2}}-1\)
    \log (1-\xi^{-k}x_2/x_1).
\end{align*}

Set
\begin{align*}
  \lambda(x)=\sum_{k\in\Z_N}[a_{\mu^k(i),i}]_{q^{x\partial_{x}}}
    q^{-x\partial_{x}}
    (q^{-ax\partial_{x}}-1)(q^{ax\partial_{x}}-1)
    \log(1-\xi^{-k}x),
\end{align*}
which lies in $\C(x)[[\hbar]]$ as $x\partial_{x} \log(1-\xi^{-k}x)=\frac{-\xi^{-k}x}{1-\xi^{-k}x}$.
Noticing that $$(x\partial_x)^2\log(1-\xi^{-k}x)=-\frac{\xi^{-k}x}{(1-\xi^{-k}x)^2},\quad
\partial_x^2 (\vartheta_k(x)+\delta_{k,0}\log x)=-\frac{\xi^{-k}e^x}{(1-\xi^{-k}e^x)^2}, $$
and $\(\partial_{x}^n\log x\)|_{x=-z}=(-\partial_z)^n\log z$ for $n\ge 1$, we have
\begin{align*}
 & \lambda(e^{x})
  =\sum_{k\in\Z_N}[a_{\mu^k(i),i}]_{q^{\partial_{x}}}
    q^{-\partial_{x}}
    \(q^{-a\partial_{x}}-1\)\(q^{a\partial_{x}}-1\)(\vartheta_k(x)+\delta_{k,0}\log x),\\
    & \lambda(e^{-x})=\sum_{k\in\Z_N}[a_{\mu^k(i),i}]_{q^{\partial_{x}}}
    q^{\partial_{x}}
    \(q^{a\partial_{x}}-1\)\(q^{-a\partial_{x}}-1\)(\vartheta_k(-x)+\delta_{k,0}\log x).
\end{align*}
Furthermore, since $ \Res_xx\inv\left(\frac{d}{dx}\right)^n\log x=0$ for $n\ge 1$, we get
\begin{align*}
  \Res_xx\inv \lambda(e^{-x})=& \Res_xx\inv\sum_{k\in\Z_N}[a_{\mu^k(i),i}]_{q^{\partial_{x}}}
    q^{\partial_{x}}
    \(q^{a\partial_{x}}-1\)\(q^{-a\partial_{x}}-1\)\vartheta_k(-x)\\
   =\ &  \lim_{x\rightarrow 0}\sum_{k\in\Z_N}[a_{\mu^{-k}(i),i}]_{q^{\partial_{x}}}
    q^{\partial_{x}}
    \(q^{a\partial_{x}}-1\)\(q^{-a\partial_{x}}-1\)\vartheta_{k}(x)\\
   =\ &  \lim_{x\rightarrow 0}\sum_{k\in\Z_N}[a_{i,\mu^k(i)}]_{q^{\partial_{x}}}
    q^{\partial_{x}}
    \(2-q^{a\partial_{x}}-q^{-a\partial_{x}}\)\vartheta_{k}(x)\\
  =\ &  \lim_{x\rightarrow 0}\sum_{k\in\Z_N}[a_{\mu^k(i),i}]_{q^{\partial_{x}}}
    q^{\partial_{x}}
    \(2\vartheta_{k}(x)-\vartheta_{k}(x+a\hbar)-\vartheta_{k}(x-a\hbar)\)\\
  =\ &  \lim_{x\rightarrow 0}\log \( \frac{\kappa_{i,i}(x)^2}{\kappa_{i,i}(x+a\hbar)\kappa_{i,i}(x-a\hbar)}\).
    \end{align*}
    Thus
  $$\exp\left(\Res_xx\inv \lambda(e^{-x})\right)
  =\frac{\kappa_{i,i}(0)^2}{\kappa_{i,i}(a\hbar)\kappa_{i,i}(-a\hbar)}.$$
Then, using Lemma \ref{lem:exp-cal} and the homomorphism property of $Y_W(\cdot,x)$, we get
\begin{align}\label{Y-T-f-A-i}
  & Y_W(A_i(a\hbar)^{\pm 1}\vac,x)
  = Y_W\(\exp((\wt\al_i)^{\eta}_{-1} )\vac,x\)
  = \exp\( Y_W(\wt\al_i,x)_{-1}^{\phi}\)1_W  \nonumber    \\
  &\quad  \quad  =  \frac{\kappa_{i,i}(0)}{\kappa_{i,i}(a\hbar)^\half \kappa_{i,i}(-a\hbar)^\half}
  \exp\(\pm Y_W(\wt\al_i,x)^+\)
  \exp\(\pm Y_W(\wt\al_i,x)^-\).
\end{align}

Noticing that
\begin{align*}
  Y_W(\wt\al_i,x) =\ &\sum_{m\ge0}\frac{(a\hbar)^{m+1}}{(m+1)!}\partial_{(x)}^mY_W(\al_i,x)
  = \al_{i}^\phi(0)a\hbar+\(q^{ax\partial_{x}}-1\)\sum_{n\ne 0}\frac{\al_{i}^\phi(n)}{-n}x^{-n}\\
 =\ &\al_{i}^\phi(0)a\hbar-\sum_{n\ne 0}\frac{\al_{i}^{\phi}(n)}{n}\(q^{-an}-1\)x^{-n},
\end{align*}
we have
\begin{align*}
  Y_W(\wt\al_i,x)^{\pm}=\al_i^\phi(0)\frac{a\hbar}{2}\pm \sum_{n\ge 1}
    \frac{\al_i^\phi(\mp n)}{n}(q^{\pm an}-1)x^{\pm n}.
\end{align*}
Finally, combining this with (\ref{Y-T-f-A-i}) we obtain (\ref{Y-T-f-Ai-expression}).
\end{proof}

We also have:

\begin{prop}\label{lem:Y-eta-rel-3}
For $i,j\in I$, the following relation holds on $W$:
\begin{align*}
  Y_W&(e_{\al_i},x_1)Y_W(e_{-\al_j},x_2)
  -g_{j,i}(x_1/x_2)Y_W(e_{-\al_j},x_2)Y_W(e_{\al_i},x_1)
  =\frac{1}{2\hbar}\kappa_{i,i}(0)^{-1}\\
  \times&\sum_{k\in\Z_N}\delta_{\mu^k(i),j}
  \( \delta\(\frac{\xi^{-k}x_2}{x_1}\)
  -\frac{\kappa_{j,j}(0)^\half}{\kappa_{j,j}(2\hbar)^\half}
  \Lambda_j^+(-2,x_2)\Lambda_j^-(-2,x_2)
  \delta\(\frac{\xi^{-k}q^{-2}x_2}{x_1}\)
  \).\nonumber
\end{align*}
In particular, the right-hand side vanishes if $j\notin \mathcal{O}(i)$.
\end{prop}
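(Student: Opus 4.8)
The plan is to derive this relation by applying the general commutator formula of Proposition~\ref{prop:reverse-cal} (with $G^0 = G = \<\mu\>$, $\chi_\phi(\mu^k) = \xi^k$, and $p(x) = x$) to the pair $u = e_{\al_i}$, $v = e_{-\al_j}$ in the $\hbar$-adic quantum vertex algebra $V_L[[\hbar]]^\eta$, and then translating the right-hand side into explicit vertex operators on $W$ using the structural results already in hand. First I would record from Lemma~\ref{lem:wh-S} that $\wh{\mathcal S}(x_2,x_1)(e_{-\al_j} \ot e_{\al_i}) = g_{i,j}(x_2/x_1)^{-1}(e_{-\al_j}\ot e_{\al_i})$, and note $g_{i,j}(x_2/x_1)^{-1} = g_{j,i}(x_1/x_2)$ by the relation $\tilde g_{i,j}(x)\tilde g_{j,i}(1/x) = 1$ of Section~6. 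This handles the ``braiding'' term on the left-hand side: the $Y_W(x_2)(1\ot Y_W(x_1))$ expression in Proposition~\ref{prop:reverse-cal} becomes $g_{j,i}(x_1/x_2)Y_W(e_{-\al_j},x_2)Y_W(e_{\al_i},x_1)$ acting on $w$.

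Next I would compute the source term $\sum_{\sigma\in G}\sum_{\ell\ge 0} Y_W((R(\mu^k)e_{\al_i})_\ell e_{-\al_j}, x_2)w \cdot \tfrac1{\ell!}(x_2\partial_{x_2})^\ell x_1 x_1^{-1}\delta(\xi^{-k}x_2/x_1)$. Here $R(\mu^k)e_{\al_i} = e_{\mu^k \al_i} = e_{\al_{\mu^k(i)}}$ (since $L(0)e_{\al_i} = e_{\al_i}$ and $\chi = 1$), so the inner contribution is governed by the singular part $Y_L^\eta(e_{\al_{\mu^k(i)}},x)^- e_{-\al_j}$, which Lemma~\ref{lem:Y-eta-Sing} evaluates: it is $0$ unless $a_{\mu^k(i),j} < 0$ (by \eqref{eq:Y-eta-Sing-e-e>=0}, using $\delta\delta' = -1$) or $\mu^k(i) = j$. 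When $\mu^k(i) = j$ we are in the ``$e_{+\al}$ against $e_{-\al}$'' case \eqref{eq:Y-eta-Sing-e-e-to-h>0}, giving the two-term expression $\tfrac1{2\hbar}(\kappa_{j,j}(0)^{-1}x^{-1} - \kappa_{j,j}(-2\hbar)^{-1}A_j(-2\hbar)x^{-1}... )$ — more precisely $\tfrac1{2\hbar}(\kappa_{i,i}(0)^{-1}x^{-1} - \kappa_{i,i}(-2\hbar)^{-1}A_i(-2\hbar)(x+2\hbar)^{-1})\vac$ with $i$ replaced appropriately. The term $\mu^k(i) = j$ with $a_{\mu^k(i),j} = -1$ cannot occur since $a_{j,j} = 2 \ne -1$; and the off-diagonal case $a_{\mu^k(i),j} = -1$ with $\mu^k(i)\ne j$ would contribute via \eqref{eq:Y-eta-Sing-e-e<0}, but one checks that $\delta\delta' = -1$ forces $Y_L^\eta(e_{\al_{\mu^k(i)}},x)^- e_{-\al_j} = 0$ by \eqref{eq:Y-eta-Sing-e-e>=0} since $\delta\delta' a_{i,j} = 1 \ge 0$. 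Hence only the orbit condition $\mu^k(i) = j$ (i.e. $j \in \mathcal O(i)$, with $\delta_{\mu^k(i),j}$) survives, which also explains why the right-hand side vanishes when $j\notin\mathcal O(i)$.

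For the surviving $\delta_{\mu^k(i),j}$ terms I would plug in \eqref{eq:Y-eta-Sing-e-e-to-h>0}: the $\ell = 0$ and the $A_i(-2\hbar)$-weighted pieces produce, after applying $Y_W$, the two $\delta$-function contributions $\delta(\xi^{-k}x_2/x_1)$ and $\Lambda$-dressed $\delta(\xi^{-k}q^{-2}x_2/x_1)$; the shift $x + 2\hbar \mapsto$ the substitution $x_1 = x_2 e^z$ with the $(x+2\hbar)^{-1}$ factor produces the argument $q^{-2} = e^{-2\hbar}$ inside the delta. The key translation of $Y_W(A_i(-2\hbar)\vac, x)$ into $\tfrac{\kappa_{i,i}(0)}{\kappa_{i,i}(2\hbar)^{1/2}\kappa_{i,i}(-2\hbar)^{1/2}}\Lambda_i^+(-2,x)\Lambda_i^-(-2,x)$ is exactly Proposition~\ref{lem:exp-al-no} with $a = -2$; combined with the prefactor $\kappa_{i,i}(-2\hbar)^{-1}$ from \eqref{eq:Y-eta-Sing-e-e-to-h>0} and $\kappa_{i,i}(-2\hbar) = \kappa_{j,j}$ via Lemma~\ref{lem:kappa-ij-com} (using $\mu^k(i) = j$ so $\kappa_{i,i}(-2\hbar)=\kappa_{j,j}(0)$ and $\kappa_{i,i}(2\hbar)$ matches), this yields exactly the coefficient $\tfrac{\kappa_{j,j}(0)^{1/2}}{\kappa_{j,j}(2\hbar)^{1/2}}$ shown in the statement. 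Finally the overall $\tfrac1{2\hbar}\kappa_{i,i}(0)^{-1}$ comes out in front.

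I expect the main obstacle to be the bookkeeping of the $\kappa$-factors: reconciling the three distinct normalizations $\kappa_{i,i}(0)^{-1}$, $\kappa_{i,i}(-2\hbar)^{-1}$, and the $\kappa_{i,i}(\pm 2\hbar)^{1/2}$ from Proposition~\ref{lem:exp-al-no}, and checking via Lemma~\ref{lem:kappa-ij-com} (specifically $\kappa_{i,i}(x-2\hbar) = \kappa_{i,i}(-x)$, so $\kappa_{i,i}(-2\hbar)=\kappa_{i,i}(0)$ and more generally the even-shift symmetry) that they collapse to precisely $\tfrac1{2\hbar}\kappa_{i,i}(0)^{-1}$ times the bracketed expression with coefficient $\tfrac{\kappa_{j,j}(0)^{1/2}}{\kappa_{j,j}(2\hbar)^{1/2}}$. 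A secondary subtlety is ensuring that when $j\in\mathcal O(i)$ there is genuinely only one value of $k\in\Z_N$ with $\mu^k(i)=j$ contributing to each delta (or that the sum $\sum_k \delta_{\mu^k(i),j}$ correctly records the multiplicity $d_i$), and that the $e_{-\al_j}$ against $e_{+\al_i}$ orientation is handled with the correct sign $\delta = +1$, $\delta' = -1$ throughout. Once these are settled the identity follows by matching coefficients of $\delta$-functions on both sides.
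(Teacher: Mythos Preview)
Your proposal is correct and follows essentially the same route as the paper's proof: apply the equivariant commutator formula (Proposition~\ref{prop:reverse-cal}, or equivalently Proposition~\ref{prop:tech-reverse-calculations-h} combined with Lemma~\ref{lem:wh-S}) to the pair $(e_{\al_i},e_{-\al_j})$, observe via Lemma~\ref{lem:Y-eta-Sing} that only the terms with $\mu^k(i)=j$ survive, insert \eqref{eq:Y-eta-Sing-e-e-to-h>0}, and then use Proposition~\ref{lem:exp-al-no} together with $\kappa_{j,j}(-2\hbar)=\kappa_{j,j}(0)$ from Lemma~\ref{lem:kappa-ij-com} to simplify the $\kappa$-factors. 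Your anticipated ``obstacles'' are exactly the bookkeeping the paper carries out, and there is no missing idea.
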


\begin{proof} Notice that for $r,s\in I$, $\<\alpha_r,-\alpha_s\><0$ if and only if $r=s$.
Using Proposition \ref{prop:tech-reverse-calculations-h} and Lemma \ref{lem:wh-S}, and
then using Proposition \ref{lem:exp-al-no} we get
\begin{align*}
  &Y_W(e_{\al_i},x_1)Y_W(e_{-\al_j},x_2)
  -g_{j,i}(x_1/x_2)Y_W(e_{-\al_j},x_2)Y_W(e_{\al_i},x_1)\nonumber\\
  =\ & \Res_z \sum_{k\in \Z_N}Y_W\(Y_L^{\eta}(\mu^k e_{\al_i},z)e_{-\al_j},x_2\)
  e^{zx_2\partial_{x_2}}\delta\left(\xi^{-k}\frac{x_2}{x_1}\right)\nonumber\\
  =\ & \Res_z \sum_{k\in \Z_N}Y_W\(Y_L^{\eta}(e_{\al_{\mu^k(i)}},z)^{-}e_{-\al_j},x_2\)
  e^{zx_2\partial_{x_2}}\delta\left(\xi^{-k}\frac{x_2}{x_1}\right)\nonumber\\
  =\ & \Res_z \sum_{k\in \Z_N}\delta_{\mu^k(i),j}
  \frac{1}{2\hbar}Y_W\(\kappa_{j,j}(0)^{-1}z^{-1}{\bf 1}-(z+2\hbar)^{-1}\kappa_{j,j}(-2\hbar)^{-1}A_{j}^{+}(-2\hbar){\bf 1},x_2\)\nonumber\\
  &\quad \times e^{zx_2\partial_{x_2}}\delta\left(\xi^{-k}\frac{x_2}{x_1}\right)\nonumber\\
  =\ &\sum_{k\in \Z_N}\delta_{\mu^k(i),j}
  \frac{1}{2\hbar}\kappa_{j,j}(0)^{-1}\nonumber\\
  &\ \times \(\delta\left(\xi^{-k}\frac{x_2}{x_1}\right)
      -\frac{\kappa_{j,j}(0)}{\kappa_{j,j}(-2\hbar)}Y_W(A_{j}(-2\hbar){\bf 1},x_2)
  e^{-2\hbar x_2\partial_{x_2}}\delta\left(\xi^{-k}\frac{x_2}{x_1}\right) \)\nonumber\\
  =\ &\sum_{k\in \Z_N}\delta_{\mu^k(i),j}
  \frac{1}{2\hbar}\kappa_{j,j}(0)^{-1}\nonumber\\
 \times&\sum_{k\in\Z_N}\delta_{\mu^k(i),j}
  \( \delta\(\frac{\xi^{-k}x_2}{x_1}\)
  -\frac{\kappa_{j,j}(0)^\half}{\kappa_{j,j}(2\hbar)^\half}
  \Lambda_j^+(-2,x_2)\Lambda_j^-(-2,x_2)
  \delta\(\frac{\xi^{-k}q^{-2}x_2}{x_1}\)
  \),\nonumber
  \end{align*}
 where for the last equality, we also use the relation
 $\kappa_{j,j}(-2\hbar)=\kappa_{j,j}(0)$ from Lemma \ref{lem:kappa-ij-com}.
\end{proof}

\begin{prop}\label{lem:Y-eta-rel-4}
The following relations hold on $W$ for $i,j\in I$:
\begin{align*}
  F_{i,j}(x_1,x_2)Y_W(e_{\pm\al_i},x_1)Y_W(e_{\pm\al_j},x_2)
  =G_{i,j}(x_1,x_2)Y_W(e_{\pm\al_j},x_2)Y_W(e_{\pm\al_i},x_1).
\end{align*}
\end{prop}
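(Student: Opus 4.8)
The strategy is to specialize the $\mathcal S$-commutator formula for equivariant $\phi$-coordinated quasi modules over an $\hbar$-adic quantum vertex algebra (Proposition~\ref{prop:reverse-cal}) to $u=e_{\pm\al_i}$, $v=e_{\pm\al_j}$, and then observe that after multiplying by $F_{i,j}(x_1,x_2)$ the residual $\delta$-function terms are supported at zeros of $F_{i,j}$ and hence vanish. This parallels the proof of Proposition~\ref{lem:Y-eta-rel-3}, the point being that for same-sign pairs in an ADE lattice one has $\langle\al_i,\al_j\rangle\ge-1$, so the surviving $\delta$-terms sit where $F_{i,j}$ vanishes rather than producing a nonzero right-hand side.

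First I would record the data $p(x)=x$, $\chi_\phi(\mu^k)=\xi^k$, and $G^0=G=\langle\mu\rangle$ (legitimate, since $\xi$ is a primitive $N$-th root of unity and hence $\chi_\phi$ is injective on $G$), and check the finiteness hypothesis of Proposition~\ref{prop:reverse-cal}/\ref{prop:tech-reverse-calculations-h}. Since $\chi=1$ gives $R(\mu^k)=\mu^k$ and $\mu^k(e_{\pm\al_i})$ is a unit scalar times $e_{\pm\al_{\mu^k(i)}}$, Lemma~\ref{lem:Y-eta-Sing} shows $(R(\mu^k)e_{\pm\al_i})_je_{\pm\al_j}=0$ whenever $a_{\mu^k(i),j}\ge0$, while for $a_{\mu^k(i),j}=-1$, using \eqref{eq:Y-eta-Sing-e-e<0} together with $(x+\hbar)^{-1}=\sum_{j\ge0}(-\hbar)^jx^{-j-1}$, it equals a constant times $(-\hbar)^jA_{\mu^k(i)}(-\hbar)^{\pm1}e_{\pm(\al_{\mu^k(i)}+\al_j)}$; in particular it lies in $\hbar^nV_L[[\hbar]]$ once $j\ge n$, so for each $n$ only finitely many $(k,j)\in\Z_N\times\N$ are exceptional.

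With Lemma~\ref{lem:wh-S} identifying $\widehat{\mathcal S}(x_2,x_1)(e_{\pm\al_j}\otimes e_{\pm\al_i})=g_{i,j}(x_2/x_1)(e_{\pm\al_j}\otimes e_{\pm\al_i})$ and the above evaluation of the singular parts, Proposition~\ref{prop:reverse-cal} yields
\[
Y_W(e_{\pm\al_i},x_1)Y_W(e_{\pm\al_j},x_2)-g_{i,j}(x_2/x_1)\,Y_W(e_{\pm\al_j},x_2)Y_W(e_{\pm\al_i},x_1)=\sum_{k\in\Z_N,\ a_{\mu^k(i),j}=-1}c_k\,Y_W(v_k,x_2)\,\delta\!\left(q^{-1}\xi^{-k}x_2/x_1\right),
\]
where $c_k\in\C[[\hbar]]$, $v_k\in V_L[[\hbar]]$, and where I use $\sum_{j\ge0}\tfrac{(-\hbar)^j}{j!}(x_2\partial_{x_2})^j\delta(\xi^{-k}x_2/x_1)=e^{-\hbar x_2\partial_{x_2}}\delta(\xi^{-k}x_2/x_1)=\delta(q^{-1}\xi^{-k}x_2/x_1)$ (the right side being $0$ already if $a_{\mu^k(i),j}\ge0$ for all $k$). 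Then I multiply through by $F_{i,j}(x_1,x_2)$: on the left $F_{i,j}(x_1,x_2)g_{i,j}(x_2/x_1)=G_{i,j}(x_1,x_2)$ by \eqref{g-G-F} and the convention $F_{i,j}=F^+_{i,j}$, $G_{i,j}=G^+_{i,j}$ (since $G^+_{i,j}$ is a polynomial, the $\iota$-expansion is trivial after cancellation), while on the right, for each $k$ with $a_{\mu^k(i),j}=-1$ the polynomial $F_{i,j}(x_1,x_2)$ contains the factor $x_1-\xi^{-k}q^{-1}x_2$, and $(x_1-\xi^{-k}q^{-1}x_2)\,\delta(q^{-1}\xi^{-k}x_2/x_1)=0$ by the standard $\delta$-function identity; so every term on the right dies and the asserted relation follows (the $-$-case being handled identically).

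The bookkeeping is routine, and the one delicate point is the manipulation of the $\delta$-terms: verifying that the $j$-sum resums to the shifted distribution $\delta(q^{-1}\xi^{-k}x_2/x_1)$ and that the resulting products of the Laurent polynomial $F_{i,j}$ with this distribution and with the rational braiding coefficient are all legitimate $\hbar$-adically. The $\hbar$-adic legitimacy is exactly what the finiteness input above guarantees, namely that $(R(\mu^k)e_{\pm\al_i})_je_{\pm\al_j}$ is $\hbar$-divisible of order $\ge j$; so I expect this to be the only step requiring care, and it is the same mechanism already used in Proposition~\ref{lem:Y-eta-rel-3}.
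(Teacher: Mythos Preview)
Your argument is correct and follows essentially the same route as the paper: apply the equivariant $\mathcal S$-commutator formula (the paper invokes Proposition~\ref{prop:tech-reverse-calculations-h} directly, you its corollary Proposition~\ref{prop:reverse-cal}), feed in Lemma~\ref{lem:wh-S} and Lemma~\ref{lem:Y-eta-Sing} to get a sum of shifted $\delta$-functions $\delta(\xi^{-k}q^{-1}x_2/x_1)$ indexed by $k$ with $a_{\mu^k(i),j}=-1$, and then multiply by $F_{i,j}(x_1,x_2)$ so that \eqref{g-G-F} turns the braiding coefficient into $G_{i,j}$ while each surviving $\delta$-term is killed by the matching factor $(x_1-\xi^{-k}q^{-1}x_2)$ of $F_{i,j}$. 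The only cosmetic point is that your displayed braiding coefficient should, by the paper's convention, be written as $g_{j,i}(x_1/x_2)^{-1}$ (the $\iota_{x_2,x_1}$-expansion of $\tilde g_{i,j}(x_2/x_1)$) rather than $g_{i,j}(x_2/x_1)$, but as you note this is immaterial once multiplied by $F_{i,j}$.
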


\begin{proof} Note that for $r,s\in I$, $\<\alpha_r,\alpha_s\><0$ if and only if $a_{r,s}=-1$.
Using Proposition \ref{prop:tech-reverse-calculations-h}, Lemma \ref{lem:wh-S}, and
then using Lemma \ref{lem:Y-eta-Sing} we get
\begin{align}\label{eq:e-e=-1-commutator}
  &Y_W(e_{\pm\al_i},x_1)Y_W(e_{\pm\al_j},x_2)-g_{j,i}(x_1/x_2)^{-1}Y_W(e_{\pm\al_j},x_2)Y_W(e_{\pm\al_i},x_1)\\
   =\  &\Res_z\sum_{k\in\Z_N}Y_W\(Y_L^\eta(\mu^ke_{\pm\al_i},z)e_{\pm\al_j},x_2\)
  e^{zx_2\partial_{x_2}} \delta\(\frac{\xi^{-k}x_2}{x_1}\)\nonumber\\
   =\  &\Res_z\sum_{k\in\Z_N}Y_W\(Y_L^\eta(e_{\pm\al_{\mu^k(i)}},z)^{-}e_{\pm\al_j},x_2\)
  e^{zx_2\partial_{x_2}} \delta\(\frac{\xi^{-k}x_2}{x_1}\)\nonumber\\
   =\  &\sum_{\substack{k\in\Z_N\\ a_{\mu^k(i),j}=-1}}\epsilon(\al_{\mu^k(i)},\al_j)
  \kappa_{\mu^k(i),j}(-\hbar)Y_W\(A_{\mu^k(i)}(-\hbar)^{\pm 1}e_{\pm(\al_{\mu^k(i)}+\al_j)},x_2\)\nonumber\\
   &\quad\quad  \times \Res_z (z+\hbar)^{-1}
  e^{zx_2\partial_{x_2}} \delta\(\frac{\xi^{-k}x_2}{x_1}\)\nonumber\\
  =\  &\sum_{\substack{k\in\Z_N\\ a_{\mu^k(i),j}=-1}}\epsilon(\al_{\mu^k(i)},\al_j)
  \kappa_{\mu^k(i),j}(-\hbar)\nonumber\\
   &\quad\quad  \times Y_W\(A_{\mu^k(i)}(-\hbar)^{\pm 1}e_{\pm(\al_{\mu^k(i)}+\al_j)},x_2\)
    \delta\(\frac{\xi^{-k}q\inv x_2}{x_1}\).\nonumber
\end{align}
Recall that
$g_{j,i}(x_1/x_2)^{-1}=\iota_{x_2,x_1}\tilde{g}_{i,j}(x_2/x_1)=\iota_{x_2,x_1}(G_{i,j}^+(x_1,x_2)/F_{i,j}^+(x_1,x_2))$ and
\begin{align*}
 F_{i,j}^+(x_1,x_2)=\prod_{\substack{k\in\Z_N\\ \mu^k(i)=j}}(x_1-\xi^{-k}q^{2}x_2)
  \prod_{\substack{k\in\Z_N\\ a_{\mu^k(i),j}=-1}}(x_1-\xi^{-k}q\inv x_2).
\end{align*}
Then multiplying both sides of  \eqref{eq:e-e=-1-commutator} by $F_{i,j}(x_1,x_2)\ (=F_{i,j}^+(x_1,x_2))$,
we obtain the desired relation.
\end{proof}

Now, we use Proposition \ref{prop:Q6'=Q7'} to complete the proof of Theorem \ref{final-theorem}.
First, set
$$h_{i,q}(x)=Y_W(\al_i,x)+\frac{1}{4\hbar}\log \frac{\kappa_{i,i}(0)}{\kappa_{i,i}(2\hbar)},\   \
e_{\pm\al_i,q}(x)=B_iY_W(e_{\pm\al_i},x)\ \ \text{ for }i\in I.$$
Second, use the expansion $h_{i,q}(x)=\sum_{n\in \Z}h_{i,q}(n)x^{-n}$ to define
$$h_{i,q}^{\pm}(x)=\frac{1}{2}h_{i,q}(0)+ \sum_{n\ge 1}h_{i,q}(\mp n)x^{\pm n}.$$
Notice that $h_{i,q}(0)=\al_i^{\phi}(0)+\frac{1}{4\hbar}\log \frac{\kappa_{i,i}(0)}{\kappa_{i,i}(2\hbar)}$.
Third, use (\ref{Psi-h})  to define $\Psi_i^{\pm}(x)$ and then use (\ref{E-e}) to define $E_i^{\pm}(x)$.
By Proposition \ref{lem:Y-eta-rel-12}, relations (Q2$'$) and (Q3$'$) hold.
Note that
\begin{align*}
&\Psi_i^{+}(xq^{-\frac{1}{2}c})^{-1}=\exp\(-2\hbar G(\hbar x\partial_{x})h_{i,q}^{+}(xq^{-c})\),\\
&\Psi_i^{-}(xq^{-\frac{3}{2}c})=\exp\(-2\hbar G(\hbar x\partial_{x})h_{i,q}^{-}(xq^{-c})\).
\end{align*}
On the other hand, we have
\begin{align}
\Lambda_i^{\pm}(-2,x)&=q^{-\al_i^{\phi}(0)}\exp\(\sum_{n\ge 1}\frac{\al_i^{\phi}(\mp n)}{\pm n}(q^{\mp 2n}-1)x^{\pm n}\)\\
&=\exp\( \frac{q^{-2 x\partial_{x}}-1}{x\partial_{x}}\(\frac{1}{2}\al_i^{\phi}(0)+\sum_{n\ge 1}\al_i^{\phi}(\mp n)x^{\pm n}\)\)\nonumber\\
&=\exp\(2\hbar \frac{q^{-x\partial_{x}}-q^{x\partial_{x}}}{2\hbar x\partial_{x}}q^{-x\partial_{x}}h_{i,q}^{\pm}(x) \)
\exp\(\frac{1}{4}\log \frac{\kappa_{i,i}(0)}{\kappa_{i,i}(2\hbar)}\)\nonumber\\
&=\(\frac{\kappa_{i,i}(0)}{\kappa_{i,i}(2\hbar)}\)^{\frac{1}{4}}\exp\(-2\hbar G(\hbar x\partial_{x})h_{i,q}^{\pm}(xq^{-1})\).\nonumber
\end{align}
Thus
\begin{align*}
\Lambda_i^{+}(-2,x)=\(\frac{\kappa_{i,i}(0)}{\kappa_{i,i}(2\hbar)}\)^{\frac{1}{4}}\Psi_i^{+}(xq^{-\frac{1}{2}})^{-1},
\quad \Lambda_i^{-}(-2,x)=\(\frac{\kappa_{i,i}(0)}{\kappa_{i,i}(2\hbar)}\)^{\frac{1}{4}}\Psi_i^{-}(xq^{-\frac{3}{2}})
\end{align*}
(with $c=1$).
Note that if $j\in \mathcal{O}(i)$, then $\kappa_{j,j}(x)=\kappa_{i,i}(x)$ and $B_j=B_i$. We also have
$B_i^{2}\cdot \frac{1}{2\hbar}\kappa_{i,i}(0)^{-1}=\frac{1}{q-q^{-1}}$.
Combining these facts with Proposition \ref{lem:Y-eta-rel-3} we get relation (Q4$'$).
Relation (Q5$'$) follows immediately from Proposition \ref{lem:Y-eta-rel-4}.
As $W$ is a $\phi$-coordinated quasi $V_L[[\hbar]]^{\eta}$-module,
the vertex operator map $Y_W(\cdot,x)$ is a homomorphism of $\hbar$-adic nonlocal vertex algebras.
Then using Lemma \ref{lem:Y-eta-rel-5} we get
\begin{align*}
 Y_W(e_{\pm\al_i},x)_0^\phi Y_W(e_{\pm\al_i},x)_0^\phi
 Y_W(e_{\pm\al_j},x)=Y_W(\(e_{\pm\al_i}\)_0^{\eta}\(e_{\pm\al_i}\)_0^{\eta} e_{\pm\al_j},x)=0
\end{align*}
for $i,j\in I$ with $a_{i,j}=-1$.
That is, relation (Q7$'$)  holds.
Finally, we invoke Proposition \ref{prop:Q6'=Q7'} to conclude that
$W$ is a level $1$ restricted $\qtar$-module, completing the proof of Theorem \ref{final-theorem}.

\end{document}